\documentclass[11pt, reqno, twoside]{amsart}
\usepackage{amsmath,amssymb,amscd,amsthm,amsxtra,amsfonts}
\usepackage{mathtools,mathrsfs,dsfont,xparse}
\usepackage{graphicx,epstopdf}
\usepackage{multirow}
\usepackage{cases}
\usepackage{enumerate}
\usepackage{xcolor}
\usepackage{tikz,pgfplots}
\usetikzlibrary{matrix}
\usepackage{mathtools}
\usepackage[margin=1 in]{geometry}
\mathtoolsset{showonlyrefs}
\usepackage[numbers,sort]{natbib}
\usepackage[all]{xy}

\usepackage{multicol,bbm}
\usepackage[colorlinks=true, pdfstartview=FitV, linkcolor=blue, citecolor=blue, urlcolor=blue]{hyperref}

\allowdisplaybreaks[4]

\hfuzz  = 0.5cm

\pagestyle{myheadings}

\theoremstyle{plain}
\newtheorem{theorem}{Theorem}[section]
\newtheorem{proposition}[theorem]{Proposition}
\newtheorem{lemma}[theorem]{Lemma}
\newtheorem{corollary}[theorem]{Corollary}

\theoremstyle{definition}
\newtheorem{definition}[theorem]{Definition}
\newtheorem{remark}[theorem]{Remark}
\numberwithin{equation}{section}
\numberwithin{figure}{section}
\newtheorem{claim}[theorem]{Claim}

\newcommand{\F}{\mathscr{F}}
\newcommand{\G}{\mathbb{G}}

\newcommand{\0}{\mathbf{0}}
\newcommand{\h}{\mathbb{H}}
\newcommand{\M}{\mathcal{M}}

\newcommand{\NN}{\mathcal{N}}

\newcommand{\oo}{\mathscr{O}}
\newcommand{\C}{\mathbb{C}}
\newcommand{\N}{\mathbb{N}}
\newcommand{\R}{\mathbb{R}}

\newcommand{\T}{\mathbb{T}}

\DeclareMathOperator{\re}{Re}

\DeclareDocumentCommand{\abs}{s m}{
  \operatorname{}
  \IfBooleanTF{#1}{#2}{\left|#2\right|}}

\DeclareDocumentCommand{\norm}{s m}{
  \operatorname{}
  \IfBooleanTF{#1}{#2} {\left\| #2\right\|}}

\DeclareDocumentCommand{\inner}{s m}{
  \operatorname{}
  \IfBooleanTF{#1}{#2}{\left \langle#2\right \rangle}}

\DeclareDocumentCommand{\parenthese}{s m}{
  \operatorname{}
  \IfBooleanTF{#1}{#2}{\left(#2\right)}}

\DeclareDocumentCommand{\square}{s m}{
  \operatorname{}
  \IfBooleanTF{#1} {#2}{\left[#2\right]}}

\DeclareDocumentCommand{\bracket}{s m}{
  \operatorname{}
  \IfBooleanTF{#1}{#2}{\left\{#2\right\}}}

\pgfplotsset{compat=1.18}

\begin{document}
\title[On GWP of mass-critical NLS in $\h^3$]{Global well-posedness and scattering for the defocusing mass-critical Schr\"odinger equation in the three-dimensional hyperbolic space}

\author[Wilson and Yu]{Bobby Wilson  and Xueying Yu}

\address{Bobby Wilson
\newline \indent Department of Mathematics, University of Washington\indent 
\newline \indent  C138 Padelford Hall Box 354350, Seattle, WA 98195,\indent }
\email{blwilson@uw.edu}

\address{Xueying  Yu
\newline \indent Department of Mathematics, Oregon State University\indent 
\newline \indent  Kidder Hall 368, Corvallis, OR 97331 \indent 
\newline \indent 
And 
\newline \indent 
Department of Mathematics, University of Washington, 
\newline \indent  C138 Padelford Hall Box 354350, Seattle, WA 98195}
\email{xueying.yu@oregonstate.edu}

\subjclass[2020]{35Q55, 35P25, 35R01, 37K06, 37L50}

\keywords{Nonlinear Schr\"odinger equations, Global well-posedness, Scattering, Hyperbolic spaces}

\begin{abstract}
In this paper, we prove that the initial value problem for the mass-critical defocusing nonlinear Schr\"odinger equation on the three-dimensional hyperbolic space $\mathbb{H}^3$ is globally well-posed and scatters for data with radial symmetry in the critical space $L^2 (\mathbb{H}^3)$. 
\end{abstract}

\maketitle

\setcounter{tocdepth}{1}
\tableofcontents

\parindent = 10pt     
\parskip = 8pt

\section{Introduction}

In this paper, we consider the initial value problem for the defocusing mass-critical nonlinear Schr\"odinger (NLS) posed on the three-dimensional hyperbolic plane $\h^3$ with radial initial datum $\phi$:
\begin{align}\label{NLS}
\begin{cases}
i \partial_t u + \Delta_{\h^3} u = \abs{u}^{\frac{4}{3}} u, & t \in \R , \quad  x \in \h^3,\\
u(0,x) = \phi(x). & 
\end{cases}
\end{align}
The primary objective of this work is to establish the global well-posedness of equation \eqref{NLS} and demonstrate scattering behavior in the critical space $L^2(\mathbb{H}^3)$.

\subsection{Setup}
To provide a more general context for the problem, we consider the initial value problem for the NLS equation defined on a manifold $\mathcal{M}$:
\begin{align}\label{pNLS}
i \partial_t u + \Delta_{\mathcal{M}} u = \abs{u}^{p-1} u, 
\end{align}
where $u : \R_t \times \mathcal{M} \rightarrow \C$ is a complex-valued function of time and space and  $\M$ is a manifold. The NLS equation is characterized as {\it defocusing} due to the positive sign of the nonlinearity.

Equation \eqref{pNLS} possesses two essential conserved quantities: mass and energy, defined as follows:
\begin{align}
M(u)(t) & = \int_{\M} |u(t,x)|^2 \, dx = M(u(0)), \label{Mass}\\
E(u)(t) &  = \int_{\M} \, \frac{1}{2} |\nabla u(t,x)|^2 +  \frac{1}{p+1} |u(t,x)|^{p+1} \, dx = E(u(0)). \label{Energy}
\end{align}
These conservation laws provide control over the $L^2$ and $\dot{H}^1$ norms of the solutions, respectively.

In Euclidean spaces, NLS exhibits a scaling symmetry
\begin{align}
u(t,x) \mapsto \lambda^{\frac{2}{p-1}} u( \lambda^2 t , \lambda x) ,
\end{align}
under which the only invariant, homogeneous, $L_x^2$-based Sobolev norm  is the $\dot{H}^{s_c} (\R^d)$ norm. This symmetry establishes its critical scaling exponent of \eqref{NLS} on $\R^d$, which is given by
\begin{align}
s_c:=\frac{d}{2} - \frac{2}{p-1} . 
\end{align}
Accordingly, the problem NLS can be classified as {\it subcritical}, {\it critical} or {\it supercritical} depending on whether the regularity of the initial data is above, equal, or below the scaling $s_c$ of NLS. We will adopt the language in the scaling context in other manifolds $\M$.

\subsection{History}

In the cases when the equation becomes scale invariant at the level of one of the conserved quantities \eqref{Mass} and \eqref{Energy}, we refer to these situations as the mass-critical NLS  ($s _c =0$,\, $p= 1+ \frac{4}{d}$) and the energy-critical NLS ($s _c =1$, \, $p= 1+ \frac{4}{d-2}$) respectively, and they have received special attention in the past.  It has now become standard that within the critical regime, establishing a uniform {\it a priori} bound for the spacetime $L_{t,x}^{\frac{2(d+2)}{d-2s_c}}$ norm of solutions to the critical NLS implies both global well-posedness and scattering for general data.

In the energy-critical case ($s = s_c =1$), Bourgain \cite{Bour99} first introduced an inductive argument on the size of the energy and a refined Morawetz inequality to prove global existence and scattering in three dimensions for large finite energy data which is assumed to be radial. A different proof of the same result is given in \cite{Gr00}. 
Then, a breakthrough was made by Colliander-Keel-Staffilani-Takaoka-Tao \cite{CKSTT08}. They removed the radial assumption and proved global well-posedness and scattering of the energy-critical problem in three dimensions for general large data. They relied on Bourgain's induction on energy technique to find minimal blow-up solutions that concentrate in both physical and frequency spaces, and proved new interaction Morawetz-type estimates to rule out this kind of minimal blow-up solutions. This milestone was later extended to higher dimensions by Ryckman and Visan \cite{RV07} and Visan \cite{Visan07}, following the groundwork laid by \cite{CKSTT08}.

In \cite{KM06} Kenig and Merle proposed a new methodology, a deep and broad road map to tackle critical problems. In fact, using a contradiction argument they first proved the existence of a critical element such that the global well-posedness and scattering fail. Then, relying on a concentration compactness argument, they showed that this critical element enjoys a compactness property up to the symmetries of this equation. This final step was reduced to a rigidity theorem that precluded the existence of such a critical element.

The mass-critical ($s = s_c =0$) global well-posedness and scattering problem was also first studied in the radial case as in \cite{TVZ07, KTV09}. Then Dodson proved the global well-posedness of the mass-critical problem in any dimension for nonradial data \cite{Dod3d, Dod1d, Dod2d}. A key ingredient in Dodson's work is to prove a long-time Strichartz estimate. This estimate played a crucial role in handling the error term within frequency-localized Morawetz estimates, ultimately enabling the exclusion of minimal blow-up solutions.

In contrast to the energy- and mass-critical problems, for any other $s_c \neq 0,1$, there are no conserved quantities that control the growth in time of the $\dot{H}^{s_c}$ norm of the solutions. In \cite{KM10}, Kenig and Merle showed for the first time that if a solution of the defocusing cubic NLS in three dimensions remains bounded in the critical norm $\dot{H}^{\frac{1}{2}}$ in the maximal time of existence, then the interval of existence is infinite and the solution scatters using concentration compactness and rigidity argument. 
See also \cite{Mur14, Yu18, Dod22} for results of critical problems at the non-conservation law levels.

On non-Euclidean manifolds, using the Kenig and Merle road map and an ad hoc profile decomposition technique, Ionescu-Pausader \cite{IP1, IP2} and Ionescu-Pausader-Staffilani \cite{IPS12} were able to transfer the already available energy-critical global existence results in Euclidean spaces into their corresponding  $\R \times \T^3$,  $\T^3$ and $\h^3$ settings. Their method (which is known as {\it blackbox} trick) has been successfully applied to other general settings, see  \cite{Yue21, YY20, YYZ20, Stru15, PTW14, Zhao19, Zhao21}.
The central idea behind this approach involves breaking down the minimal blow-up solution into a combination of Euclidean-like solutions and scale-1 solutions through profile decomposition. Leveraging the well-established critical global well-posedness theory in Euclidean spaces, they employ it as a blackbox trick to achieve global well-posedness on the given manifold, effectively adapting and transferring the theory to non-Euclidean settings.

\subsection{Motivation}
In Euclidean spaces, where the sectional curvature is constant zero, the global wellposedness and scattering problem of NLS is well understood. However, in curved spaces with negative curvatures, the distinctive geometric properties introduced by the metric geometry pose unique obstacles, such as the lack of a Fourier convolution theorem. As a result, extrapolating results from the Euclidean to the hyperbolic case is often nontrivial. There are only very few results studying the global wellposedness and scattering of NLS in the hyperbolic case.

However, hyperbolic spaces represent the simplest symmetric spaces of noncompact type characterized by a constant negative sectional curvature. In the papers, \cite{IS09, AP09, BD07}, this negative curvature leads to dispersive estimates which are slightly improved compared to those that can be obtained in Euclidean spaces. Such enhanced dispersion has, in fact, facilitated the establishment of global well-posedness and scattering results for subcritical NLS in these spaces, see \cite{BD15,BCD09,BCS08,Ban07,IS09, SY,Ma}. 
In \cite{IPS12}, the authors establish global well-posedness and scattering for energy-critical NLS on $\h^3$.
In this paper, we aim to establish the global well-posedness and scattering theory for the mass-critical NLS on the three-dimensional hyperbolic space, which to the best of the authors' knowledge, is the first mass-critical global well-posedness and scattering result obtained on non-Euclidean manifolds.

\subsection{Main result and discussion}
Now we consider the initial value problem for the mass-critical \eqref{NLS} (that is $p=\frac{7}{3}$ in \eqref{pNLS}) NLS posed on the three-dimensional hyperbolic plane $\h^3$.

Now let us state the main result of this paper. 
     \begin{theorem}\label{thm mainthm} 
     Let $\phi \in L^2 (\h^3)$ and let $\phi$ be rotationally symmetric.
         \begin{enumerate}
             \item Then there exists a unique global solution $u \in C(\mathbb{R}; L^2(\h^3))$ of \eqref{NLS}. In addition, the mapping $\phi \to u$ is a continuous mapping from $L^2(\h^3)$ to $C(\mathbb{R}; L^2(\h^3))$ and $\|u\|_{L^2(\h^3)}$ is conserved.
             \item We have the bound of the global solution
                \begin{align}\label{eq 10/3}
                    \|u\|_{L^{10/3}_{t,x}(\R \times \h^3)} \lesssim_{\|\phi\|_{L^2(\h^3)}} 1
                \end{align}
            which implies $u$ scatters to a linear solution, that is, there exists $u_{\pm} \in L^2(\h^3)$ such that 
                \begin{align*}
                    \lim_{t \to \pm\infty}\|u(t)- e^{it \Delta_{\h^3}}u_{\pm}\|_{L^2(\h^3)}=0.
                \end{align*}
         \end{enumerate}
     \end{theorem}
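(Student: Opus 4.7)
The plan is to reduce the full theorem to the a priori bound \eqref{eq 10/3}, since once this uniform spacetime estimate is available the standard bootstrap/stability theory yields global well-posedness, uniqueness, continuous dependence on the data, and the scattering statement through a routine $L^2$ argument in Strichartz spaces. For this, I would first develop the basic linear and nonlinear theory on $\h^3$: Strichartz estimates on $\h^3$ (as in Anker--Pierfelice and Ionescu--Staffilani), local well-posedness in $L^2$ by Picard iteration with $L^{10/3}_{t,x}$ as the Strichartz-controlling norm, small-data scattering, and a perturbation/stability lemma. Mass conservation, combined with the radial assumption, reduces the residual symmetry group to time translations and scale.

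\textbf{Concentration-compactness reduction to a critical element.} Arguing by contradiction, suppose \eqref{eq 10/3} fails and let $M_{\max}$ be the infimum of masses for which it fails. Taking a sequence $u_n$ with $\|u_n(0)\|_{L^2}^2 \to M_{\max}$ and $\|u_n\|_{L^{10/3}_{t,x}}\to\infty$, I would follow the Kenig--Merle road map adapted to the manifold setting (in the spirit of \cite{IPS12} for the energy-critical problem on $\h^3$). The crucial tool is a mass-critical profile decomposition for radial bounded sequences in $L^2(\h^3)$: any such sequence decomposes, up to small errors in a suitable Strichartz norm, into a superposition of \emph{Euclidean profiles} (living at shrinking scales $N_n \to \infty$, which after rescaling are $\R^3$-profiles evolving by $e^{it\Delta_{\R^3}}$) and \emph{hyperbolic profiles} (scale-$1$ profiles evolving by $e^{it\Delta_{\h^3}}$). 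The technical ingredients are a refined/improved Strichartz estimate for $e^{it\Delta_{\h^3}}$ and comparison of the linear hyperbolic propagator with its Euclidean counterpart at small scales. Combined with perturbation theory, a pigeonhole argument reduces matters to a single profile, that is, a minimal-mass critical element $u_c$ which is almost periodic modulo the residual symmetries.

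\textbf{Ruling out the two enemies.} Two cases must be excluded. If the surviving profile is Euclidean, rescaling produces a radial mass-critical solution on $\R^3$ of mass $M_{\max}$; invoking Dodson \cite{Dod3d} as a black box yields a uniform $L^{10/3}_{t,x}(\R\times\R^3)$ bound, which by the stability theory contradicts $\|u_n\|_{L^{10/3}_{t,x}}\to\infty$. If instead the profile is genuinely hyperbolic, one obtains a scale-$1$ almost periodic solution $u_c$ of mass $M_{\max}$ on $\h^3$. Using the spectral gap $-\Delta_{\h^3}\ge 1$ together with almost periodicity, one propagates regularity above $L^2$, and then runs a frequency-localized interaction Morawetz argument with a weight built from the hyperbolic distance, in the style of \cite{IPS12}, where the error terms are controlled by a long-time Strichartz estimate à la Dodson. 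The convexity of the hyperbolic distance and the spectral gap together produce a coercive Morawetz quantity forcing $u_c\equiv 0$, contradicting $\|u_c(0)\|_{L^2}^2=M_{\max}>0$.

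\textbf{Main obstacle.} The principal technical difficulty will be the scale-$1$ rigidity step: setting up the long-time Strichartz estimate and the frequency-localized interaction Morawetz inequality at the mass-critical level on $\h^3$. In the Euclidean case Dodson's proof relies on bilinear Strichartz and a delicate frequency-localization bootstrap tied to the Fourier convolution structure, which is unavailable on $\h^3$. Adapting these tools using the Helgason--Fourier transform and the improved dispersive estimates characteristic of negative curvature, while preserving the combinatorial structure of Dodson's argument, is the central technical core. The radial assumption enters critically both in suppressing spatial translation parameters in the profile decomposition and in providing the enhanced decay needed to close the Morawetz bootstrap.
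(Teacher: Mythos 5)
Your road map coincides with the paper's: reduce to the a priori bound \eqref{eq 10/3}, run a concentration-compactness argument with a profile decomposition into Euclidean profiles (handled by Dodson's theorem as a black box) and hyperbolic (scale-one) profiles, and kill the resulting almost periodic critical element with a frequency-localized Morawetz estimate whose errors are controlled by a long-time Strichartz estimate. Two points, however, deserve comment.

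First, and most importantly, you identify the bilinear Strichartz / long-time Strichartz machinery as the ``central technical core'' but propose to build it by adapting Dodson's Fourier-analytic arguments through the Helgason--Fourier transform and the improved dispersion of negative curvature. This is precisely the route that is \emph{not} available: the absence of a convolution theorem on $\h^3$ is why general bilinear estimates there remain open. The paper's key device is different and much more special: for radial data in exactly three dimensions, the substitution $w = e^{it}\tfrac{\sinh r}{r}\,u$ turns the linear hyperbolic flow into the linear Euclidean flow, and (crucially) one checks that the radial Fourier support is \emph{exactly} preserved under this map, so that Shao's radial Euclidean bilinear estimates transfer verbatim to $\h^3$. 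Without this observation (or a substitute), the long-time Strichartz estimate and hence the rigidity step cannot be closed as you describe; so as written your plan has a genuine gap at its self-declared core. Relatedly, the radial hypothesis enters mainly through Shao's radial bilinear improvement, not primarily to suppress translation parameters (the paper's profiles still carry isometries $h_k\in\mathbb{G}$), and there is no scaling symmetry on $\h^3$, so the residual modulation group is $\mathbb{G}$ with $N(t)\equiv 1$ rather than ``time translations and scale.''

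Second, your rigidity mechanism differs from the paper's. You envision a coercive frequency-localized \emph{interaction} Morawetz with a hyperbolic-distance weight forcing $u_c\equiv 0$ after propagating extra regularity via the spectral gap. The paper instead applies the ordinary one-particle Morawetz inequality of Ionescu--Staffilani to the truncation $P_{\leq T}u$ (which is automatically in $H^1$ with $\|\nabla P_{\leq T}u\|_{L^2}\lesssim \eta T$, so no regularity propagation is needed), obtains the upper bound $\|P_{\leq T}u\|_{L^{10/3}_{t,x}}^{10/3}\lesssim (\eta+\varepsilon^{1/2})T$ after controlling the commutator error $P_{\leq T}F(u)-F(P_{\leq T}u)$ with the long-time Strichartz estimate, and contradicts the lower bound $\gtrsim T$ coming from almost periodicity; the contradiction is quantitative rather than a Liouville-type vanishing. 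Your alternative is not obviously wrong, but it is unproved and strictly harder than what is needed.
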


\begin{remark}
Recall that it is sufficient to prove a uniform {\it a priori} bound for the spacetime $L_{t,x}^{10/3}$ norm of solutions in \eqref{eq 10/3}, as the scattering part follows from a standard argument (see, for example, \cite{Ca, tao2006nonlinear}).
\end{remark}

     One would expect that the stronger dispersion in hyperbolic space would play a role in a well-posedness argument. However, the most important observation for the following argument is that in three dimensions, one can reduce the bilinear Strichartz estimate in hyperbolic space with radial data to a Euclidean bilinear Strichartz estimate using a simple change-of-variables. For hyperbolic space of any dimension greater than 3, this change-of-variables does not produce a nice correspondence between Euclidean and hyperbolic bilinear Strichartz estimates. Since we have an assumption of radial data, we can use improved Euclidean radial bilinear Strichartz estimates of Shao \cite{Shao09}. It is important to note that, due to the presence of frequency localization in bilinear estimates, the most challenging part is to show that this frequency localization is preserved under the change of variables. If this preservation does not hold, the Euclidean bilinear estimate will not be applicable, potentially leading to undesirable error terms. The question of whether one can prove better general-data bilinear Strichartz estimates in hyperbolic space remains.

     For the scattering argument, one would expect a Morawetz estimate to be an essential feature of a contradiction argument. In this case, we use the Morawetz estimate for NLS on hyperbolic space of Ionescu and Staffilani \cite{IS09} combined with the strategy of Dodson \cite{Dod3d} to construct a suitable contradiction argument. The Morawetz estimate of \cite{IS09} was used to establish scattering for the energy-critical NLS on $\h^3$ in \cite{IPS12}. Of course, the difficulty in our setting is that we are assuming a measure of regularity that falls below the scaling regularity of the Morawetz estimate. To overcome this difficulty, we consider the frequency localized version of the Morawetz estimate. This is similar to the strategy employed in \cite{Dod3d}, which makes it a perfect strategy to emulate. Fortunately, the blackbox trick allows for a simplification of the strategy of Dodson.

The structure of the paper is as follows: Section \ref{sec Preliminaries} contains an assortment of important definitions and tools necessary for analysis of Schr\"odinger equations on Hyperbolic space. Then Section \ref{sec local} presents the local wellposedness statement as well as local stability estimates and the Morawetz estimate. In Section \ref{sec mainthm}, the proof of Theorem \ref{thm mainthm} is presented as a reduction to two key propositions: Proposition \ref{prop key} and Proposition \ref{prop MorDod}. Section \ref{sec bilinear} presents the bilinear Strichartz estimate (as well as a nonlinear corollary) followed by estimates that allow for Euclidean approximations in the profile decomposition in Section \ref{sec euclidean}. Section \ref{sec profile} details the profile decomposition machinery. Finally, Section \ref{sec Key} is a presentation of the proof of Proposition \ref{prop key}, and  Section \ref{sec MorDod} discusses the proof of Proposition \ref{prop MorDod}.

\subsection*{Acknowledgement}
Both authors would like to thank Gigliola Staffilani for suggesting this problem, and Sohrab Shahshahani for very insightful conversations.
B. W. is supported by NSF grant DMS 1856124, and NSF CAREER Fellowship, DMS 2142064. This material is based upon work supported by the National Science Foundation under Grant No. DMS-1928930 while B.W. was in residence at the Simons Laufer Mathematical Sciences Institute (formerly MSRI) in Berkeley, California, during the summer of 2023. 
X. Y. is partially supported by NSF DMS-2306429.

%%%%%%%%%%%%%%%%%%%%%%%%%%%%%%%%%%%%%%%%%%%%%%%%%%%%%%%%%%%%%%%%%%%%%%%%%%%%%
%%%%%%%%%%%%%%%%%%%%%%%%%%%%%%%%%%%%%%%%%%%%%%%%%%%%%%%%%%%%%%%%%%%%%%%%%%%%%

\section{Preliminaries}\label{sec Preliminaries}

In this section, we establish notation and provide a basic framework for understanding Schr\"odinger equations on hyperbolic spaces.
%%%%%%%%%%%%%%%%%%%%%%%%%%%%%%%
\subsection{Notations}
We define
\begin{align}
\norm{f}_{L_t^q L_x^r (J \times \h^3)} : = \square{\int_J \parenthese{\int_{\h^3} \abs{f(t,x)}^r \, dx}^{\frac{q}{r}} dt}^{\frac{1}{q}},
\end{align}
where $J$ is a time interval.

We adopt the usual notation that $A \lesssim  B$ or $B \gtrsim A$ to denote an estimate of the form $A \leq C B$, for some constant $0 < C < \infty$ depending only on the {\it a priori} fixed constants of the problem. Furthermore, we let $A \sim B$ denote the double estimate $A \lesssim B$ and $B \lesssim A$. We also use $a+$ and $a-$ to denote expressions of the form $a + \varepsilon$ and $a - \varepsilon$, for any $0 <  \varepsilon \ll 1$.

\subsection{Hyperbolic geometry}

We consider the Minkowski space $\R^{d+1}$ with the standard Minkowski metric
\begin{align}
- (dx^0)^2 + (dx^1)^2 + (dx^2)^2 + \cdots + (dx^d)^2 
\end{align}
and we define the bilinear form on $\R^{d+1} \times \R^{d+1}$,
\begin{align}
\square{x,y} = x^0 y^0 -x^1 y^1 -x^2 y^2 - \cdots -x^d y^d.
\end{align}
The hyperbolic space $\h^d$ is defined as
\begin{align}
\h^d = \{ x \in \R^{d+1} : \square{x,x} =1 \text{ and } x^0 > 0 \}.
\end{align}
Let $\0 =  \{1, 0_{\R^d} \} = \{(1,0,0, \cdots,0) \}$ denote the origin of $\h^d$. The Minkowski metric on $\R^{d+1}$ induces a Riemannian metric $g$ on $\h^d$, with covariant derivative $D$ and induced measure $d\mu$.

We define $\mathbb{G} : = SO(d,1) = SO_e (d,1)$ as the connected Lie group of $(d+1) \times (d+1)$ matrices that leave the form $[\cdot, \cdot]$ invariant. Clearly, $X \in SO (d,1)$ if and only if
\begin{align}
^{tr} X \cdot I_{d,1} \cdot X = I_{d,1}, \quad \det X=1, \quad X_{00} >0,
\end{align}
where $I_{d,1}$ is the diagonal matrix $\text{diag}[-1, 1, \ldots , 1]$ (since $[x,y] = - ^t x \cdot I_{d,1} \cdot y$). Let $\mathbb{K} = SO(d)$ denote the subgroup of $SO(d,1)$ that fixes the origin $\0$. Clearly,  $SO(d)$ is the compact rotation group acting on the variables $(x^1, \ldots, x^d)$. We define also the commutative subgroup $\mathbb{A}$ of $\mathbb{G}$,
\begin{align}
\mathbb{A} : = \bracket{ a_s = 
\begin{bmatrix}
\cosh s & \sinh s & 0\\
\sinh s & \cosh s & 0\\
0 & 0 & I_{d-1}
\end{bmatrix}
: s \in \R  } ,
\end{align}
and recall the Cartan decomposition
\begin{align}\label{eq Cartan}
\mathbb{G} = \mathbb{K} \mathbb{A}_+ \mathbb{K}, \quad \mathbb{A}_+ : = \{ a_s , s \in [0, \infty) \} .
\end{align}
The semisimple Lie group $\mathbb{G}$ acts transitively on $\h^d$ and hyperbolic space $\h^d$ can be identified with the homogeneous space $\mathbb{G}/ \mathbb{K} = SO (d,1) / SO(d)$. Moreover, for any $h \in SO (d,1)$ the mapping $L_h : \h^d \to \h^d$, $L_h (x) = h \cdot x$, defines an isometry of $\h^d$. Therefore, for any $h \in \mathbb{G}$, we further define the $L^2$ isometries
\begin{align}
\pi_h : L^2 (\h^d) \to L^2(\h^d), \quad \pi_h (f)(x) = f(h^{-1} \cdot x) .
\end{align}
We fix normalized coordinate charts which allow us to pass in a suitable way between functions defined on hyperbolic spaces and functions defined on Euclidean spaces. More precisely, for any $h \in SO (d,1)$ we define the diffeomorphism
\begin{align}\label{eq Diffeo}
\Psi_h : \R^d \to \h^d, \quad \Psi_h (v^1, \ldots , v^d) = h\cdot (\sqrt{1+\abs{v}^2} , v^1, \ldots , v^d) .
\end{align}
Using these diffeomorphisms we define, for any $h \in \mathbb{G}$,
\begin{align}
\widetilde{\pi}_h : C(\R^d) \to C(\h^d), \quad \widetilde{\pi}_h (f) (x) = f(\Psi_h^{-1} (x)) .
\end{align}
We will use the diffeomorphism $\Psi_{\mathcal{I}}$ as a global coordinate chart on $\h^d$, where $\mathcal{I}$ is the identity element of $\mathbb{G}$. We record the integration formula
\begin{align}
\int_{\h^d} f(x) \, d \mu(x) = \int_{\R^d} f(\Psi_{\mathcal{I}} (v)) (1 + \abs{v}^2)^{-\frac{1}{2}} \, dv
\end{align}
for any $f \in C_0(\h^d)$.

An alternative definition for the hyperbolic space is 
\begin{align}
\h^d = \{ x = (s, t) \in \R^{d+1} , (s, t) = ((\sinh r)\omega, \cosh r ), r \geq 0, \omega \in \mathbb{S}^{d-1} \}.
\end{align}
One has
\begin{align}
dt = \sinh r \, dr, \quad ds = \cosh r\omega \, dr + \sinh r \, d \omega
\end{align}
and the metric induced on $\h^d$ is
\begin{align}
dr^2 + \sinh^2 r \, d\omega^2 ,
\end{align}
where $d\omega^2$ is the metric on the sphere $\mathbb{S}^{d-1}$.

Then one can rewrite integrals as 
\begin{align}
\int_{\h^d} f(x) \, dx= \int_0^{\infty} \int_{\mathbb{S}^{d-1}} f(r, \omega) \sinh^{d-1} r \, dr d\omega .
\end{align}
The length of a curve
\begin{align}
\gamma(t) = (\cosh r(t) ,\sinh r(t) \omega(t)),
\end{align}
with $t$ varying from $a$ to $b$, is defined
\begin{align}
L(\gamma) = \int_a^b \sqrt{\abs{\gamma' (t)}^2 + \abs{\sinh r(t)}^2 \abs{\omega' (t)}^2} \, dt .
\end{align}

Recall $\0 = \{(1,0_{\R^d}) \}$ denote the origin of $\h^d$. The distance of a point to $\0$ is
\begin{align}
d((\cosh r, \sinh r\omega) , \0) =r.
\end{align}
More generally, the distance between two arbitrary points is
\begin{align}
d(x, x') = \cosh^{-1} ([x , x']).
\end{align}

The general definition of the Laplace-Beltrami operator is given by
\begin{align}
\Delta_{\h^d} = \partial_r^2 + (d-1)\frac{\cosh r}{\sinh r} \partial_r + \frac{1}{\sinh^2 r} \Delta_{\mathbb{S}^{d-1}} .
\end{align}

\begin{remark}
The form of the Laplace-Beltrami operator implies that there will be no scaling symmetry in $\h^3$ as we usually have in the $\R^d$ setting. 
\end{remark}

%%%%%%%%%%%%%%%%%%%%%%%%%%%%%%%%%%%%%%%%%%%%%%%%%%%%%%%%%%%%

\subsection{Fourier transforms}
\subsubsection{Fourier transforms on $\h^d$}
For $\theta \in \mathbb{S}^{d-1}$ and $\lambda$ a real number, the functions of the type
\begin{align}
h_{\lambda,\theta} (x) = [x , \Lambda(\theta)]^{i\lambda -\frac{d-1}{2}},
\end{align}
where $\Lambda(\theta)$ denotes the point of $\R^{d+1}$ given by $(1, \theta)$, are generalized eigenfunctions of the Laplacian-Beltrami operator. Indeed, we have
\begin{align}
-\Delta_{\h^d} h_{\lambda, \theta} = \parenthese{\lambda^2 + \frac{(d-1)^2}{4}} h_{\lambda,\theta}.
\end{align}
The Fourier transform on $\h^d$ is defined as 
\begin{align}
\widehat{f}(\lambda ,\theta) := \int_{\h^d} h_{\lambda , \theta} (x) f(x) \, d x,
\end{align}
and the Fourier inversion formula on $\h^d$ takes the form of 
\begin{align}
f(x) = \int_{-\infty}^{\infty} \int_{\mathbb{S}^{d-1}} \overline{h}_{\lambda ,\theta} (x) \widehat{f}(\lambda ,\theta) \frac{d\theta d\lambda}{\abs{c(\lambda)}^2},
\end{align}
where $c(\lambda)$ is the Harish-Chandra coefficient
\begin{align}
\frac{1}{\abs{c(\lambda)}^2} = \frac{1}{2 (2 \pi)^d} \frac{\abs{\Gamma(i\lambda + \frac{d-1}{2})}^2}{\abs{\Gamma(i\lambda)}^2} .
\end{align}
In particular, when $d=3$, the Harish-Chandra coefficient is simple, that is,
\begin{align}
\abs{c(\lambda)}^2 = \frac{c}{\lambda^2} .
\end{align}

In the radially symmetric case, Fourier transform is given in the following form
\begin{align}
\widetilde{f}(\lambda) & = \int_0^{\infty} f(r) \phi_{\lambda} (r) \sinh^{d-1} r \, dr ,\\
f(r) & = \frac{2^{d-1}}{2 \pi \omega_{d-1}} \int_0^{\infty} \widetilde{f} (\lambda) \phi_{\lambda} (r) \abs{c(\lambda)}^{-2} \, d \lambda .
\end{align}
In particular, in three dimensional radially symmetric case, we have
\begin{align}
\phi_{\lambda} (r) = \frac{c}{\lambda} \frac{\sin (\lambda r)}{\sinh r}
\end{align}
which implies
\begin{align}
\widetilde{f} (\lambda) & = \frac{c}{\lambda} \int_0^{\infty} f(r) \sin (\lambda r)  \sinh r \, dr ,\\
f(r) & = c \int_0^{\infty} \widetilde{f} (\lambda) \frac{\sin (\lambda r)}{\lambda} \frac{1}{\sinh r} \lambda^2 \, d \lambda .
\end{align}
Also Plancherel formula reads
\begin{align}
\int_0^{\infty} \abs{f(r)}^2 \sinh^2 r \, dr = c \int_0^{\infty} \abs{\widetilde{f} (\lambda)}^2 \lambda^2 \, d\lambda .
\end{align}

\subsubsection{Radial Fourier transform on $\R^d$}
We define the Fourier transform on $\R^d$ by
\begin{align}
\widehat{f} (\xi) : = \frac{1}{(2\pi)^{\frac{d}{2}}} \int_{\R^d} e^{-ix \cdot \xi } f(x) \, dx ,
\end{align}
and Fourier inversion 
\begin{align}
f (x) : = \frac{1}{(2\pi)^{\frac{d}{2}}} \int_{\R^d} e^{ix \cdot \xi } \widehat{f}(\xi) \, d\xi .
\end{align}

Before defining the radial Fourier transforms, we recall Bessel functions and their properties. 
The Bessel function of order $n$, $J_n(x)$, is defined by
\begin{align*}
J_{n}(x) = \sum_{j=0}^{\infty} \frac{(-1)^j}{j! \, \Gamma (j+n +1)} \parenthese{\frac{x}{2}}^{2j+n}.
\end{align*}
In particular, $J_{\frac{1}{2}}$ has the following explicit formula
\begin{align}
J_{\frac{1}{2}} (z) = \sqrt{\frac{2}{\pi z}} \sin z .
\end{align}

In the radially symmetric case, the Fourier transform is given in terms of the Bessel function
\begin{align}
\widehat{f} (k) = (2 \pi)^{\frac{d}{2}} \int_0^{\infty} J_{\frac{d-2}{2}} (kr) f(r) r^{\frac{d}{2}} k^{-\frac{d-2}{2}} \, dr ,
\end{align}
and its inversion is given by
\begin{align}
f (r) = (2 \pi)^{\frac{d}{2}} \int_0^{\infty} J_{\frac{d-2}{2}} (kr) \widehat{f}(k) k^{\frac{d}{2}} r^{-\frac{d-2}{2}} \, dk .
\end{align}

\subsection{A change of variables between $\R^d$ and rotationally symmetric manifolds}\label{ssec COV}
In this subsection, we recall a change of variables computation for rotationally symmetric manifolds (see \cite{Pie08, BD07}). 

In the case of rotationally symmetric manifolds $\mathcal{M}$, the metric is given by
\begin{align}
dx^2 = dr^2 + \phi^2 (r) \, d\omega^2
\end{align}
where $d \omega^2$ is the metric on the sphere $\mathbb{S}^{d-1}$, and $\phi$ is a positive function $C([0,\infty))$, such that $\phi(0)=0$, $\phi'(0) = 1$, and $\phi^{(k)} (0)=0$ (for $k\in 2 \mathbb{Z}_+$). For example, $\R^d$ and $\h^d$ are such manifolds, with $\phi(r) = r$ and $\phi(r) = \sinh r$ respectively.
The Laplace-Beltrami operator on $\mathcal{M}$ is 
\begin{align}
\Delta_{\mathcal{M}} = \partial_r^2 + (d-1) \frac{\phi'(r)}{\phi(r)} \partial_r + \frac{1}{\phi^2 (r)} \Delta_{\mathbb{S}^{d-1}} .
\end{align}

Consider the linear Schr\"odinger equation posed on $\mathcal{M}$
\begin{align}
\begin{cases}
i \partial_t u + \Delta_{\mathcal{M}} u = 0 ,\\
u(0,x) = u_0.
\end{cases}
\end{align}
We define an auxiliary function
\begin{align}
k(r) := \parenthese{\frac{\phi(r)}{r}}^{\frac{d-1}{2}}.
\end{align}
Under the following change of variables
\begin{align}
u(t,r, \omega) = \frac{v(t,r,\omega)}{k(r)},
\end{align}
we see that $v$ solves the equation
\begin{align}
i \partial_t v + \Delta_{\R^d} v + \parenthese{\frac{1}{\phi^2(r)} - \frac{1}{r^2}} \Delta_{\mathbb{S}^{d-1}} v - V(r) v = 0 ,
\end{align}
where
\begin{align}
V(r) = \frac{d-1}{2} \frac{\phi''}{\phi} + \frac{(d-1)(d-3)}{4} \parenthese{ \parenthese{\frac{\phi'}{\phi}}^2 - \frac{1}{r^2}}.
\end{align}

In particular, when considering on $\h^3$ with radial data, we simplify to obtain
\begin{align}
u (t,r,w) = u(t,r), \qquad k(r) = \frac{\sinh r}{r}, \qquad V(r) = 1
\end{align}
and we can see that
\begin{align}
v(t,r) = \frac{\sinh r}{r} u(t,r)
\end{align}
solves
\begin{align}
i \partial_t v + \Delta_{\R^3} v - v =0 .
\end{align}
Now let 
\begin{align}
w (t,r) = e^{it} v(t,r) ,
\end{align}
then we see that $w$ solves the linear Schr\"odinger equation
\begin{align}
i \partial_t w + \Delta_{\R^3} w  =0  .
\end{align}

It is clear that the radial, 3-dimensional regime is a very special case of such a change of variables in rotationally symmetric manifolds. We will rely heavily on the simplicity of relating the generic rotationally symmetric case to the Euclidean case throughout the course of this manuscript.

\subsection{Strichartz estimates} 
In this subsection, we recall the Strichartz estimates proved in both Euclidean spaces and hyperbolic spaces.
\subsubsection{In Euclidean spaces}
We say that a couple $(q,r)$ is admissible if $(1/q,1/r)$ belong to the line
\begin{align}
 L_d=\{(\frac{1}{q},\frac{1}{r}) \in [0,\frac{1}{2}] \times ( 0,\frac{1}{2}] \, \big| \, \frac{2}{q} + \frac{d}{r} = \frac{d}{2} \}.
\end{align} 
Then we have the following  
\begin{proposition}[Euclidean Strichartz estimates in \cite{GV92, KT98, Yaji87}] 
Assume $u$ is the solution to the inhomogeneous initial value problem
\begin{align}
\begin{cases}
i \partial_t u + \Delta_{\mathbb{R}^d} u = F, & t \in \R , \quad x \in \R^d,\\
u(0,x) = \phi(x), & 
\end{cases}
\end{align}
For any admissible exponents $(q,r)$ and $(\widetilde{q}, \widetilde{r})$ we have the Strichartz estimates:
\begin{align}
\norm{u}_{L_t^q L_x^r(\R \times \R^d)} \lesssim \norm{\phi}_{L_x^2(\R^d)} + \norm{F}_{L_t^{\widetilde{q}'}  L_x^{\widetilde{r}'} (\R \times \R^d)} ,
\end{align}
where $1/q +1/q' =1$ and $1/r +1/r' =1$.
\end{proposition}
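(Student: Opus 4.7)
The plan is to follow the classical Keel--Tao strategy based on the dispersive estimate combined with a $TT^\ast$ argument, and then extend from the homogeneous to the inhomogeneous/retarded estimate. The starting point is the explicit kernel of the free Schr\"odinger propagator on $\R^d$, which yields the dispersive bound
\begin{align}
\norm{e^{it \Delta_{\R^d}} f}_{L_x^\infty(\R^d)} \lesssim \abs{t}^{-d/2} \norm{f}_{L_x^1(\R^d)},
\end{align}
together with the trivial mass conservation bound $\norm{e^{it\Delta_{\R^d}}f}_{L_x^2} = \norm{f}_{L_x^2}$. Interpolating these two estimates produces, for every $r \in [2,\infty]$, the decay
\begin{align}
\norm{e^{it \Delta_{\R^d}} f}_{L_x^r(\R^d)} \lesssim \abs{t}^{-d(1/2 - 1/r)} \norm{f}_{L_x^{r'}(\R^d)},
\end{align}
which is the only analytic input we will need; everything else is abstract and works on any measure space endowed with such a one-parameter family of operators.

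Next I would reduce to the homogeneous estimate $\norm{e^{it\Delta_{\R^d}}\phi}_{L^q_t L^r_x} \lesssim \norm{\phi}_{L^2_x}$ via a $TT^\ast$ argument. Setting $T\phi(t) := e^{it\Delta_{\R^d}}\phi$, the desired bound is equivalent by duality to $\norm{TT^\ast F}_{L^q_t L^r_x} \lesssim \norm{F}_{L^{q'}_t L^{r'}_x}$, where
\begin{align}
TT^\ast F(t,x) = \int_{\R} e^{i(t-s) \Delta_{\R^d}} F(s, \cdot)(x)\, ds.
\end{align}
Applying the interpolated dispersive bound inside the integral and then the Hardy--Littlewood--Sobolev inequality in the time variable gives the non-endpoint estimate whenever $2/q = d(1/2 - 1/r)$ with $q > 2$, i.e., along the admissible line with the endpoint excluded. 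The endpoint $(q,r) = (2, 2d/(d-2))$ for $d \geq 3$ is the main obstacle, since Hardy--Littlewood--Sobolev fails exactly at $q = 2$; this step requires the Keel--Tao argument, in which one performs a dyadic decomposition $TT^\ast F = \sum_j T_j F$ where $T_j$ restricts to $\abs{t-s} \sim 2^j$, proves off-diagonal bilinear estimates using the dispersive decay on one side and mass conservation on the other, and then sums using real interpolation into Lorentz spaces to close the $L^2_t$ endpoint.

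Finally, I would pass from the homogeneous estimate to the full inhomogeneous statement. Combining the homogeneous estimate with its dual $\norm{\int e^{-is\Delta_{\R^d}} F(s)\, ds}_{L^2_x} \lesssim \norm{F}_{L^{\tilde q'}_t L^{\tilde r'}_x}$ immediately yields the untruncated version $\norm{\int_{\R} e^{i(t-s)\Delta_{\R^d}} F(s)\,ds}_{L^q_t L^r_x} \lesssim \norm{F}_{L^{\tilde q'}_t L^{\tilde r'}_x}$. To obtain the retarded Duhamel integral $\int_0^t e^{i(t-s)\Delta_{\R^d}} F(s)\, ds$ in the case $(q,r) \neq (\tilde q, \tilde r)$, I would invoke the Christ--Kiselev lemma, whose hypothesis $q > \tilde q'$ is automatic under the admissibility conditions since the two endpoints cannot simultaneously be the endpoint; the diagonal retarded endpoint instead follows from an independent, symmetric application of the $TT^\ast$ scheme. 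Adding the homogeneous contribution $e^{it\Delta_{\R^d}}\phi$ yields the claimed bound.
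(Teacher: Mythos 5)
Your proof is correct and is precisely the standard Ginibre--Velo/Keel--Tao argument (dispersive estimate, $TT^\ast$ with Hardy--Littlewood--Sobolev off the endpoint, the Keel--Tao dyadic/real-interpolation argument at the endpoint, and Christ--Kiselev for the retarded non-diagonal case) that the paper simply cites from \cite{GV92, KT98, Yaji87} without reproving. No gaps; this matches the source the proposition relies on.
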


\begin{definition}[Strichartz spaces in $\R^d$]
We define the Banach space
\begin{align}
S_{\R^d}^0 (I) = \bracket{f \in C(I ; L^2(\R^d)) : \norm{f}_{S_{\R^d}^0 (I)} = \sup_{(q,r) \text{ admissible }} \norm{f}_{L_t^q L_x^r (I  \times \R^d)} < \infty} .
\end{align}
\end{definition}

\subsubsection{In hyperbolic spaces}
We have a larger class of admissible pairs. We say that a couple $(q,r)$ is admissible if $(1/q,1/r)$ belong to the triangle
\begin{align}
T_d = \bracket{ (\frac{1}{q},\frac{1}{r}) \in (0,\frac{1}{2}] \times ( 0,\frac{1}{2}) \, \big| \, \frac{2}{q} + \frac{d}{r} \geq \frac{d}{2} } \cup \bracket{ (0,\frac{1}{2})} .
\end{align}
We then have the following theorem:
\begin{proposition}[Hyperbolic Strichartz estimates in \cite{AP09, IS09}]
Assume $u$ is the solution to the inhomogeneous initial value problem
\begin{align}
\begin{cases}
i \partial_t u + \Delta_{\h^d} u = F, & t \in \R , \quad x \in \h^d,\\
u(0,x) = \phi(x). & 
\end{cases}
\end{align}
Then, for any admissible exponents $(q,r)$ and $(\widetilde{q}, \widetilde{r})$ we have the Strichartz estimates:
\begin{align}
\norm{u}_{L_t^q L_x^r (\R \times \h^d)} \lesssim \norm{\phi}_{L_x^2 (\h^d)} + \norm{F}_{L_t^{\widetilde{q}'}  L_x^{\widetilde{r}'} (\R \times \h^d)} ,
\end{align}
where $1/q +1/q' =1$ and $1/r +1/r' =1$.
\end{proposition}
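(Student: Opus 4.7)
My plan is to follow the Keel-Tao $TT^*$ framework, with the key novelty being a two-regime dispersive estimate for the linear propagator $e^{it\Delta_{\h^d}}$ that reflects the enhanced decay on hyperbolic space. The enlarged admissible region $T_d$ relative to the Euclidean line $L_d$ is precisely the footprint of this improved long-time decay.

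The first and most substantial step is to establish the pointwise dispersive bound
\begin{align}
\norm{e^{it \Delta_{\h^d}} f}_{L^\infty(\h^d)} \lesssim \min\{\abs{t}^{-d/2}, \abs{t}^{-3/2}\} \, \norm{f}_{L^1(\h^d)}.
\end{align}
I would start from the Fourier inversion formula on $\h^d$ together with the explicit generalized eigenfunctions $h_{\lambda,\theta}$ recorded in the preliminaries. By the $\G$-invariance of the propagator and the Cartan decomposition \eqref{eq Cartan}, the Schwartz kernel of $e^{it\Delta_{\h^d}}$ reduces to a scalar oscillatory integral in the spectral variable $\lambda$ against the Harish-Chandra density $\abs{c(\lambda)}^{-2}$ and the spherical function $\phi_\lambda(r)$ evaluated at the hyperbolic distance $r = d(x,y)$. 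Stationary phase in $\lambda$, treating the short and long time regimes separately, produces the Euclidean-type bound $\abs{t}^{-d/2}$ when $\abs{t} \leq 1$; for $\abs{t} \geq 1$ one exploits the spectral gap $\mathrm{spec}(-\Delta_{\h^d}) \subset [(d-1)^2/4, \infty)$, extracting the phase $e^{-it(d-1)^2/4}$ so that the residual oscillatory integral behaves like a one-dimensional Schr\"odinger kernel and produces the universal $\abs{t}^{-3/2}$ decay, independent of the dimension. This is the analysis carried out in \cite{AP09, IS09}.

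Once the dispersive bound is in hand, together with the trivial energy identity $\norm{e^{it\Delta_{\h^d}} f}_{L^2} = \norm{f}_{L^2}$, I would invoke the abstract Keel-Tao theorem twice. With the short-time decay exponent $\sigma = d/2$ one obtains the homogeneous Strichartz estimate for all pairs on the Euclidean scaling line $2/q + d/r = d/2$; applied with the long-time decay exponent $\sigma = 3/2$ it yields, on the line $2/q + 3/r = 3/2$, a global-in-time estimate whose smaller admissible set reflects the weaker pointwise bound but benefits from global integrability. Real interpolation between these two families, the $L^2$ energy pair $(q,r) = (\infty, 2)$, and a Littlewood-Paley partition of $\R$ into unit intervals that glues short-time decay on each unit interval with long-time decoupling between distant intervals, fills out the full triangle $T_d$. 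The inhomogeneous estimate with independent admissible pairs $(q,r)$ and $(\widetilde{q}, \widetilde{r})$ then follows from a standard duality plus Christ-Kiselev argument, noting that at the double endpoint one reverts to the bilinear Keel-Tao argument directly.

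The main obstacle is unambiguously the first step: producing the sharp $\abs{t}^{-3/2}$ long-time decay uniformly in dimension. Unlike the Euclidean case where the kernel is explicitly Gaussian, on $\h^d$ the asymptotics of $\phi_\lambda(r)$ tangle oscillation in $\lambda$ with the exponentially growing volume element $\sinh^{d-1} r$, and the Plancherel density carries a factor of $\lambda^{d-1}$ at high frequency. A delicate Littlewood-Paley frequency splitting with stationary phase in one band and nonstationary phase in the other is required to see that the extra decay is universal in $d$. Once this is established, the remainder is classical.
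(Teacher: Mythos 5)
This proposition is quoted from \cite{AP09, IS09} as a black box; the paper offers no proof of it and only records the interpolated dispersive estimate \eqref{eq Dispersive} as the input it actually uses. Your sketch is a faithful outline of the argument in those references: kernel representation via the spherical Fourier inversion formula and the Harish--Chandra density, a two-regime dispersive bound ($\abs{t}^{-d/2}$ for $\abs{t}\leq 1$, $\abs{t}^{-3/2}$ for $\abs{t}\geq 1$), then $TT^*$/Keel--Tao on unit time intervals glued with the integrable long-time tail, interpolation with the energy pair $(\infty,2)$ to fill the triangle $T_d$, and Christ--Kiselev for the non-diagonal inhomogeneous pairs. One heuristic in your write-up is off and worth correcting: the universal $\abs{t}^{-3/2}$ long-time decay does not come from the residual integral "behaving like a one-dimensional Schr\"odinger kernel" (that would give $\abs{t}^{-1/2}$); it comes from the fact that the Plancherel density $\abs{c(\lambda)}^{-2}$ vanishes to order $\lambda^{2}$ at the bottom of the spectrum in every dimension $d\geq 2$, so after extracting the phase $e^{-it(d-1)^2/4}$ the low-frequency contribution mimics a \emph{three}-dimensional radial free evolution, while the high-frequency part is handled by nonstationary phase using the exponential decay of the spherical functions. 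Also, the partition of $\R$ into unit intervals is not a Littlewood--Paley decomposition; it is just a covering argument. With those two cosmetic fixes your plan matches the cited proof.
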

Note that the main inequality we need is the dispersive estimate
\begin{align}\label{eq Dispersive}
\norm{e^{it\Delta_{\h^d}}}_{L^p \to L^{p'}} \lesssim \abs{t}^{-d(\frac{1}{p} - \frac{1}{2})}, \quad p \in [\frac{2d}{(d+2)} , 2], \quad \frac{1}{p} + \frac{1}{p'} = 1 .
\end{align}

\begin{remark}
Strichartz estimates are better in $\h^d$ in the sense that the set $T_d$ of admissible pairs for $\h^d$ is much wider than the corresponding set, $L_d$, for $\R^d$ which is just the lower edge of the triangle. See also Figure \ref{figure} below. 
\end{remark}

\begin{figure}[!htp]
\begin{center}
\begin{tikzpicture}
\draw[->] (0, 0) -- (6, 0) node[right] {\textit{{\large $\frac{1}{q}$}}};
\draw[->] (0, 0) -- (0, 5) node[above] {\textit{{\large $\frac{1}{r}$}}};
\draw[scale=1, domain=0:5, smooth, variable=\x, black] plot ({\x}, {4-0.8*\x});
\draw[scale=1, domain=0:4, smooth, variable=\x, black] plot ({\x}, {4});
\draw[scale=1, domain=0:4, smooth, variable=\y, black]  plot ({4}, {\y});
\draw (4,0)  node [below, fill=white] {$\frac{1}{2}$};
\draw (0,4)  node [left, fill=white] {$\frac{1}{2}$};
\draw (5,0)  node [below, fill=white] {$\frac{d}{4}$};
\draw (0,0)  node [below, fill=white] {$0$};    
\fill[blue,opacity=0.3] (0,4)  --++ (4,0) --++  (0,-3.2) --++  (-4,3.2)-| cycle; 
\end{tikzpicture}    
\end{center}
\caption{Strichartz admissible pair regions for the hyperbolic space $\h^d$.}\label{figure}
\end{figure}
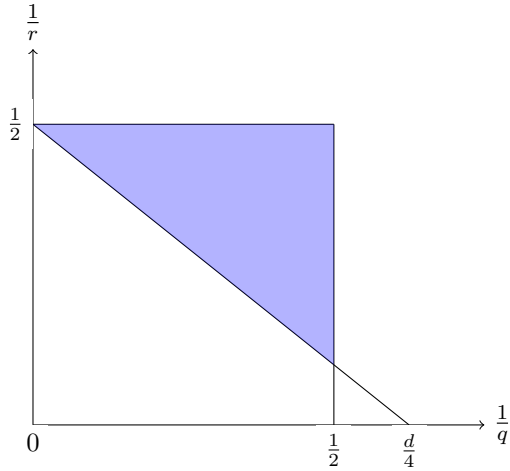

\begin{definition}[Strichartz spaces]\label{defn Strichartz spaces}
We define the Banach space
\begin{align}
S_{\h^d}^0 (I) = \bracket{f \in C(I ; L^2(\h^d)) : \norm{f}_{S_{\h^d}^0 (I)} = \sup_{(q,r) \text{ admissible }} \norm{f}_{L_t^q L_x^r (I  \times \h^d)} < \infty} .
\end{align}
Also we define the dual Banach space $N_{\h^d}^{0} (I)$ of $S_{\h^d}^0 (I)$,
\begin{align}
N_{\h^d}^0 (I) = \bracket{f \in C(I ; L^2(\h^d))  : \norm{f}_{N_{\h^d}^0(I)} := \inf_{(q,r) \text{ admissible } } \norm{f}_{L_{t}^{q'} L_x^{r'} (I \times \h^d)} < \infty} ,
\end{align}
where $1/q +1/q' =1$ and $1/r +1/r' =1$.
\end{definition}

\subsection{Tools needed on $\h^3$} In this subsection we recall some important and classical analysis developed for the hyperbolic spaces.
\subsubsection{Local smoothing estimates in the hyperbolic space}\label{lem H LSE}
\begin{proposition}[Theorem 1.2 in \cite{Kaiz14}: Local Smoothing Estimates in $\h^d$]\label{thm Smoothing}
For any $\varepsilon >0$,
\begin{align}
& \norm{\inner{x}^{-\frac{1}{2} -\varepsilon} \abs{\nabla}^{\frac{1}{2}} e^{it\Delta} f}_{L_{t,x}^2 (\R \times \h^d)} \lesssim \norm{f}_{L_x^2(\h^d)} ,\\
& \norm{\inner{x}^{-\frac{1}{2} -\varepsilon} \nabla \int_0^t e^{i(t-s) \Delta} F(s,x) \, ds}_{L_{t,x}^2 (\R \times \h^d)} \lesssim \norm{\inner{x}^{\frac{1}{2} +\varepsilon}  F}_{L_{t,x}^2 (\R \times \h^d)} .
\end{align}
where the Japanese bracket notation is given by $\inner{x} = (1 + d (x, \0)^2)^{\frac{1}{2}}$.
\end{proposition}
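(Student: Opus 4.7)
The plan is to follow the classical $TT^*$ paradigm for local smoothing estimates, adapted to the spectral geometry of $\h^d$. Setting $Tf(t,x) = \inner{x}^{-1/2-\varepsilon} \abs{\nabla}^{1/2} e^{it \Delta_{\h^d}} f(x)$, the first estimate is equivalent, by $\norm{T}_{L^2_x \to L^2_{t,x}}^2 = \norm{TT^*}_{L^2_{t,x} \to L^2_{t,x}}$, to the $L^2_{t,x}$-boundedness of
\begin{equation}
TT^* F(t,x) = \inner{x}^{-\frac{1}{2}-\varepsilon} \abs{\nabla} \int_\R e^{i(t-s) \Delta_{\h^d}} \bigl[ \inner{\cdot}^{-\frac{1}{2}-\varepsilon} F(s,\cdot) \bigr](x) \, ds.
\end{equation}
Taking the Fourier transform in $t$ and invoking the spectral theorem for $-\Delta_{\h^d}$ reduces this, in turn, to the uniform limiting absorption estimate
\begin{equation}
\sup_{z \in \C \setminus [(d-1)^2/4, \infty)} \norm{\inner{x}^{-\frac{1}{2}-\varepsilon} \sqrt{-\Delta_{\h^d}} \, (-\Delta_{\h^d} - z)^{-1} \inner{x}^{-\frac{1}{2}-\varepsilon}}_{L^2_x \to L^2_x} \lesssim 1.
\end{equation}

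To prove this resolvent bound I would pass to the Helgason--Fourier representation
\begin{equation}
(-\Delta_{\h^d} - z)^{-1} f(x) = \int_{-\infty}^\infty \int_{\mathbb{S}^{d-1}} \frac{\wb{h}_{\lambda,\theta}(x) \, \widehat{f}(\lambda,\theta)}{\lambda^2 + (d-1)^2/4 - z} \, \frac{d\theta \, d\lambda}{\abs{c(\lambda)}^2},
\end{equation}
which diagonalizes $-\Delta_{\h^d}$ as multiplication by $\lambda^2 + (d-1)^2/4$. A useful geometric input here is that the $L^2$-spectrum of $-\Delta_{\h^d}$ equals $[(d-1)^2/4, \infty)$ and is therefore strictly separated from zero; this automatically eliminates the delicate low-frequency contribution familiar from the Euclidean local smoothing argument. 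Near the continuous spectrum I would control the boundary values of the resolvent by one of two standard routes: either a Mourre-type positive commutator estimate with the radial generator $A = \frac{1}{2}(\partial_r + \partial_r^*)$ in geodesic polar coordinates, or direct analysis of the integral kernel of $(-\Delta_{\h^d} - (\mu + i 0))^{-1}$, whose decay in the geodesic distance is given explicitly in terms of associated Legendre / hypergeometric functions and matches the polynomial weight $\inner{x}^{-1/2-\varepsilon}$.

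For the inhomogeneous estimate, I would repeat the above analysis with $\abs{\nabla}^{1/2}$ replaced by $\nabla$ in the definition of $T$, gaining one full derivative from the $TT^*$ pairing rather than a half, and then apply the Christ--Kiselev lemma to convert the full-line integral $\int_\R$ into the retarded Duhamel integral $\int_0^t$; this is permitted because the space-time exponents lie strictly off the $L^\infty_t$ endpoint. The hard part of the whole argument will be the uniform-in-$z$ weighted resolvent bound: the polynomial weight $\inner{x}^{-1/2-\varepsilon}$ is only marginally integrable against the exponentially growing hyperbolic volume element $\sinh^{d-1} r$, so closing the estimate forces one to exploit both the improved dispersive decay \eqref{eq Dispersive} on $\h^d$ and the Jacobi-type structure of $\abs{c(\lambda)}^{-2}$. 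A naive transplantation of the Euclidean proof fails precisely because these weight-versus-volume and Plancherel factors differ sharply from their Euclidean counterparts.
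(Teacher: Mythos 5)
First, a point of order: the paper does not prove this proposition at all --- it is imported verbatim from Kaizuka's work, with only a remark explaining how to specialize the hypotheses there ($p(\lambda)=\abs{\lambda}^2$, $m=2$). So there is no in-paper argument to match yours against; what follows assesses your proposal on its own terms.

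Your overall architecture --- Kato smoothing / $TT^*$ to reduce the homogeneous estimate to a uniform weighted limiting-absorption bound for $\sqrt{-\Delta_{\h^d}}\,(-\Delta_{\h^d}-z)^{-1}$, diagonalized by the Helgason--Fourier transform --- is the right strategy and is consistent with how Kaizuka's theorem is actually proved. The genuine gap is in your treatment of the inhomogeneous estimate: you invoke the Christ--Kiselev lemma to pass from the full-line integral to the retarded Duhamel integral, but Christ--Kiselev is unavailable here because the source and target time exponents are both equal to $2$. The lemma requires $p<q$ strictly (its constant degenerates like $(1-2^{-(1/p-1/q)})^{-1}$ as $p\to q$), and the obstruction is precisely the diagonal case $p=q$, not the $L^\infty_t$ endpoint as you assert. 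The correct route is already contained in your own setup: the retarded kernel $\mathbf{1}_{\{s<t\}}e^{i(t-s)\Delta}$ is a convolution in time whose Fourier multiplier is the boundary resolvent $(-\Delta-(\tau+i0))^{-1}$ (equivalently, split $\int_0^t$ into $\tfrac12\int_{\R}\mathrm{sgn}(t-s)\,ds$ plus $\tfrac12\,e^{it\Delta}\int_{\R}e^{-is\Delta}\,ds$, the latter piece being the homogeneous estimate composed with its dual), so the double-$L^2_t$ retarded bound follows by Plancherel in $t$ from the same uniform resolvent estimate with a full gradient. As written, that step of your argument would fail.

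Separately, the place where all the real work lives --- the uniform-in-$z$ bound on $\inner{x}^{-1/2-\varepsilon}\sqrt{-\Delta}\,(-\Delta-z)^{-1}\inner{x}^{-1/2-\varepsilon}$ --- is only gestured at, with two alternative routes named but neither carried out. Your worry that the polynomial weight is ``only marginally integrable against $\sinh^{d-1}r$'' should be replaced by the actual mechanism: the outgoing resolvent kernel at spectral parameter $\lambda^2+\rho^2$, $\rho=\tfrac{d-1}{2}$, decays like $e^{-\rho\, d(x,y)}$, which exactly cancels the square root of the volume growth in each variable, reducing matters to a one-dimensional weighted estimate for which $\inner{r}^{-1/2-\varepsilon}$ suffices. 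Until that (or a completed Mourre argument) is supplied, the proposal is a plausible roadmap rather than a proof.
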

\begin{remark}
In \cite{Kaiz14}, the author considered more general manifolds that there are denoted with $X$. To obtain the theorem above one needs to take  $p(\lambda) = \abs{\lambda}^2$, $p(D) = -\Delta_{X} - \abs{\rho}^2$ and $m=2$. 
\end{remark}

\subsubsection{Heat-flow-based Littlewood-Paley projections and functional inequalities on $\h^3$}

The Little\allowbreak wood-Paley projections on $\h^3$ that we use in this paper are based on the linear heat propagator $e^{s\Delta}$. It turns out, in fact, that in $\h^d$  this is a great substitute for the standard Littlewood-Paley decomposition used in $\R^d$, since in $\h^d$ one cannot localize in frequencies efficiently. We report below several results that first appeared in \cite{LLOS}.
\begin{definition}[Section 2.7.1 in \cite{LLOS}: Heat-flow-based Littlewood-Paley projections]\label{defn LP}
For any $N> 0$, we define
\begin{align}
P_{\leq N} f = e^{N^{-2}\Delta_{\h^3}} f , \quad P_N f = N^{-2}\Delta_{\h^3} e^{N^{-2}\Delta_{\h^3}} f .
\end{align}
By the fundamental theorem of calculus, it is straightforward to verify that
\begin{align}
P_{\leq N} f = 2\int_0^{N} P_{M} f \,\frac{dM}{M} \quad \text{ for } N >0.
\end{align}
In particular, we have
\begin{align}
f = 2\int_0^{\infty} P_{M} f \, \frac{dM}{M} ,
\end{align}
which is the basic identity that relates $f$ with its Littlewood-Paley resolution $\{P_N f\}_{N \in (0,\infty)}$. We also have
\begin{align}
P_{\geq N} f = 2\int_N^{\infty} P_{M} f \, \frac{dM}{M} .
\end{align}
\end{definition}
\begin{remark}
Intuitively, $P_N f$ may be interpreted as a projection of $f$ to frequencies comparable to $N$. $P_{\leq N}$ and $P_{\geq N}$ can be viewed as projections into low and high frequencies, respectively.
\end{remark}

\begin{proposition}[Theorem 3.2 in \cite{hebey2000nonlinear}]
 Let $(\mathcal{M}, \mathbf{g})$ be a smooth, complete Riemannian $d$-manifold with Ricci curvature bounded from below. Assume that
    \begin{align*}
        \inf _{x \in \mathcal{M}} \operatorname{Vol}_{\mathbf{g}} \left(B_x(1)\right)>0
    \end{align*}
where $\operatorname{Vol}_{\mathbf{g}} \left(B_x(1)\right)$ stands for the volume of $B_x(1)$ with respect to $\mathbf{g}$. Then for any $q \in [1, d)$, 
    \begin{align*}
        W^{1, p}(\mathcal{M})  \hookrightarrow  L^q(\mathcal{M})
    \end{align*} 
    where $1 / q=1 / p-1 / d$.
\end{proposition}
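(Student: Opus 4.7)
The plan is to reduce the global embedding to a uniform local Sobolev inequality on balls of fixed small radius, and then assemble the pieces via a uniformly locally finite cover. This is essentially Hebey's argument, and it splits cleanly into a geometric step and a combinatorial/functional step.

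First, I would exploit the two hypotheses — the lower Ricci bound and the uniform non-collapsing of unit balls — to obtain bounded geometry in the following quantitative sense: there exist $r_0 \in (0,1]$ and $C_0 > 0$, depending only on $d$, the Ricci lower bound, and $\inf_x \operatorname{Vol}_{\mathbf{g}}(B_x(1))$, such that for every $x \in \mathcal{M}$ and every $u \in W^{1,p}(\mathcal{M})$ supported in $B_x(r_0)$,
\[
\| u \|_{L^q(B_x(r_0))} \le C_0 \bigl( \| \nabla u \|_{L^p(B_x(r_0))} + \| u \|_{L^p(B_x(r_0))} \bigr),
\]
with $1/q = 1/p - 1/d$. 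The key ingredient is the Bishop-Gromov volume comparison (which converts the Ricci lower bound into an upper doubling constant on small balls); combined with the non-collapsing hypothesis, this produces a uniform lower doubling constant as well. From uniform doubling and the Poincar\'e inequality on geodesic balls (valid under a Ricci lower bound via the segment inequality), the local $(p,q)$-Sobolev inequality above follows by the standard Saloff-Coste / Moser iteration machinery. Alternatively one can work in harmonic coordinates on balls of size $r_0$, where the metric components are controlled in $C^{1,\alpha}$ and the inequality reduces to the Euclidean one.

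Next, I would build a uniformly locally finite cover $\{ B_{x_i}(r_0) \}_{i \in I}$ of $\mathcal{M}$, with overlap multiplicity bounded by an integer $N$ depending only on the same geometric data, and a subordinate partition of unity $\{ \chi_i \}$ with $0 \le \chi_i \le 1$, $\sum_i \chi_i \equiv 1$, and $|\nabla \chi_i| \le C_1/r_0$ uniformly. Applying the local inequality to each $\chi_i f$, raising to the $q$-th power and summing, then using Minkowski in $\ell^q$ followed by the embedding $\ell^p \hookrightarrow \ell^q$ (valid since $q > p$) together with the finite overlap bound, I would conclude
\[
\| f \|_{L^q(\mathcal{M})} \lesssim \Bigl( \sum_i \| \chi_i f \|_{L^q}^q \Bigr)^{1/q} \lesssim \Bigl( \sum_i \| \chi_i f \|_{L^p}^p + \| \nabla(\chi_i f) \|_{L^p}^p \Bigr)^{1/p} \lesssim \| f \|_{W^{1,p}(\mathcal{M})}.
\]

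The main obstacle is the first step. The Ricci lower bound alone is insufficient, since thin cusps would allow the local Sobolev constant to degenerate; the non-collapsing assumption must be combined with it to extract uniform control, and this is the geometric heart of the argument. Once uniform constants are established on unit-scale balls, the covering and partition-of-unity assembly is routine.
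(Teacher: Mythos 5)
The paper does not prove this proposition; it is quoted verbatim as Theorem~3.2 of Hebey's book, and your outline is essentially Hebey's own argument: a uniform local Sobolev inequality on balls of a fixed small radius (extracted from the Ricci lower bound together with the non-collapsing hypothesis), patched globally by a uniformly locally finite cover and a subordinate partition of unity, using $\ell^p\hookrightarrow\ell^q$ for $q>p$ and the bounded overlap. Your main route through Bishop--Gromov doubling, the Poincar\'e inequality under a lower Ricci bound, and the Saloff-Coste machinery is sound; the only caveat is your ``alternative'' via harmonic coordinates, since $C^{1,\alpha}$ control of the metric in harmonic charts requires a two-sided Ricci bound rather than merely a lower bound, so that route would not work under the stated hypotheses. (Note also that the paper's transcription of the exponents is garbled --- the condition should read $p\in[1,d)$ with $q$ its Sobolev conjugate, which is how you correctly interpreted it.)
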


\begin{lemma}[Sobolev embedding]\label{lem Sobolev}
\begin{align}
W^{s,p} (\h^d) \hookrightarrow L^q (\h^d), \quad \text{if } 1 < p \leq q < \infty \text{ and } \frac{1}{p}-\frac{1}{q} =\frac{s}{d}.
\end{align}
\end{lemma}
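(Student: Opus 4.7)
The plan is to bootstrap from the $s=1$ case supplied by the preceding proposition (Theorem 3.2 of \cite{hebey2000nonlinear}) to arbitrary $s$ via iteration and interpolation. First I would verify that $\h^d$ satisfies the hypotheses of that proposition: since $\h^d$ has constant sectional curvature $-1$, its Ricci tensor equals $-(d-1)g$ and is therefore bounded below; moreover, because $\G = SO(d,1)$ acts transitively by isometries, the volume $\operatorname{Vol}_{\mathbf{g}}(B_x(1))$ is independent of $x \in \h^d$ and is a fixed positive constant, so $\inf_x \operatorname{Vol}_{\mathbf{g}}(B_x(1)) > 0$. Consequently, the base estimate $W^{1,p}(\h^d) \hookrightarrow L^{p^{*}}(\h^d)$ with $1/p^{*} = 1/p - 1/d$ holds for every $1 < p < d$, which is precisely the statement of the lemma when $s = 1$.

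Next, for integer $s = k \geq 1$ satisfying $kp < d$, I would iterate the base case. If $f \in W^{k,p}(\h^d)$, then every covariant derivative $D^{j} f$ with $0 \leq j \leq k$ lies in $L^p$; applying the $s=1$ embedding to $D^{k-1} f$ yields $D^{k-1} f \in L^{p_1}$ with $1/p_1 = 1/p - 1/d$, and repeating $k$ times produces $f \in L^q$ with $1/q = 1/p - k/d$. The interchange of covariant derivatives that arises introduces only lower-order curvature terms, which on a constant-curvature manifold are controlled by the Ricci identity and absorbed into the $W^{k,p}$-norm.

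For fractional $s$, I would argue via the Bessel potential characterization $W^{s,p}(\h^d) = (1 - \Delta_{\h^d})^{-s/2} L^p(\h^d)$. Since the $L^2$-spectrum of $-\Delta_{\h^d}$ is contained in $[\rho^2, \infty)$ with $\rho = (d-1)/2 > 0$, the operators $(1-\Delta_{\h^d})^{-z/2}$ form an admissible analytic family in the sense of Stein, uniformly bounded on $L^2$ for $\operatorname{Re} z = 0$. Combining this with the integer-$s$ endpoints established above, Stein's complex interpolation theorem then yields the embedding for all intermediate $s$ and all $1 < p \leq q < \infty$ with $1/p - 1/q = s/d$. Alternatively, one could characterize $W^{s,p}$ through the heat-flow Littlewood-Paley projections $P_N$ from Definition \ref{defn LP}, which integrates more naturally with the rest of the paper, and deduce the embedding by chaining bounds $\|P_N f\|_{L^q} \lesssim N^{-s} \|(1-\Delta)^{s/2} P_N f\|_{L^p}$ obtained from the heat-kernel Sobolev estimates scale by scale.

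The main obstacle is the fractional step: on $\h^d$ the Bessel potential kernel is radial but the volume element $\sinh^{d-1}r\,dr$ grows exponentially, so the usual Euclidean Hardy–Littlewood–Sobolev proof does not transplant directly. The spectral gap $\rho^2 > 0$ is what makes the argument work cleanly, since it gives the Bessel kernel exponential decay at infinity that compensates for the volume growth. Once this decay is quantified, the complex-interpolation or heat-semigroup argument runs smoothly and produces the stated embedding.
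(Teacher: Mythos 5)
The paper does not actually prove this lemma: it is quoted as a classical fact, with the preceding proposition (Theorem 3.2 of \cite{hebey2000nonlinear}) supplying only the $s=1$ case, so there is no in-paper argument to compare against. Your outline is the standard route and is essentially sound. The verification of Hebey's hypotheses is correct ($\mathrm{Ric} = -(d-1)\mathbf{g}$ is bounded below, and transitivity of the isometry group gives a uniform positive lower bound on $\operatorname{Vol}_{\mathbf{g}}(B_x(1))$), and the integer iteration closes because the constraint $q<\infty$ forces $1/p > k/d$, so each intermediate exponent $p_j$ satisfies $p_j < d$ and the base case applies at every step.

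The one place where your sketch is genuinely thinner than what a complete proof requires is the Stein interpolation for fractional $s$. Admissibility of the family $(1-\Delta_{\h^d})^{-z/2}$ on the boundary line $\operatorname{Re} z = 0$ requires $L^p \to L^p$ bounds for the purely imaginary powers $(1-\Delta_{\h^d})^{it}$ with at most admissible growth in $|t|$; boundedness on $L^2$, which is all the spectral theorem gives you, is not enough to interpolate an $L^p \to L^q$ family. On $\h^d$ these imaginary-power bounds do hold, but they come from spectral multiplier theorems adapted to the exponential volume growth (Clerc--Stein, Stanton--Tomas, Anker), not from the spectral gap alone. Your closing paragraph correctly identifies the exponential decay of the Bessel kernel (forced by the spectrum lying in $[\rho^2,\infty)$) as the feature that rescues the kernel estimates, and your alternative heat-flow Littlewood--Paley route is the one most consonant with the rest of the paper; either way, the missing ingredient to cite or prove is the $L^p$ functional calculus, and once that is in hand the argument is complete.
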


\begin{lemma}[Refined Fatou]\label{lem Fatou}
Suppose $\{ f_n \} \subset L^p (\h^d)$ with $\limsup_{n \to \infty} \norm{f_n}_{L^p} < \infty$. If $f_n \to f$ almost everywhere, then
\begin{align}
\int_{\h^d} \abs{\abs{f_n}^p - \abs{f_n - f}^p - \abs{f}^p} \, dx \to 0 .
\end{align}
In particular, 
\begin{align}
\norm{f_n}_{L^p}^p - \norm{f_n - f}_{L^p}^p \to \norm{f}_{L^p}^p .
\end{align}
\end{lemma}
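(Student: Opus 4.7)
The plan is to reproduce the classical Brezis--Lieb argument, which reduces the claim to the dominated convergence theorem via a simple pointwise inequality; the integration measure plays no role, so the hyperbolic structure of $\h^d$ enters only through the phrase ``$dx$ is the Riemannian volume measure.'' First, I would note that by the pointwise convergence $|f_n|^p \to |f|^p$ almost everywhere and Fatou's lemma applied to the nonnegative integrands $|f_n|^p$, one obtains $\|f\|_{L^p}^p \leq \liminf_{n\to\infty} \|f_n\|_{L^p}^p \leq \limsup_{n\to\infty}\|f_n\|_{L^p}^p < \infty$, so that $f\in L^p(\h^d)$.

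Next, I would isolate the elementary pointwise inequality that drives the proof: for every $p \in [1,\infty)$ and every $\varepsilon>0$, there exists a constant $C_\varepsilon = C_{\varepsilon,p}>0$ such that
\begin{align}
\bigl| |a+b|^p - |a|^p - |b|^p \bigr| \leq \varepsilon\, |a|^p + C_\varepsilon\, |b|^p \quad \text{for all } a,b \in \C.
\end{align}
This follows by a homogeneity/continuity argument: divide by $|a|^p$ and use that $\bigl||1+t|^p - 1 - |t|^p\bigr|/|t|^p$ is bounded and tends to $0$ as $|t|\to 0$. Applying the inequality with $a = f_n - f$ and $b = f$ yields
\begin{align}
G_n := \bigl| |f_n|^p - |f_n - f|^p - |f|^p \bigr| \leq \varepsilon\, |f_n - f|^p + C_\varepsilon\, |f|^p \quad \text{a.e.}
\end{align}

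The key point now is to truncate: set
\begin{align}
W_{n,\varepsilon} := \bigl( G_n - \varepsilon |f_n - f|^p \bigr)_+ \leq C_\varepsilon |f|^p.
\end{align}
Since $f_n \to f$ a.e., we also have $G_n \to 0$ a.e. and $|f_n - f|^p \to 0$ a.e., so $W_{n,\varepsilon} \to 0$ a.e. The bound $W_{n,\varepsilon}\leq C_\varepsilon |f|^p \in L^1(\h^d)$ together with the dominated convergence theorem gives $\int_{\h^d} W_{n,\varepsilon}\, dx \to 0$ as $n\to\infty$.

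Finally, I would combine these pieces. Writing $G_n \leq W_{n,\varepsilon} + \varepsilon|f_n - f|^p$, integrating, and using that $\|f_n - f\|_{L^p}^p \leq 2^{p-1}(\|f_n\|_{L^p}^p + \|f\|_{L^p}^p)$ is bounded uniformly in $n$ by some constant $M$, one obtains
\begin{align}
\limsup_{n\to\infty} \int_{\h^d} G_n \, dx \leq \limsup_{n\to\infty} \int_{\h^d} W_{n,\varepsilon}\, dx + \varepsilon M = \varepsilon M.
\end{align}
Since $\varepsilon>0$ is arbitrary, the first assertion follows. The ``in particular'' statement is then immediate: the integral of $|f_n|^p - |f_n - f|^p - |f|^p$ is dominated in absolute value by $\int G_n \, dx \to 0$. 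The only real decision point in the argument is the choice of the pointwise inequality and the truncation $W_{n,\varepsilon}$; once those are in place the proof is a routine application of Fatou and dominated convergence, so no substantive obstacle arises in the hyperbolic setting.
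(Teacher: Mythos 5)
Your argument is the standard Brezis--Lieb proof, and it is correct; the paper states this lemma without proof, treating it as the classical refined Fatou lemma, so there is no competing argument in the paper to compare against. The structure --- Fatou to get $f\in L^p$, the pointwise inequality $\bigl||a+b|^p-|a|^p-|b|^p\bigr|\leq\varepsilon|a|^p+C_\varepsilon|b|^p$ applied with $a=f_n-f$, $b=f$, the truncation $W_{n,\varepsilon}=(G_n-\varepsilon|f_n-f|^p)_+\leq C_\varepsilon|f|^p$, dominated convergence, and then letting $\varepsilon\to0$ using the uniform bound on $\|f_n-f\|_{L^p}^p$ --- is exactly right, and as you note the hyperbolic volume measure plays no role.

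One small correction to your justification of the elementary inequality: the ratio $\bigl||1+t|^p-1-|t|^p\bigr|/|t|^p$ is \emph{not} bounded near $t=0$ for $p>1$ (e.g.\ for $p=2$ it equals $2|\re t|/|t|^2\to\infty$), so it certainly does not tend to $0$ there. The correct splitting is the opposite one: the expression $|1+t|^p-1-|t|^p$ itself is continuous and vanishes at $t=0$, hence is at most $\varepsilon$ for $|t|\leq\delta(\varepsilon)$, while for $|t|\geq\delta$ the ratio is bounded by $C_p(1+\delta^{-p})$; this is precisely why the $\varepsilon|a|^p$ term is needed. Since the inequality you actually invoke is the true, standard one, this is a flaw only in the parenthetical justification, not in the proof.
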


\section{Local Theory, Stability and Morawetz Estimates}\label{sec local}
In this section, we include a local well-posedness argument, a stability theory, and Morawetz estimates. 
\begin{definition}[The partial ordering on trajectories]\label{def order}
   Let
\begin{align}
\mathscr{P} = \{ (I, u) : I \subset \R \text{ is an open interval and } u \in C (I ; L^2(\h^3))\}
\end{align}
with the natural partial order
\begin{align}
(I, u) \leq (I' , u') \text{ if and only if } I \subset I' \text{ and } u'(t) = u(t) \text{ for any } t \in I.
\end{align} 
\end{definition}

We denote the solution norm 
\begin{align}
\norm{f}_{Z(I)} := \norm{f}_{L_{t,x}^{\frac{10}{3}} (I \times \h^3)}.
\end{align}

\begin{proposition}[Local well-posedness]\label{prop LWP}
Assume $\phi \in L^2 (\h^3)$. Then there is a unique maximal solution $(I, u) = (I(\phi) , u(\phi)) \in \mathscr{P}, 0 \in I $, of the initial-value problem \eqref{NLS}
on $I \times \h^3$. The mass defined in \eqref{Mass} is constant on $I$, and $\norm{u}_{S_{\h^3}^0 (J)} < \infty$ for any compact interval $J \subset I$. In addition,
\begin{align}
& \norm{u}_{Z(I_+)} = \infty \quad \text{if} \quad I_+ : = I \cap [0, \infty) \text{ is bounded}, \\
& \norm{u}_{Z(I_-)} = \infty \quad \text{if} \quad I_- : = I \cap (- \infty, 0] \text{ is bounded}.
\end{align}
\end{proposition}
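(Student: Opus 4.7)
My plan is a standard Strichartz-plus-contraction argument executed in the scattering norm $Z(I) = L^{10/3}_{t,x}(I \times \h^3)$, using that $(q,r) = (10/3, 10/3)$ is admissible since $\frac{2}{q} + \frac{3}{r} = \frac{3}{2}$. Writing the Duhamel map
\begin{align}
\Phi(u)(t) = e^{it\Delta_{\h^3}}\phi - i\int_0^t e^{i(t-s)\Delta_{\h^3}} |u|^{4/3} u(s)\, ds,
\end{align}
I apply the hyperbolic Strichartz estimate with the admissible pair $(10/3, 10/3)$ on the left and its dual $(10/7, 10/7)$ on the right, together with the pointwise bound $\bigl| |u|^{4/3} u - |v|^{4/3} v \bigr| \lesssim (|u|^{4/3} + |v|^{4/3})|u-v|$ and H\"older with exponents $(5/2, 10/3)$ (noting $\frac{2}{5} + \frac{3}{10} = \frac{7}{10}$ and $\frac{4}{3}\cdot\frac{5}{2} = \frac{10}{3}$), to obtain the two key estimates
\begin{align}
\|\Phi(u)\|_{Z(I)} & \lesssim \|\phi\|_{L^2} + \|u\|_{Z(I)}^{7/3},\\
\|\Phi(u) - \Phi(v)\|_{Z(I)} & \lesssim \bigl(\|u\|_{Z(I)}^{4/3} + \|v\|_{Z(I)}^{4/3}\bigr)\|u-v\|_{Z(I)}.
\end{align}
Running the same computation with any other admissible $(q,r)$ on the left controls the full Strichartz norm $\|\Phi(u)\|_{S^0_{\h^3}(I)}$ by the same right-hand side.

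I would next isolate a small interval on which $\Phi$ is a contraction. Since $\|e^{it\Delta_{\h^3}}\phi\|_{Z(\R)} \lesssim \|\phi\|_{L^2}$ by global Strichartz, the dominated convergence theorem furnishes an open interval $I \ni 0$ with $\|e^{it\Delta_{\h^3}}\phi\|_{Z(I)} \leq \eta$ for $\eta$ arbitrarily small. Choosing $\eta$ small depending only on the Strichartz constant, $\Phi$ sends the closed ball $\{u : \|u\|_{Z(I)} \leq 2C\eta\}$ into itself and is a contraction there, producing a unique fixed point $u \in C(I; L^2(\h^3))$ which lies in $S^0_{\h^3}(J)$ for every compact $J \subset I$. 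The maximal solution $(I(\phi), u(\phi))$ is then obtained from Zorn's lemma on the partial order of Definition \ref{def order}, or equivalently by gluing together local solutions agreeing at $t = 0$.

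Mass conservation follows by approximating $\phi$ by smooth $\phi_n$, for which the corresponding classical solutions $u_n$ trivially conserve mass; the contraction estimates give continuous dependence in $C_t L^2_x$ on compact subintervals, and conservation passes to the limit. For the blowup alternative I would argue by contradiction: assuming $I_+ = [0, T^*)$ is bounded with $\|u\|_{Z(I_+)} < \infty$, I would partition $I_+$ into finitely many pieces on which $\|u\|_Z$ is small, use the Duhamel--Strichartz estimate to show $u(t)$ is $L^2$-Cauchy as $t \to T^*$ with an $L^2$-limit $u^\ast := \lim_{t \to T^*} u(t)$, and then solve the initial-value problem afresh at time $T^*$ with datum $u^\ast$, contradicting maximality. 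The only genuinely non-routine point, and the main conceptual obstacle at this scaling-critical level, is that the length of the local existence interval is \emph{not} controlled by $\|\phi\|_{L^2}$ alone (as it would be in a subcritical regime) but rather by the profile-dependent quantity $\|e^{it\Delta_{\h^3}}\phi\|_{Z}$; this is precisely why the blowup alternative must be stated in terms of the scattering norm $\|u\|_{Z(I_\pm)}$ rather than the length of $I_\pm$.
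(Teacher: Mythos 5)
The paper states Proposition \ref{prop LWP} without proof, and your argument is precisely the standard critical contraction-mapping proof it implicitly invokes: Duhamel plus the hyperbolic Strichartz estimate on the self-dual pair $(10/3,10/3)$, smallness of $\|e^{it\Delta_{\h^3}}\phi\|_{Z(I)}$ on a short interval via dominated convergence, mass conservation by approximation, and the blowup alternative phrased in the scattering norm. Your exponent bookkeeping checks out and your closing remark correctly identifies the genuinely critical feature (the existence time depends on the profile of $\phi$ through $\|e^{it\Delta_{\h^3}}\phi\|_{Z}$, not on $\|\phi\|_{L^2}$ alone), so the proposal is correct and matches the intended approach.
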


\begin{proposition}[Stability]\label{prop Stab}
Assume $I$ is an open interval, $\rho \in \{0, 1\}$, and $v \in C (I ; L^2 (\h^3))$ satisfies the approximate Schr\"odinger equation
\begin{align}\label{eq PerNLS}
i \partial_t v + \Delta_{\h^3} v = \rho \abs{v}^{\frac{4}{3}} v + e
\end{align}
on $I \times \h^3$. Assume in addition that
\begin{align}\label{eq Stab1}
\norm{v}_{L_{t,x}^{\frac{10}{3}} (I \times \h^3)} + \norm{v}_{L_t^{\infty}L_x^2 (I \times \h^3)} \leq M, 
\end{align}
for some $M \in [0 , \infty)$. Assume $t_0 \in I$ and $u(t_0) \in L^2 (\h^3)$ is such that the smallness condition
\begin{align}
\norm{u(t_0) - v (t_0)}_{L^2 (\h^3)} + \norm{e}_{N_{\h^3}^0 (I)} \leq \varepsilon
\end{align}
holds for some $0 < \varepsilon < \varepsilon_1$, where $\varepsilon_1 \leq 1$ is a small constant $\varepsilon_1 = \varepsilon_1 (M) > 0$.
Then there exists a solution $u \in C (I ; L^2 (\h^3))$ of the Schr\"odinger equation
\begin{align}
i \partial_t u + \Delta_{\h^3} u = \rho \abs{u}^{\frac{4}{3}} u 
\end{align}
on $I \times \h^3$, and 
\begin{align}
\norm{u}_{S_{\h^3}^0 (I \times \h^3)} + \norm{v}_{S_{\h^3}^0 (I \times \h^3)}  \leq C(M) ,\\
\norm{u -v}_{S_{\h^3}^0 (I \times \h^3)}  \leq C(M) \varepsilon .
\end{align}
\end{proposition}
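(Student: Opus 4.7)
The plan is the standard subinterval-and-bootstrap argument adapted to the hyperbolic setting. The crucial structural observation is that the diagonal pair $(\tfrac{10}{3}, \tfrac{10}{3})$ is Strichartz-admissible for $\h^3$ since $\tfrac{2}{10/3} + \tfrac{3}{10/3} = \tfrac{3}{2} = \tfrac{d}{2}$, so the scattering norm $Z(I)$ embeds into $S^0_{\h^3}(I)$, and its dual $L^{10/7}_{t,x}$ into $N^0_{\h^3}(I)$. Restricting to the nonlinear case $\rho=1$ (the linear case being immediate from Strichartz), I first partition $I$ into consecutive subintervals $I_1,\dots,I_J$ with $J=J(M,\eta)$ on each of which $\|v\|_{Z(I_j)} \leq \eta$, where $\eta=\eta(M)>0$ will be chosen small; this is possible because $\|v\|_{Z(I)} \leq M$. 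Setting $w := u - v$, the difference satisfies
\[
i\partial_t w + \Delta_{\h^3} w = \bigl(|v+w|^{4/3}(v+w) - |v|^{4/3}v\bigr) - e
\]
together with the corresponding Duhamel formula on each $I_j$ starting from the left endpoint $t_{j-1}$.

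On a single subinterval, applying the hyperbolic Strichartz estimate of Proposition 2.3 in the admissible pair $(\tfrac{10}{3},\tfrac{10}{3})$, together with the pointwise bound $\bigl||v+w|^{4/3}(v+w) - |v|^{4/3}v\bigr| \lesssim (|v|^{4/3} + |w|^{4/3})|w|$ and Hölder's inequality, yields
\[
\|w\|_{S^0_{\h^3}(I_j)} \leq C_0\bigl(\|w(t_{j-1})\|_{L^2(\h^3)} + (\eta^{4/3} + \|w\|_{Z(I_j)}^{4/3})\|w\|_{Z(I_j)} + \|e\|_{N^0_{\h^3}(I_j)}\bigr).
\]
A standard continuity-in-endpoint argument then simultaneously constructs $u$ on $I_j$ via local well-posedness (Proposition 3.1) and shows that, provided $\eta$ is chosen small compared to $1/C_0$ and $\|w(t_{j-1})\|_{L^2} + \|e\|_{N^0_{\h^3}(I)}$ is comparably small, the self-interaction term $\|w\|_{Z(I_j)}^{7/3}$ can be absorbed into the left-hand side. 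The cleaned-up estimate on each piece reads
\[
\|w\|_{S^0_{\h^3}(I_j)} + \|w(t_j)\|_{L^2(\h^3)} \leq 2C_0\bigl(\|w(t_{j-1})\|_{L^2(\h^3)} + \|e\|_{N^0_{\h^3}(I_j)}\bigr).
\]

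Iterating this across $j=1,\dots,J$ gives $\|w(t_j)\|_{L^2} \leq (2C_0)^j\bigl(\varepsilon + \sum_{k\leq j}\|e\|_{N^0_{\h^3}(I_k)}\bigr) \leq C(M)\varepsilon$, and summing the Strichartz bounds across the $J$ subintervals yields the desired $\|u-v\|_{S^0_{\h^3}(I)} \leq C(M)\varepsilon$; the bound on $\|v\|_{S^0_{\h^3}}$ then follows by Strichartz applied directly to the equation \eqref{eq PerNLS} for $v$, and $\|u\|_{S^0_{\h^3}}$ by the triangle inequality. The main technical obstacle is pairing existence with the a priori estimate on each $I_j$: one must verify that the locally constructed $u$ actually extends over the full length of $I_j$ rather than escaping the fixed-point regime earlier. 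This is handled in the customary way by showing that the set $\{\tau \in I_j : \|w\|_{Z([t_{j-1},\tau])} \leq 2\eta\}$ is nonempty, open (by local well-posedness continuation), and closed (by the a priori bound above combined with the choice of $\varepsilon_1(M)$), hence equals $I_j$. A minor bookkeeping point is that $\eta(M)$ must be chosen small enough that the compounding factor $(2C_0)^J$ never violates the bootstrap threshold at any intermediate step, which determines $\varepsilon_1(M)$ explicitly.
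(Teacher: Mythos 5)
Your argument is correct and is the standard perturbative stability proof: subdividing $I$ so that $\|v\|_{Z(I_j)}\leq\eta(M)$, running Strichartz plus a continuity/bootstrap on each piece using the admissibility of $(\tfrac{10}{3},\tfrac{10}{3})$ and the dual pair $L^{10/7}_{t,x}\hookrightarrow N^0_{\h^3}$, and iterating with the $(2C_0)^J$ loss absorbed into the choice of $\varepsilon_1(M)$. The paper states Proposition \ref{prop Stab} without proof (it is taken as standard, following the analogous results in \cite{IPS12}), and your write-up supplies exactly the argument being implicitly invoked; the only nitpick is that the embedding direction should read $S^0_{\h^3}(I)\hookrightarrow Z(I)$ (the $S^0$ norm dominates the $Z$ norm), not the reverse.
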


\begin{proposition}[Morawetz estimates in \cite{IS09}]
Assume that $I \subset \R$ is an open interval, and $u \in C(I; L^2 (\h^3))$ is a solution of \eqref{NLS}. Then for any $t_1 ,t_2 \in I$
\begin{align}
\norm{u}_{L_{t,x}^{\frac{10}{3}}([t_1,t_2] \times \h^3)}^{\frac{10}{3}}  \lesssim \norm{u}_{L_t^{\infty} L_x^2 ([t_1,t_2] \times \h^3)} \norm{u}_{L_t^{\infty} H_x^1 ([t_1,t_2]\times \h^3)} .
\end{align}
\end{proposition}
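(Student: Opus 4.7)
The plan is to prove the estimate via the classical Morawetz-type identity, adapted to the hyperbolic setting by choosing a geometric weight. Set $a(x) := d(x, \0)$, the Riemannian distance to the origin, and define the Morawetz action
$$M(t) := 2\,\im \int_{\h^3} \bar u(t,x)\, \bigl(\nabla u(t,x) \cdot \nabla a(x)\bigr)\, d\mu(x).$$
A direct computation using \eqref{NLS} (multiplying the equation by $\bar u \nabla a$, adding the conjugate, and integrating by parts twice) produces the Morawetz identity
$$\frac{d}{dt} M(t) = 4\int_{\h^3}\! \text{Hess}(a)(\nabla \bar u, \nabla u)\, d\mu \;-\; \int_{\h^3}\!(\Delta_{\h^3}^2 a)\,|u|^2\, d\mu \;+\; \frac{4}{5}\int_{\h^3}\!(\Delta_{\h^3} a)\,|u|^{10/3}\, d\mu,$$
where the coefficient $4/5 = 2(p-1)/(p+1)$ corresponds to $p=7/3$.

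The second step is to exploit the geometry of $\h^3$ in dimension three. In geodesic polar coordinates, one computes
$$\Delta_{\h^3} a = 2\coth r \geq 2, \qquad \text{Hess}(a) = \coth(r)\,\bigl(g - dr\otimes dr\bigr) \succeq 0,$$
so the first and third terms on the right-hand side are nonnegative, and in fact $\frac{4}{5}\int(\Delta a)|u|^{10/3} \geq \frac{8}{5}\int |u|^{10/3}\,d\mu$. A pleasant coincidence particular to $\h^3$ is that $\Delta_{\h^3}(\coth r) = 0$ for $r > 0$: for any radial function $f$ on $\h^3$ one has $\Delta_{\h^3} f = f''(r) + 2\coth(r) f'(r)$, and substituting $f = \coth r$ gives zero. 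Hence $\Delta_{\h^3}^2 a$ is a distribution supported at $\0$, and since $\coth r \sim 1/r$ near $r = 0$, the local computation is analogous to $\Delta_{\R^3}(1/|x|) = -4\pi\delta_0$, producing $\Delta_{\h^3}^2 a = -c\,\delta_{\0}$ with $c > 0$. This contributes a \emph{nonnegative} term $c\,|u(t,\0)|^2$ to $-\int (\Delta^2 a)|u|^2$, which can be discarded in the inequality.

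Combining these sign facts, integration of the Morawetz identity over $[t_1,t_2]$ yields
$$\int_{t_1}^{t_2}\!\!\int_{\h^3}\!|u|^{10/3}\, d\mu\, dt \;\lesssim\; |M(t_2)| + |M(t_1)|.$$
Since $|\nabla a| = 1$, the Cauchy--Schwarz inequality gives
$$|M(t)| \leq 2\,\|u(t)\|_{L^2(\h^3)}\,\|\nabla u(t)\|_{L^2(\h^3)} \;\lesssim\; \|u\|_{L^\infty_t L^2_x}\,\|u\|_{L^\infty_t H^1_x},$$
which combined with the previous display yields the stated bound.

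The main technical obstacle is that the weight $a(x) = d(x,\0)$ is merely Lipschitz at the origin, so the Morawetz identity is strictly speaking formal. This is resolved by a standard regularization: one introduces a smooth $a_\varepsilon$ that agrees with $a$ for $r \geq \varepsilon$ and is smooth at $\0$, verifies that $\Delta a_\varepsilon \geq 2 - O(\varepsilon)$ and $\text{Hess}(a_\varepsilon)\succeq -O(\varepsilon)\,g$ uniformly, carries out the identity rigorously for $a_\varepsilon$, and passes to the limit as $\varepsilon \to 0$. The resulting delta-function contribution from $\Delta_{\h^3}^2 a$ has the right sign to be discarded, as noted above. A secondary point is that the identity presumes sufficient regularity and decay for the integrations by parts; one first establishes it for smooth approximating solutions supplied by the local theory (Proposition~\ref{prop LWP}) and then extends by density, using continuous dependence in $L^2$ and the admissible Strichartz control provided in Definition~\ref{defn Strichartz spaces}.
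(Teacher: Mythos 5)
The paper does not prove this proposition—it imports it from \cite{IS09}—and your argument is a correct reconstruction of exactly the standard proof given there: the virial/Morawetz identity with weight $a=d(x,\0)$, using that $\Delta_{\h^3}a=2\coth r\ge 2$, $\mathrm{Hess}(a)=\coth(r)(g-dr\otimes dr)\succeq 0$, and $\Delta^2_{\h^3}a=-c\,\delta_{\0}$ (since $\coth r$ is harmonic away from the origin), followed by Cauchy--Schwarz on the boundary terms. Your identity, the coefficient $4/5=2(p-1)/(p+1)$, and all the sign checks are right, and the regularization/density caveats you flag are the standard ones.
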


\begin{proposition}[Modified Morawetz estimates, Proposition 4.1 in \cite{SY}]\label{prop Morawetz}
If  $u$ solves
\begin{align}\label{eq modNLS}
i \partial_t u + \Delta_{\h^3} u = \abs{u}^{\frac{4}{3}} u + \NN ,
\end{align}
then the modified Morawetz estimate becomes
\begin{align}
\begin{aligned}\label{modmor}
\norm{u}_{L_{t,x}^{\frac{10}{3}}([t_1,t_2] \times \h^2)}^{\frac{10}{3}} & \lesssim \norm{u}_{L_t^{\infty} L_x^2 ([t_1,t_2] \times \h^3)} \norm{u}_{L_t^{\infty} H_x^1 ([t_1,t_2] \times \h^3)}  \\
& \quad + \norm{\NN\bar{u}}_{L_{t,x}^1 ([t_1,t_2] \times \h^3)} + \norm{\NN \nabla \bar{ u}}_{L_{t,x}^1 ([t_1,t_2] \times \h^3)} .
\end{aligned}
\end{align}
\end{proposition}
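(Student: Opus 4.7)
The plan is to follow the Morawetz identity approach of Ionescu--Staffilani that underlies the unmodified estimate stated just above, and to track the new contributions produced by the extra forcing $\NN$. First, I would introduce the Morawetz action
\begin{align*}
M(t) = 2\im \int_{\h^3} \bar u(t,x)\, \nabla^i u(t,x)\, \nabla_i a(x) \, d\mu(x),
\end{align*}
where $a$ is the radial weight of \cite{IS09} built from the geodesic distance to a fixed origin of $\h^3$. The essential features are that $|\nabla a|$ and $|\Delta_{\h^3} a|$ are uniformly bounded, and that the Hessian of $a$, evaluated against $\nabla u \otimes \nabla \bar u$, together with the curvature contributions, gives a coercive bulk density that controls $|u|^{10/3}$ after accounting for the nonlinearity $|u|^{4/3} u$. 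The Cauchy--Schwarz inequality gives the pointwise-in-time bound $|M(t)| \lesssim \|u(t)\|_{L^2(\h^3)} \|\nabla u(t)\|_{L^2(\h^3)}$, which will supply the right-hand side after time integration.

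Next, I would differentiate $M(t)$ in $t$ and substitute $\partial_t u = i\Delta_{\h^3} u - i|u|^{4/3} u - i\NN$ together with its conjugate. When $\NN \equiv 0$ this is precisely the identity of \cite{IS09} and yields, after integration over $[t_1,t_2]$,
\begin{align*}
\|u\|_{L^{10/3}_{t,x}([t_1,t_2]\times \h^3)}^{10/3} \lesssim \|u\|_{L^\infty_t L^2_x} \|u\|_{L^\infty_t H^1_x}.
\end{align*}
The novelty is that when $\NN \not\equiv 0$, two additional contributions appear in $\frac{d}{dt} M(t)$. After one integration by parts in space, they take the schematic form
\begin{align*}
2\re \int_{\h^3} \NN\, \bar u \, \Delta_{\h^3} a \, d\mu \;-\; 4 \re \int_{\h^3} \NN \, \nabla a \cdot \nabla \bar u \, d\mu.
\end{align*}
Since $|\nabla a|, |\Delta_{\h^3} a| \lesssim 1$, the absolute value of this contribution is bounded pointwise in $t$ by $\int_{\h^3}\bigl( |\NN\, \bar u| + |\NN\, \nabla \bar u|\bigr)\,d\mu$, and after integration in $t \in [t_1,t_2]$ these become precisely the two error terms $\|\NN \bar u\|_{L^1_{t,x}}$ and $\|\NN \nabla \bar u\|_{L^1_{t,x}}$ appearing in \eqref{modmor}.

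The main obstacle is in fact in the unperturbed part of the computation: the positivity of the Hessian term and the cancellation with the nonlinear contribution require the constant negative curvature of $\h^3$ and a carefully chosen weight $a$. That analysis is exactly the content of \cite{IS09} and is unaffected by the presence of $\NN$. Hence the new content of Proposition \ref{prop Morawetz} reduces to verifying that no other uncontrolled terms arise from the $\NN$ perturbation in $\frac{d}{dt} M(t)$ beyond the two $L^1_{t,x}$ quantities above, which follows at once from the uniform boundedness of $\nabla a$ and $\Delta_{\h^3} a$ and from the fact that $M(t)$ depends on $u$ only through $\bar u$ and $\nabla \bar u$.
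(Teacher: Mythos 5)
Your proposal is correct and follows the standard perturbed Morawetz-action argument (define $M(t)=2\im\int\bar u\,\nabla a\cdot\nabla u\,d\mu$ with the weight of \cite{IS09}, bound $|M(t)|$ by Cauchy--Schwarz, and track the two extra terms $2\re\int\NN\bar u\,\Delta_{\h^3}a\,d\mu$ and $4\re\int\NN\,\nabla a\cdot\nabla\bar u\,d\mu$ produced by the forcing); this is exactly the route taken in the cited source, and the paper itself offers no independent proof, quoting Proposition 4.1 of \cite{SY} directly. No gaps.
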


%%%%%%%%%%%%%%%%%%%%%%%%%%%%%%%%%%%%%%%%%%%%%%%%%%%%%%%%%%%%%%%%%%%%%%%%%%%%%%%%%%%%%

\section{The Proof of the Main Theorem}\label{sec mainthm}

In this section, we present the proof of Theorem \ref{thm mainthm} by reducing the statement to Proposition \ref{prop key}. The remainder of the paper is devoted to proving Proposition \ref{prop key} and other important results used throughout this section. 

First, for any $M \in [0, \infty)$, define
    \begin{align*}
        S(M):= \sup\left\{ \|u\|_{Z(I)}~:~ \|u\|_{L^2(\h^3)} \leq M \right\}
    \end{align*}
where the supremum is taken over all solutions $u \in C(I; L^2(\h^3))$ to \eqref{NLS} defined on the interval $I$. We further define 
    \begin{align}\label{eq mmax}
        M_{\max}:= \sup\{ M~:~ S(M)< \infty\} .
    \end{align}
Stability at the trivial solution implies that $M_{\max}>0$. Proposition \ref{prop LWP} then implies that if $\|u\|_{L^2(\h^3)} < M_{\max}$ then $u$ exists globally and scatters. Now, if $M_{\max}= \infty$, then Theorem \ref{thm mainthm} is proven. Therefore, we assume, by contradiction, that $M_{\max}< \infty$.

If $M_{\max}< \infty$, then we can construct a sequence of functions, $u_k \in C((-T_k, T^k); L^2(\h^3))$ such that the hypotheses of the following proposition hold:

\begin{proposition}[Key proposition]\label{prop key}
For $k= 1,2 , \cdots$, let $(-T_k, T^k)\subset \mathbb{R}$ be a sequence of intervals and let $u_k \in C( ( -T_k, T^k) ; L^2 (\h^3))$,  be a sequence of  solutions of the nonlinear equation 
 \begin{align}
i \partial_t u + \Delta_{\h^3} u = \abs{u}^{\frac{4}{3}} u, 
\end{align}
 such that $M(u_k) \to M_{\max}$. Let $t_k \in (-T_k, T^k) $ be a sequence of times with
\begin{align}\label{eq Contradiction}
\lim_{k \to \infty} \norm{u_k}_{Z((-T_k, t_k))} = \lim_{k \to \infty} \norm{u_k}_{Z((t_k , T^k))} = + \infty .
\end{align}
Then there exists $w_0 \in  L^2 (\h^3)$ and a sequence of isometries $h_k \in \G$ such that, up to passing to a subsequence, $u_k (t_k, h_k^{-1} \cdot x) \to w_0 \in L^2$ strongly.
\end{proposition}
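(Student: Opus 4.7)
The plan is a standard profile-decomposition and concentration-compactness argument, adapted to $\h^3$ via the blackbox philosophy of Ionescu--Pausader--Staffilani. By the definition of $M_{\max}$ in \eqref{eq mmax}, any $L^2$ initial datum with mass strictly less than $M_{\max}$ yields a global solution of \eqref{NLS} with uniformly bounded $Z$-norm. The strategy is to exploit this inductive hypothesis together with an orthogonal mass-decoupling to force all but one profile to vanish in the limit.

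First, I would apply the profile decomposition of Section \ref{sec profile} to the bounded sequence $v_k := u_k(t_k) \in L^2(\h^3)$. Up to passing to a subsequence, this yields for every $J \geq 1$ a decomposition
\begin{align*}
v_k = \sum_{\alpha=1}^J \pi_{h_k^{\alpha}} \tilde\phi_k^{\alpha} + R_k^J,
\end{align*}
in which each profile $\tilde\phi_k^{\alpha}$ is either a fixed function on $\h^3$ (a \emph{hyperbolic} profile at scale $1$, possibly with a built-in linear time shift) or is a renormalized Euclidean bump concentrating at scale $N_k^{\alpha} \to \infty$ (a \emph{Euclidean} profile, handled via the change of variables of Section \ref{ssec COV} and the approximation results of Section \ref{sec euclidean}). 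The parameters attached to distinct profiles are pairwise orthogonal, the mass Pythagorean identity
\begin{align*}
M(v_k) = \sum_{\alpha=1}^J M(\tilde\phi_k^{\alpha}) + M(R_k^J) + o_k(1)
\end{align*}
holds, and the remainder is asymptotically $Z$-negligible:
\begin{align*}
\lim_{J \to \infty} \limsup_{k \to \infty} \norm{e^{it\Delta_{\h^3}} R_k^J}_{Z(\R)} = 0.
\end{align*}

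Next, I would evolve each profile nonlinearly. For a hyperbolic profile whose limiting mass $M^{\alpha}$ is strictly less than $M_{\max}$, the inductive bound $S(M^{\alpha}) < \infty$ supplies a uniform $Z$-estimate on the associated nonlinear profile $U_k^{\alpha}$. For a Euclidean profile, I would use the Euclidean approximation of Section \ref{sec euclidean} to show that $U_k^{\alpha}$ is approximated in $Z$ by the corresponding radial mass-critical NLS solution on $\R^3$, whose global $Z$-bound is provided as a blackbox by Dodson's radial Euclidean scattering theory. Using orthogonality of the parameters together with the radial bilinear Strichartz estimate of Section \ref{sec bilinear}, the superposition
\begin{align*}
\widetilde U_k^J(t) := \sum_{\alpha=1}^J U_k^{\alpha}(t) + e^{i(t-t_k)\Delta_{\h^3}} R_k^J
\end{align*}
is an approximate solution of \eqref{NLS} whose error tends to $0$ in $N_{\h^3}^0(\R)$ as $k, J \to \infty$. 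The stability Proposition \ref{prop Stab} then transfers the global $Z$-bound from $\widetilde U_k^J$ to $u_k$, yielding $\sup_k \norm{u_k}_{Z(\R)} < \infty$, in direct contradiction with \eqref{eq Contradiction}.

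Consequently, exactly one nontrivial profile can survive; mass Pythagoras forces it to carry the full mass $M_{\max}$ and the remainder $R_k^J$ to be $L^2$-small uniformly in $k$ for $J$ large. Since a single Euclidean profile would again be controlled by the blackbox and hence lead to the same contradiction, the surviving profile must be a hyperbolic one, and the bilateral blowup \eqref{eq Contradiction} excludes any time translation tending to $\pm\infty$ (such a translation would produce a finite $Z$-norm on one side by Strichartz plus stability). After passing to a further subsequence, we obtain $\phi \in L^2(\h^3)$ and isometries $h_k \in \G$ with $v_k - \pi_{h_k} \phi \to 0$ in $L^2$, which is exactly the statement $u_k(t_k, h_k^{-1} \cdot x) \to w_0 := \phi$ in $L^2(\h^3)$. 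The main difficulty, and the point where the radial hypothesis is essential, lies in treating Euclidean profiles as a genuine blackbox: this rests on the radial bilinear Strichartz estimate of Section \ref{sec bilinear}, available only through the $\h^3 \to \R^3$ change of variables of Section \ref{ssec COV} in three dimensions, together with the delicate compatibility of frequency localization with that change of variables.
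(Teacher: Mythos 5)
Your proposal is correct and follows essentially the same route as the paper: profile decomposition of $u_k(t_k)$, nonlinear evolution of each profile (hyperbolic profiles via the inductive definition of $M_{\max}$, Euclidean profiles via the approximation lemmas and Dodson's radial result as a blackbox), non-interaction plus error control to make the superposition an approximate solution, and stability to derive a contradiction with \eqref{eq Contradiction} unless exactly one hyperbolic profile with vanishing time shift survives. The only minor discrepancy is attribution: in the paper the non-interaction of nonlinear profiles (Lemma \ref{lem 6.3}) is proved by spatial/scale separation of the concentration sets rather than by the bilinear Strichartz estimate, which instead enters through the inverse/improved Strichartz inequalities underlying the profile decomposition itself.
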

(The proof of Proposition \ref{prop key} is presented in Section \ref{sec Key}.)

Let $u \in C((-T_*, T^*); L^2(\h^3))$ be the maximal solution to \eqref{NLS} with initial data $w_0$. If $\|u\|_{Z((0, T^*))}< \infty$, then stability and the fact that $u$ is maximal implies that $T^*=\infty$ and
    \begin{align*}
        \sup_k\|u_k\|_{Z((t_k, \infty))} \leq C_{\|u\|_{Z((0, \infty))}} ,
    \end{align*}
which presents a contradiction. Therefore, $\|u\|_{Z((0, T^*))}= \infty$. Local wellposedness (Proposition \ref{prop LWP}) now implies that $u$ can be extended to a global solution, $u \in C(\R, L^2(\h^3))$, such that $\|u\|_{L^2(\h^3)}=M_{\max}$ and 
    \begin{align*}
        \|u\|_{Z((0, \infty))}=\|u\|_{Z((-\infty, 0))}=\infty.
    \end{align*}

Proposition \ref{prop key} (with $u_k=u$ for all $k$) and Arzel\`a-Ascoli theorem now implies that  $u$ can be assumed to be almost periodic modulo $\mathbb{G}$. Here, the notion of almost periodicity modulo $\mathbb{G}$ is defined in the following sense:
    \begin{definition}[Almost Periodic Modulo $\mathbb{G}$]\label{defn AP solution}
         A solution $u$ to \eqref{NLS} with lifespan $I$ is said to be almost periodic modulo $\mathbb{G}$ if there exists a  function $h: I \rightarrow \mathbb{G}$ and a function $C: \mathbb{R}^{+} \rightarrow \mathbb{R}^{+}$ such that
        \begin{align*}
            \int_{d(x, \0) \geq C(\eta)}|(\pi_{h(t)}u)(t, x)|^2 \, d \mu(x)+\|(P_{\geq C(\eta)}u)(t, \cdot)\|^2_{L^2(\h^3)} \leq \eta .
        \end{align*}
    \end{definition}

In the three-dimensional Euclidean case \cite{Dod3d}, Dodson's notion of almost periodicity requires three additional functions $\xi$, $x$, and $N$ so  that
    \begin{align*}
        \int_{|x-x(t)| \geq C(\eta)/N(t)}|u(t, x)|^2 \, dx+ \int_{|\xi-\xi(t)| \geq C(\eta)N(t)}|\hat{u}(t, \xi)|^2 \, d\xi < \eta .
    \end{align*}
    We refer to the function $N(t)$ as the frequency scale function, $x(t)$ is the spatial center function, $\xi(t)$ is the frequency center function, and $C(\eta)$ is the compactness modulus function. In particular, one can prove the following properties (Lemma \ref{lem AP constancy} and Lemma \ref{lem aprioriT}) of the frequency scale function:

\begin{lemma}\label{lem AP constancy}
Let $u$ be a minimal mass blow-up solution to \eqref{NLS} on $I$ that is almost periodic modulo $\mathbb{G}$. Then there exists $\delta(u)$ such that for all $t_0 \in I$,
\begin{align}
[t_0 - \delta N(t_0)^{-2}, t_0 + \delta N(t_0)^{-2}] \subset I
\end{align}
and
\begin{align}
N(t) \sim N(t_0). 
\end{align}
\end{lemma}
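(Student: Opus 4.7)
The plan is to combine the almost-periodic structure of $u$ with the $\mathbb{G}$-invariance of \eqref{NLS} under the isometry action $\pi_h$ and the local well-posedness theory (Proposition \ref{prop LWP}). Since both $\Delta_{\h^3}$ and the pointwise nonlinearity $|u|^{4/3}u$ commute with $\pi_h$, the equation is $\mathbb{G}$-equivariant; it therefore suffices to produce a uniform lifespan for initial data drawn from the symmetrized orbit $K := \{\pi_{h(t)} u(t) : t \in I\} \subset L^2(\h^3)$ and then transport this lifespan back to $u$ via the inverse isometry $\pi_{h(t_0)}^{-1}$ together with the uniqueness statement in Proposition \ref{prop LWP}.

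The key observation is that almost periodicity, coupled with mass conservation $\|u(t)\|_{L^2} = M_{\max}$, forces $K$ to be precompact in $L^2(\h^3)$: the physical-space tail $\int_{d(x,\0) \geq C(\eta)} |\pi_{h(t)} u(t,x)|^2 \, d\mu(x)$ and the high-frequency tail $\|P_{\geq C(\eta)} u(t)\|_{L^2}^2$ are both uniformly small in $t$, which is a standard Rellich-type criterion for $L^2$-compactness. On a precompact set $K$ the linear Strichartz norm shrinks uniformly on short time intervals: for each fixed $\phi \in K$ dominated convergence gives $\|e^{it\Delta_{\h^3}}\phi\|_{Z([-\delta, \delta])} \to 0$ as $\delta \to 0$, and an $\varepsilon/3$ argument using continuity of this map in $L^2$ upgrades the decay to a rate uniform over $K$. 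Choosing $\delta = \delta(u)$ below the small-data threshold of the local theory, Proposition \ref{prop LWP} yields a solution on $[-\delta, \delta]$ starting from any datum in $K$ with controlled $Z$-norm. Applied to $v_0 := \pi_{h(t_0)} u(t_0)$ and pulled back by the global isometry $\pi_{h(t_0)}^{-1}$, uniqueness identifies the pullback with $u$ and shows $[t_0 - \delta, t_0 + \delta] \subset I$.

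For the claim $N(t) \sim N(t_0)$: as emphasized in the discussion preceding the lemma and in the remark following the formula for $\Delta_{\h^d}$, the hyperbolic radial regime has no scaling symmetry, so the frequency-scale function reduces to $N(t) \equiv 1$ (the three-parameter Euclidean compactness of Dodson collapses to the single parameter $h(t)$ in Definition \ref{defn AP solution}). Under this convention the relation $N(t) \sim N(t_0)$ is automatic and the substantive content of the lemma is the uniform lifespan $\delta N(t_0)^{-2} = \delta$ around every $t_0$. The main technical delicacy lies in the uniformity step: pointwise-in-$K$ Strichartz decay must be upgraded to a uniform rate on $K$, which requires either the $\varepsilon/3$ argument above or, equivalently, a contradiction argument producing a convergent subsequence from any failed modulus of decay and contradicting precompactness. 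Everything else is a direct application of results already established, and no new analytic input is needed beyond Definition \ref{defn AP solution} and Proposition \ref{prop LWP}.
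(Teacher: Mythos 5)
Your proof is correct. The paper actually states this lemma without proof (it is imported as a standard property of almost periodic solutions, in the spirit of Dodson and Killip--Visan), so there is no internal argument to compare against; what you give is exactly the standard one. The two ingredients are right and fit the paper's toolkit: precompactness of $\{\pi_{h(t)}u(t)\}$ in $L^2(\h^3)$ follows from the uniform spatial and frequency tails in Definition \ref{defn AP solution} together with mass conservation (and the heat-flow projections $P_{\geq N}$ commute with the isometries $\pi_h$, so the frequency tail transfers to the symmetrized orbit), and the uniform lifespan then comes from the uniform smallness of $\|e^{it\Delta_{\h^3}}\phi\|_{Z([-\delta,\delta])}$ over a compact set -- for which you should invoke the small-linear-evolution criterion (Lemma \ref{lem 6.1}) rather than bare local well-posedness, though that is a citation detail, not a gap. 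Your treatment of the second claim is also consistent with the paper: since Definition \ref{defn AP solution} carries no scaling parameter, the paper itself declares $N(t)\equiv 1$, so $N(t)\sim N(t_0)$ is vacuous and the substance of the lemma is the uniform lifespan $\delta$ around every $t_0\in I$.
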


\begin{definition}\label{Defn LocCon}
Divide $[0,\infty)$ into consecutive intervals $J_k$ such that $\norm{u}_{L_{t,x}^{\frac{10}{3}} (J_k \times \h^3)} =1$. We call these $J_k$'s the intervals of local constancy and 
\begin{align}
N(t) \equiv N_k \geq 1 \quad \text{for each} \quad t \in J_k.
\end{align}
If $J \subset [0,\infty)$ is a union of consecutive intervals of local constancy, then
\begin{align}
\sum_{J_k} N(J_k) \sim \int_J N(t)^3 \, dt = : K .
\end{align}
For convenience let $J_k(t)$ denote the intervals $J_k$ to which $t$ belongs.
\end{definition}

\begin{lemma}\label{lem aprioriT}
If $u(t,x)$ is a minimal mass blow-up solution on an interval $J$, then
\begin{align}
\int_J N(t)^2 \, dt \leq \norm{u}_{L_{t,x}^{\frac{10}{3}} (J \times \h^3)}^{\frac{10}{3}} \lesssim 1 + \int_J N(t)^2 \, dt .
\end{align}
Combining with conservation of mass and Strichartz inequality, we have 
\begin{align}\label{eq local}
\norm{u}_{L_t^2 L_x^6} \lesssim 1 + \int_J N(t)^2 \, dt \lesssim 1 + \frac{1}{N_{\min}} \int_J N(t)^3 \,dt \lesssim 1 + \frac{K}{N_{\min}} ,
\end{align}
where $N_{\min} = \int_{t\in J} N(t)$. 
\end{lemma}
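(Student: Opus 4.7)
The plan is to reduce the lemma to the single structural claim that each interval of local constancy $J_k$ satisfies $|J_k| \sim N_k^{-2}$; once this is in hand, both displays follow by summing the $J_k$ and a short Strichartz computation. From there, I will convert the $L^{10/3}$ bound into the $L_t^2 L_x^6$ bound and finally estimate $\int_J N(t)^2 \, dt$ by $K/N_{\min}$.

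For the upper bound $|J_k| \lesssim N_k^{-2}$, I would extract a pointwise-in-time Bernstein-type lower bound
\begin{equation*}
\norm{u(t)}_{L^{10/3}(\h^3)}^{10/3} \gtrsim N(t)^2 \qquad \text{for } t \in J_k,
\end{equation*}
by combining almost periodicity modulo $\mathbb{G}$ (Definition \ref{defn AP solution}) with the constancy statement in Lemma \ref{lem AP constancy}. The idea is that, modulo a small-mass $\eta$-tail, $u(t)$ sits on frequencies $\lesssim N(t)$ (via the heat-flow projector of Definition \ref{defn LP}) and is spatially concentrated in a bounded region after applying $\pi_{h(t)}$; H\"older combined with a Bernstein-type inequality in the heat-flow scale then produces the $N(t)^2$. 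Integrating over $J_k$ and using $\norm{u}_{L^{10/3}(J_k \times \h^3)}^{10/3} = 1$ gives $N_k^2 |J_k| \lesssim 1$. For the reverse bound $|J_k| \gtrsim N_k^{-2}$, I would use the hyperbolic Strichartz estimate together with the stability result (Proposition \ref{prop Stab}) on any subinterval $I \subset J_k$ with $|I| \ll N_k^{-2}$: iterating small-time bounds, the nonlinear term is absorbed by the linear part and $\norm{u}_{L^{10/3}(I \times \h^3)}$ becomes arbitrarily small, contradicting $\norm{u}_{L^{10/3}(J_k)}^{10/3} = 1$ unless $|J_k|$ is at least of order $N_k^{-2}$.

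With $|J_k| \sim N_k^{-2}$ available, summing over $k$ gives
\begin{equation*}
\int_J N(t)^2 \, dt \;=\; \sum_k N_k^2 |J_k| \;\sim\; \#\{J_k\} \;=\; \norm{u}_{L^{10/3}(J \times \h^3)}^{10/3},
\end{equation*}
which implies both sides of the first display (the $1+$ on the right absorbs a possible partial boundary interval). For the second display, I would apply the hyperbolic Strichartz estimate at the admissible pair $(2, 6)$ on each $J_k$ to \eqref{NLS}, placing the nonlinearity $|u|^{4/3}u$ in $L^{10/7}_{t,x}$, to obtain $\norm{u}_{L_t^2 L_x^6(J_k \times \h^3)} \lesssim M(u)^{1/2} + \norm{u}_{L^{10/3}(J_k)}^{7/3} \lesssim 1$. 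Since $L^2$-in-time is additive over disjoint time intervals, squaring and summing in $k$ gives $\norm{u}_{L_t^2 L_x^6(J \times \h^3)}^2 \lesssim \#\{J_k\} \lesssim 1 + \int_J N(t)^2 \, dt$, from which the stated form follows via $\sqrt{1+X} \leq 1+X$. Finally, since $N(t) \geq N_{\min}$ on $J$, we have $N(t)^2 \leq N(t)^3/N_{\min}$, hence $\int_J N(t)^2 \, dt \leq K/N_{\min}$ by the definition of $K$.

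The principal obstacle is the Bernstein-type pointwise-in-time bound $\norm{u(t)}_{L^{10/3}}^{10/3} \gtrsim N(t)^2$ on each $J_k$. In the Euclidean Dodson setting this essentially drops out of scaling, but $\h^3$ admits no exact dilation symmetry, so one must carefully combine the spatial concentration of $\pi_{h(t)} u(t)$ with the heat-flow frequency cutoffs of Definition \ref{defn LP} and, where needed, transfer through the change-of-variables framework of Section \ref{ssec COV} (available here because of the radial assumption) to import the Euclidean Bernstein inequality. Quantifying the $\eta$-tails so they survive both the pointwise-in-time lower bound and the subsequent integration over $J_k$ is the delicate step.
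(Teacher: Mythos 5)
The paper does not actually prove Lemma \ref{lem aprioriT}: it is stated as a property carried over from Dodson's Euclidean theory \cite{Dod3d}, and the only place its mechanism is exercised in the text is the H\"older/concentration computation at the start of the proof of Proposition \ref{prop MorDod} (which is exactly your pointwise bound in the case $N(t)\equiv 1$) and the Strichartz-on-$J_k$ computation inside the proof of Lemma \ref{claim 1}. Your sketch reassembles precisely these standard ingredients: the reduction to $|J_k|\sim N_k^{-2}$, the additivity of $\norm{\cdot}_{L^{10/3}_{t,x}}^{10/3}$ over the $J_k$, the $(2,6)$-Strichartz estimate with the nonlinearity in $L^{10/7}_{t,x}$ (note that no smallness/bootstrap is needed there, since $\norm{u}_{L^{10/3}(J_k)}=1$ is already finite), and the trivial pointwise bound $N(t)^2\le N(t)^3/N_{\min}$ (where $N_{\min}$ should of course be $\inf_{t\in J}N(t)$, a typo in the statement). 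This is correct and is the intended argument.

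One caveat worth flagging. Your key step, the pointwise-in-time bound $\norm{u(t)}_{L^{10/3}}^{10/3}\gtrsim N(t)^2$, requires spatial concentration of $u(t)$ at scale $C(\eta)/N(t)$. The paper's Definition \ref{defn AP solution} only provides concentration at the \emph{fixed} radius $C(\eta)$ (and frequency cutoff $C(\eta)$), so as literally stated it yields $\norm{u(t)}_{L^{10/3}}^{10/3}\gtrsim 1$, not $\gtrsim N(t)^2$; the same issue affects your lower bound $|J_k|\gtrsim N_k^{-2}$, whose Bernstein step needs frequency localization at scale $N_k$ rather than at scale $C(\eta)$. This is not a genuine gap here, because the paper observes immediately after the lemma that the absence of scaling forces $N(t)\equiv 1$, in which case the fixed-scale almost periodicity is exactly what is needed and your argument closes; but if you intend the proof to cover nonconstant $N(t)$ as written, you must first upgrade Definition \ref{defn AP solution} to its $N(t)$-rescaled form. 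Also, for the left inequality $\int_J N(t)^2\,dt\le\norm{u}_{L^{10/3}(J)}^{10/3}$ it is cleaner to integrate the pointwise bound directly over $J$ rather than to pass through the $J_k$ decomposition, since a partial boundary interval would otherwise disturb that direction (Definition \ref{Defn LocCon} sidesteps this by taking $J$ to be a union of complete intervals of local constancy).
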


We make the observation that, since we have eliminated rescaling symmetries from our almost periodic solutions, $N(t)=1$ for all $t\in J$. Therefore, Lemma \ref{lem aprioriT} implies that 
    \begin{align}
        \norm{u}_{L_{t,x}^{\frac{10}{3}} (J \times \h^3)}^{\frac{10}{3}} \sim |J|.
    \end{align}
Using a Morawetz estimate we can prove the following proposition for solutions similar to $u$:

\begin{proposition}\label{prop MorDod}
Let $u \in C(\R; L^2(\h^3))$ be an almost periodic solution to \eqref{NLS}. Then for every $\eta>0$ there exists $T_0(\eta)$  and $C=C(\|u\|_{L^2(\h^3)})$ such that $T\geq T_0$ implies
    \begin{align*}
        C^{-1}T \leq\norm{P_{\leq T}u}_{L_{t,x}^{\frac{10}{3}}([-T,T] \times \h^3)}^{\frac{10}{3}} \leq \eta C T .
    \end{align*}
\end{proposition}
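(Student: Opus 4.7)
The plan is to prove the upper bound (the substantive direction) by applying the modified Morawetz estimate to $v:=P_{\leq T}u$, and the lower bound from Lemma \ref{lem aprioriT} with auxiliary control on $w:=P_{>T}u$; together the two inequalities yield the desired contradiction in the larger argument, since choosing $\eta<C^{-2}$ makes them incompatible for $T\geq T_0(\eta)$.

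For the upper bound, $v$ satisfies $i\partial_t v+\Delta_{\h^3}v=|v|^{4/3}v+\NN$ with $\NN:=P_{\leq T}(|u|^{4/3}u)-|v|^{4/3}v$, so Proposition \ref{prop Morawetz} gives
\begin{align*}
\|v\|^{10/3}_{L^{10/3}_{t,x}([-T,T]\times\h^3)}\lesssim \|v\|_{L^\infty_tL^2_x}\|v\|_{L^\infty_tH^1_x}+\|\NN\bar v\|_{L^1_{t,x}}+\|\NN\nabla\bar v\|_{L^1_{t,x}}.
\end{align*}
For the linear term I would decompose $u=P_{\leq C(\eta)}u+P_{>C(\eta)}u$: almost periodicity gives $\|P_{>C(\eta)}u\|_{L^\infty_tL^2_x}\leq \eta^{1/2}$ and the heat-flow smoothing estimate gives $\|\nabla P_{\leq T}f\|_{L^2}\lesssim T\|f\|_{L^2}$, hence
\begin{align*}
\|\nabla v\|_{L^\infty_tL^2_x}\lesssim \|\nabla P_{\leq C(\eta)}u\|_{L^2_x}+T\eta^{1/2}\lesssim C(\eta)\|u\|_{L^2}+T\eta^{1/2},
\end{align*}
which is $\lesssim T\eta^{1/2}$ once $T_0(\eta)\gg C(\eta)\eta^{-1/2}$, giving a linear contribution $\lesssim \eta^{1/2}T$. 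The error $\NN$ splits as $\NN_1+\NN_2$ with $\NN_1:=(P_{\leq T}-I)(|v|^{4/3}v)$ and $\NN_2:=P_{\leq T}(|u|^{4/3}u-|v|^{4/3}v)$; the latter is pointwise $\lesssim |w|(|v|^{4/3}+|w|^{4/3})$ and is handled by H\"older combined with $\|w\|_{L^\infty_tL^2_x}\leq \eta^{1/2}$ and Strichartz-type bounds on $v$, while $\NN_1$ is controlled through the smoothing of $I-P_{\leq T}$ applied to the nonlinearity $|v|^{4/3}v$ whose frequencies are essentially concentrated below $C(\eta)\ll T$.

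For the lower bound, Lemma \ref{lem aprioriT} with $N(t)\equiv 1$ yields $\|u\|^{10/3}_{L^{10/3}_{t,x}([-T,T]\times\h^3)}\geq 2T$; the elementary power inequality $|u|^{10/3}\leq C(|v|^{10/3}+|w|^{10/3})$ then reduces the task to showing $\|w\|^{10/3}_{L^{10/3}_{t,x}([-T,T]\times\h^3)}\leq T/(2C)$ for $T$ large. This is obtained by Duhamel: the linear piece $e^{it\Delta_{\h^3}}w(0)$ is small in $L^{10/3}_{t,x}(\R\times\h^3)$ by Strichartz applied to $\|w(0)\|_{L^2_x}\leq \eta^{1/2}$, while the Duhamel contribution is controlled via $\|P_{>T}(|u|^{4/3}u)\|_{N^0_{\h^3}([-T,T])}$ using the same commutator/smallness analysis driving the bound on $\NN$: since $u$ is essentially frequency-localized below $C(\eta)\ll T$, the high-frequency projection $P_{>T}$ of the nonlinearity is small in the relevant dual Strichartz norm.

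The main obstacle will be control of the Littlewood-Paley commutator $\NN_1=(I-P_{\leq T})(|v|^{4/3}v)$ (and its lower-bound analogue $P_{>T}(|u|^{4/3}u)$), since heat-flow projections on $\h^3$ do not enjoy the sharp frequency support properties of Fourier multipliers on $\R^d$ and the nonlinearity does not preserve sharp frequency localization. Here I would exploit the three-dimensional radial setting by invoking the change of variables from Subsection \ref{ssec COV}, which identifies the linearized problem on $\h^3$ with a shifted Schr\"odinger equation on $\R^3$, thereby permitting the use of sharp Euclidean product and commutator estimates that are unavailable a priori on general hyperbolic manifolds.
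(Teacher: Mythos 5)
Your overall skeleton for the upper bound (apply Proposition \ref{prop Morawetz} to $v=P_{\leq T}u$, control the linear term via almost periodicity plus the heat-flow Bernstein bound $\|\nabla P_{\leq T}f\|_{L^2}\lesssim T\|f\|_{L^2}$) matches the paper, but there is a genuine gap in your treatment of the error terms, and it is not the one you identify. The dangerous term is $\|\NN\,\nabla\bar v\|_{L^1_{t,x}}$, which must come out as $o(1)\cdot T$ against $\|\nabla v\|_{L^2_tL^6_x([0,T])}\lesssim T$; this forces you to prove $\|\NN\|_{L^2_tL^{6/5}_x([0,T])}\lesssim \varepsilon^{1/2}$ \emph{uniformly in} $T$. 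Your proposed tools cannot deliver this. H\"older on $\NN_2\lesssim |w|(|v|^{4/3}+|w|^{4/3})$ requires $\|P_{>T}u\|_{L^2_tL^6_x([0,T])}$ to be small, but the only a priori bound available from mass conservation and Strichartz (see \eqref{eq local}) is $\|P_{>T}u\|_{L^2_tL^6_x([0,T])}\lesssim 1+T$; pairing instead with $\|w\|_{L^\infty_tL^2_x}\leq\eta^{1/2}$ forces the remaining factors into norms like $\|u\|_{L^{8/3}_tL^4_x}^{4/3}\|\nabla v\|_{L^2_tL^6_x}\sim T^{1/2}\cdot T$, giving $\eta^{1/2}T^{3/2}$, which does not close. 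The missing ingredient is the Dodson-type long-time Strichartz estimate, $\|P_{\geq N}u\|_{L^2_tL^6_x([0,T])}\lesssim \varepsilon(1+(T/N)^{1/2})$ for $N\geq N_0(\varepsilon)$ (Lemma \ref{lem LTS}), which in the paper is established by the recursive inequality of Lemma \ref{claim 1}, whose proof hinges on the nonlinear bilinear Strichartz estimate (Lemma \ref{lem nonbilinear}, hence Proposition \ref{prop Bilinear}) together with almost periodicity and the interval-counting of Lemma \ref{lem aprioriT}. With this estimate the paper splits $\NN=[F(u)-F(P_{\leq T}u)]-P_{>T}F(P_{<\varepsilon T}u)-P_{>T}(F(u)-F(P_{<\varepsilon T}u))$ and each piece is $O(\varepsilon^{1/2})$ in $L^2_tL^{6/5}_x$. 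Your diagnosis that the main obstacle is the imperfect frequency support of heat-flow projections, to be cured by the change of variables to $\R^3$, misses the point: even with sharp Fourier projections on $\R^3$ the naive estimates fail for exactly the quantitative reason above, which is why Dodson needed the long-time Strichartz machinery in the first place.

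Two smaller remarks. First, your lower bound also implicitly needs the long-time Strichartz estimate: interpolating $\|P_{>T}u\|_{L^\infty_tL^2_x}\leq\eta^{1/2}$ against the crude $\|P_{>T}u\|_{L^2_tL^6_x}\lesssim 1+T$ gives $\|P_{>T}u\|_{L^{10/3}_{t,x}}^{10/3}\lesssim \eta^{2/3}T^2$, not $T/(2C)$. The paper's lower bound is more elementary and avoids this entirely: almost periodicity puts a fixed fraction of the mass in a ball of radius $C(\eta)$ whose volume is bounded below, so H\"older gives a pointwise-in-time lower bound $\|u(t)\|_{L^{10/3}_x}^{10/3}\gtrsim \|u\|_{L^2}^{10/3}$, which integrates to $CT$, and the passage from $u$ to $P_{\leq T}u$ only uses the $L^\infty_tL^2_x$-smallness of the high frequencies. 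Second, the term $\NN_1$ is handled in the paper not by a commutator estimate but by the Bernstein-type bound $\|P_{>T}G\|\lesssim T^{-1}\|\nabla G\|$ combined with Lemma \ref{lem gradlow}, which again rests on Lemma \ref{lem LTS}. You should restructure the proof around establishing Lemma \ref{claim 1} and Lemma \ref{lem LTS} first.
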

(Proposition \ref{prop MorDod} is proven in Section \ref{sec MorDod})

For $\eta>0$ small enough, this presents a contradiction. Therefore, $M_{\max}=\infty$, which proves Theorem \ref{thm mainthm}.

%%%%%%%%%%%%%%%%%%%%%%%%%%%%%%%%%%%%%%%%%%%%%%%%%%%%%%%%%%%%%%%%%%%%%%%%%%%%%%%%
%%%%%%%%%%%%%%%%%%%%%%%%%%%%%%%%%%%%%%%%%%%%%%%%%%%%%%%%%%%%%%%%%%%%%%%%%%%%%%%%

\section{Bilinear Strichartz Estimates and Improved Strichartz Inequalities}\label{sec bilinear}
In this section, we establish a bilinear Strichartz estimate for linear solutions in hyperbolic spaces. Then we subsequently derive an improved Strichartz inequality, which will be used in Section \ref{sec profile}.

\subsection{Bilinear Strichartz estimates}

Note that in this subsection, we use the convention that upper case letters denote functions in $\h^3$ while lower case letters denote functions in $\R^3$. Also $\widehat{f}$ denotes Fourier transforms in $\R^3$ and $\widetilde{F}$ denotes Fourier transforms in $\h^3$. 

\begin{lemma}[Bilinear estimates on $\R^d$, Corollary~6.5 in \cite{Shao09}]\label{lem Shao}
Suppose $\widehat{f}_N$ is a radial function and compactly supported on $\{ \xi \in \R^d : \abs{\xi} \sim N\}$ and $\widehat{g}_L$ is a radial function and compactly supported on $\{ \xi \in \R^d : \abs{\xi} \sim L\}$ with $N \leq L /4$.

Then
\begin{align}
\norm{e^{it \Delta_{\R^d}} f_N \, e^{it \Delta_{\R^d}} g_L }_{L_{t,x}^q (\R \times \R^d)} \lesssim C(N,L) \norm{f_N}_{L_x^2 (\R^d)} \norm{g_L}_{L_x^2 (\R^d)} ,
\end{align}
where $C(N,L)$ varies under different constrains of $q$.
\begin{align}
C(N,L) =
\begin{cases}
N^{\frac{2d+1}{2} - \frac{d+2}{q}} L^{-\frac{1}{2}} & \text{if} \quad \frac{d+1}{d} < q \leq 2 ,\\ 
N^{\frac{4d-1}{4} - \frac{2d+1}{2q}} L^{- \frac{3}{2q}+\frac{1}{4}}  & \text{if} \quad 2 \leq q \leq \frac{2(2d+1)}{2d-1}  ,\\ 
N^{\frac{d}{2}} L^{\frac{d}{2} - \frac{d+2}{q}} & \text{if} \quad q \geq \frac{2(2d+1)}{2d-1} .
\end{cases}
\end{align}
\end{lemma}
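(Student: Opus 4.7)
The plan is to establish this three-regime estimate by proving the bound at endpoints and interpolating. Set $u_N(t,x) := e^{it\Delta_{\R^d}}f_N(x)$ and $u_L(t,x) := e^{it\Delta_{\R^d}}g_L(x)$.

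First I would prove the classical Bourgain--Klainerman--Machedon bilinear $L^2_{t,x}$ endpoint
\[
\norm{u_N u_L}_{L^2_{t,x}(\R \times \R^d)} \lesssim N^{(d-1)/2}L^{-1/2}\norm{f_N}_{L^2}\norm{g_L}_{L^2},
\]
which agrees with the claimed constant $C(N,L)$ at $q=2$ in both Case 1 and Case 2. Parseval in space-time reduces the square of the left-hand side to an integral involving the convolution of two measures carried on the paraboloids $\{\tau = -\abs{\xi}^2 : \abs{\xi}\sim N\}$ and $\{\tau = -\abs{\xi}^2 : \abs{\xi} \sim L\}$. The hypothesis $N \leq L/4$ makes the normals to the two sheets quantitatively transverse, and the corresponding Jacobian change of variables yields the gain $(N/L)^{1/2}$ over the trivial $N^{d/2}$ volume count. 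Radial symmetry is not used at this step.

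Next I would invoke Shao's improved radial linear Strichartz estimate: for radial $f$ with $\widehat{f}$ supported in $\abs{\xi}\sim N$ and $q \geq q_* := 2(2d+1)/(2d-1)$,
\[
\norm{e^{it\Delta_{\R^d}}f}_{L^q_{t,x}(\R\times\R^d)} \lesssim N^{d/2 - (d+2)/q}\norm{f}_{L^2}.
\]
This lies strictly outside the standard Strichartz admissible range and is proved via the improved decay of the Fourier transform of the radial surface measure on the paraboloid (equivalently, the $\abs{J_\nu(r)} \lesssim r^{-1/2}$ decay of Bessel functions). For $q \geq q_*$ the Case~3 bound is then immediate from H\"older's inequality $\norm{u_N u_L}_{L^q_{t,x}} \leq \norm{u_N}_{L^{2q}_{t,x}}\norm{u_L}_{L^{2q}_{t,x}}$ together with the linear radial estimate on each factor; the frequency separation plays no role here.

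For Case 2 ($2 \leq q \leq q_*$), I would interpolate bilinearly between the $L^2_{t,x}$ endpoint and the $L^{q_*}_{t,x}$ endpoint $C(N,L) = N^{d/2}L^{-(d-1)/(2d+1)}$, the latter obtained by combining the bilinear $L^2$ bound with the radial linear estimate on the high-frequency factor through a suitable Lorentz-space interpolation. Complex interpolation then produces the affine relation in $1/q$ between the two endpoints that matches the claimed exponents of $N$ and $L$ exactly. For Case 1 ($(d+1)/d < q < 2$), I would interpolate the same bilinear $L^2$ endpoint against the dispersive bound $\norm{e^{it\Delta_{\R^d}}f_N}_{L^\infty_x} \lesssim \abs{t}^{-d/2}\norm{f_N}_{L^1}$ combined with Bernstein, which supplies the bottom endpoint at $q = (d+1)/d$ and closes the range.

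The main obstacle is Shao's linear radial Strichartz at $q_*$, since this is what pushes the estimates out of the standard Strichartz range and accounts for the improved dependence on $N$ in the higher-$q$ regimes. Its proof requires a refined analysis of radial Fourier decay on the paraboloid, a careful Littlewood--Paley decomposition adapted to the radial setting, and a Whitney-style annular decomposition. Once this linear ingredient is in hand, the remaining work is careful bookkeeping of the interpolation exponents to recover the three piecewise formulas for $C(N,L)$.
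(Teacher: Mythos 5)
First, a point of comparison: the paper does not prove this lemma at all — it is quoted verbatim as Corollary~6.5 of Shao's paper and used as a black box — so there is no internal proof to measure your attempt against. Judged on its own terms, your outline has the right architecture for $q\ge 2$ (bilinear $L^2$ endpoint plus Shao's radial linear estimate plus interpolation), but it contains one fixable error and one genuine gap. The fixable error is Case 3: the split $\norm{u_N u_L}_{L^q_{t,x}}\le \norm{u_N}_{L^{2q}_{t,x}}\norm{u_L}_{L^{2q}_{t,x}}$ gives the constant $N^{d/2-(d+2)/(2q)}L^{d/2-(d+2)/(2q)}$, which exceeds the claimed $N^{d/2}L^{d/2-(d+2)/q}$ by the factor $(L/N)^{(d+2)/(2q)}\ge 1$, so the lemma does not follow from it. You need the asymmetric split $\norm{u_N}_{L^\infty_{t,x}}\norm{u_L}_{L^q_{t,x}}$ with the Bernstein bound $\norm{u_N}_{L^\infty_{t,x}}\lesssim N^{d/2}\norm{f_N}_{L^2}$ on the low-frequency factor; contrary to your remark, the frequency asymmetry does play a role here. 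The same corrected split supplies the $q_*$ endpoint $N^{d/2}L^{-(d-1)/(2d+1)}$, after which your Case 2 interpolation with the $L^2$ endpoint does reproduce the stated exponents (both are affine in $1/q$ and match at $q=2$ and $q=q_*$).

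The serious gap is Case 1. The regime $\tfrac{d+1}{d}<q<2$ is a sub-$L^2$ bilinear restriction estimate, and for $d\ge 2$ it goes strictly below the non-radial threshold $q>\tfrac{d+3}{d+1}$ of Tao's bilinear theorem; this improvement is exactly where the radial hypothesis must enter the \emph{bilinear} argument itself, and it constitutes the hard content of Shao's paper (harder than the linear $q_*$ estimate you single out as the main obstacle). It cannot be reached by interpolating the $L^2$ bilinear bound against ``the dispersive estimate combined with Bernstein'': the dispersive bound requires control of $\norm{f_N}_{L^1_x}$, which is not dominated by $\norm{f_N}_{L^2_x}$ for merely frequency-localized data (Bernstein goes the wrong way), so there is no elementary bilinear bound at or near $q=\tfrac{d+1}{d}$ to interpolate with — and the strict inequality $\tfrac{d+1}{d}<q$ in the statement already signals that no clean endpoint exists there. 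As written, your plan does not close the range $q<2$.
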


\begin{proposition}[Bilinear estimates on $\h^3$]\label{prop Bilinear}
Suppose $\widetilde{F}_N$ is a radial function and compactly supported on $\{ \lambda \in [0, \infty): \lambda \sim N\}$ and $\widetilde{G}_L$ is a radial function and compactly supported on $\{ \lambda \in [0, \infty) : \lambda \sim L\}$ with $N \leq L /4$. 

Then
\begin{align}
\norm{e^{it \Delta_{\h^3}} F_N \, e^{it \Delta_{\h^3}} G_L }_{L_{t,x}^q (\R \times \h^3)} \lesssim C(N,L) \norm{F_N}_{L_x^2 (\h^3)} \norm{G_L}_{L_x^2 (\h^3)} ,
\end{align}
where $C(N,L)$ is the same coefficient in Lemma \ref{lem Shao} (with $d=3$) under different constrains of $q$.
\begin{align}
C(N,L) =
\begin{cases}
N^{\frac{7}{2} - \frac{5}{q}} L^{-\frac{1}{2}} & \text{if} \quad \frac{4}{3} < q \leq 2 ,\\ 
N^{\frac{11}{4} - \frac{7}{2q}} L^{- \frac{3}{2q}+\frac{1}{4}} & \text{if} \quad 2 \leq q \leq \frac{14}{5} ,\\ 
N^{\frac{3}{2}} L^{\frac{3}{2} - \frac{5}{q}} & \text{if} \quad q \geq \frac{14}{5} .
\end{cases}
\end{align}
\end{proposition}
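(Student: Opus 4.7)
The plan is to reduce the hyperbolic bilinear estimate to Shao's Euclidean bilinear estimate (Lemma~\ref{lem Shao} with $d=3$) via the change of variables from Section~\ref{ssec COV}, using radiality and the three-dimensional coincidence that puts the radial Fourier transforms on $\h^3$ and $\R^3$ into essentially the same form.

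First I would set up the change of variables. For radial $F_N, G_L$ on $\h^3$, define
\begin{align}
f_N(r) := \frac{\sinh r}{r}\,F_N(r), \qquad g_L(r) := \frac{\sinh r}{r}\,G_L(r).
\end{align}
Integrating in polar coordinates gives $\|f_N\|_{L^2(\R^3)} = c\|F_N\|_{L^2(\h^3)}$ and likewise for $g_L$. By Section~\ref{ssec COV}, if $U_N(t,r) = e^{it\Delta_{\h^3}}F_N$ then $w_N(t,r) := e^{it}\frac{\sinh r}{r}U_N(t,r)$ solves $i\partial_t w_N + \Delta_{\R^3} w_N = 0$ with $w_N(0) = f_N$, so
\begin{align}
\bigl|e^{it\Delta_{\h^3}} F_N(r)\bigr| \,=\, \frac{r}{\sinh r}\,\bigl|e^{it\Delta_{\R^3}} f_N(r)\bigr|,
\end{align}
and the analogous identity holds for $G_L, g_L$.

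Next, the key step: verify that the frequency localization is preserved. Using $\phi_\lambda(r) = \frac{c}{\lambda}\frac{\sin(\lambda r)}{\sinh r}$ and $J_{1/2}(z) = \sqrt{2/(\pi z)}\sin z$, the radial Fourier transform formulas from Section~2.3 give in three dimensions
\begin{align}
\widetilde{F}_N(\lambda) \,=\, \frac{c}{\lambda}\int_0^\infty F_N(r)\sin(\lambda r)\sinh r\,dr \,=\, \frac{c}{\lambda}\int_0^\infty f_N(r)\sin(\lambda r)\,r\,dr,
\end{align}
and
\begin{align}
\widehat{f}_N(k) \,=\, \frac{c'}{k}\int_0^\infty f_N(r)\sin(kr)\,r\,dr.
\end{align}
Hence $\widehat{f}_N$ and $\widetilde{F}_N$ agree up to a universal constant, and the assumption that $\widetilde{F}_N$ is supported in $\{\lambda \sim N\}$ translates exactly to $\widehat{f}_N$ being supported in $\{|\xi|\sim N\}$. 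The same goes for $g_L$ at scale $L$. This is the step I expect to be the heart of the argument; it is precisely the three-dimensional radial miracle emphasized in the introduction, and it would fail in higher dimensions where the comparison between $\phi_\lambda$ and $J_{(d-2)/2}$ is not a pointwise identity.

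Finally, I combine everything. Since $r/\sinh r \le 1$ for $r\ge 0$ and $2q-2\ge 0$ for $q\ge 1$, for any admissible $q$ in the proposition,
\begin{align}
\bigl\|\,|e^{it\Delta_{\h^3}}F_N|\,|e^{it\Delta_{\h^3}}G_L|\,\bigr\|_{L^q_{t,x}(\R\times\h^3)}^q \,=\, c\int_\R\!\!\int_0^\infty \!\Bigl(\tfrac{r}{\sinh r}\Bigr)^{2q-2}\!\!|e^{it\Delta_{\R^3}}f_N|^q|e^{it\Delta_{\R^3}}g_L|^q r^2\,dr\,dt,
\end{align}
which is dominated by $\|e^{it\Delta_{\R^3}}f_N\cdot e^{it\Delta_{\R^3}}g_L\|_{L^q(\R\times\R^3)}^q$. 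Applying Shao's Lemma~\ref{lem Shao} with $d=3$ to $f_N, g_L$ (which are legitimate radial functions with the required frequency supports thanks to the previous step) produces the constants $C(N,L)$ in each of the three $q$-regimes, and reconverting the $L^2$ norms via $\|f_N\|_{L^2(\R^3)}\sim \|F_N\|_{L^2(\h^3)}$ yields the stated bound. One small caveat to check is the endpoint behavior at small $r$, where $r/\sinh r \to 1$ so no loss occurs, and at large $r$, where the factor is exponentially small, which only helps.
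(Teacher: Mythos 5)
Your proposal is correct and follows essentially the same route as the paper: change of variables $u \mapsto e^{it}\tfrac{\sinh r}{r}u$, verification that the frequency localization is preserved via the coincidence of the 3d radial Fourier transforms, the pointwise domination $(r/\sinh r)^{2q-2}\le 1$ to pass the $L^q$ norm from $\h^3$ to $\R^3$, and then Shao's lemma. The only cosmetic difference is that you check frequency preservation by comparing the transforms of the initial data directly, whereas the paper computes the transform of the evolved solution and extracts a Dirac-delta identity; the two verifications are equivalent.
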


\begin{proof}[Proof of Proposition \ref{prop Bilinear}]
We first recall the radial Fourier transform and  its inversion on $\R^3$:
\begin{align}
\widehat{f} (k) & = \int_0^{\infty} J_{\frac{1}{2}} (kr) f(r) r^{\frac{3}{2}} k^{-\frac{1}{2}} \, dr ,\\
f(r) & = \int_0^{\infty} J_{\frac{1}{2}} (kr) \widehat{f}(k) k^{\frac{3}{2}} r^{-\frac{1}{2}} \, dk ,
\end{align}
and the radial Fourier transform and  its inversion on $\h^3$:
\begin{align}
\widetilde{F} (\lambda) & = \frac{c}{\lambda} \int_0^{\infty} \sin (\lambda r) F(r) \sinh r \, dr ,\\
F(r) & = c \int_0^{\infty} \widetilde{F} (\lambda) \frac{\sin (\lambda r)}{\lambda} \frac{1}{\sinh r} \lambda^2 \, d \lambda ,
\end{align}
with Plancherel theorem
\begin{align}
\int_0^{\infty} \abs{F(r)}^2 \sinh^2 r \, dr = c \int_0^{\infty} \abs{\widetilde{F} (\lambda)}^2 \lambda^2 \, d\lambda .
\end{align}

Recall from Section \ref{ssec COV}. 
We can write the radial Laplacian in $\R^3$ and $\h^3$ of the following form
\begin{align}
\Delta_{\R^3} & = \partial_r^2 + \frac{2}{r} \partial_r ,\\
\Delta_{\h^3} & = \partial_r^2 + 2 \coth r \partial_r . 
\end{align}
Hence if $u \in L^2(\h^3)$ solves 
\begin{align}
i \partial_t u + \Delta_{\h^3} u = 0 ,
\end{align}
then 
\begin{align}
w : = e^{it} \frac{\sinh r}{r} u \in L^2 (\R^3)
\end{align}
solves
\begin{align}
i \partial_t w + \Delta_{\R^3} w = 0 .
\end{align}
Also under this change of variables, we have the invariance of the $L^2$ norms, that is,
\begin{align}
\norm{u}_{L^2 (\h^3)}  = \norm{w}_{L^2 (\R^3)} .
\end{align}

We wish to use this change of variables to convert linear solutions on $\h^3$ to those on $\R^3$, then use the known bilinear estimates on $\R^3$ to derive a bilinear estimate on $\h^3$ using the following detour
\begin{align}
\\
\xymatrixcolsep{10pc}
\xymatrixrowsep{5pc}
\xymatrix{
\text{Hyperbolic bilinear } F_N, G_L \text{ in } \h^3 \ar@{-->}[r]^{\text{wish to obtain}} \ar[d]^{\text{(1) change of variables}} & C(N,L) \norm{F_N}_{L^2 (\h^3)} \norm{G_L}_{L^2 (\h^3)}  \\
\text{Euclidean bilinear } f_{N'}, g_{L'} \text{ in } \R^3 \ar@{->}[r]^{\text{(2) Shao's bilinear in $\R^3$}} & C(N' , L') \norm{f_N}_{L^2(\R^3)} \norm{g_L}_{L^2(\R^3)} \ar[u]^{\text{(3) change of variables back}}
}
\\
\end{align}

More precisely, assuming $N \leq L$, we wish to prove
\begin{align}\label{eq Bilinear}
& \quad \norm{e^{it \Delta_{\h^3}} F_N e^{it \Delta_{\h^3}} G_L }_{L_{t,x}^q (\R \times \h^3)} \stackrel{(1)}{\sim} \norm{e^{it \Delta_{\R^3}} f_{N'} e^{it \Delta_{\R^3}} g_{L'} }_{L_{t,x}^q (\R \times \R^3)} \\
& \stackrel{(2)}{\lesssim} C(N', L') \norm{f_{N'}}_{L^2 (\R^3)}\norm{g_{L'}}_{L^2 (\R^3)} \stackrel{(3)}{\sim}  C(N, L) \norm{F_N}_{L^2 (\h^3)}\norm{G_L}_{L^2 (\h^3)} .
\end{align}
where the inequality (2) is some suitable bilinear estimate on $\R^3$. Moreover, we need to verify that under the change of variables, the frequency location preserves in (1) and (3), that is, $N \sim N'$ and $L \sim L'$.

Recall the linear solutions on $\R^3$ and $\h^3$ 
\begin{align}
e^{it \Delta_{\R^3}} f(r) & = \int_0^{\infty} J_{\frac{1}{2}} (kr) k^{\frac{3}{2}} r^{-\frac{1}{2}} e^{-itk^2} \widehat{f} (k) \, dk , \\
e^{it \Delta_{\h^3}} F(r) & = \int_0^{\infty} e^{-it (\lambda^2 + \rho^2)} \widetilde{F} (\lambda) \frac{\sin (\lambda r)}{\lambda \sinh r} \lambda^2 \, d \lambda ,
\end{align}
where $J_{\alpha}$ is Bessel functions of the first kind of order $\alpha$. Recall that $J_{\frac{1}{2}}$ has the following explicit formula
\begin{align}\label{eq Bessel}
J_{\frac{1}{2}} (z) = \sqrt{\frac{2}{\pi z}} \sin z .
\end{align}

Under the change of variables, we have
\begin{align}
\Phi (t,r) & = e^{it} \frac{\sinh r}{r} e^{it \Delta_{\h^3}} F_N (r) \\
& = e^{it} \int_0^{\infty} e^{-it (\lambda^2 + \rho^2)} \widetilde{F}_N (\lambda) \frac{\sinh r}{r} \frac{\sin (\lambda r)}{\lambda \sinh r} \lambda^2 \, d \lambda \\
& = e^{it} \int_0^{\infty} e^{-it (\lambda^2 + \rho^2)} \widetilde{F}_N (\lambda) \frac{\sin (\lambda r)}{\lambda r} \lambda^2 \, d\lambda ,
\end{align}
here we note that $\Phi $ is a linear solution in $\R^3$.

Then
\begin{align}
\widehat{\Phi} (t,k) & = \int_0^{\infty} \Phi(t,r) J_{\frac{1}{2}} (kr) r^{\frac{3}{2}} k^{-\frac{1}{2}} \, dr\\
& = e^{it} \int_0^{\infty} \int_0^{\infty} e^{-it (\lambda^2 + \rho^2)} \widetilde{F}_N (\lambda) \frac{\sin (\lambda r)}{\lambda r} \lambda^2 J_{\frac{1}{2}} (kr) r^{\frac{3}{2}} k^{-\frac{1}{2}} \, d\lambda dr \\
& =  e^{it} \int_0^{\infty} \int_0^{\infty} e^{-it (\lambda^2 + \rho^2)} \widetilde{F}_N (\lambda) \sin (\lambda r) J_{\frac{1}{2}} (kr) \lambda  r^{\frac{1}{2}} k^{-\frac{1}{2}} \, d\lambda dr  \\
&  =  e^{it} \int_0^{\infty}  \parenthese{ \int_0^{\infty} J_{\frac{1}{2}} (kr)  \sin (\lambda r)  r^{\frac{1}{2}} \, dr }  e^{-it (\lambda^2 + \rho^2)} \widetilde{F}_N (\lambda) \lambda   k^{-\frac{1}{2}} \, d\lambda .
\end{align}

Thanks to the exact expression of $J_{\frac{1}{2}}$ in \eqref{eq Bessel}, we write
\begin{align}
\int_0^{\infty} J_{\frac{1}{2}} (kr)  \sin (\lambda r)  r^{\frac{1}{2}} \, dr  =c  k^{-\frac{1}{2}}  \int_0^{\infty} \sin (kr) \sin (\lambda r) \, dr . 
\end{align}
We wish to show $\int_0^{\infty} J_{\frac{1}{2}} (kr)  \sin (\lambda r)  r^{\frac{1}{2}} \, dr $ is a delta like function in $k, \lambda$. In fact, let
    \begin{align}
        d\mu^{\lambda}(k):= c  k^{-\frac{1}{2}}  \int_0^{\infty} \sin (kr) \sin (\lambda r) \, dr ,
    \end{align}
then, since the integrand is even,
    \begin{align}
        d\mu^{\lambda}(k)&= \frac{c}{2}  k^{-\frac{1}{2}}  \int_{\mathbb{R}} \sin (kr) \sin (\lambda r) \, dr\\
        &= \frac{c}{4}  k^{-\frac{1}{2}}  \int_{\mathbb{R}} \cos((\lambda-k)r) - \cos((\lambda+k)r) \, dr\\
        &=\frac{c}{4}  k^{-\frac{1}{2}}  \re\left(\int_{\mathbb{R}} e^{i(\lambda-k)r} - e^{i(\lambda+k)r} \, dr \right)\\
        &=\frac{2\pi c}{4}  k^{-\frac{1}{2}}  (\delta_{\lambda}(k) - \delta_{-\lambda}(k) ) .
    \end{align}
As $k, \lambda >0$, we have
\begin{align}\label{eq delta}
d\mu^{\lambda}(k) = c k^{-\frac{1}{2}} \delta_{\lambda} (k) .
\end{align}
Then
\begin{align}
\widehat{\Phi} (t,k) = c e^{it} e^{-it (k^2 + \rho^2)} \widetilde{F}_N(k) .
\end{align}
which implies that in \eqref{eq Bilinear}, $N = N'$ and  $L = L'$, that is, under that change of variables, the linear solutions in $\R^3$ and $\h^3$ share the same frequency location range.

\begin{remark}
Let us stress that such frequency stability is the key feature in applying Euclidean bilinear estimates. In fact, in higher dimensions, due to the presence of the nontrivial potential in the change of variables trick (see Section \ref{ssec COV}), the identity in \eqref{eq delta} does not hold, but holds in a weak sense, that is, it will be a Dirac delta function plus an error term. However, the error term is not small enough to generalize this argument to higher dimensions. Thanks to \eqref{eq delta}, we can proceed with the bilinear estimate as follows. 
\end{remark}

Now the initial data $f_N$ of $\Phi$ can be found in the following form
\begin{align}
f_N & = e^{-it\Delta_{\R^3}} \Phi \\
\widehat{f_N} (k) & = e^{it k^2} \widehat{\Phi} (t,k) = e^{it k^2} c e^{it} e^{-it (k^2 + \rho^2)} \widetilde{F}_N(k)  = c e^{it} e^{-it \rho^2} \widetilde{F}_N(k)
\end{align}
and
\begin{align}
\norm{f_N}_{L^2(\R^3)} =c \norm{\widetilde{F}_N}_{L_{k}^2 (\R^3)} = c \norm{F_N}_{L^2 (\h^3)} ,
\end{align}
where the last equality is due to Plancherel theorem.

In fact, the constant $c$ above should be $c=1$, since
\begin{align}
\norm{f_N}_{L^2 (\R^3)} & = \norm{e^{-it\Delta_{\R^3}} \Phi }_{L^2(\R^3)} = \norm{\Phi }_{L^2(\R^3)} = \norm{e^{it} \frac{\sinh r}{r} e^{it \Delta_{\h^3}} F_N (r)}_{L^2(\R^3)} \\
& = \norm{e^{it \Delta_{\h^3}} F_N (r)}_{L^2(\h^3)} = \norm{ F_N (r)}_{L^2(\h^3)} .
\end{align}

Now we proved that $\Phi (t,r) = e^{it} \frac{\sinh r}{r} e^{it \Delta_{\h^3}} F_N (r)$ solves a linear Schr\"odinger equation on $\R^3$ with initial data $f_N$ localized around frequency $N$ and invariance of its $L^2$ norm under change of variables. That is,
\begin{align}
e^{it\Delta_{\R^3}} f_N =  e^{it} \frac{\sinh r}{r} e^{it \Delta_{\h^3}} F_N \implies e^{-it} \frac{r}{\sinh r} e^{it\Delta_{\R^3}} f_N =   e^{it \Delta_{\h^3}} F_N ,
\end{align}
then
\begin{align}
e^{it \Delta_{\h^3}} F_N  \, e^{it \Delta_{\h^3}} G_L  =  e^{-2it} \frac{r^2}{\sinh^2 r}  e^{it\Delta_{\R^3}} f_N \, e^{it\Delta_{\R^3}} g_L .
\end{align}

For $p \geq 1$, we claim
\begin{align}
\norm{e^{it \Delta_{\h^3}} F_N \, e^{it \Delta_{\h^3}} G_L}_{L^p (\h^3)} \leq \norm{e^{it\Delta_{\R^3}} f_N \, e^{it\Delta_{\R^3}} g_L}_{L^p(\R^3)} . 
\end{align}
In fact,
\begin{align}
\norm{e^{it \Delta_{\h^3}} F_N \, e^{it \Delta_{\h^3}} G_L}_{L^p (\h^3)}^p  & = \int \abs{e^{it \Delta_{\h^3}} F_N \, e^{it \Delta_{\h^3}} G_L}^p \sinh^2 r \, dr \\
& = \int \abs{e^{-2it} \frac{r^2}{\sinh^2 r}  e^{it\Delta_{\R^3}} f_N \, e^{it\Delta_{\R^3}} g_L}^p \sinh^2 r \, dr \\
& = \int \abs{ e^{it\Delta_{\R^3}} f_N \, e^{it\Delta_{\R^3}} g_L}^p \frac{r^{2p-2}}{\sinh^{2p -2} r} r^2 \, dr \\
& = \int \abs{ e^{it\Delta_{\R^3}} f_N \, e^{it\Delta_{\R^3}} g_L}^p \frac{r^{2p-2}}{\sinh^{2p -2} r} r^2 \, dr \\
& \leq \int \abs{ e^{it\Delta_{\R^3}} f_N \, e^{it\Delta_{\R^3}} g_L}^p  r^2 \, dr = \norm{e^{it\Delta_{\R^3}} f_N \, e^{it\Delta_{\R^3}} g_L}_{L^p(\R^3)}^p .
\end{align}

Hence we are able to apply the result in Lemma \ref{lem Shao} and write
\begin{align}
\norm{e^{it \Delta_{\h^3}} F_N \, e^{it \Delta_{\h^3}} G_L}_{L_{t,x}^p (\R \times \h^3)} &\leq \norm{e^{it\Delta_{\R^3}} f_N \, e^{it\Delta_{\R^3}} g_L}_{L_{t,x}^p(\R \times \R^3)} \\
& \lesssim C(N,L) \norm{f_N}_{L^2 (\R^3)} \norm{g_L}_{L^2 (\R^3)} \\
& = C(N,L) \norm{F_N}_{L^2 (\h^3)} \norm{G_L}_{L^2 (\h^3)} .
\end{align}
We finish the proof of Proposition \ref{prop Bilinear}.
\end{proof}

We also have the following nonlinear version of Proposition \ref{prop Bilinear} that follows as a corollary.

\begin{lemma}[Equivalent to Lemma 3.4 in \cite{CKSTT08} (see also Lemma 2.5 in \cite{Visan07})]\label{lem nonbilinear}
For any space-time slab $I \times \h^3$, any $t_0 \in I$, and any $\delta>0$, we have for $N \ll L$
\begin{align}\label{eq nonbilinear}
\|P_N u P_L v\|_{L_{t, x}^q(I \times \h^3)} \lesssim C(N,L)  \norm{P_N u}_{S_*^0} \norm{P_L v}_{S_*^0} ,
\end{align}
where the $S_*^0$ norm is defined as follows
\begin{align}
\norm{P_N u}_{S_*^0} & : = \norm{P_N u(t_0)}_{L_x^2} + \norm{(i \partial_t+\Delta_{\h^3} )P_N u}_{L_t^1 L_x^2 (I \times \h^3)} = \norm{P_N u(t_0)}_{L_x^2} + \norm{P_N F(u)}_{L_t^1 L_x^2 (I \times \h^3)} ,\\
\norm{P_L v}_{S_*^0} & : = \norm{P_L v(t_0)}_{L_x^2} + \norm{(i \partial_t+\Delta_{\h^3} )P_L v}_{L_t^1 L_x^2 (I \times \h^3)} = \norm{P_L v(t_0)}_{L_x^2} + \norm{P_L F(v)}_{L_t^1 L_x^2 (I \times \h^3)} ,
\end{align}
and $C(N,L)$ is the coefficient in Lemma \ref{lem Shao} for different range of $q$.

In particular, when $q=2$, \eqref{eq nonbilinear} agrees with the result in Lemma 3.4 in \cite{CKSTT08}.
\end{lemma}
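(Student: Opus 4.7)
The strategy is to upgrade the linear bilinear Strichartz bound of Proposition \ref{prop Bilinear} to the stated nonlinear version via Duhamel's formula, following the standard argument in \cite{CKSTT08, Visan07}. First, I would write each frequency-localized factor in Duhamel form anchored at $t_0$:
\[
P_N u(t) = e^{i(t-t_0)\Delta_{\h^3}} P_N u(t_0) - i\int_{t_0}^t e^{i(t-s)\Delta_{\h^3}} P_N F(u)(s)\, ds,
\]
and analogously for $P_L v(t)$, where the ``forcing'' is $F(u) := (i\partial_t+\Delta_{\h^3}) u$ (which equals $|u|^{4/3}u$ when $u$ solves \eqref{NLS}). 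Multiplying these expansions yields four contributions to $P_N u \cdot P_L v$: linear-linear, linear-Duhamel, Duhamel-linear, and Duhamel-Duhamel.

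For the linear-linear piece, I would directly invoke Proposition \ref{prop Bilinear}, since $e^{i(t-t_0)\Delta_{\h^3}}$ commutes with the heat-flow projection, preserves radial symmetry, and preserves the frequency localization. For each of the three remaining pieces, I would apply Minkowski's integral inequality to pull the $ds$ (and $d\sigma$) integrals outside the $L^q_{t,x}$ norm. After doing so, the inner bilinear quantity is a product of two free linear evolutions with frequency-localized data: for instance, $e^{i(t-s)\Delta_{\h^3}} P_N F(u)(s)$ (a linear evolution starting at time $s$ from datum $P_N F(u)(s)$) paired with $e^{i(t-t_0)\Delta_{\h^3}} P_L v(t_0)$. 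Proposition \ref{prop Bilinear} then yields $C(N,L) \|P_N F(u)(s)\|_{L^2_x} \|P_L v(t_0)\|_{L^2_x}$, and analogously for the other two mixed terms. Integrating in $s$ (and $\sigma$) converts the $L^2_x$ norms of the inhomogeneity into $L^1_t L^2_x$ norms, and adding the four pieces factors as $C(N,L)\cdot(\text{data}_N+\text{forcing}_N)\cdot(\text{data}_L+\text{forcing}_L)$, which is exactly the desired $S_*^0$-type bound.

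The main subtlety is that Proposition \ref{prop Bilinear} requires the hyperbolic Fourier transforms of the factors to be \emph{compactly} supported in $\{\lambda \sim N\}$ and $\{\lambda \sim L\}$, whereas the heat-flow projections $P_N$ and $P_L$ only concentrate Fourier mass near these scales with Gaussian decay, via the multiplier $N^{-2}(\lambda^2+\rho^2) e^{-(\lambda^2+\rho^2)/N^2}$. To bridge this, I would decompose each heat-flow projection as a continuous (or dyadic) superposition of compactly supported annular projections $\widetilde P_M$, apply Proposition \ref{prop Bilinear} to each pair $(\widetilde P_M, \widetilde P_{M'})$ with $M\sim N$ and $M'\sim L$, and sum against the rapid decay of the heat-flow weight. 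Because the coefficient $C(N,L)$ is homogeneous in the frequency scales, these sums converge and reproduce $C(N,L)$ up to absolute constants. Radial symmetry is preserved throughout, since the heat semigroup, the Schrödinger propagator, and all projections commute with the $SO(3)$-action on $\h^3$.
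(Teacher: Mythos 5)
Your proposal is correct and follows essentially the same route as the paper's proof: Duhamel expansion at $t_0$, splitting the product into the four linear/inhomogeneous combinations, Minkowski's inequality to pull the time integrals outside the $L^q_{t,x}$ norm, and insertion of the homogeneous bilinear estimate of Proposition \ref{prop Bilinear} in each term. Your additional remark about reconciling the non-compact frequency support of the heat-flow projections with the compact-support hypothesis of Proposition \ref{prop Bilinear} addresses a point the paper passes over silently, and is a worthwhile refinement rather than a deviation.
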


\begin{proof}[Proof of Lemma \ref{lem nonbilinear}]
The proof is adapted from Lemma 3.4 in \cite{CKSTT08}.
Using the Duhamel formula, we write
\begin{align}
u & = e^{i(t-t_0)\Delta_{\h^3}} u(t_0)  -i \int_{t_0}^t e^{i(t-s)\Delta_{\h^3}} (i \partial_s + \Delta_{\h^3}) u (s) \, ds ,\\
v & = e^{i(t-t_0)\Delta_{\h^3}} v(t_0)  -i \int_{t_0}^t e^{i(t-s)\Delta_{\h^3}} (i \partial_s + \Delta_{\h^3}) v (s) \, ds .
\end{align}
We obtain
\begin{align}
&\norm{P_N u P_L v}_{L_{t,x}^q}  \\
& \quad \lesssim \norm{ e^{i(t-t_0)\Delta_{\h^3}} P_N u(t_0) e^{i(t-t_0)\Delta_{\h^3}} P_L v(t_0)}_{L_{t,x}^q} \\
& \quad\quad + \norm{e^{i(t-t_0)\Delta_{\h^3}} P_N u(t_0) \parenthese{ \int_{t_0}^t e^{i(t-s)\Delta_{\h^3}} (i \partial_s + \Delta_{\h^3}) P_L v(s) \, ds }}_{L_{t,x}^q} \\
& \quad \quad + \norm{e^{i(t-t_0)\Delta_{\h^3}} P_L v(t_0) \parenthese{\int_{t_0}^t e^{i(t-s)\Delta_{\h^3}} (i \partial_s + \Delta_{\h^3}) P_N u(s) \, ds }}_{L_{t,x}^q}  \\
& \quad  \quad+ \norm{\parenthese{\int_{t_0}^t e^{i(t-s)\Delta_{\h^3}}(i \partial_s + \Delta_{\h^3}) P_N u(s) \, ds} \parenthese{\int_{t_0}^t e^{i(t-s')\Delta_{\h^3}}(i \partial_{s'} + \Delta_{\h^3}) P_L v(s') \, ds'}}_{L_{t,x}^q} \\
& \quad = : I_1 + I_2 + I_3 + I_4 .
\end{align}

Term $I_1$ is treated in Proposition \ref{prop Bilinear}. For Term $I_2$, by the Minkowski inequality (since $q >1$), we write
\begin{align}
I_2 \lesssim \int_{\R} \norm{ e^{i(t-t_0)\Delta_{\h^3}} P_N u(t_0) e^{i(t-s)\Delta_{\h^3}} (i\partial_s + \Delta_{\h^3}) P_L v(s)}_{L_{t,x}^q} \, ds
\end{align}
then this case follows from the homogenous estimate in  Proposition \ref{prop Bilinear}. Term $I_3$ can be treated similarly.

For Term $I_4$, by the Minkowski inequality, we have
\begin{align}
I_4 \lesssim \int_{\R} \int_{\R} \norm{\parenthese{e^{i(t-s)\Delta_{\h^3}} (i\partial_s + \Delta_{\h^3}) P_N u(s) } \parenthese{e^{i(t-s')\Delta_{\h^3}} (i\partial_{s'} + \Delta_{\h^3}) P_L v(s') }}_{L_{t,x}^q} \, ds ds' ,
\end{align}
then the proof follows by inserting in the integrand the homogeneous estimate in Proposition \ref{prop Bilinear}. This completes the proof of Lemma \ref{lem nonbilinear}.
\end{proof}

\subsection{Improved Strichartz inequalities}
In this subsection, we prove a useful inequality that will be utilized in profile decomposition (see Section \ref{ssec profile}). 
\begin{proposition}[Improved Strichartz inequalities]\label{prop Improved}
\begin{align}
\norm{e^{it\Delta_{\h^3}}f}_{L_{t,x}^{\frac{10}{3}} (\R \times \h^3)}^{\frac{10}{3}} \lesssim \sup_{N} \norm{N^{-\frac{3}{2}}e^{it\Delta_{\h^3}} P_N f}_{L_{t,x}^{\infty} (\R \times \h^3)}^{\frac{1}{3}} \norm{f}_{L_x^2 (\h^3)}^{3} .
\end{align}
More generally, for $\frac{4}{3} < q < \frac{5}{3}$
\begin{align}
\norm{e^{it\Delta_{\h^3}}f}_{L_{t,x}^{\frac{10}{3}} (\R \times \h^3)}^{\frac{10}{3}} \lesssim \sup_{N} \norm{N^{-\frac{3}{2}}e^{it\Delta} P_N f}_{L_{t,x}^{\infty} (\R \times \h^3)}^{\frac{10}{3} -2q} \norm{f}_{L_x^2 (\h^3)}^{2q} .
\end{align}
\end{proposition}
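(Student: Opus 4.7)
The plan is to follow the classical Bourgain strategy for improved Strichartz estimates, leveraging the radial bilinear estimate from Proposition \ref{prop Bilinear} together with a real-interpolation argument against a trivial pointwise $L^{\infty}$ bound extracted from the defining Besov-type quantity $A := \sup_N \|N^{-3/2}e^{it\Delta_{\h^3}}P_N f\|_{L^{\infty}_{t,x}}$.

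First, I would decompose $u = e^{it\Delta_{\h^3}}f$ dyadically via the heat-flow Littlewood--Paley projections (Definition \ref{defn LP}), writing $u = \sum_N u_N$ with $u_N = P_N u$. Using the pointwise bound $|u|^2 \lesssim \sum_{N \leq L}|u_N u_L|$ together with H\"older's inequality in the dual exponents $(5/2, 5/3)$, the $L^{10/3}$-norm can be absorbed as
\begin{align*}
\|u\|_{L^{10/3}_{t,x}}^{10/3} \lesssim \|u\|_{L^{10/3}_{t,x}}^{4/3}\sum_{N \leq L}\|u_N u_L\|_{L^{5/3}_{t,x}},
\end{align*}
giving $\|u\|_{L^{10/3}_{t,x}}^{2} \lesssim \sum_{N \leq L}\|u_N u_L\|_{L^{5/3}_{t,x}}$.

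Next, for each dyadic pair with $N \leq L/4$ (the case $N \sim L$ being handled by a crude H\"older estimate and summed via Cauchy--Schwarz), I would invoke Proposition \ref{prop Bilinear} at an auxiliary exponent $p_0 \in (4/3, 5/3]$, yielding
\begin{align*}
\|u_N u_L\|_{L^{p_0}_{t,x}} \lesssim N^{7/2 - 5/p_0}L^{-1/2}\|P_N f\|_{L^2}\|P_L f\|_{L^2},
\end{align*}
and combine it with the trivial bound $\|u_N u_L\|_{L^{\infty}_{t,x}} \leq A^2(NL)^{3/2}$ through real interpolation at the parameter $\theta = 3p_0/5 \in (4/5, 1]$. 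A direct computation yields
\begin{align*}
\|u_N u_L\|_{L^{5/3}_{t,x}} \lesssim A^{2(1-\theta)}(N/L)^{2\theta - 3/2}\|P_N f\|_{L^2}^{\theta}\|P_L f\|_{L^2}^{\theta}.
\end{align*}
Since $2\theta - 3/2 > 0$ for $\theta > 3/4$, Schur's test applies and the double sum over $N \leq L$ converges, controlled by $A^{2(1-\theta)}\sum_N \|P_N f\|_{L^2}^{2\theta}$. Setting $q = 5\theta/3$, so that the free parameter $\theta \in (4/5, 1]$ corresponds to $q \in (4/3, 5/3]$, and raising the final inequality to the power $5/3$ to recover $\|u\|_{L^{10/3}}^{10/3}$, the exponents line up to give $A^{10/3 - 2q}\|f\|_{L^2}^{2q}$, with the special case $\theta = 9/10$ (equivalently $q = 3/2$) producing the first, more explicit claim.

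The principal obstacle is the final step of translating $\sum_N \|P_N f\|_{L^2}^{2\theta}$ into a bound by $\|f\|_{L^2}^{2\theta}$: Plancherel only gives $\sum_N \|P_N f\|_{L^2}^2 = \|f\|_{L^2}^2$, and in general $\sum_N\|P_N f\|_{L^2}^{2\theta}$ for $\theta < 1$ cannot be dominated by $\|f\|_{L^2}^{2\theta}$ alone. The way I would approach this is to use the universal envelope $\|u_N\|_{L^{\infty}_{t,x}} \leq \min\{AN^{3/2}, N^{3/2}\|P_N f\|_{L^2}\}$ (the second inequality coming from Bernstein together with the $L^2$-conservation of $u_N$), which forces $A \leq \sup_N \|P_N f\|_{L^2} \leq \|f\|_{L^2}$, and then split the dyadic sum according to whether $\|P_N f\|_{L^2}$ is comparable to $A$ or not. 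On the former regime, $A$ bounds $\|P_N f\|_{L^2}$ from above; on the latter, one writes $\|P_N f\|_{L^2}^{2\theta} = \|P_N f\|_{L^2}^{2\theta - 2}\|P_N f\|_{L^2}^2$ and uses the appropriate comparison with $A$ to absorb the negative power, after which Plancherel closes the estimate. Tracking the bookkeeping carefully through the interpolation parameter $\theta$ (and hence $q$) yields the asserted powers.
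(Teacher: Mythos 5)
Your overall architecture (bilinear estimate from Proposition \ref{prop Bilinear} interpolated against the trivial $L^\infty$ bound, then a Schur-type summation) matches the paper's, but your very first reduction diverges from it in a way that creates a gap you correctly sense but do not repair. You reduce via H\"older to $\norm{u}_{L^{10/3}_{t,x}}^{2} \lesssim \sum_{N\le L}\norm{u_Nu_L}_{L^{5/3}_{t,x}}$, i.e.\ you put power $1$ on each bilinear piece. After interpolating $L^{5/3}$ between $L^{p_0}$ and $L^\infty$ with weight $\theta=3p_0/5<1$, each $\norm{P_Nf}_{L^2}$ then carries the exponent $\theta<1$, and the Schur step requires $\sum_N\norm{P_Nf}_{L^2}^{2\theta}\lesssim\norm{f}_{L^2}^{2\theta}$, which is false since $\ell^2\not\hookrightarrow\ell^{2\theta}$ for $\theta<1$ (take $\norm{P_Nf}_{L^2}=\varepsilon$ for $K$ dyadic blocks: the left side is $K^{1-\theta}$ times the right). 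The paper avoids this entirely by keeping the \emph{full} power $5/3$ on each piece, writing $\norm{u}_{L^{10/3}}^{10/3}\lesssim\sum_{N\le L}\norm{u_Nu_L}_{L^{5/3}}^{5/3}$ — a step that is not the triangle inequality but an almost-orthogonality lemma (Lemma A.9 in \cite{KVClay}, which the paper cites). With the power $5/3$ retained, the $L^2$ exponent that survives the interpolation is $q\in(\tfrac43,\tfrac53)>1$, and $\ell^2\hookrightarrow\ell^{2q}$ closes the sum.

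Your proposed repair — the envelope $\norm{u_N}_{L^\infty}\le\min\{AN^{3/2},N^{3/2}\norm{P_Nf}_{L^2}\}$ and a case split on whether $\norm{P_Nf}_{L^2}$ exceeds $A$ — does not recover the estimate. On the blocks with $\norm{P_Nf}_{L^2}\ge A$, writing $\norm{P_Nf}^{2\theta}=\norm{P_Nf}^{2\theta-2}\norm{P_Nf}^{2}\le A^{2\theta-2}\norm{P_Nf}^{2}$ and summing gives $A^{2-2\theta}\sum\norm{P_Nf}^{2\theta}\le\norm{f}_{L^2}^{2}$, which (since $A\lesssim\norm{f}_{L^2}$) is strictly \emph{weaker} than the target $A^{2-2\theta}\norm{f}_{L^2}^{2\theta}$ — it destroys exactly the $A$-gain the proposition exists to provide for the inverse Strichartz argument. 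On the blocks with $\norm{P_Nf}_{L^2}\le A$ the inequality $\norm{P_Nf}^{2\theta-2}\le A^{2\theta-2}$ goes the wrong way, and $\sum_N\norm{P_Nf}^{2\theta}$ over that regime cannot be controlled by $\norm{f}_{L^2}^{2\theta}$ at all. Likewise, interpolating the two branches of the min to raise the $L^2$ exponent up to $1$ forces the $A$ factor to exponent $0$. The fix is not more clever bookkeeping but replacing your first reduction with the almost-orthogonal one; once you do that, the rest of your computation (the bilinear input at $p_0=q$, the $(N/L)^{2q-5/2}$ off-diagonal decay, and the Cauchy--Schwarz/Schur summation) goes through exactly as in the paper.
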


\begin{proof}[Proof of Proposition \ref{prop Improved}]
Take $q = \frac{3}{2} \in (\frac{4}{3},2]$ in Lemma \ref{prop Bilinear}
\begin{align}
\norm{e^{it \Delta_{\h^3}} f_N \, e^{it \Delta_{\h^3}} g_L }_{L_{t,x}^{\frac{3}{2}} (\R \times \h^3)} \lesssim N^{\frac{1}{6}} L^{-\frac{1}{2}} \norm{f_N}_{L_x^2 (\h^3)} \norm{g_L}_{L_x^2 (\h^3)} .
\end{align}
Then breaking $f$ into different frequencies, using triangle inequality, interpolation, and Proposition \eqref{prop Bilinear}, we write
\begin{align}
&\quad \norm{e^{it\Delta_{\h^3}}f}_{L_{t,x}^{\frac{10}{3}} (\R \times \h^3)}^{\frac{10}{3}} \lesssim  \norm{\sum_{N \leq L} [e^{it\Delta_{\h^3}}  P_N f] [e^{it\Delta_{\h^3}} P_L f]}_{L_{t,x}^{\frac{5}{3}} (\R \times \h^3)}^{\frac{5}{3}} \\
& \lesssim  \sum_{N \leq L}  \norm{[e^{it\Delta_{\h^3}}  P_N f] [e^{it\Delta_{\h^3}} P_L f]}_{L_{t,x}^{\frac{5}{3}} (\R \times \h^3)}^{\frac{5}{3}} \\
& \lesssim \sum_{N \leq L} \norm{e^{it\Delta_{\h^3}}  P_N f \, e^{it\Delta_{\h^3}} P_L f}_{L_{t,x}^{\infty} (\R \times \h^3)}^{\frac{1}{6}} \norm{e^{it\Delta_{\h^3}}  P_N f \, e^{it\Delta_{\h^3}} P_L f}_{L_{t,x}^{\frac{3}{2}} (\R \times \h^3)}^{\frac{3}{2}} \\
& \lesssim \sum_{N \leq L} \norm{N^{-\frac{3}{2}}e^{it\Delta_{\h^3}}  P_N f }_{L_{t,x}^{\infty} }^{\frac{1}{6}}  \norm{L^{-\frac{3}{2}}e^{it\Delta_{\h^3}} P_L f}_{L_{t,x}^{\infty} }^{\frac{1}{6}} (N^{\frac{3}{2}} L^{\frac{3}{2}})^{\frac{1}{6}} (N^{\frac{1}{6}} L^{-\frac{1}{2}})^{\frac{3}{2}} \norm{P_N f}_{L_x^2}^{\frac{3}{2}} \norm{P_L f}_{L_x^2}^{\frac{3}{2}} \\
& \lesssim \sup_{N} \norm{N^{-\frac{3}{2}}e^{it\Delta_{\h^3}} P_N f}_{L_{t,x}^{\infty} (\R \times \h^3)}^{\frac{1}{3}} \sum_{N \leq L} N^{\frac{1}{2}} L^{-\frac{1}{2}} \norm{P_N f}_{L_x^2(\h^3)}^{\frac{3}{2}} \norm{P_L f}_{L_x^2(\h^3)}^{\frac{3}{2}} \\
& \lesssim \sup_{N} \norm{N^{-\frac{3}{2}}e^{it\Delta_{\h^3}} P_N f}_{L_{t,x}^{\infty} (\R \times \h^3)}^{\frac{1}{3}} \norm{f}_{L_x^2 (\h^3)}^{3} .
\end{align}
Note that the second inequality is not always true, one needs to use almost orthogonality (see, for example, Lemma A.9 in \cite{KVClay}).

As one notices in the computation above $\frac{3}{2}$ norm is not essential, any $q$ norm ($ \frac{4}{3} < q < \frac{5}{3}$) would work. In fact,
\begin{align}
&\quad \norm{e^{it\Delta_{\h^3}}f}_{L_{t,x}^{\frac{10}{3}} (\R \times \h^3)}^{\frac{10}{3}} \lesssim  \norm{\sum_{N \leq L} [e^{it\Delta_{\h^3}}  P_N f] [e^{it\Delta_{\h^3}} P_L f]}_{L_{t,x}^{\frac{5}{3}} (\R \times \h^3)}^{\frac{5}{3}} \\
& \lesssim  \sum_{N \leq L}  \norm{[e^{it\Delta_{\h^3}}  P_N f] [e^{it\Delta_{\h^3}} P_L f]}_{L_{t,x}^{\frac{5}{3}} (\R \times \h^3)}^{\frac{5}{3}} \\
& \lesssim \sum_{N \leq L} \norm{e^{it\Delta_{\h^3}}  P_N f \, e^{it\Delta_{\h^3}} P_L f}_{L_{t,x}^{\infty} (\R \times \h^3)}^{\frac{5}{3} -q} \norm{e^{it\Delta_{\h^3}}  P_N f \, e^{it\Delta_{\h^3}} P_L f}_{L_{t,x}^{q} (\R \times \h^3)}^{q} \\
& \lesssim \sum_{N \leq L} \norm{N^{-\frac{3}{2}}e^{it\Delta_{\h^3}}  P_N f }_{L_{t,x}^{\infty} }^{\frac{5}{3}-q}  \norm{L^{-\frac{3}{2}}e^{it\Delta_{\h^3}} P_L f}_{L_{t,x}^{\infty} }^{\frac{5}{3}-q} (N^{\frac{3}{2}} L^{\frac{3}{2}})^{\frac{5}{3}-q} (N^{\frac{7}{2} -\frac{5}{q}} L^{-\frac{1}{2}})^{q} \norm{P_N f}_{L_x^2}^{q} \norm{P_L f}_{L_x^2}^{q} \\
& \lesssim \sup_{N} \norm{N^{-\frac{3}{2}}e^{it\Delta_{\h^3}} P_N f}_{L_{t,x}^{\infty} (\R \times \h^3)}^{\frac{10}{3} -2q} \sum_{N \leq L} N^{-\frac{5}{2} + 2q} L^{\frac{5}{2}-2q} \norm{P_N f}_{L_x^2(\h^3)}^{q} \norm{P_L f}_{L_x^2(\h^3)}^{q} \\
& \lesssim \sup_{N} \norm{N^{-\frac{3}{2}}e^{it\Delta_{\h^3}} P_N f}_{L_{t,x}^{\infty} (\R \times \h^3)}^{\frac{10}{3} -2q} \norm{f}_{L_x^2 (\h^3)}^{2q} ,
\end{align}
where we needed $\frac{5}{2} - 2q < 0$ to sum the dyadic coefficient, which implies $q > \frac{5}{4}$. Also, we need $ \frac{4}{3} < q < 2$ to be able to use  Shao's bilinear estimates in Lemma \ref{lem Shao}, and $q < \frac{5}{3}$ to make sense of the interpolation.

In the last inequality above, we sum for $a>0$, $p \geq 1$,
\begin{align}
\sum_{N \leq L} (\frac{N}{L})^{a} \norm{P_N f}_{L_x^2}^p \norm{P_L f}_{L_x^2}^p & \lesssim (\sum_{N \leq L} (\frac{N}{L})^{a} \norm{P_N f}_{L_x^2}^{2p} )^{\frac{1}{2}} (\sum_{N \leq L} (\frac{N}{L})^{a} \norm{P_L f}_{L_x^2}^{2p})^{\frac{1}{2}} \lesssim \norm{f}_{L_x^2}^{2p} ,
\end{align}
where the first sum above is controlled by
\begin{align}
\sum_{N \leq L} (\frac{N}{L})^{a} \norm{P_N f}_{L_x^2}^{2p} & = \sum_{ N } N^{a} \norm{P_N f}_{L_x^2}^{2p} \sum_{L: L \geq N}  \frac{1}{L^{a}}   \\
& \lesssim  \sum_{ N } N^{a} \norm{P_N f}_{L_x^2}^{2p} \frac{1}{N^{a}} = \sum_{N} \norm{P_N f}_{L_x^2}^{2p} \leq  (\sum_N  \norm{P_N f}_{L_x^2}^2 )^{p} = \norm{f}_{L_x^2}^{2p} ,
\end{align}
and the second sum above is bounded by
\begin{align}
\sum_{N \leq L} (\frac{N}{L})^{a} \norm{P_L f}_{L_x^2}^2 & = \sum_{L} \frac{1}{L^{a}} \norm{P_L f}_{L_x^2}^{2p} \sum_{N : N \leq L} N^{a} \\
& \lesssim \sum_{L} \frac{1}{L^{a}} \norm{P_L f}_{L_x^2}^{2p} L^{a} = \sum_{L} \norm{P_L f}_{L_x^2}^{2p} \leq \norm{f}_{L_x^2}^{2p} .
\end{align}
Now we complete the proof of Proposition \ref{prop Improved}.
\end{proof}

%%%%%%%%%%%%%%%%%%%%%%%%%%%%%%%%%%%%%%%%%%%%%%%%%%%%%%%%%%%%%%%%%%%
%%%%%%%%%%%%%%%%%%%%%%%%%%%%%%%%%%%%%%%%%%%%%%%%%%%%%%%%%%%%%%%%%%%

\section{Euclidean Approximations}\label{sec euclidean}
This section marks the beginning of the profile decomposition argument. For profiles whose support localizes in hyperbolic space, we would like to use established well-posedness results in Euclidean space along with the fact that hyperbolic space is locally Euclidean to guarantee the global well-posedness of profiles defined by what will be known as ``Euclidean profiles". 

We fix a spherically symmetric function $\eta \in C_0^{\infty} (\R^3)$ supported in the disk of radius $2$ and equal to $1$ in the disk of radius $1$. Given $\phi \in L^2(\R^3)$ and a real number $N \geq 1$ we define
\begin{align}\label{eq f_N}
\begin{aligned}
 \phi_N & \in C_0^{\infty} (\R^3),  & \phi_N (x)  & = \eta(\frac{x}{N^{\frac{1}{2}}}) \cdot (e^{\Delta_{\R^3} / N} \phi) (x), \\
T_N \phi  & \in C_0^{\infty} (\h^3),  &  T_N \phi (y) & = N^{3/2}\phi_N (N\Psi_{\mathcal{I}}^{-1} (y)) ,
\end{aligned}
\end{align}
where $\Psi_{\mathcal{I}}$ is defined in \eqref{eq Diffeo}. Thus $\phi_N$ is a regularized, compactly supported modification  of the profile $\phi$, $N^{3/2}\phi_N (N \cdot)$ is an $L^2$-invariant rescaling of $ \phi_N$, and $T_N \phi$ is the function obtained by transferring $\phi_N$ to a neighborhood of $\0$ in $\h^3$.

Note here the scaling of $N^{3/2}\phi_N (N \cdot)$ is due to the scaling symmetry of the equation $i \partial_t u + \Delta u = \abs{u}^{p-1}u$
\begin{align}
u(t,x) \to \lambda^{\frac{2}{p-1}} u(\lambda^2 t ,\lambda x) .
\end{align}

\begin{theorem}[Mass-critical global well-posedness on $\R^3$ in Tao-Visan-Zhang \cite{TVZ07} and Dodson \cite{Dod3d}]\label{thm Dodson}
Assume $\phi \in L^2(\R^3)$. Then there is a unique global solution $v \in C(\R ; L^2 (\R^3))$ of the initial-value problem
\begin{align}
\begin{cases}
i \partial_t v + \Delta_{\R^3} v = \abs{v}^{\frac{4}{3}} v, \\
v(0,x) = \phi(x), 
\end{cases}
\end{align}
and 
\begin{align}
\norm{v}_{S_{\R^3}^0 (\R \times \R^3)} \leq \widetilde{C} (\norm{\phi}_{L^2 (\R^3)}).
\end{align}
This solution scatters in the sense that  there exists $v^{\pm \infty} \in L^2 (\R^3)$ such that
\begin{align}\label{eq Scattering}
\lim_{ t \to \pm \infty} \norm{v(t) - e^{it\Delta_{\R^3}} v^{\pm \infty}}_{L^2 (\R^3)} =0 .
\end{align} 
If $\phi \in H^5 (\R^3)$, then $v \in C(\R ; H^5 (\R^3))$ and 
\begin{align}
\sup_{t \in \R} \norm{v(t)}_{H^5(\R^3)} \lesssim_{\norm{\phi}_{H^5 (\R^3)}} 1.
\end{align}
\end{theorem}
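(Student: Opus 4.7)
The theorem being stated is a deep and famous result taken directly from the literature: it is the combined outcome of Tao--Visan--Zhang \cite{TVZ07} for the radial case and Dodson \cite{Dod3d} for general (non-radial) data. The plan is therefore not to reprove it but to indicate the architecture of proof and to supply the mild add-on one needs for $H^5$ persistence.

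The strategy follows the concentration-compactness/rigidity roadmap of Kenig--Merle \cite{KM06}. First I would define $M^{\R^3}_{\max}$ as the supremum of masses for which the spacetime norm bound
\begin{align}
\norm{v}_{L^{10/3}_{t,x}(\R \times \R^3)} \leq C(\norm{\phi}_{L^2})
\end{align}
holds, and argue by contradiction that $M^{\R^3}_{\max} < \infty$. Next I would apply the Euclidean mass-critical profile decomposition (Bégout--Vargas, Carles--Keraani) to extract an $L^2$-minimal blow-up solution $v_*$ that is almost periodic modulo the noncompact symmetry group of the equation (scaling, space translation, and Galilean/frequency translation). The rigidity step is to show that such a $v_*$ cannot exist. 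In the radial setting of \cite{TVZ07} this is done by splitting $v_*$ into a soliton-like, self-similar, or low-to-high cascade scenario and ruling each out via virial and frequency-localized Morawetz estimates. In the general setting of \cite{Dod3d}, the essential new ingredient is the long-time Strichartz estimate, which controls the behavior of $v_*$ on long time intervals and provides the error control needed to absorb the non-critical error term in a frequency-localized interaction Morawetz inequality, giving the contradiction.

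The $H^5$ persistence-of-regularity statement is comparatively routine once the global scattering bound $\norm{v}_{S_{\R^3}^0(\R \times \R^3)} \leq \widetilde{C}(\norm{\phi}_{L^2})$ is in hand. I would partition $\R$ into finitely many intervals $I_j$ on each of which $\norm{v}_{L^{10/3}_{t,x}(I_j \times \R^3)}$ is smaller than a universal threshold, and then run the usual Strichartz/Sobolev bootstrap for $\norm{\langle \nabla \rangle^5 v}_{S^0(I_j)}$ on each piece. Since the nonlinearity $|v|^{4/3}v$ is only $C^{2,1/3}$ at the origin, one does not get arbitrarily high Sobolev regularity for free, but $H^5$ is well within the range handled by the standard Christ--Weinstein fractional chain rule together with Strichartz (one could alternatively chase this directly out of the stability theory of \cite{TVZ07, Dod3d}).

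The main obstacle in this chain is of course the rigidity step, which is the heart of \cite{Dod3d}; but since we are citing \cite{TVZ07, Dod3d} as a black box, the genuine work in the present paper is the hyperbolic-to-Euclidean transference (already set up via the change of variables in Section \ref{ssec COV} and the bilinear estimate of Proposition \ref{prop Bilinear}) that allows Theorem \ref{thm Dodson} to be imported into the Euclidean approximation step of the profile decomposition on $\h^3$.
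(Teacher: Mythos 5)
Your proposal is correct and matches the paper's treatment: Theorem \ref{thm Dodson} is stated in the paper purely as a black-box citation of \cite{TVZ07} and \cite{Dod3d}, with no proof given, so acknowledging the concentration-compactness/rigidity architecture of those works and noting that the $H^5$ persistence follows from a standard interval-partition bootstrap off the global $S^0_{\R^3}$ bound is exactly the right level of detail. Nothing further is required here; the substantive work of the paper lies elsewhere, as you observe.
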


\begin{lemma}\label{lem ProfileAppx}
Assume $\phi \in L^2 (\R^3)$, $T_0 \in (0, \infty)$ and $\rho \in \{ 0, 1\}$ are given, and define $T_N\phi$ as in \eqref{eq f_N}. 
\begin{enumerate}
\item
There is $N_0 = N_0 (\phi , T_0)$ sufficiently large such that for any $N \geq N_0$ there is a unique solution $U_N \in C ( (-T_0 N^{-2} , T_0 N^{-2}) ; L^2(\h^3) )$ of the initial-value problem
\begin{align}\label{eq NLSN}
\begin{cases}
i \partial_t U_N + \Delta_{\h^3} U_N = \rho \abs{U_N}^{\frac{4}{3}} U_N\\
U_N (0) = T_N\phi .
\end{cases}
\end{align}
Moreover, for any $N \geq N_0$,
\begin{align}
\norm{U_N}_{S_{\h^3}^0((-T_0 N^{-2} , T_0 N^{-2})) } \lesssim_{\norm{\phi}_{L^2 (\R^3)}} 1 .
\end{align}

\item
Assume $\varepsilon_1 \in ( 0 ,1]$ is sufficiently small (depending only on $\norm{\phi}_{L^2 (\R^3)}$), and let $\phi' \in H^5(\R^3)$ satisfy $\norm{\phi - \phi'}_{L^2 (\R^3)} \leq \varepsilon_1$. Let $v' \in C (\R ; H^5(\R^3))$ denote the solution of the initial-value problem
\begin{align}
\begin{cases}
i \partial_t v' + \Delta_{\R^3} v' = \rho \abs{v'}^{\frac{4}{3}} v'\\
v' (0) = \phi' .
\end{cases}
\end{align}
For $R, N \geq 1$ we define
\begin{align}
v'_R (t,x) & = \eta (\frac{x}{R}) v' (t,x)  ,  &  (t,x) & \in (-T_0 , T_0) \times \R^3 \\
v'_{R,N} (t,x) & = N^{\frac{3}{2}}  v'_R (N^2 t, Nx)  ,  &  (t,x) & \in (-T_0 N^{-2} , T_0 N^{-2}) \times \R^3 \\
V_{R,N} (t,y) & =  v'_{R,N} (t, \Psi_{\mathcal{I}}^{-1} (y))  ,  &  (t,y) & \in (-T_0 N^{-2} , T_0 N^{-2}) \times \h^3 
\end{align}
Then there is $R_0 \geq 1$ (depending on $T_0$, $\phi'$ and $\varepsilon_1$) such that, for any $R \geq R_0$,
\begin{align}
\limsup_{N \to \infty} \norm{U_N - V_{R,N}}_{S_{\h^3}^0( (-T_0 N^{-2} , T_0 N^{-2})) } \lesssim_{\norm{\phi}_{L^2 (\R^3)}} \varepsilon_1 .
\end{align}
\end{enumerate} 
\end{lemma}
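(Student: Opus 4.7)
\textbf{Proof plan for Lemma \ref{lem ProfileAppx}.} The strategy is to establish (2) via a construction-plus-stability argument; part (1) then follows by $L^2$-approximating $\phi$ by $H^5$-data and invoking Proposition \ref{prop Stab}. The underlying geometric principle is that in the normalized chart $\Psi_{\mathcal{I}}$ of \eqref{eq Diffeo}, the hyperbolic metric on $\h^3$ is Euclidean to leading order at the origin, with curvature corrections of order $|v|^2$. After the $L^2$-preserving rescaling that concentrates the profile to scale $N^{-1}$, the hyperbolic NLS on the fast interval $|t| \leq T_0 N^{-2}$ becomes a small perturbation of the rescaled Euclidean NLS on $|t| \leq T_0$, which is globally well-posed by Theorem \ref{thm Dodson}.

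Starting from $\phi'$, apply Theorem \ref{thm Dodson} to obtain $v'$ with $\|v'\|_{S_{\R^3}^0(\R)} + \sup_t \|v'(t)\|_{H^5} \lesssim_{\|\phi'\|_{H^5}} 1$. Choose $R_0 = R_0(T_0, \phi', \varepsilon_1)$ so large that $v'_R := \eta(\cdot/R)v'$ agrees with $v'$ to within $\varepsilon_1$ in $S_{\R^3}^0((-T_0, T_0))$ and satisfies the Euclidean NLS with a cutoff-commutator error $e^{\R}_R$ small in $N_{\R^3}^0((-T_0, T_0))$. Writing the transferred function as $V_{R,N}(t, \Psi_{\mathcal{I}}(v)) = N^{3/2} v'_R(N^2 t, Nv)$, one has, in the chart,
\begin{align}
(i\partial_t + \Delta_{\h^3})V_{R,N} - \rho|V_{R,N}|^{4/3}V_{R,N} = (\Delta_{\h^3} - \Delta_{\R^3,v}) V_{R,N} + N^{7/2} e^{\R}_R(N^2 t, Nv),
\end{align}
since the time derivative and nonlinearity transfer pointwise while the Euclidean evolution rescales exactly. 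A direct computation from the explicit pullback metric $g_{jk}(v) = \delta_{jk} - v^j v^k/(1+|v|^2)$ yields the identity
\begin{align}
(\Delta_{\h^3} - \Delta_{\R^3,v}) f = v^i v^j \partial_{v^i}\partial_{v^j} f + 3 v^j \partial_{v^j} f.
\end{align}
Evaluating at $v = x/N$ on the support $|x| \leq 2R$ of $v'_R$, the $v^iv^j$ and $3v^j$ coefficients contribute $O(R^2 N^{-2})$ and $O(R N^{-1})$ factors that more than compensate the two derivatives falling on the rescaled profile; changing variables $x = Nv$ and integrating over $|t| \leq T_0 N^{-2}$ produces an $L^1_t L^2_v$-bound of order $T_0 N^{-2} C(R) \|v'_R\|_{L^\infty_t H^2_x}$, which is $\leq \varepsilon_1$ once $N$ is sufficiently large.

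Pulling back through $\widetilde{\pi}_{\mathcal{I}}$ (an $L^2$-isometry on bounded regions) then gives $\|e_N\|_{N_{\h^3}^0} \lesssim \varepsilon_1$. Simultaneously, $\|V_{R,N}(0) - T_N\phi\|_{L^2(\h^3)} \leq \|V_{R,N}(0) - T_N\phi'\|_{L^2} + \|T_N(\phi-\phi')\|_{L^2} \lesssim \varepsilon_1$: the first term compares the $R$-cutoff of $\phi'$ against the heat-smoothed $N^{1/2}$-cutoff built into $T_N$ (both $\varepsilon_1$-close to $\phi'$ once $R$ is large and $N \gg R$), while the second uses $L^2$-boundedness of $T_N$. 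Proposition \ref{prop Stab} then produces the exact solution $U_N$ on $(-T_0 N^{-2}, T_0 N^{-2})$ with $\|U_N - V_{R,N}\|_{S_{\h^3}^0} \lesssim \varepsilon_1$, proving (2) and, by choosing $\phi'$ arbitrarily close to $\phi$ in $L^2$, part (1). The main technical obstacle is verifying the smallness of the metric-correction error: the argument succeeds only because the $|v|^2$- and $|v|$-vanishing of the coefficients, combined with the evaluation point $v = x/N$ of size $\lesssim R/N$, conspires to produce $N^{-2}$ and $N^{-1}$ factors that defeat the derivative losses on the $N$-concentrated profile; the $H^5$-regularity of $\phi'$ furnishes the requisite uniform control of these derivatives and justifies the persistence-of-regularity step in Theorem \ref{thm Dodson}.
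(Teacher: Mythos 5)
Your proposal is correct and follows essentially the same route as the paper: transfer the rescaled, cut-off Euclidean solution to the chart, verify it is an approximate solution by estimating the cutoff error plus the metric-correction error $(\Delta_{\h^3}-\Delta_{\R^3})V_{R,N}$, check the $L^2$ closeness of the initial data, and conclude by the stability Proposition \ref{prop Stab}. The only cosmetic difference is that you compute the exact identity $\Delta_{\h^3}-\Delta_{\R^3}=v^iv^j\partial_{v^i}\partial_{v^j}+3v^j\partial_{v^j}$ in the chart (which is correct), whereas the paper uses a general pointwise inequality for the difference of the Laplacians; both yield the same $O(RT_0N^{-1})$ smallness after rescaling and time integration.
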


\begin{proof}[Proof of Lemma \ref{lem ProfileAppx}]
Part (1) follows from part (2) in a straightforward manner, so we proceed with the proof of part (2):

Let us note first that the implicit constants may depend on $M_{\R^3} (\phi)$. By Theorem \ref{thm Dodson}, we have,
\begin{align}
\norm{v}_{S_{\R^3}^0 (\R )} \lesssim 1 , \quad \sup_{t \in \R} \norm{v(t)}_{H^5(\R^3)} \lesssim_{\norm{\phi}_{H^5 (\R^3)}} 1 .
\end{align}
We will prove that for any $R_0$ sufficiently large there is $N_0$ such that $V_{R_0, N}$ is an almost-solution of \eqref{eq NLSN}, for any $N \geq N_0$. 

Let 
\begin{align}
e_{R} (t,x) & : = [(i \partial_t + \Delta_{\R^3})v_R'  - \rho \abs{v_R' }^{\frac{4}{3}} v_R' ] (t,x) \\
& = \rho \cdot [\eta(\frac{x}{R}) - \eta(\frac{x}{R})^{\frac{7}{3}}] \abs{v'}^{\frac{4}{3}} v' (t,x) + R^{-2} v' (t,x) \Delta_{\R^3} \eta (\frac{x}{R}) + 2 R^{-1} \sum_{j=1}^2 \partial_j v' (t,x) \partial_j \eta (\frac{x}{R}) .
\end{align}
Then 
\begin{align}
\norm{e_{R}}_{L_t^1 L_x^2 ( (-T_0, T_0) \times \R^3)} \lesssim \norm{\chi_{\{R \leq\abs{x}\leq 2R\}} \parenthese{ v'(t,x) +  \sum_{j=1}^2 \partial_j v' (t,x)} }_{L_t^1 L_x^2 ( (-T_0, T_0) \times \R^3)} \to 0 
\end{align}
as $R \to \infty$. 

Letting 
\begin{align}
e_{R,N} (t,x) & : = [(i \partial_t + \Delta_{\R^3})v_{R,N}'  - \rho \abs{v_{R,N}' }^{\frac{4}{3}
} v_{R,N}' ] (t,x)  = N^{\frac{7}{2}} e_R (N^2 t, Nx) ,
\end{align}
there exists $R_0 \geq 1$ such that, for any $R \geq R_0$ and $N \geq 1$, 
\begin{align}\label{eq ERN}
\norm{e_{R,N}}_{L_t^1 L_x^2 ( (-T_0 N^{-2} , T_0 N^{-2}) \times \R^3)} = \norm{e_{R}}_{L_t^1 L_x^2 ( (-T_0  , T_0 ) \times \R^3)}  \leq \varepsilon_1 .
\end{align}
Turning to $V_{R,N} (t,y) = v_{R,N}' (t, \Psi_{\mathcal{I}}^{-1} (y)) $, we write
\begin{align}
E_{R,N} (t,y)  : & = [(i \partial_t + \Delta_{\mathbf{g}})V_{R,N}  - \rho \abs{V_{R,N} }^{\frac{4}{3}} V_{R,N} ] (t,y)  \\
& = e_{R,N} (t, \Psi_{\mathcal{I}}^{-1}(y)) + \Delta_{\mathbf{g}} V_{R,N} (t,y) - (\Delta_{\R^3} v_{R,N}' ) (t, \Psi_{\mathcal{I}}^{-1}(y)) .
\end{align}

We write $\partial_j$, $j=1,2,3$ as the standard vector fields on $\R^3$ and $\widetilde{\partial_j} := (\Psi_{\mathcal{I}}) * (\partial_j)$ as the induced vector fields on $\h^3$. 
\begin{align}
\mathbf{g}_{ij} (y) : = \mathbf{g}_y (\widetilde{\partial_i} , \widetilde{\partial_j} ) = \delta_{ij} - \frac{v_i v_j}{1 + \abs{v}^2} , \quad y = \Psi_{\mathcal{I}} (v) .
\end{align}
Using the standard formula for the Laplace-Beltrami operator 
\begin{align}
\Delta_{\mathbf{g}} f = \abs{\mathbf{g}}^{-\frac{1}{2}} \widetilde{\partial_i} (\abs{\mathbf{g}}^{\frac{1}{2}} \mathbf{g}^{ij} \widetilde{\partial_j} f) ,
\end{align}
then we have
\begin{align}
\abs{ \Delta_{\mathbf{g}} f(y) - \Delta (f \circ \Psi_{\mathcal{I}}) (\Psi_{\mathcal{I}}^{-1} (y)) } \lesssim \sum_{k=1}^3 \abs{\Psi_{\mathcal{I}}^{-1} (y)}^{k-1} \abs{\widetilde{\nabla}^k f(y)} ,
\end{align}
for any $C^3$ function $f : \h^3 \to \C$ supported in the ball of radius 1 around $\0$, where 
\begin{align}
\abs{\widetilde{\nabla}^k f(y)} : = \sum_{k_1 + k_2 + k_3 =k} \abs{\widetilde{\partial}_1^{k_1} \widetilde{\partial}_2^{k_2} \widetilde{\partial}_3^{k_3} h(y)} .
\end{align}

Then
\begin{align}
\abs{E_{R,N} (t,y)} & \lesssim \abs{e_{R,N} (t, \Psi_{\mathcal{I}}^{-1}(y))} +  \sum_{k=1}^2 \sum_{k_1 + k_2 + k_3 =k} \abs{ \Psi_{\mathcal{I}}^{-1}(y)}^{k-1} \abs{\partial_1^{k_1} \partial_2^{k_2} \partial_3^{k_3} v_{R,N}'(t, \Psi_{\mathcal{I}}^{-1}(y))} \\
& \lesssim \abs{e_{R,N} (t, \Psi_{\mathcal{I}}^{-1}(y))} + (\frac{R}{N}) N^2 N^{\frac{3}{2}} \sum_{k_1 + k_2 + k_3 \in \{ 1,2\} } \abs{\partial_1^{k_1} \partial_2^{k_2} \partial_3^{k_3} v_{R}'(t, N\Psi_{\mathcal{I}}^{-1}(y))} \\
& = \abs{e_{R,N} (t, \Psi_{\mathcal{I}}^{-1}(y))} + R N^{\frac{5}{2}} \sum_{k_1 + k_2 + k_3 \in \{ 1,2\} } \abs{\partial_1^{k_1} \partial_2^{k_2} \partial_3^{k_3} v_{R}'(t, N\Psi_{\mathcal{I}}^{-1}(y))} .
\end{align}
Therefore, combining with \eqref{eq ERN}, we have that for any $R_0$ sufficiently large there is $N_0$ such that for any $N \geq N_0$
\begin{align}
\norm{E_{R,N}}_{L_t^1 L_x^2 ( (-T_0 N^{-2} , T_0 N^{-2}) \times \h^3)} \leq \varepsilon_1 + c R N^{\frac{5}{2}}  N^{-\frac{3}{2}} (T_0 N^{-2})  \leq 2 \varepsilon_1 .
\end{align}

Check the smallness condition \eqref{eq Stab1} in Proposition \ref{prop Stab}. 
\begin{align}
& \quad \norm{V_{R_0 , N}}_{L_{t,x}^{\frac{10}{3}} ( (-T_0N^{-2} , T_0 N^{-2}) \times \h^3)} + \sup_{t \in (-T_0N^{-2} , T_0 N^{-2}) } \norm{V_{R_0 , N} (t)}_{L^2 (\h^3)} \\
& \lesssim \norm{v_{R_0 , N}'}_{L_{t,x}^{\frac{10}{3}} ( (-T_0N^{-2} , T_0 N^{-2}) \times \R^3)} + \sup_{t \in (-T_0N^{-2} , T_0 N^{-2}) } \norm{v_{R_0 , N}' (t)}_{L^2 (\R^3)} \\
& = \norm{v_{R_0}'}_{L_{t,x}^{\frac{10}{3}} ( (-T_0 , T_0 ) \times \R^3)} + \sup_{t \in (-T_0, T_0 ) } \norm{v_{R_0}' (t)}_{L^2 (\R^3)} \\
& \lesssim 1 .
\end{align}

Finally, 
\begin{align}
\norm{T_N\phi - V_{R_0 , N}}_{L^2 (\h^3)} & \lesssim \norm{N^{3/2}\phi_N(N \cdot) - v_{R_0, N}' (0)}_{L^2 (\R^3)} = \norm{\phi_N - v_{R_0}' (0)}_{L^2 (\R^3)} \\
& \leq \norm{\phi_N - \phi}_{L^2 (\R^3)} + \norm{\phi - \phi'}_{L^2 (\R^3)} + \norm{\phi' - v_{R_0}' (0)}_{L^2 (\R^3)} \leq 3 \varepsilon_1 .
\end{align}

The proof of Lemma \ref{lem ProfileAppx} is then finished.
\end{proof}

As a consequence, we have
\begin{corollary}\label{cor ProfileAppx}
Assume $ \psi \in L^2 (\R^3)$, $\varepsilon > 0$, $J \subset \R$ is an interval, and
\begin{align}
\norm{e^{it\Delta_{\R^3}} \psi}_{L_t^p L_x^q (J \times \R^3)} \leq \varepsilon ,
\end{align}
where $(p,q)$ is admissible in $\R^3$, $q > 2$. 
For $N \geq 1$, we define as before,
\begin{align}
 \psi_N (x)  & = \eta(\frac{x}{N^{\frac{1}{2}}}) \cdot (e^{\Delta_{\R^3} / N} \psi) (x),   &  T_N\psi  (y) & =N^{3/2}\phi_N (N\Psi_{\mathcal{I}}^{-1} (y)) .
\end{align}
Then there exists $N_1 = N_1 (\psi , \varepsilon)$ such that, for any $N \geq N_1$,
\begin{align}
\norm{e^{it\Delta_{\h^3}} T_N\psi}_{L_t^p L_x^q (N^{-2}J \times \h^3)} \lesssim_q \varepsilon .
\end{align}
\end{corollary}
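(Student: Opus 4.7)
The plan is to combine Lemma \ref{lem ProfileAppx} (applied with $\rho=0$) with a change-of-variables bound to transfer $L_t^pL_x^q$ smallness from the Euclidean linear evolution on $\R^3$ to its hyperbolic counterpart on $\h^3$.

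First I would reduce to smooth data. Fix $\varepsilon_1>0$ with $\varepsilon_1\ll\varepsilon$, and pick $\phi'\in H^5(\R^3)$ with $\|\psi-\phi'\|_{L^2(\R^3)}\leq\varepsilon_1$; Strichartz on $\R^3$ then yields $\|e^{it\Delta_{\R^3}}\phi'\|_{L_t^pL_x^q(J\times\R^3)}\leq\varepsilon+O(\varepsilon_1)$. The chart integration formula $\int_{\h^3}f\,d\mu=\int_{\R^3}(f\circ\Psi_{\mathcal{I}})(v)(1+|v|^2)^{-1/2}\,dv$, together with $|\eta|\leq 1$ and the $L^2$-contractivity of the heat flow, gives $\|T_Ng\|_{L^2(\h^3)}\leq\|g\|_{L^2(\R^3)}$ uniformly in $N$, so Strichartz on $\h^3$ produces $\|e^{it\Delta_{\h^3}}T_N(\psi-\phi')\|_{L_t^pL_x^q(N^{-2}J\times\h^3)}\lesssim\varepsilon_1$. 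It therefore suffices to bound $\|e^{it\Delta_{\h^3}}T_N\phi'\|_{L_t^pL_x^q(N^{-2}J\times\h^3)}$.

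Next I would apply Lemma \ref{lem ProfileAppx} with $\rho=0$. Since $v':=e^{it\Delta_{\R^3}}\phi'\in L_t^pL_x^q(\R\times\R^3)$ by Strichartz, choose $T_0$ so that $\|v'\|_{L_t^pL_x^q(\{|t|>T_0\}\times\R^3)}\leq\varepsilon_1$. The lemma then produces, for large $R,N$, a transferred Euclidean approximation $V_{R,N}(t,y)=v'_{R,N}(t,\Psi_{\mathcal{I}}^{-1}(y))$ with
\begin{equation*}
\|e^{it\Delta_{\h^3}}T_N\phi'-V_{R,N}\|_{S^0_{\h^3}((-T_0N^{-2},T_0N^{-2}))}\lesssim\varepsilon_1.
\end{equation*}
The same chart integration formula (now using $(1+|v|^2)^{-1/2}\leq 1$) combined with the admissibility-enforced invariance of the $L_t^pL_x^q$ norm under the $L^2$-critical scaling $(t,x)\mapsto(N^{-2}t,N^{-1}x)$ gives
\begin{equation*}
\|V_{R,N}\|_{L_t^pL_x^q(N^{-2}(J\cap(-T_0,T_0))\times\h^3)}\leq\|v'_R\|_{L_t^pL_x^q((J\cap(-T_0,T_0))\times\R^3)}\leq\varepsilon+O(\varepsilon_1),
\end{equation*}
which handles the central time window.

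The main obstacle is controlling the tail $N^{-2}J\setminus(-T_0N^{-2},T_0N^{-2})$. To treat it I would iterate Lemma \ref{lem ProfileAppx} on successive slabs of $\h^3$-time length $T_0N^{-2}$, restarting at each $\pm kT_0N^{-2}$ with the idealized data $T_N(v'(\pm kT_0))$ (note $v'(\pm kT_0)\in H^5$ uniformly in $k$ since the linear Schr\"odinger evolution preserves $H^5$). At each restart the true $\h^3$-evolution $e^{\pm ikT_0N^{-2}\Delta_{\h^3}}T_N\phi'$ must be compared in $L^2(\h^3)$ to $T_N(v'(\pm kT_0))$, which reduces to reconciling the two cutoff scales ($R/N$ inside $v'_R$ versus $N^{-1/2}$ inside the heat-regularized $\phi'_N$); controlling the propagation of the accumulated $L^2$ discrepancy across iterations is the technical heart of the argument. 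Fortunately $v'\in L_t^pL_x^q(\R)$ forces only finitely many slabs to carry non-negligible norm (their $p$-th powers summing to $\leq\|v'\|_{L_t^pL_x^q(\R)}^p$), so choosing $\varepsilon_1$ small relative to this (finite, $\phi'$-dependent) slab count keeps the total contribution $\lesssim\varepsilon$. Combined with the reduction in the first paragraph, this yields the claimed bound $\|e^{it\Delta_{\h^3}}T_N\psi\|_{L_t^pL_x^q(N^{-2}J\times\h^3)}\lesssim_q\varepsilon$.
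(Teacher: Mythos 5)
Your reduction to smooth data and your treatment of the central time window $N^{-2}(J\cap(-T_0,T_0))$ via Lemma \ref{lem ProfileAppx} with $\rho=0$ match the paper's argument. The gap is in the tail $N^{-2}J\setminus(-T_0N^{-2},T_0N^{-2})$, and it is a genuine one: the iteration of Lemma \ref{lem ProfileAppx} over successive slabs cannot work. The error in that lemma contains the term $cRN^{5/2}\cdot N^{-3/2}\cdot(T_0N^{-2})\sim RT_0N^{-1}$ coming from the discrepancy between $\Delta_{\mathbf g}$ and $\Delta_{\R^3}$ in the coordinate chart; iterating over $k$ slabs accumulates an error $\sim kRT_0N^{-1}$, so for fixed $N$ you can only cover a Euclidean time window of length $O(N/R)$, whereas $J$ (hence the tail) may be all of $\R$. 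Shrinking $\varepsilon_1$ "relative to the slab count" does not help because the slab count is not finite, and the slabs on which $v'$ has negligible $L_t^pL_x^q$ norm still require \emph{some} control of the hyperbolic evolution there — Strichartz alone only gives an $O(1)$ bound, not $O(\varepsilon)$. Moreover, as Euclidean time grows the solution spreads beyond the coordinate chart, so the restart data $T_N(v'(\pm kT_0))$ ceases to capture the actual hyperbolic evolution.

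The paper closes the tail with a completely different and much shorter tool: the dispersive estimate \eqref{eq Dispersive} on $\h^3$. For $\psi\in C_0^\infty$ one gets
\begin{align}
\norm{e^{it\Delta_{\h^3}}T_N\psi}_{L_x^q(\h^3)}\lesssim_\psi |t|^{\frac{3}{q}-\frac{3}{2}}N^{\frac{3}{q}-\frac{3}{2}},
\end{align}
and integrating the $p$-th power over $|t|\geq T_1N^{-2}$, admissibility ($\tfrac{2}{p}+\tfrac{3}{q}=\tfrac{3}{2}$) makes the powers of $N$ cancel, leaving a bound $\lesssim T_1^{-1/p}$ uniform in $N$. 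Choosing $T_1=T_1(\psi,\varepsilon)$ large kills the entire tail at once, with no comparison to the Euclidean flow needed there. You should replace your iteration by this dispersive-decay argument; the rest of your proof then goes through.
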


\begin{proof}[Proof of Corollary \ref{cor ProfileAppx}]
As before, the implicit constants may depend on $M_{\R^3} (\psi)$.

We assume that $\psi \in C_0^{\infty} (\R^3)$. 
\begin{align}
\norm{e^{it \Delta_{\h^3}} T_N\psi}_{L_x^q (\h^3)} & \lesssim \abs{t}^{\frac{3}{q} -\frac{3}{2}} \norm{ T_N\psi}_{L_x^{q'} (\h^3)} \lesssim \abs{t}^{\frac{3}{q} -\frac{3}{2}} \norm{\psi_N}_{L_x^{q'} (\R^3)}  \\
& \lesssim \abs{t}^{\frac{3}{q}-\frac{3}{2}} N^{\frac{3}{2}- \frac{3}{q'}} \norm{\psi_N}_{L_x^2 (\R^3)} \lesssim_{\psi} \abs{t}^{\frac{3}{q}-\frac{3}{2}} N^{\frac{3}{q}-\frac{3}{2}} ,
\end{align}
where $\frac{1}{q} + \frac{1}{q'} =1$. 

Thus for $T_1>0$,
\begin{align}
\norm{e^{it\Delta_{\h^3}}  T_N\psi}_{L_t^p L_x^q ([\R \setminus (-T_1N^{-1} , T_1 N^{-2})] \times \h^3)} \lesssim_{\psi} \abs{T_1N^{-2}}^{\frac{3}{q}-\frac{3}{2}+ \frac{1}{p}} N^{\frac{3}{q}-\frac{3}{2}}  \lesssim T_1^{-\frac{1}{p}} 
\end{align}
Therefore we can fix $T_1 = T_1 (\psi , \varepsilon)$ such that for any $N \geq 1$,
\begin{align}
\norm{e^{it\Delta_{\h^3}}  T_N\psi}_{L_t^p L_x^q ([\R \setminus (-T_1N^{-1} , T_1 N^{-2})] \times \h^3)} \lesssim_q \varepsilon .
\end{align}
The desired bound on the remaining interval $N^{-2} J \cap (-T_1N^{-1} , T_1 N^{-2})$ follows from {\it Part (2)} in Lemma \ref{lem ProfileAppx} with $\rho =0$.

Then
\begin{align}
\norm{e^{it\Delta_{\h^3}} T_N\psi}_{L_t^p L_x^q (N^{-2}J \times \h^3)} & \lesssim \norm{e^{it\Delta_{\h^3}} T_N\psi}_{L_t^p L_x^q ([N^{-2}J \cap (\R \setminus (-T_1N^{-1} , T_1 N^{-2}))] \times \h^3)} \\
& \quad + \norm{e^{it\Delta_{\h^3}} T_N\psi}_{L_t^p L_x^q ([N^{-2} J \cap (-T_1N^{-1} , T_1 N^{-2})] \times \h^3)} \\
& \lesssim \varepsilon. 
\end{align}
Now we have finished the proof of Corollary \ref{cor ProfileAppx}. 
\end{proof}

%%%%%%%%%%%%%%%%%%%%%%%%%%%%%%%%%%%%%%%%%%%%%%%%%%%%%%%%%%%%%%%%%%%%%%%%%%%%%
%%%%%%%%%%%%%%%%%%%%%%%%%%%%%%%%%%%%%%%%%%%%%%%%%%%%%%%%%%%%%%%%%%%%%%%%%%%%%

\section{Profile Decomposition in Hyperbolic Spaces}\label{sec profile}

In this section, we present a profile decomposition of the linear solutions and nonlinear solutions.  We note that, in this section and Section \ref{sec Key}, we will (regrettably) recycle the $\widetilde{f}$ notation. In this section, $\widetilde{f}$ will not denote the Fourier transform on $\h^3$, but a different operation to be specified in Definition \ref{defn Profiles}.

\begin{definition}\label{defn Pi+T}
Given $(f , t_0 , h_0) \in L^2 (\h^3) \times \R \times \G$ we define
\begin{align}\label{eq Pi}
\Pi_{t_0 ,h_0} f(x) = (e^{-it_0\Delta_{\h^3}} f )(h_0^{-1} x) = (\pi_{h_0} e^{-it_0\Delta_{\h^3}} f )(x) .
\end{align}
Given $\phi \in L^2 (\R^3)$ and $N \geq 1$, we recall the definition
\begin{align}
\phi_N(y) : =  \eta(\frac{y}{N^{\frac{1}{2}}}) \cdot  e^{\Delta_{\R^3}/N}\phi(y)
\end{align}
and 
\begin{align}\label{eq T}
T_N  \phi (x) : = N^{3/2} \phi_N (N \Psi_{\mathcal{I}}^{-1} (x)) , 
\end{align}
and observe that $T_N : L^2 (\R^3) \to L^2 (\h^3)$ is bounded linear operator with $\norm{T_N \phi}_{L^2 (\h^3)} \lesssim \norm{\phi}_{L^2 (\R^3)}$. We also define $T^*_N : L^2 (\h^3) \to L^2 (\R^3)$ by 
\begin{align*}
    T^*_N  f (y) : = e^{\Delta_{\R^3}/N}\left[\eta(\frac{y}{N^{\frac{1}{2}}})\cdot N^{-3/2} f ( \Psi_{\mathcal{I}} (N^{-1}y))\right] .
\end{align*}
\end{definition}

\subsection{A Tool for the profile decomposition argument}\label{ssec profile}

\begin{proposition}[Inverse Strichartz Inequality]\label{prop InvStrichartz}
Let $\{f_k\} \subset L_x^2 (\h^3)$. Suppose that
\begin{align}
\lim_{k \to \infty} \norm{f_k}_{L_x^2 (\h^3)} = B \quad \text{and} \quad \lim_{k \to \infty} \norm{e^{it\Delta_{\h^3}} f_k}_{L_{t,x}^{\frac{10}{3}} (\R \times \h^3)} = \varepsilon .
\end{align}
Then exists a subsequence in $k$, $\{ N_k\} \subset (0, \infty)$, and $\{ t_k, h_k\} \in \R \times \mathbb{G}$ so that along the subsequence, we have the following:
\begin{enumerate}
\item If $\lim_k N_k = \infty$, then there exists $\phi \in L_x^2 (\R^3)$ such that 
    \begin{enumerate}
\item
    $T^{*}_{N_k}(\Pi_{-t_k, h^{-1}_k} f_k) (x) \rightharpoonup  \phi(x) \quad \text{weakly in } L_x^2 (\R^3)$,
\item 
$\lim_{k \to \infty} \norm{f_k}_{L_x^2(\h^3)}^2 - \norm{f_k - \phi_k}_{L_x^2(\h^3)}^2 = \norm{\phi}_{L_x^2(\R^3)}^2 \gtrsim B^2 (\frac{\varepsilon}{B})^{20} = \varepsilon^2 (\frac{\varepsilon}{B})^{18}$,
\item 
$\limsup_{k \to \infty} \norm{e^{it\Delta_{\h^3} } (f_k - \phi_k)}_{L_{t,x}^{\frac{10}{3}}(\h^3)} \leq \varepsilon [1 - c (\frac{\varepsilon}{B})^{30}]^{\frac{3}{10}}$,
\end{enumerate}
where $c$ and $\beta$ are constants and 
\begin{align}
\phi_k (x) : = \Pi_{t_k, h_k}(T_{N_k}\phi)(x).
\end{align}
\item If $\lim_k N_k <\infty$, then there exists $\phi \in L_x^2 (\h^3)$ such that 
    \begin{enumerate}
\item
    $\Pi_{-t_k, h^{-1}_k} f_k (x) \rightharpoonup  \phi(x) \quad \text{weakly in } L_x^2 (\h^3)$,
\item 
$\lim_{k \to \infty} \norm{f_k}_{L_x^2(\h^3)}^2 - \norm{f_k - \phi_k}_{L_x^2(\h^3)}^2 = \norm{\phi}_{L_x^2(\h^3)}^2 \gtrsim B^2 (\frac{\varepsilon}{B})^{20} = \varepsilon^2 (\frac{\varepsilon}{B})^{18}$.
\item 
$\limsup_{k \to \infty} \norm{e^{it\Delta_{\h^3} } (f_k - \phi_k)}_{L_{t,x}^{\frac{10}{3}}(\h^3)} \leq \varepsilon [1 - c (\frac{\varepsilon}{B})^{30}]^{\frac{3}{10}}$,
\end{enumerate}
where $c$ and $\beta$ are constants and 
\begin{align}
\phi_k (x) : = \Pi_{t_k, h_k}\phi(x).
\end{align}
\end{enumerate}

\end{proposition}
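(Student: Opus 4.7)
The plan is to adapt a Keraani-style inverse Strichartz argument to the hyperbolic setting, driven by Proposition \ref{prop Improved}. Applied to $f_k$, that improved Strichartz estimate combined with the hypotheses $\norm{f_k}_{L^2} \to B$ and $\norm{e^{it\Delta_{\h^3}}f_k}_{L^{10/3}_{t,x}} \to \varepsilon$ yields, for $k$ large,
\begin{align}
\sup_N \norm{N^{-\frac{3}{2}}e^{it\Delta_{\h^3}}P_N f_k}_{L^\infty_{t,x}(\R\times\h^3)} \gtrsim B \left(\frac{\varepsilon}{B}\right)^{10}.
\end{align}
Select $N_k>0$, $t_k \in \R$, $x_k \in \h^3$ nearly realizing this lower bound, and pick $h_k \in \G$ carrying $\0$ to $x_k$. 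Setting $g_k := \Pi_{-t_k, h^{-1}_k} f_k$ and using that $P_N$ commutes with both $e^{-it\Delta_{\h^3}}$ and $\pi_h$ (each being a function of $\Delta_{\h^3}$), one obtains
\begin{align}
N_k^{-3/2}\abs{P_{N_k}g_k(\0)} \gtrsim B \left(\frac{\varepsilon}{B}\right)^{10}.
\end{align}
After passing to a subsequence, assume $N_k \to N_\infty \in [1,\infty]$.

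If $N_k \to \infty$ (Case 1), set $\tilde g_k := T^*_{N_k} g_k$; this sequence is uniformly bounded in $L^2(\R^3)$ by $\norm{g_k}_{L^2(\h^3)} \lesssim B$, so extract $\tilde g_k \rightharpoonup \phi$ weakly in $L^2(\R^3)$ along a further subsequence. If $N_\infty < \infty$ (Case 2), pass directly to a weak limit $g_k \rightharpoonup \phi$ in $L^2(\h^3)$. The $L^2$ lower bound for $\phi$ comes from testing against the integral kernel of $P_{N_k}$ at $\0$. In Case 2 this kernel $K_{N_\infty}(\0,\cdot)$ lies in $L^2(\h^3)$ with a fixed norm, and weak convergence with Cauchy-Schwarz yields $\norm{\phi}_{L^2}^2 \gtrsim B^2(\varepsilon/B)^{20} = \varepsilon^2(\varepsilon/B)^{18}$. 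In Case 1, the radial change of variables of Section \ref{ssec COV} converts $N_k^{-3/2}P_{N_k}g_k(\0)$ into an $L^2(\R^3)$-pairing $\langle \tilde K_k, \tilde g_k\rangle$, where $\tilde K_k$ is a strongly $L^2$-convergent sequence of Euclidean mollifiers at unit scale, and the same argument gives the same bound.

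Mass decoupling, $\norm{f_k}_{L^2}^2 - \norm{f_k - \phi_k}_{L^2}^2 \to \norm{\phi}_{L^2}^2$, is then immediate from the $L^2$-isometry of $\Pi_{t_k,h_k}$ (together with the approximate isometry of $T_{N_k}$ in Case 1) combined with weak convergence. For the Strichartz norm decay, reapply Proposition \ref{prop Improved} to $f_k - \phi_k$: subtracting the profile simultaneously reduces the $L^2$ mass (by $\sim B^2(\varepsilon/B)^{20}$) and, by cancellation at $(t_k,x_k,N_k)$, the supremum $\sup_N \norm{N^{-3/2}e^{it\Delta_{\h^3}}P_N(\cdot)}_{L^\infty_{t,x}}$ (by $\sim B(\varepsilon/B)^{10}$). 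Inserting these gains into the $\sup^{1/3}\cdot\norm{\cdot}_{L^2}^3$ structure of Proposition \ref{prop Improved} produces the stated bound $\varepsilon[1 - c(\varepsilon/B)^{30}]^{3/10}$.

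The main obstacle is Case 1: constructing the Euclidean mollifier $\tilde K_k$ and verifying its strong $L^2(\R^3)$ convergence. Via the radial change of variables $v = (\sinh r/r)u$ of Section \ref{ssec COV}, the hyperbolic heat semigroup $e^{N_k^{-2}\Delta_{\h^3}}$ at scale $N_k^{-1}$ corresponds to the Euclidean heat semigroup $e^{N_k^{-2}\Delta_{\R^3}}$ up to a potential term of size $N_k^{-2}$, which is lower-order and vanishes as $N_k \to \infty$. One must verify that this correspondence preserves the unit-scale frequency localization of $\tilde K_k$, in the same spirit as the delta-function identity \eqref{eq delta} used in the proof of Proposition \ref{prop Bilinear}.
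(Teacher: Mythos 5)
Your setup — extracting $(N_k,t_k,x_k)$ from Proposition \ref{prop Improved}, passing to weak limits of $T^*_{N_k}(\Pi_{-t_k,h_k^{-1}}f_k)$ (resp.\ $\Pi_{-t_k,h_k^{-1}}f_k$), and obtaining the $L^2$ lower bound for $\phi$ by pairing against the kernel of $N_k^{-3/2}P_{N_k}$ at $\0$ — is essentially the paper's argument, and the mass decoupling via weak convergence is fine. The genuine gap is in part (c), the decay of the linear $L^{10/3}_{t,x}$ norm of the remainder. You propose to re-apply Proposition \ref{prop Improved} to $f_k-\phi_k$ and feed in the reduced mass and a reduced supremum. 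This cannot work for two reasons. First, subtracting the profile only produces cancellation of $N^{-3/2}e^{it\Delta_{\h^3}}P_N(\cdot)$ at the single parameter choice $(N_k,t_k,x_k)$; the supremum over all $(N,t,x)$ of the remainder need not decrease at all, since it may be attained elsewhere. Second, and more fundamentally, Proposition \ref{prop Improved} is an upper bound with an implicit constant: even granting both reductions, it would only yield
\begin{align}
\norm{e^{it\Delta_{\h^3}}(f_k-\phi_k)}_{L^{10/3}_{t,x}}^{10/3}\lesssim \Bigl(\sup_N\dots\Bigr)^{1/3}\norm{f_k-\phi_k}_{L^2}^{3}\lesssim B^{10/3}\bigl(1-c(\tfrac{\varepsilon}{B})^{20}\bigr)^{3/2},
\end{align}
which is of size $B^{10/3}$, not $\varepsilon^{10/3}(1-c(\varepsilon/B)^{30})$; when $\varepsilon\ll B$ this is far weaker than the claim. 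An upper bound that shrinks slightly says nothing about the actual norm shrinking below $\varepsilon$.

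What is actually needed (and what the paper does) is a genuine decoupling of the $L^{10/3}_{t,x}$ norms: one shows via the local smoothing estimate (Proposition \ref{thm Smoothing}) and Rellich--Kondrashov that $e^{it\Delta_{\h^3}}\phi_k$ captures the a.e.\ limit of $e^{it\Delta_{\h^3}}f_k$ after the symmetries, then applies the refined Fatou lemma (Lemma \ref{lem Fatou}) to get
\begin{align}
\norm{e^{it\Delta_{\h^3}}f_k}_{L^{10/3}_{t,x}}^{10/3}-\norm{e^{it\Delta_{\h^3}}(f_k-\phi_k)}_{L^{10/3}_{t,x}}^{10/3}-\norm{e^{it\Delta_{\h^3}}\phi_k}_{L^{10/3}_{t,x}}^{10/3}\to 0,
\end{align}
and finally combines this with a \emph{lower} bound $\norm{e^{it\Delta}\phi}_{L^{10/3}_{t,x}}\gtrsim B(\varepsilon/B)^{10}$ on the Strichartz norm of the profile itself (obtained by the same pairing computation used for the $L^2$ lower bound). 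Your proposal never establishes any lower bound on the Strichartz norm of $\phi$, and without it the exponent $30$ in part (c) cannot be produced. The remaining technical point you flag — transferring the heat-kernel mollifier at scale $N_k^{-1}$ to a unit-scale Euclidean mollifier — is handled in the paper by the approximation $\Pi_{0,h_k}T_{N_k}(e^{\Delta_{\R^3}}\delta_0)-N_k^{-3/2}P_{N_k}\delta_{x_k}\to 0$ in $L^2(\h^3)$, which is consistent with your sketch.
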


\begin{proof}[Proof of Proposition \ref{prop InvStrichartz}]
Passing to a subsequence, we may assume that
\begin{align}
\lim_{k \to \infty} \norm{f_k}_{L_x^2 (\h^3)} \leq  2B \quad \text{and} \quad \lim_{k \to \infty} \norm{e^{it\Delta_{\h^3}} f_k}_{L_{t,x}^{\frac{10}{3}} (\R \times \h^3)} \geq \frac{1}{2}\varepsilon .
\end{align}

By Proposition \ref{prop Improved}, there exists $\{N_k, t_k, x_k\}_{k=1}^{\infty} \subset \R_+ \times \R \times \h^3 $ such that
\begin{align}
\varepsilon^{10 } \lesssim \liminf_{k \to \infty} |N_k^{-\frac{3}{2}} e^{i t_k \Delta_{\h^3}} P_{N_k} f_k(x_k)| B^{9} ,\\
\varepsilon^{10} B^{-9} \lesssim \liminf_{k \to \infty} |N_k^{-\frac{3}{2}} e^{i t_k \Delta_{\h^3}} P_{N_k} f_k(x_k) | .
\end{align}
By taking subsequences we can assume that $\{N_k\}$ is either $N_k \to \infty$ or $N_k \to N$ for some $N \in \mathbb{R}_+$. 

If $N_k \to \infty$, let $h_k\cdot \0 := x_k$ and
\begin{align}
g_k : = T^{*}_{N_k}(\Pi_{-t_k, h^{-1}_k} f_k) \in L^2(\R^3).
\end{align}
We see that
\begin{align}
\norm{g_k}_{L_x^2 (\R^3)} \lesssim \norm{f_k}_{L_x^2 (\h^3)}  \lesssim B .
\end{align}
Then choose $\phi$ such that $g_k \rightharpoonup \phi$ weakly in $L_x^2(\R^3)$ (Alaoglu’s theorem). If $h \in L_x^2(\R^3)$, then
\begin{align}
\abs{\inner{h, \phi}_{L^2_x(\R^3)}} & = \lim_{k \to \infty} \abs{\inner{ h,  T^{*}_{N_k}(\Pi_{-t_k, h^{-1}_k} f_k )  }_{L^2_x(\R^3)}} \\
&\gtrsim \lim_{k \to \infty} \abs{\inner{ \Pi_{0, h_k} T_{N_k}h,  \Pi_{-t_k, \mathcal{I}} f_k   }_{L^2_x(\h^3)}} .
\end{align}
If we let $h := e^{\Delta_{\mathbb{R}^3}} \delta_0$, then $\Pi_{0, h_k} T_{N_k}h -N_k^{-\frac{3}{2}}P_{N_k}\delta_{x_k} \to 0$ in $L^2(\h^3)$. Therefore, 
\begin{align}
\lim_{k \to \infty} \abs{\inner{ \Pi_{0, h_k} T_{N_k}h,  \Pi_{-t_k, \mathcal{I}} f_k   }_{L^2_x(\h^3)}} & = \lim_{k \to \infty}  \abs{\inner{ N_k^{-\frac{3}{2}}P_{N_k}\delta_{x_k}, e^{it_k \Delta_{\h^3}} f_k}_{L^2_x(\h^3)}} \\
& = \lim_{k \to \infty} N_k^{-\frac{3}{2}} \abs{[e^{it_k \Delta_{\h^3}}P_{N_k} f_k](x_k)} \gtrsim B (\frac{\varepsilon}{B})^{10}.
\end{align}
Therefore, $\|\phi\|_{L^2_x(\R^3)} \gtrsim B (\frac{\varepsilon}{B})^{10}$.
A similar computation yields the following estimate for $e^{it\Delta_{\R^3}} \phi$:
    \begin{align*}
        B (\frac{\varepsilon}{B})^{10}\lesssim \norm{ e^{it\Delta_{\R^3}} \phi}_{L^{\frac{10}{3}}_{t,x}(\R \times \R^3)} .
    \end{align*}
By local smoothing estimate and the Rellich–Kondrashov Theorem, 
\begin{align}
e^{it\Delta_{\h^3}}  f_k (x) - e^{it\Delta_{\h^3}} \phi_k (x) \to 0\quad \text{a.e. } (t,x) \in \R \times \h^3 .
\end{align}
and by Refined Fatou Lemma  (Lemma \ref{lem Fatou}), we have
\begin{align}
\norm{e^{it\Delta_{\h^3} } f_k }_{L_{t,x}^{\frac{10}{3}} (\R \times \h^3)}^{\frac{10}{3}} - \norm{e^{it\Delta_{\h^3} } (f_k - \phi_k)}_{L_{t,x}^{\frac{10}{3}} (\R \times \h^3)}^{\frac{10}{3}} - \norm{e^{it\Delta_{\h^3} } \phi_k }_{L_{t,x}^{\frac{10}{3}} (\R \times \h^3)}^{\frac{10}{3}} \to 0.
\end{align}
This implies  
\begin{align*}
    \limsup_{k \to \infty}\norm{e^{it\Delta_{\h^3} } (f_k - \phi_k)}_{L_{t,x}^{\frac{10}{3}} (\R \times \h^3)}^{\frac{10}{3}}&\leq   \limsup_{k \to \infty}\norm{e^{it\Delta_{\h^3} } f_k }_{L_{t,x}^{\frac{10}{3}} (\R \times \h^3)}^{\frac{10}{3}}- \norm{e^{it\Delta_{\R^3} } \phi }_{L_{t,x}^{\frac{10}{3}} (\R \times \R^3)}^{\frac{10}{3}} \\
    &\leq \varepsilon^{\frac{10}{3}}(1- c (\tfrac{\varepsilon}{B})^{30}).
\end{align*}

Now if $N_k \to N$, let 
\begin{align}
g_k : = \Pi_{-t_k, h^{-1}_k} f_k \in L^2(\h^3).
\end{align}
We see that
\begin{align}
\norm{g_k}_{L_x^2 (\h^3)} = \norm{f_k}_{L_x^2 (\h^3)}  \lesssim B .
\end{align}
Then choose $\phi$ such that $g_k \rightharpoonup \phi$ weakly in $L_x^2(\h^3)$. If $h \in L_x^2(\h^3)$, then
\begin{align}
\abs{\inner{h, \phi}_{L^2_x(\h^3)}} & = \lim_{k \to \infty} \abs{\inner{ h,  \Pi_{-t_k, h^{-1}_k} f_k  }_{L^2_x(\R^3)}} \gtrsim \lim_{k \to \infty} \abs{\inner{ \Pi_{0, h_k} h,  \Pi_{-t_k, \mathcal{I}} f_k   }_{L^2_x(\h^3)}} .
\end{align}
If we let $h := N^{-\frac{3}{2}}P_{N}\delta_{0}$, then 
    \begin{align*}
        \Pi_{0, h_k} h -N_k^{-\frac{3}{2}}P_{N_k}\delta_{x_k} =N^{-\frac{3}{2}}P_{N}\delta_{x_k} -N_k^{-\frac{3}{2}}P_{N_k}\delta_{x_k}\to 0
    \end{align*}
    in $L^2(\h^3)$. Therefore, 
\begin{align}
\lim_{k \to \infty} \abs{\inner{ \Pi_{0, h_k} h,  \Pi_{-t_k, \mathcal{I}} f_k   }_{L^2_x(\h^3)}} & = \lim_{k \to \infty}  \abs{\inner{ N_k^{-\frac{3}{2}}P_{N_k}\delta_{x_k}, e^{it_k \Delta_{\h^3}} f_k}_{L^2_x(\h^3)}} \\
& = \lim_{k \to \infty} N_k^{-\frac{3}{2}} \abs{[e^{it_k \Delta_{\h^3}}P_{N_k} f_k](x_k)} \gtrsim B (\frac{\varepsilon}{B})^{10} .
\end{align}

A similar computation yields the following estimate for $e^{it\Delta_{\R^3}} \phi$:
    \begin{align*}
        B (\frac{\varepsilon}{B})^{10}\lesssim \norm{ e^{it\Delta_{\R^3}} \phi}_{L^{\frac{10}{3}}_{t,x}(\R \times \h^3)} .
    \end{align*}
By local smoothing estimate (Theorem \ref{thm Smoothing}) and the Rellich–Kondrashov Theorem, 
\begin{align}
e^{it\Delta_{\h^3}}  f_k (x) - e^{it\Delta_{\h^3}} \phi_k (x) \to 0\quad \text{a.e. } (t,x) \in \R \times \h^3
\end{align}
and by Refined Fatou Lemma  (Lemma \ref{lem Fatou}), we have
\begin{align}
\norm{e^{it\Delta_{\h^3} } f_k }_{L_{t,x}^{\frac{10}{3}} (\R \times \h^3)}^{\frac{10}{3}} - \norm{e^{it\Delta_{\h^3} } (f_k - \phi_k)}_{L_{t,x}^{\frac{10}{3}} (\R \times \h^3)}^{\frac{10}{3}} - \norm{e^{it\Delta_{\h^3} } \phi_k }_{L_{t,x}^{\frac{10}{3}} (\R \times \h^3)}^{\frac{10}{3}} \to 0.
\end{align}
This implies  
\begin{align*}
    \limsup_{k \to \infty}\norm{e^{it\Delta_{\h^3} } (f_k - \phi_k)}_{L_{t,x}^{\frac{10}{3}} (\R \times \h^3)}^{\frac{10}{3}}&\leq   \limsup_{k \to \infty}\norm{e^{it\Delta_{\h^3} } f_k }_{L_{t,x}^{\frac{10}{3}} (\R \times \h^3)}^{\frac{10}{3}}- \norm{e^{it\Delta_{\R^3} } \phi }_{L_{t,x}^{\frac{10}{3}} (\R \times \h^3)}^{\frac{10}{3}} \\
    &\leq \varepsilon^{\frac{10}{3}}(1- c (\tfrac{\varepsilon}{B})^{30}) .
\end{align*}
Now we finish the proof of Proposition \ref{prop InvStrichartz}.
\end{proof}

\subsection{Frames}

\begin{definition}\label{defn Profiles}
\begin{enumerate}
\item
We define a {\it frame} to be a sequence $\oo_k = (N_k , t_k ,h_k) \in [1, \infty) \times \R \times \G$, $ k \in \N^+$, where $ N_k \geq 1$ is a scale, $t_k \in \R$ is a time, and $h_k \in \G$ is a translation element. We also assume that either $N_k =1$ for all $k$ (in which case we call $\{\oo_k \}_{k \geq 1}$  a hyperbolic frame) or that $N_k \nearrow \infty$ (in which case we call $\{\oo_k \}_{k \geq 1}$  a Euclidean frame). Let $\F_e$ denote the set of Euclidean frames,
\begin{align}
\F_e = \{ \oo = \{ (N_k , t_k ,h_k)\}_{k \geq 1} : N_k \in [1 , \infty), N_k  \nearrow \infty , t_k \in \R , h_k \in \G  \} .
\end{align}
and let  $\F_h$ denote the set of hyperbolic frames,
\begin{align}
\F_h = \{ \widetilde{\oo} = \{ (1 , t_k ,h_k)\}_{k \geq 1} :  t_k \in \R , h_k \in \G  \} .
\end{align}

\item
We say that two frames $\{ (N_k , t_k ,h_k)\}_{k \geq 1} $ and $\{ (N_k' , t_k' ,h_k')\}_{k \geq 1} $ are {\it orthogonal} if 
\begin{align}\label{eq Orth}
\lim_{k \to \infty} \square{\abs{\ln (\frac{N_k}{N_k'}) }  + N_k^2 \abs{t_k -t_k'} + N_k d(h_k \cdot \0, h_k' \cdot \0))} = + \infty.
\end{align}
Two frames that are not orthogonal are called {\it equivalent}.

\item
Given $\phi \in L^2 (\R^3)$ and a Euclidean frame $\oo = \{ \oo_k\}_{k \geq 1}= \{ (N_k , t_k ,h_k)\}_{k \geq 1} \in \F_e$, we define {\it the Euclidean profile associated with} $(\phi , \oo)$ as the sequence $\widetilde{\phi}_{\oo_k}$, where
\begin{align}\label{eq 5.5}
\widetilde{\phi}_{\oo_k} : = \Pi_{t_k ,h_k} (T_{N_k} \phi),
\end{align}
The operators $\Pi$ and $T$ are defined in \eqref{eq Pi} and \eqref{eq T}.

\item
Given $\psi \in L^2 (\h^3)$ and a hyperbolic frame $\widetilde{\oo} = \{ \widetilde{\oo}_k\}_{k \geq 1}= \{ (1 , t_k ,h_k)\}_{k \geq 1} \in \F_h$ we define {\it the hyperbolic profile associated with} $(\psi , \widetilde{\oo} )$ as the sequence $\widetilde{\psi}_{\oo_k}$, where
\begin{align}\label{eq 5.6}
\widetilde{\psi}_{\oo_k} : = \Pi_{t_k ,h_k} \psi,
\end{align}

\end{enumerate}

\end{definition}

\begin{definition}
We say a sequence $(f_k)_k$ bounded in $L^2(\h^3)$ is {\it absent} from a frame $\oo = \{ (N_k , t_k ,h_k)\}_{k}$ if its localization to $\oo$ converges weakly to $0$, that is, if for any profile $\widetilde{\phi}_{\oo_k}$ associated to $\oo$, we have
\begin{align}\label{eq 5.7}
\lim_{k \to \infty} \inner{f_k , \widetilde{\phi}_{\oo_k}}_{L^2 \times L^2 (\h^3)} =0 . 
\end{align}
\end{definition}

\begin{remark}
\begin{enumerate}
\item
If $\oo = \{ (1 , t_k ,h_k)\}_{k}$ is a hyperbolic frame, this is equivalent to saying that
\begin{align}
\Pi_{-t_k , h_k^{-1}} f_k \rightharpoonup 0
\end{align}
as $k \to \infty$ in $L^2 (\h^3)$.

\item
If $\oo = \{ (N_k , t_k ,h_k)\}_{k}$ is a Euclidean frame, this is equivalent to saying that for all $R >0$
\begin{align}
\mathbf{g}_k^R (v) = \eta (\frac{v}{R}) N_k^{-\frac{3}{2}} (\Pi_{-t_k , h_k^{-1}}f_k) (\Psi_{\mathcal{I}} (\frac{v}{N_k})) \rightharpoonup 0
\end{align}
as $k \to \infty$ in $L^2 (\R^3)$.

\item
If $\oo$ is a Euclidean frame and $\oo'$ is a hyperbolic frame, then the two frames are orthogonal.
\end{enumerate}
\end{remark}

\begin{lemma}\label{lem EquiOrth}
\begin{enumerate}
\item
Assume $\{ \oo_k \}_{k \geq 1} = \{ (N_k , t_k ,h_k)\}_{k \geq 1}$ and $\{ \oo_k' \}_{k \geq 1} = \{ (N_k' , t_k' ,h_k')\}_{k \geq 1}$ are two equivalent Euclidean frames (or hyperbolic frames), and $\phi \in L^2 (\R^3)$ (or $\phi \in L^2 (\h^3)$). Then there is $\phi' \in L^2 (\R^3)$ (or $\phi' \in L^2 (\h^3)$) such that, up to a subsequence,
\begin{align}\label{eq 5.8}
\lim_{k \to \infty} \norm{\widetilde{\phi}_{\oo_k} - \widetilde{\phi}_{\oo'_k}'  }_{L^2 (\h^3)} =0,
\end{align}
where $\widetilde{\phi}_{\oo_k}$, $\widetilde{\phi}_{\oo'_k}'$ are defined as in Definition \ref{defn Profiles}.

\item
Assume $\{ \oo_k \}_{k \geq 1} = \{ (N_k , t_k ,h_k)\}_{k \geq 1}$ and $\{ \oo_k' \}_{k \geq 1} = \{ (N_k' , t_k' ,h_k')\}_{k \geq 1}$ are two orthogonal frames (either Euclidean or hyperbolic) and $\widetilde{\phi}_{\oo_k}$, $\widetilde{\psi}_{\oo'_k}$ are associated profiles. Then
\begin{align}\label{eq 5.9}
\lim_{k \to \infty} \abs{\int_{\h^3} \widetilde{\phi}_{\oo_k} \overline{\widetilde{\psi}_{\oo'_k}} \, d \mu} =0 .
\end{align}

\item
If $\widetilde{\phi}_{\oo_k}$, $\widetilde{\psi}_{\oo_k}$ are two Euclidean profiles associated to the same frame, then 
\begin{align}
\lim_{k \to \infty} \inner{ \widetilde{\phi}_{\oo_k} ,  \widetilde{\psi}_{\oo_k}}_{L^2 \times L^2 (\h^3)} = \lim_{k \to \infty} \abs{\int_{\h^3} \widetilde{\phi}_{\oo_k} \overline{\widetilde{\psi}_{\oo_k}} \, d \mu} = \int_{\R^3} \phi(x) \cdot \overline{\psi(x)} \, dx  =  \inner{ \phi,  \psi}_{L^2 \times L^2 (\R^3)}.
\end{align}
\end{enumerate}
\end{lemma}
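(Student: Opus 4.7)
The plan is to handle the three parts separately, each reducing to extracting limits of bounded frame parameters and exploiting the $L^2$-isometry of $\Pi_{t,h}$ together with the integration formula for $\h^3$ in normalized coordinates.

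For Part (1), I would first pass to a subsequence along which the bounded equivalence quantities converge: $N_k/N_k' \to \alpha \in (0,\infty)$, $N_k^2(t_k - t_k') \to \tau \in \R$, and, using the Cartan decomposition \eqref{eq Cartan} together with compactness on bounded subsets of $\G$, $(h_k^{-1} h_k') \to k_0 \in \G$ (possibly after adjusting by rotations around $\0$ whose action on radial-like objects can be absorbed). In the hyperbolic case $N_k = N_k' = 1$ so $\alpha = 1$, and one checks directly that $\pi_{h_k'}e^{-it_k'\Delta_{\h^3}}\phi = \pi_{h_k}\pi_{h_k^{-1}h_k'}e^{-i(t_k' - t_k)\Delta_{\h^3}} e^{-it_k \Delta_{\h^3}}\phi$, so that $\phi' := \pi_{k_0} e^{-i\tau \Delta_{\h^3}}\phi$ satisfies \eqref{eq 5.8} by continuity of $e^{is\Delta_{\h^3}}$ and $\pi_h$ on $L^2(\h^3)$. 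In the Euclidean case I would similarly transfer to $\R^3$ using that $\Pi_{t_k,h_k}T_{N_k}$ is an $L^2$-controlled operation, and use the fact that modifying the parameters by a bounded amount corresponds, up to vanishing $L^2$ errors coming from the cutoff $\eta(\cdot/N_k^{1/2})$ and the mollification $e^{\Delta_{\R^3}/N_k}$, to replacing $\phi$ by a specific translated, rescaled, and time-shifted element $\phi' \in L^2(\R^3)$; one proves this first for $\phi \in C_c^\infty$ and then passes to the $L^2$ limit by density.

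For Part (2), orthogonality says one of $|\ln(N_k/N_k')|$, $N_k^2|t_k - t_k'|$, or $N_k d(h_k \cdot \0, h_k'\cdot\0)$ tends to infinity along the subsequence, and I would argue by a case split. After writing $\int_{\h^3} \widetilde\phi_{\oo_k}\overline{\widetilde\psi_{\oo_k'}}\,d\mu = \inner{\Pi_{-t_k,h_k^{-1}}\widetilde\phi_{\oo_k},\, \Pi_{-t_k,h_k^{-1}}\widetilde\psi_{\oo_k'}}$, the problem reduces to showing weak convergence to $0$ of the second factor. Scale orthogonality ($|\ln N_k/N_k'| \to \infty$, or Euclidean vs.\ hyperbolic mixing) gives decoupling via the heat-based Littlewood--Paley characterization of Definition \ref{defn LP}, since the two profiles live at essentially disjoint frequencies $N_k, N_k'$; time orthogonality uses the dispersive estimate \eqref{eq Dispersive} applied to $e^{i(t_k - t_k')\Delta}$ against a fixed test profile to force the pairing to $0$; spatial orthogonality uses that the supports of the Euclideanly rescaled profiles become disjoint, or for hyperbolic frames that $d(h_k\cdot\0,h_k'\cdot\0)\to\infty$ pushes one profile to spatial infinity relative to the other, which combined with approximation by compactly supported functions kills the inner product.

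For Part (3), since both profiles share the same frame, the $L^2$-isometry of $\Pi_{t_k,h_k}$ reduces the computation to $\inner{T_{N_k}\phi, T_{N_k}\psi}_{L^2(\h^3)}$. Using the integration formula on $\h^3$ and the change of variables $w = N_k \Psi_{\mathcal{I}}^{-1}(x)$ I would rewrite this as
\begin{equation*}
\int_{\R^3} \phi_{N_k}(w)\,\overline{\psi_{N_k}(w)}\,\bigl(1 + |w/N_k|^2\bigr)^{-1/2}\,dw,
\end{equation*}
and since $N_k \to \infty$, the Jacobian weight tends to $1$ locally uniformly while $\phi_{N_k} \to \phi$ and $\psi_{N_k}\to \psi$ in $L^2(\R^3)$ (by properties of the heat semigroup and the cutoff $\eta(\cdot/N_k^{1/2})\to 1$), yielding $\inner{\phi,\psi}_{L^2(\R^3)}$ in the limit by dominated convergence.

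The main obstacle is Part (1) in the Euclidean case: one must show that replacing $(N_k,t_k,h_k)$ by an equivalent frame $(N_k',t_k',h_k')$ produces an $L^2(\h^3)$-vanishing discrepancy despite the presence of the high-frequency cutoff and mollifier defining $T_{N_k}$, so the three bounded perturbations (scale by $\alpha$, time shift by $\tau$, translation conjugation by $k_0$) must be absorbed consistently into the definition of $\phi'$. The remaining parts are largely bookkeeping given the right subsequential extraction.
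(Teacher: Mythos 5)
Your overall architecture (subsequence extraction of the bounded equivalence parameters, case analysis according to which orthogonality quantity diverges, and a change of variables with Jacobian $(1+|w/N_k|^2)^{-1/2}\to 1$ for part (3)) matches the paper's, and your treatments of part (3) and of the time- and space-orthogonal cases of part (2) are essentially the paper's arguments. There are, however, two genuine gaps. First, in part (1) for Euclidean frames you attribute the vanishing $L^2$ discrepancy only to the cutoff $\eta(\cdot/N_k^{1/2})$ and the mollifier $e^{\Delta_{\R^3}/N_k}$. But the time shift between the two frames is implemented by the \emph{hyperbolic} propagator: one must compare $e^{i(t_k'-t_k)\Delta_{\h^3}}(T_{N_k}\phi)$ with the profile built from $\phi'(x)=\overline{N}^{3/2}(e^{-i\overline{t}\Delta_{\R^3}}\phi)(\overline{N}x-\overline{x})$, whose definition involves the \emph{Euclidean} propagator. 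The paper closes exactly this gap by invoking Lemma \ref{lem ProfileAppx}(2) with $\rho=0$, i.e.\ the quantitative statement that on time scales $O(N_k^{-2})$ the hyperbolic linear flow of data concentrated at spatial scale $N_k^{-1}$ is approximated in $S_{\h^3}^0$ by the transferred, truncated Euclidean linear solution. This is the central analytic input of the lemma, not bookkeeping, and your proposal leaves it as an assertion ("absorbed consistently into the definition of $\phi'$").

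Second, in part (2) your treatment of scale orthogonality ($|\ln(N_k/N_k')|\to\infty$, and the Euclidean-versus-hyperbolic mixed case) via "essentially disjoint frequencies" of the heat-flow Littlewood--Paley decomposition does not work as stated: for general $\psi\in L^2(\R^3)$ the profile $T_{N'}\psi$ is not frequency-localized near $N'$ (its Fourier content spreads over all frequencies $\lesssim N'$, including arbitrarily low ones), so the two profiles need not be frequency-disjoint, and the heat-based projections are in any case not sharp cutoffs. The paper instead reduces to $C_0^\infty$ data and uses H\"older together with the Lebesgue-norm scaling $\|T_{N}\phi\|_{L^6(\h^3)}\lesssim_\phi N$ and $\|T_{N'}\psi\|_{L^{6/5}(\h^3)}\lesssim_\psi (N')^{-1}$, giving the bound $N_k(N_k')^{-1}\to 0$ (and $(N_k')^{-1}\to 0$ in the mixed case, combined with the dispersive decay of the hyperbolic profile). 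Your route could be repaired by first approximating the data by functions whose Fourier transforms are compactly supported away from the origin, but as written it is a heuristic rather than a proof.
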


\begin{proof}[Proof of Lemma \ref{lem EquiOrth}]
\begin{enumerate}
\item
We will prove the claim in the following subcases.

\textbf{Case 1:} In the case of two hyperbolic frames,  $\{ \oo_k\}_{k\geq 1}$ and $\{ \oo_k'\}_{k\geq 1}$, by passing to a subsequence we may assume $\lim_{k \to \infty} - t_k' + t_k = \overline{t}$ and $\lim_{k \to \infty} h_k'^{-1} h_k = \overline{h}$, and define
\begin{align}
\phi' : = \Pi_{\overline{t}, \overline{h}} \phi.
\end{align}

\textbf{Case 2:} In the case that $\{ \oo_k\}_{k\geq 1}$ and  $\{ \oo_k'\}_{k\geq 1}$ are equivalent Euclidean frames, we decompose $h_k'^{-1} h_k$ using the Cartan decomposition \eqref{eq Cartan}
\begin{align}\label{eq 5.10}
h_k'^{-1} h_k = m_k a_{s_k} n_k, \quad m_k , n_k \in \mathbb{K}, \quad s_k \in [0, \infty) .
\end{align}
Therefore, using the compactness of the subgroup $\mathbb{K}$ and the definition \eqref{eq Orth}, after passing to a subsequence, we may assume that
\begin{align}\label{eq 5.11}
\lim_{k \to \infty} \frac{N_k}{N_k'} = \overline{N}, \quad \lim_{k \to \infty} N_k^2 (t_k - t_k') = \overline{t} , \quad \lim_{k \to \infty}  m_k = m, \quad \lim_{k \to \infty}  n_k = n, \quad \lim_{k \to \infty}  N_k s_k = \overline{s} .
\end{align}
We observe that for any $N \geq 1$, $\psi \in L^2 (\R^3)$, $t\in \R$, $g \in \mathbb{G}$, and $q \in \mathbb{K}$
\begin{align}
\Pi_{t, g q}(T_{N} \psi)=\Pi_{t, g}(T_{N} \psi_{q}), \quad \text { where } \psi_{q}(x)=\psi (q^{-1} \cdot x ) .
\end{align}
Therefore, in \eqref{eq 5.10} we may assume that
\begin{align}
m_{k}=n_{k}=\mathcal{I}, \quad h_{k}'^{-1} h_{k}=a_{s_{k}} .
\end{align}
With $ \overline{x}=(\overline{s}, 0,0) $, we define
\begin{align}
\phi'(x):=\overline{N}^{\frac{3}{2}}(e^{-i \overline{t} \Delta} \phi)(\overline{N} x-\overline{x}), \quad \phi' \in L^2 (\mathbb{R}^{3}),
\end{align}
and define $ \widetilde{\phi}_{\mathscr{O}_{k}'}' $ as in \eqref{eq 5.5}. The identity \eqref{eq 5.8} is equivalent to
\begin{align}\label{eq 5.12}
\lim _{k \to \infty}\left\|T_{N_{k}'} \phi'-\pi_{h_{k}'^{-1} h_{k}} e^{i(t_{k}'-t_{k}) \Delta_{\h^3}}(T_{N_{k}} \phi)\right\|_{L^2 (\mathbb{H}^{3})}=0 .
\end{align}
To prove \eqref{eq 5.12} we may assume that $\phi' \in C_{0}^{\infty}(\mathbb{R}^{3}), \phi \in H^{5}(\mathbb{R}^{3})$, and apply Lemma \ref{lem ProfileAppx} (2) with $ \rho=0 $. Let $ v(t,x)=(e^{i t \Delta_{\R^3}} \phi)(x)$ and, for $ R \geq 1 $,
\begin{align}
v_{R}(t,x)=\eta(\frac{x}{R}) v(t,x), \quad v_{R, N_{k}}(t,x)=N_{k}^{\frac{3}{2}} v_{R}(N_{k}^{2} t , N_{k} x), \quad V_{R, N_{k}}(t, y)=v_{R, N_{k}}(t, \Psi_{\mathcal{I}}^{-1}(y)) .
\end{align}
It follows from Lemma \ref{lem ProfileAppx} (2) that for any $ \varepsilon>0 $ sufficiently small there is $ R_{0} $ sufficiently large such that, for any $ R \geq R_{0} $,
\begin{align}\label{eq 5.13}
\limsup _{k \to \infty}\left\|e^{i(t_{k}'-t_{k}) \Delta_{\h^3}}(T_{N_{k}} \phi)-V_{R, N_{k}}(t_{k}'-t_{k})\right\|_{L^2(\mathbb{H}^{3})} \leq \varepsilon .
\end{align}
Therefore, to prove \eqref{eq 5.12} it suffices to show that, for $ R $ large enough,
\begin{align}\label{eq 5.135}
\limsup_{k \to \infty}\left\|\pi_{h_{k}^{-1}h_{k}'} (T_{N_{k}'} \phi')-V_{R, N_{k}}(t_{k}'-t_{k})\right\|_{L^2(\mathbb{H}^{3})} \lesssim \varepsilon,
\end{align}
which, after examining the definitions and recalling that $ \phi' \in C_{0}^{\infty}(\mathbb{R}^{3}) $, is equivalent to
\begin{align}
\limsup _{k \to \infty}\left\|N_{k}'^{ \frac{3}{2}} \phi'(N_{k}' \Psi_{\mathcal{I}}^{-1}(h_{k}'^{-1} h_{k} \cdot y))-N_{k}^{\frac{3}{2}} v_{R}( N_{k}^{2}(t_{k}'-t_{k}), N_{k} \Psi_{\mathcal{I}}^{-1}(y))\right\|_{L_{y}^{2}(\mathbb{H}^{3})} \lesssim \varepsilon .
\end{align}
After changing variables $ y=\Psi_{\mathcal{I}}(x) $ this is equivalent to
\begin{align}
\limsup _{k \to \infty}\left\|N_{k}'^{ \frac{3}{2}} \phi'(N_{k}' \Psi_{\mathcal{I}}^{-1}(h_{k}'^{-1} h_{k} \cdot \Psi_{\mathcal{I}}(x)))-N_{k}^{\frac{3}{2}} v_{R}( N_{k}^{2}(t_{k}'-t_{k}), N_{k} x)\right\|_{L_{x}^{2}(\mathbb{R}^{3})} \lesssim \varepsilon .
\end{align}
Since, by definition, $ \phi'(z)=\overline{N}^{\frac{3}{2}} v(-\overline{t} , \overline{N} z-\overline{x})$, this follows provided that
\begin{align}
\lim _{k \to \infty} N_{k} \Psi_{\mathcal{I}}^{-1}(h_{k}'^{-1} h_{k} \cdot \Psi_{\mathcal{I}}(x / N_{k}))-x=\overline{x} \quad \text { for any } x \in \mathbb{R}^{3} .
\end{align}
This last claim follows from explicit computations using \eqref{eq 5.11} and the definition \eqref{eq Diffeo}. Finally, for arbitrarily small $\varepsilon>0$, combining \eqref{eq 5.13} and \eqref{eq 5.135}
\begin{align}
   &\limsup_{k \to \infty}\left\|T_{N_{k}'} \phi'-\pi_{h_{k}'^{-1} h_{k}} e^{i(t_{k}'-t_{k}) \Delta_{\h^3}}(T_{N_{k}} \phi)\right\|_{L^2 (\mathbb{H}^{3})}\\
   &\leq \limsup _{k \to \infty}\left\|e^{i(t_{k}'-t_{k}) \Delta_{\h^3}}(T_{N_{k}} \phi)-V_{R, N_{k}}(t_{k}'-t_{k})\right\|_{L^2(\mathbb{H}^{3})}\\
   &\hspace{3cm}+\limsup _{k \to \infty}\left\|\pi_{h_{k}^{-1}h_{k}'} (T_{N_{k}'} \phi')-V_{R, N_{k}}(t_{k}'-t_{k})\right\|_{L^2(\mathbb{H}^{3})}\lesssim \varepsilon.
\end{align}

\item
It suffices to prove that one can extract a subsequence such that \eqref{eq 5.9} holds. We analyze three cases:

\textbf{Case 1: $\oo, \oo' \in \mathscr{F}_{h} $.}  
By taking arbitrarily close approximations, we may assume that $ \phi, \psi \in C_{0}^{\infty}(\mathbb{H}^{3}) $ and select a subsequence such that either
\begin{align}\label{eq 5.14}
\lim _{k \to \infty}\left|t_{k}-t_{k}'\right|=\infty
\end{align}
or
\begin{align}\label{eq 5.15}
\lim _{k \to \infty} t_{k}-t_{k}'=\overline{t} \in \mathbb{R} \quad\mbox{ and } \quad \lim _{k \to \infty} d(h_{k} \cdot \mathbf{0}, h_{k}' \cdot \mathbf{0})=\infty .
\end{align}
Using \eqref{eq Dispersive}, it follows that
\begin{align}
\left\|\Pi_{t, h} \phi\right\|_{L^{6}(\mathbb{H}^{3})} &\lesssim_{\phi}(1+|t|)^{-1} ,\\
 \left\|\Pi_{t, h} \psi\right\|_{L^{6}(\mathbb{H}^{3})} & \lesssim_{\psi}(1+|t|)^{-1},
\end{align}
for any $ t \in \mathbb{R} $ and $ h \in \mathbb{G} $. Thus
\begin{align}
\abs{\int_{\h^3} \widetilde{\phi}_{\oo_k} \cdot \overline{ \widetilde{\psi}_{\oo_k'} } \, d\mu} & = \abs{\int_{\h^3} \pi_{h_k'^{-1} h_k} e^{-i (t_k - t_k') \Delta_{\h^3}}  \phi \cdot \overline{\psi} \, d\mu} \\
& \lesssim\left\|\pi_{h_{k}'^{-1} h_{k}} e^{-i(t_{k}-t_{k}') \Delta_{\h^3}} \phi \right\|_{L^{6}(\mathbb{H}^{3})}\|\psi\|_{L^{6 / 5}(\mathbb{H}^{3})} \lesssim_{\phi, \psi}(1+\left|t_{k}-t_{k}'\right|)^{-1} .
\end{align}
The claim \eqref{eq 5.9} follows if the selected subsequence satisfies \eqref{eq 5.14}.

If the selected subsequence satisfies \eqref{eq 5.15} then, as before,
\begin{align}
\left| \int_{\mathbb{H}^{3}} \widetilde{\phi}_{\oo_{k}} \overline{\widetilde{\psi}_{\oo_{k}'}} \, d \mu \right|& =\left|\int_{\mathbb{H}^{3}} \pi_{h_{k}'^{-1} h_{k}} e^{-i(t_{k}-t_{k}') \Delta_{\h^3}} \phi \cdot \overline{ \psi} \, d \mu\right| \\
& \lesssim\left\| \psi\right\|_{L^{2}(\mathbb{H}^{3})} \cdot\left\|e^{-i \overline{t} \Delta_{\h^3}} \phi-e^{-i(t_{k}-t_{k}') \Delta_{\h^3}} \phi\right\|_{L^{2}(\mathbb{H}^{3})}+\int_{\mathbb{H}^{3}}\left|e^{-i \overline{t} \Delta_{\h^3}} \phi\right| \cdot\left|\pi_{h_{k}^{-1} h_{k}'}  \psi\right| \, d \mu .
\end{align}
The limit in \eqref{eq 5.9} follows. 

\textbf{Case 2: $ \oo \in \mathscr{F}_{h}, \oo' \in \mathscr{F}_{e} $.} 
We may assume that $ \phi \in C_{0}^{\infty}(\mathbb{H}^{3}) $ and $ \psi \in C_{0}^{\infty}(\mathbb{R}^{3}) $. We estimate
\begin{align}
\left|\int_{\mathbb{H}^{3}} \widetilde{\phi}_{\mathscr{O}_{k}} \overline{\widetilde{\psi}_{\mathscr{O}_{k}'}} \, d \mu\right| = \left|\int_{\mathbb{H}^{3}} \Pi_{t_{k}, h_{k}}\phi \cdot \overline{\Pi_{t_{k}', h_{k}'}(T_{N_{k}'} \psi)} \, d \mu\right| \lesssim_{\phi} (1+|t_k-t_k'|)^{-1}\left\|T_{N_{k}'} \psi\right\|_{L^{6/5}(\mathbb{H}^{3})} \lesssim_{\phi, \psi} N_{k}'^{-1} .
\end{align}
The limits in \eqref{eq 5.9} follow.

\textbf{Case 3: $ \oo, \oo' \in \mathscr{F}_{e} $.} 
We may assume that $ \phi, \psi \in C_{0}^{\infty}(\mathbb{R}^{3}) $ and select a subsequence such that either
\begin{align}\label{eq 5.17}
\lim _{k \to \infty} \frac{N_{k}}{N_{k}'} =0
\end{align}
or
\begin{align}\label{eq 5.18}
\lim _{k \to \infty} \frac{N_{k}}{N_{k}'} =\overline{N} \in(0, \infty), \quad \lim _{k \to \infty} N_{k}^{2}\left|t_{k}-t_{k}'\right|=\infty
\end{align}
or
\begin{align}\label{eq 5.19}
\lim _{k \to \infty} \frac{N_{k}}{N_{k}'}=\overline{N} \in(0, \infty), \quad \lim _{k \to \infty} N_{k}^{2}(t_{k}-t_{k}')=\overline{t} \in \mathbb{R}, \quad \lim _{k \to \infty} N_{k} d(h_{k} \cdot \mathbf{0}, h_{k}' \cdot \mathbf{0})=\infty .
\end{align}
Assuming \eqref{eq 5.17} we estimate, as in Case 2,
\begin{align}
\left|\int_{\mathbb{H}^{3}}  \widetilde{\phi}_{\oo_{k}} \overline{\widetilde{\psi}_{\oo_{k}'}} \, d \mu\right| & =\left|\int_{\mathbb{H}^{3}} \Pi_{t_{k}, h_{k}}(T_{N_{k}} \phi) \cdot \overline{\Pi_{t_{k}', h_{k}'}(T_{N_{k}'} \psi)} \, d \mu\right| \\
& \lesssim\left\|T_{N_{k}} \phi\right\|_{L^{6}(\mathbb{H}^{3})}\left\|T_{N_{k}'} \psi\right\|_{L^{6/5}(\mathbb{H}^{3})} \lesssim_{\phi, \psi} N_{k} N_{k}'^{-1} .
\end{align}
The limits in \eqref{eq 5.9} follow in this case.

To prove the limit \eqref{eq 5.9}  assuming \eqref{eq 5.18}, we estimate first, using \eqref{eq Dispersive},
\begin{align}\label{eq 5.20}
\left\|\Pi_{t, h}(T_{N} f)\right\|_{L^{6}(\mathbb{H}^{3})} \lesssim_{f}(1+N^{2}|t|)^{-1},
\end{align}
for any $ t \in \mathbb{R}, h \in \mathbb{G}, N \in[0, \infty) $, and $ f \in C_{0}^{\infty}(\mathbb{R}^{3}) $. Thus
\begin{align}
\abs{\int_{\h^3}  \widetilde{\phi}_{\oo_k} \cdot \overline{\widetilde{\psi}_{\oo_k'} } \, d\mu} & = \left|\int_{\mathbb{H}^{3}} \pi_{h_{k}'^{-1} h_{k}}  e^{-i(t_{k}-t_{k}') \Delta_{\h^3}}(T_{N_{k}} \phi) \cdot \overline{T_{N_{k}'} \psi} \, d \mu\right| \\
& \lesssim\left\|\pi_{h_{k}'^{-1} h_{k}}  e^{-i(t_{k}-t_{k}') \Delta_{\h^3}}(T_{N_{k}} \phi)\right\|_{L^{6}(\mathbb{H}^{3})}\left\|T_{N_{k}'} \psi \right\|_{L^{\frac{6}{5}}(\mathbb{H}^{3})} \\
& \lesssim_{\phi, \psi}(N'_k)^{-1}(1+N_{k}^{2}\left|t_{k}-t_{k}'\right|)^{-1} .
\end{align}
The claim \eqref{eq 5.9} follows if the selected subsequence verifies \eqref{eq 5.18}.

Finally, it remains to prove the limit \eqref{eq 5.9} if the selected subsequence verifies \eqref{eq 5.19}. For this, we will use the following claim:
\begin{claim}\label{claim 5.21}
If $ (g_{k}, M_{k})_{k \geq 1} \in \mathbb{G} \times[1, \infty), \lim _{k \to \infty} M_{k}=\infty, \lim _{k \to \infty} M_{k} d(g_{k} \cdot \mathbf{0}, \mathbf{0})=\infty $, and $ f, g \in L^2(\mathbb{R}^{3}) $ then
\begin{align}
\lim _{k \to \infty}\left|\int_{\mathbb{H}^{3}} \pi_{g_{k}}(T_{M_{k}} f) \cdot(T_{M_{k}} g) \, d \mu\right| =0 .
\end{align}
\end{claim}

Assuming Claim \ref{claim 5.21}, we can complete the proof of \eqref{eq 5.9}. It follows from \eqref{eq 5.12} that if $ f \in L^2(\mathbb{R}^{3}) $ and $ \left\{s_{k}\right\}_{k \geq 1} $ is a sequence with the property that $ \lim _{k \to \infty} N_{k}^{2} s_{k}=\overline{s} \in \mathbb{R} $ then
\begin{align}\label{eq 5.22}
\lim _{k \to \infty}\left\|e^{-i s_{k} \Delta_{\h^3}}(T_{N_{k}} f)-T_{N_{k}'} f'\right\|_{L^2 (\mathbb{H}^{3})}=0,
\end{align}
where $ f'(x)=\overline{N}^{\frac{3}{2}}(e^{-i \overline{s} \Delta_{\R^3}} f)(\overline{N} x)$. We estimate
\begin{align}
\left|\int_{\mathbb{H}^{3}} \widetilde{\phi}_{\oo_{k}} \overline{\widetilde{\psi}_{\oo_{k}'}} \, d \mu\right| & = \left|\int_{\mathbb{H}^{3}}\pi_{h_{k}'^{-1} h_{k}}  e^{-i(t_{k}-t_{k}') \Delta_{\h^3}}(T_{N_{k}} \phi) \cdot \overline{T_{N_{k}'} \psi} \, d \mu\right| \\
& \lesssim\left|\int_{\mathbb{H}^{3}} \pi_{h_{k}'^{-1}h_{k}} (T_{N_{k}'} \phi') \cdot \overline{T_{N_{k}'} \psi} \, d \mu\right| \\
& \quad +\|\psi\|_{L^2(\mathbb{R}^{3})} \cdot\left\|\pi_{h_{k}'^{-1} h_{k}} e^{-i(t_{k}-t_{k}') \Delta_{\h^3}}(T_{N_{k}} \phi)-\pi_{h_{k}'^{-1} h_{k}}(T_{N_{k}'} \phi')\right\|_{L^2 (\mathbb{H}^{3})} .
\end{align}
In view of Claim \ref{claim 5.21} and \eqref{eq 5.22}, both terms in the expression above converge to $0$ as $k \to \infty $, as desired.

It remains to prove Claim \ref{claim 5.21}. In view of the $L^2 (\R^3) \to L^2 (\h^3)$ boundedness of the operators $ T_{N} $, we may assume that $ f, g \in C_{0}^{\infty}(\mathbb{R}^{3}) $ and replace $ T_{M_{k}} f $ and $ T_{M_{k}} g $ by $ M_{k}^{\frac{3}{2}} f(M_{k} \Psi_{\mathcal{I}}^{-1}(x)) $ and $ M_{k}^{\frac{3}{2}} g(M_{k} \Psi_{\mathcal{I}}^{-1}(x)) $ respectively, up to small errors. Then we notice that the supports of these functions become disjoint for $ k $ sufficiently large (due to the assumption $ \lim _{k \to \infty} M_{k} d(g_{k} \cdot \mathbf{0}, \mathbf{0})=\infty $). The limit in Claim \ref{claim 5.21} follows.

\item  
By the boundedness of $ T_{N_{k}}$, it suffices to consider the case when $ \phi, \psi \in C_{0}^{\infty}(\mathbb{R}^{3}) $. In this case, we have
\begin{align}
\left\|T_{N_{k}} \phi-N_{k}^{3 / 2} \phi(N_{k} \Psi_{\mathcal{I}}^{-1} \cdot)\right\|_{L^{2}(\mathbb{H}^{3})} \to 0
\end{align}
as $ k \to \infty $. Hence, by the unitarity of $ \Pi_{t_{k}, h_{k}} $, it suffices to compute
\begin{align}
\lim _{k \to \infty} N^3_{k}\left\langle \phi(N_{k} \Psi_{\mathcal{I}}^{-1} \cdot), \psi(N_{k} \Psi_{\mathcal{I}}^{-1} \cdot) \right\rangle_{L^{2} \times L^{2}(\mathbb{H}^{3})}=\int_{\mathbb{R}^{3}}  \phi(x) \cdot \overline{\psi}(x) d x,
\end{align}
which follows after a change of variables and the use of the dominated convergence theorem.
\end{enumerate}
The proof of Lemma \ref{lem EquiOrth} is complete. 
\end{proof}

\subsection{Profile decomposition}

\begin{proposition}\label{prop ProfileDecomp}
Assume that $(f_k)_{k \geq 1}$ is a bounded sequence in $L^2 (\h^3)$. Then there are sequences of pairs $(\phi^{\mu} , \oo^{\mu} ) \in L^2 (\R^3) \times \F_e$ and $(\psi^{\nu} , \widetilde{\oo}^{\nu} ) \in L^2 (\h^3) \times \F_h$, $\mu, \nu \in \N^+$ such that, up to a subsequence, for any ${\Lambda} \geq 1$,
\begin{align}
f_k = \sum_{1 \leq \mu \leq {\Lambda}} \widetilde{\phi}_{\oo_k^{\mu}}^{\mu}  + \sum_{1 \leq \nu \leq {\Lambda}} \widetilde{\psi}_{\oo_k^{\nu}}^{\nu} + r_k^{\Lambda},
\end{align}
where $\widetilde{\phi}_{\oo_k^{\mu}}^{\mu} $ and $\widetilde{\psi}_{\oo_k^{\nu}}^{\nu} $ are the associated profiles in Definition \ref{defn Profiles} and 
\begin{align}\label{eq ErrSmall}
\lim_{{\Lambda} \to \infty} \limsup_{k \to \infty}  \norm{e^{it\Delta_{\h^3}} r_k^{\Lambda}}_{L_{t,x}^{\frac{10}{3}} (\R \times \h^3)} =0 .
\end{align}
Moreover the frames $\{ \oo^{\mu} \}_{\mu \geq 1}$ and $\{ \widetilde{\oo}^{\nu} \}_{\nu \geq 1}$ are pairwise orthogonal. Finally, the decomposition is asymptotically orthogonal in the sense that
\begin{align}\label{eq ProfileDecomp}
\lim_{{\Lambda} \to \infty} \limsup_{k \to \infty} \abs{\norm{f_k}_{L^2 (\h^3)}^2 - \sum_{1 \leq \mu \leq {\Lambda}}   \norm{\widetilde{\phi}_{\oo_k^{\mu}}^{\mu} }_{L^2 (\h^3)}^2 - \sum_{1 \leq \nu \leq {\Lambda}}   \norm{\widetilde{\psi}_{\oo_k^{\nu}}^{\nu} }_{L^2 (\h^3)}^2  - \norm{r_k^{\Lambda}}_{L^2 (\h^3)}^2 } =0 .
\end{align}
\end{proposition}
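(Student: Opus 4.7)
The plan is an iterative extraction of profiles driven by the inverse Strichartz inequality (Proposition \ref{prop InvStrichartz}), followed by a diagonal subsequence. Set $r_k^{0} := f_k$, and having defined the residuals $r_k^{\Lambda-1}$, put
\[
B_\Lambda := \limsup_{k \to \infty} \|r_k^{\Lambda-1}\|_{L^2(\h^3)}, \qquad \varepsilon_\Lambda := \limsup_{k \to \infty} \|e^{it\Delta_{\h^3}} r_k^{\Lambda-1}\|_{L^{10/3}_{t,x}(\R \times \h^3)}.
\]
If $\varepsilon_\Lambda = 0$, stop. Otherwise, apply Proposition \ref{prop InvStrichartz} to the sequence $(r_k^{\Lambda-1})_k$: after passing to a subsequence, it yields scales $(N_k^\Lambda)$, times $(t_k^\Lambda)$, isometries $(h_k^\Lambda)$, and a nontrivial weak limit $\phi^\Lambda$, which is Euclidean (lives in $L^2(\R^3)$) when $N_k^\Lambda \to \infty$ and hyperbolic (lives in $L^2(\h^3)$, with scales set to $1$) otherwise. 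Reindex these as $(\phi^\mu, \oo^\mu)$ or $(\psi^\nu, \widetilde{\oo}^\nu)$ per Definition \ref{defn Profiles}, and define $r_k^\Lambda$ to be $r_k^{\Lambda-1}$ minus the newly extracted profile. A standard diagonal argument delivers a single subsequence valid for every step.

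A summable mass-decrement forces $\varepsilon_\Lambda \to 0$: Proposition \ref{prop InvStrichartz}(1b), (2b) gives
\[
\limsup_{k \to \infty} \bigl( \|r_k^{\Lambda-1}\|_{L^2}^{2} - \|r_k^\Lambda\|_{L^2}^{2} \bigr) \;\gtrsim\; \varepsilon_\Lambda^{2} \bigl( \varepsilon_\Lambda / B_\Lambda \bigr)^{18} \;\gtrsim\; \varepsilon_\Lambda^{20} / B_1^{18},
\]
where $B_1 := \limsup_k \|f_k\|_{L^2}$ bounds every $B_\Lambda$. Telescoping yields $\sum_\Lambda \varepsilon_\Lambda^{20} \lesssim B_1^{20}$, hence $\varepsilon_\Lambda \to 0$, which is precisely \eqref{eq ErrSmall}.

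Pairwise orthogonality of the extracted frames is established by induction on $\Lambda$: the key invariant, guaranteed by the weak-limit characterization in Proposition \ref{prop InvStrichartz}(1a), (2a), is that $r_k^{\Lambda-1}$ is \emph{absent} (in the sense of \eqref{eq 5.7}) from all previously extracted frames $\oo^1, \dots, \oo^{\Lambda-1}$. If the newly produced frame $\oo^\Lambda$ were equivalent to some $\oo^\mu$ with $\mu < \Lambda$, then Lemma \ref{lem EquiOrth}(1) would let us replace $\widetilde{\phi}^\Lambda_{\oo_k^\Lambda}$ by a profile along $\oo^\mu$ up to $o_k(1)$ errors in $L^2(\h^3)$, while the inverse Strichartz extraction supplies $\langle r_k^{\Lambda-1}, \widetilde{\phi}^\Lambda_{\oo_k^\Lambda} \rangle \to \|\phi^\Lambda\|_{L^2}^{2} > 0$; together these contradict absence of $r_k^{\Lambda-1}$ from $\oo^\mu$. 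Orthogonality of $\oo^\Lambda$ with respect to all earlier frames then, by Lemma \ref{lem EquiOrth}(2), makes the newly extracted profile itself absent from those earlier frames, so $r_k^\Lambda = r_k^{\Lambda-1} - \widetilde{\phi}^\Lambda_{\oo_k^\Lambda}$ inherits absence from $\oo^1, \dots, \oo^\Lambda$, closing the induction.

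Finally, the asymptotic Pythagorean identity \eqref{eq ProfileDecomp} follows by expanding $\|r_k^{\Lambda-1}\|_{L^2}^{2} = \|r_k^\Lambda + (\text{extracted profile})_k\|_{L^2}^{2}$ at each step and iterating. The cross term at step $\Lambda$ between $r_k^\Lambda$ and the newly extracted profile vanishes in the limit by absence, while cross terms between any two previously extracted profiles vanish by the orthogonality of their frames combined with Lemma \ref{lem EquiOrth}(2). Sending $k \to \infty$ and then $\Lambda \to \infty$ produces \eqref{eq ProfileDecomp}. The main obstacle is the frame-orthogonality step: keeping track of countably many weak-limit absences simultaneously and ruling out frame equivalences requires both parts of Lemma \ref{lem EquiOrth} in a delicate inductive loop, and it is this rigidity (rather than any quantitative estimate) that underpins both the asymptotic orthogonality of masses and the vanishing of the remainder's spacetime norm.
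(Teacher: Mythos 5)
Your proposal is correct and follows essentially the same route as the paper: iterative extraction via the inverse Strichartz inequality (Proposition \ref{prop InvStrichartz}), a summable mass decrement forcing the remainder's spacetime norm to vanish, orthogonality of new frames deduced from absence of the residual from all earlier frames via Lemma \ref{lem EquiOrth}, and the asymptotic Pythagorean expansion from the vanishing cross terms. The only cosmetic difference is that the paper packages the extraction as a finitary lemma at threshold $\delta$ and then runs it for $\delta = 2^{-l}$, whereas you iterate directly to infinity with a diagonal subsequence; the substance is identical.
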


Proposition \ref{prop ProfileDecomp} is a consequence of the following finitary decomposition.

\begin{lemma}\label{lem ProfileDecomp}
Let $(f_k)_{k \geq 1}$ be a bounded sequence of functions in $L^2 (\h^3)$ and $\delta \in (0 ,\delta_0]$ be sufficiently small. Up to passing to a subsequence, the sequence $(f_k)_{k \geq 1}$ can be decomposed into $2{\Lambda} +1 = O(\delta^{-2})$ terms
\begin{align}
f_k = \sum_{1 \leq \mu \leq {\Lambda}} \widetilde{\phi}_{\oo_k^{\mu}}^{\mu}  + \sum_{1 \leq \nu \leq {\Lambda}} \widetilde{\psi}_{\oo_k^{\nu}}^{\nu} + r_k,
\end{align}
where $\widetilde{\phi}_{\oo_k^{\mu}}^{\mu} $ and $\widetilde{\psi}_{\oo_k^{\nu}}^{\nu} $ are Euclidean and hyperbolic profiles, respectively, associated to the sequences $(\phi^{\mu} , \oo^{\mu} ) \in L^2 (\R^3) \times \F_e$ and $(\psi^{\nu} , \widetilde{\oo}^{\nu} ) \in L^2 (\h^3) \times \F_h$, $\mu, \nu \in \N^+$ as in Definition \ref{defn Profiles}. 
Moreover the remainder $r_k$ is absent from all the frames $\oo^{\mu}$, $\widetilde{\oo}^{\nu}$, $1 \leq \mu, \nu \leq {\Lambda}$ and 
\begin{align}
\limsup_{k \to \infty}  \norm{e^{it\Delta_{\h^3}} r_k}_{L_{t,x}^{\frac{10}{3}} (\R \times \h^3)} \leq \delta.
 \end{align}
In addition, the frames $\oo^{\mu}$ and $\widetilde{\oo}^{\nu}$ are pairwise orthogonal, and the decomposition is asymptotically orthogonal in the sense that 
\begin{align}
\norm{f_k}_{L^2 (\h^3)}^2 = \sum_{1 \leq \mu \leq {\Lambda}}   \norm{\widetilde{\phi}_{\oo_k^{\mu}}^{\mu} }_{L^2 (\h^3)}^2 + \sum_{1 \leq \nu \leq {\Lambda}}   \norm{\widetilde{\psi}_{\oo_k^{\nu}}^{\nu} }_{L^2 (\h^3)}^2  + \norm{r_k^{\Lambda}}_{L^2 (\h^3)}^2  + o_k (1)
\end{align}
where $o_k(1) \to 0$ as $k \to \infty$.
\end{lemma}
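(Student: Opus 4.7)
The plan is to build the decomposition iteratively using the inverse Strichartz inequality (Proposition \ref{prop InvStrichartz}), peeling off one profile at a time from the current remainder, and then to pass to a diagonal subsequence. Termination will follow from the fact that each extracted profile carries a uniform quantum of $L^2$ mass, together with an inductive ``absence'' property that forces every new frame to be orthogonal to all previously extracted frames. The asymptotic orthogonality of masses then drops out of the frame orthogonality via Lemma \ref{lem EquiOrth}.

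\textbf{Iteration.} Set $f_k^{(0)} := f_k$ and $B := \limsup_k \|f_k\|_{L^2(\h^3)}$, and at each stage $j \geq 0$ let $\varepsilon_j := \limsup_k \|e^{it\Delta_{\h^3}} f_k^{(j)}\|_{L^{10/3}_{t,x}(\R\times\h^3)}$. If $\varepsilon_j \leq \delta$, I stop and set $\Lambda = j$. Otherwise, after passing to a further subsequence, Proposition \ref{prop InvStrichartz} yields a frame $\oo^{j+1}\in\F_e\cup\F_h$ and a profile $\phi^{j+1}$ satisfying
\[
\|\phi^{j+1}\|_{L^2}^2 \;\gtrsim\; B^2\bigl(\varepsilon_j/B\bigr)^{20} \;\gtrsim\; \delta^{20}B^{-18},
\]
and I define $f_k^{(j+1)} := f_k^{(j)} - \widetilde{\phi}^{j+1}_{\oo_k^{j+1}}$. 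At termination I relabel: sequences with $N_k\to\infty$ become Euclidean profiles $\widetilde{\phi}^{\mu}_{\oo_k^{\mu}}$, sequences with $N_k\equiv 1$ become hyperbolic profiles $\widetilde{\psi}^{\nu}_{\widetilde{\oo}_k^{\nu}}$, and I set $r_k := f_k^{(\Lambda)}$; by construction $\limsup_k\|e^{it\Delta_{\h^3}} r_k\|_{L^{10/3}_{t,x}}\leq \delta$.

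\textbf{Orthogonality of frames and mass identity.} I would prove by induction on $j$ that the frames $\oo^1,\ldots,\oo^j$ are pairwise orthogonal and that $f_k^{(j)}$ is absent from each of them. For the inductive step, suppose the newly extracted frame $\oo^{j+1}$ were equivalent to some earlier $\oo^i$ with $i\leq j$. By Lemma \ref{lem EquiOrth}(1) we could then replace $\widetilde{\phi}^{j+1}_{\oo_k^{j+1}}$ by a profile supported on $\oo^i$ up to an $L^2$ error that vanishes as $k\to\infty$; but since the extraction gives $\|\phi^{j+1}\|_{L^2}^2 = \lim_k \langle f_k^{(j)}, \widetilde{\phi}^{j+1}_{\oo_k^{j+1}}\rangle$ and $f_k^{(j)}$ is absent from $\oo^i$ by the inductive hypothesis, this pairing would vanish, contradicting the lower bound on $\|\phi^{j+1}\|_{L^2}^2$. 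Absence of $f_k^{(j+1)}$ from the earlier frames then follows from
\[
\langle f_k^{(j+1)}, \widetilde{\psi}_{\oo_k^i}\rangle \;=\; \langle f_k^{(j)}, \widetilde{\psi}_{\oo_k^i}\rangle \;-\; \langle \widetilde{\phi}^{j+1}_{\oo_k^{j+1}}, \widetilde{\psi}_{\oo_k^i}\rangle
\]
together with Lemma \ref{lem EquiOrth}(2). Expanding $\|f_k\|_{L^2}^2$ into profile self-inner-products, cross profile inner products and remainder pairings, the cross terms among distinct profiles vanish in the limit by Lemma \ref{lem EquiOrth}(2) and the pairings against $r_k$ vanish by the absence property, producing the stated asymptotic orthogonality of masses up to $o_k(1)$. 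This mass identity in turn gives $\sum_j \|\phi^j\|_{L^2}^2 \lesssim B^2 + o_k(1)$, so together with the uniform lower bound above the iteration terminates after $\Lambda \lesssim \delta^{-N} B^{N}$ steps (polynomial in $\delta^{-1}$). A standard diagonal extraction then selects one subsequence along which all of the weak limits, the asymptotic orthogonalities, and the small-Strichartz bound hold simultaneously.

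\textbf{Main obstacle.} The delicate point is the joint management of orthogonality, absence, and termination: one must ensure that the absence property is preserved under subtraction of the new profile, since this is precisely what forces each new frame to be genuinely orthogonal to all previous ones, which is in turn what kills the cross terms in the mass expansion and prevents the same profile from being re-extracted at later stages. The Euclidean case ($N_k\to\infty$) and the hyperbolic case ($N_k\equiv 1$) have to be treated uniformly here using the full strength of Lemma \ref{lem EquiOrth}(2), and the diagonal subsequence has to be constructed carefully so that all countably many limiting statements (one per extracted profile, plus the remainder bound, plus the mass identity) are realized along a single subsequence.
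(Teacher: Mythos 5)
Your proposal is correct and follows essentially the same route as the paper: iterated application of the inverse Strichartz inequality (Proposition \ref{prop InvStrichartz}) to extract one profile at a time, the same contradiction argument (via Lemma \ref{lem EquiOrth} and the inductive absence property) to force each new frame to be orthogonal to all earlier ones, and termination from the uniform lower bound on the mass of each extracted profile. If anything you spell out the mass-expansion and diagonal-extraction steps in more detail than the paper, which defers them to ``the standard induction argument''; the only (cosmetic) discrepancy is that your profile-mass lower bound $\gtrsim\delta^{20}B^{-18}$ yields a term count polynomial in $\delta^{-1}$ of higher degree than the stated $O(\delta^{-2})$, but this reflects the exponents in the paper's own Proposition \ref{prop InvStrichartz} rather than a flaw in your argument.
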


Assuming Lemma \ref{lem ProfileDecomp}, we first prove Proposition \ref{prop ProfileDecomp}. 
\begin{proof}[Proof of Proposition \ref{prop ProfileDecomp}]
We apply Lemma \ref{lem ProfileDecomp} repeatedly for $\delta = 2^{-l}$, $l = 1,2 ,\cdots$ and we obtain Proposition \ref{prop ProfileDecomp}. 
\end{proof}

\begin{proof}[Proof of Lemma \ref{lem ProfileDecomp}]
For $(g_k)_k$, a bounded sequence in $L^2 (\h^3)$, we define
\begin{align}
\varepsilon ((g_k)_k)  := \limsup_{k \to \infty} \norm{e^{it\Delta_{\h^3}} g_k}_{L_{t,x}^{\frac{10}{3}} (\R \times \h^3)} .
\end{align}
If $\varepsilon ((g_k)_k)  \leq \delta$, then we let $\Lambda=0$ and $f_k = r_k$ and Lemma \ref{lem ProfileDecomp} follows. Otherwise, we use inductively the following important claim
\begin{claim}
Assume $(g_k)_k$ is a bounded sequence in $L^2 (\h^3)$ which is absent from a family of frames $(\oo^{\alpha})_{\alpha \leq A}$ and such that $\varepsilon((g_k)_k) \geq \delta$. Then, after passing to a subsequence, there exists a new frame $\oo$ which is orthogonal to $\oo^{\alpha}$ for all $\alpha \leq A$ and a profile $\widetilde{\phi}_{\oo_k'}$ of mass
\begin{align}
\lim_{k \to \infty} \norm{\widetilde{\phi}_{\oo_k}}_{L_x^2} \gtrsim \delta
\end{align}
such that $g_k - \widetilde{\phi}_{\oo_k}$ is absent from the frames $\oo$ and $\oo^{\alpha}$, $\alpha \leq A$. 
\end{claim}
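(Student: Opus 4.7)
\medskip

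\noindent\textbf{Proof proposal.} The plan is to extract the new frame and profile by a single application of the Inverse Strichartz Inequality (Proposition \ref{prop InvStrichartz}), and then check the orthogonality and absence conditions using weak convergence together with Lemma \ref{lem EquiOrth}. Since $(g_k)$ is bounded in $L^2(\h^3)$ by some $B$ and $\varepsilon((g_k)) \geq \delta$, Proposition \ref{prop InvStrichartz} yields, after passing to a subsequence, a sequence $\oo = \{(N_k, t_k, h_k)\}$ and a profile $\phi$ (in $L^2(\R^3)$ if $N_k \to \infty$, in $L^2(\h^3)$ if $N_k \to N < \infty$, in which case we rescale so that $N_k \equiv 1$), such that $\widetilde{\phi}_{\oo_k} := \Pi_{t_k,h_k}(T_{N_k}\phi)$ (Euclidean) or $\widetilde{\phi}_{\oo_k} := \Pi_{t_k,h_k}\phi$ (hyperbolic) satisfies the mass lower bound. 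The lower bound from Proposition \ref{prop InvStrichartz} is of the form $\|\phi\|_{L^2}^2 \gtrsim B^2(\delta/B)^{20}$, which we abbreviate as $\gtrsim \delta$ (up to a harmless polynomial dependence on $\delta$ and $B$).

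To show $\oo$ is orthogonal to each $\oo^\alpha$, $\alpha \leq A$, I argue by contradiction. If $\oo$ is equivalent to some $\oo^\alpha$, Lemma \ref{lem EquiOrth}(1) gives, after a further subsequence, a profile $\widetilde{\phi'}_{\oo^\alpha_k}$ with $\|\widetilde{\phi}_{\oo_k} - \widetilde{\phi'}_{\oo^\alpha_k}\|_{L^2(\h^3)} \to 0$, so that
\begin{align*}
\langle g_k, \widetilde{\phi}_{\oo_k}\rangle_{L^2(\h^3)} = \langle g_k, \widetilde{\phi'}_{\oo^\alpha_k}\rangle_{L^2(\h^3)} + \langle g_k, \widetilde{\phi}_{\oo_k} - \widetilde{\phi'}_{\oo^\alpha_k}\rangle_{L^2(\h^3)}.
\end{align*}
The first term vanishes in the limit by absence of $(g_k)$ from $\oo^\alpha$, and the second by Cauchy--Schwarz together with the $L^2$-boundedness of $(g_k)$. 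On the other hand, unfolding $\Pi_{t_k,h_k}$ and using the weak convergence supplied by Proposition \ref{prop InvStrichartz} (namely $T^*_{N_k}(\Pi_{-t_k,h_k^{-1}}g_k) \rightharpoonup \phi$ in the Euclidean case, or $\Pi_{-t_k,h_k^{-1}}g_k \rightharpoonup \phi$ in the hyperbolic case; in the Euclidean case the Jacobian $(1+|w/N_k|^2)^{-1/2}$ appearing in the change of variables converges to $1$ on any bounded set), one obtains $\lim_k \langle g_k, \widetilde{\phi}_{\oo_k}\rangle = \|\phi\|_{L^2}^2 > 0$, a contradiction.

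It remains to verify that $g_k - \widetilde{\phi}_{\oo_k}$ is absent from each frame in the enlarged family. For any $\alpha \leq A$ and any test profile $\widetilde{\psi}_{\oo^\alpha_k}$, I write
\begin{align*}
\langle g_k - \widetilde{\phi}_{\oo_k}, \widetilde{\psi}_{\oo^\alpha_k}\rangle = \langle g_k, \widetilde{\psi}_{\oo^\alpha_k}\rangle - \langle \widetilde{\phi}_{\oo_k}, \widetilde{\psi}_{\oo^\alpha_k}\rangle,
\end{align*}
where the first term vanishes by hypothesis and the second vanishes by Lemma \ref{lem EquiOrth}(2), now that $\oo \perp \oo^\alpha$. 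For absence from $\oo$ itself, in the Euclidean case duality reduces the statement to showing that $T^*_{N_k}(\Pi_{-t_k,h_k^{-1}}(g_k - \widetilde{\phi}_{\oo_k})) \rightharpoonup 0$ in $L^2(\R^3)$; since $T^*_{N_k}(\Pi_{-t_k,h_k^{-1}}g_k) \rightharpoonup \phi$ by construction, it suffices to check that $T^*_{N_k}(T_{N_k}\phi) \to \phi$ strongly in $L^2(\R^3)$, which follows from the explicit formulas for $T_{N_k}, T^*_{N_k}$ together with $\eta(\cdot/N_k^{1/2})^2 \to 1$ pointwise and $e^{\Delta_{\R^3}/N_k} \to I$ strongly. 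In the hyperbolic case this step is immediate, since $\Pi_{-t_k,h_k^{-1}}\widetilde{\phi}_{\oo_k} \equiv \phi$ for all $k$. The main technical point is the identification of weak limits under the rescaling $T^*_{N_k}$ in the Euclidean case, particularly tracking the Jacobian factor $(1+|v|^2)^{-1/2}$ arising from the integration formula on $\h^3$; once this is handled the conclusion is straightforward.
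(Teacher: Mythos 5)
Your proposal is correct and follows essentially the same route as the paper: a single application of Proposition \ref{prop InvStrichartz} produces the frame and profile with the mass lower bound, orthogonality to the old frames follows by contradiction from Lemma \ref{lem EquiOrth}(1) together with the absence hypothesis, and absence of $g_k - \widetilde{\phi}_{\oo_k}$ from the old and new frames follows from Lemma \ref{lem EquiOrth}(2) and the weak-limit identification, respectively. Your treatment of absence from the new frame (reducing to $T^*_{N_k}(T_{N_k}\phi)\to\phi$ strongly) is slightly more explicit than the paper's appeal to parts 1(b) and 2(b) of Proposition \ref{prop InvStrichartz}, but the substance is the same.
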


By Proposition \ref{prop InvStrichartz}, there are two possibilities: (1) there exists $\phi \in L^2_x(\R^3)$, $\{ N_k\} \subset (0, \infty)$, $\{ t_k, h_k\} \in \R \times \mathbb{G}$ and
    \begin{align*}
        \phi_k (x) : = \Pi_{t_k, h_k} [T_{N_k}\phi](x)
    \end{align*}
or (2) there exists $\phi \in L^2_x(\h^3)$, $\{ t_k, h_k\} \in \R \times \mathbb{G}$ and
    \begin{align*}
        \phi_k (x) : = \Pi_{t_k, h_k} \phi(x) .
    \end{align*}
In either case, Proposition \ref{prop InvStrichartz} implies
    \begin{align*}
        \lim_{k \to \infty}|\langle g_k, \phi_k \rangle|=\lim_{k \to \infty}\|\phi_k\|_{L^2_x}=\|\phi\|_{L^2_x} \gtrsim B^{-9}\varepsilon((g_k)_k)^{10} \geq B^{-9}\delta^{10} ,
    \end{align*}
where $B:= \limsup_{k \to \infty} \|g_k\|_{L^2_x}$. Since $g_k$ is absent from $\oo^{\alpha}$ for all $\alpha \leq A$ and $B^{-9}\delta^{10}>0$,  Lemma \ref{lem EquiOrth} coupled with the previous inequality imply that $\{ N_k, t_k, h_k\}$ is orthogonal to $\oo^{\alpha}$ for all $\alpha \leq A$.  Therefore, $g_k - \widetilde{\phi}_{\oo_k}=g_k - \phi_k$  is also absent from $(\oo^{\alpha})_{\alpha \leq A}$.

Furthermore, Proposition \ref{prop InvStrichartz}, parts 1(b) and 2(b) imply
    \begin{align*}
        \langle g_k - \phi_k, \phi_k \rangle=\langle g_k - \phi_k, \widetilde{\phi}_{\oo_k} \rangle \to 0. 
    \end{align*}
Therefore, $g_k - \phi_k$ is also absent from $\oo$.

Now that the claim has been established, one proceeds with the standard induction argument for profile decomposition. The proof of Lemma \ref{lem ProfileDecomp} is complete.
\end{proof}

%%%%%%%%%%%%%%%%%%%%%%%%%%%%%%%%%%%%%%%%%%%%%%%%%%%%%%%%%%%%%%%%%%%%%%%%%%%%%
%%%%%%%%%%%%%%%%%%%%%%%%%%%%%%%%%%%%%%%%%%%%%%%%%%%%%%%%%%%%%%%%%%%%%%%%%%%%%

\section{Proof of Proposition \ref{prop key}}\label{sec Key}

\begin{proof}[Proof of Proposition \ref{prop key}]

 Using the time translation symmetry, we may assume that $t_k=0$ for all $k \geq 1$. We apply Proposition \ref{prop ProfileDecomp} to the sequence $(u_k(0))_k$ which is bounded in $L^2(\mathbb{H}^3)$ and we get sequences of pairs $(\phi^\mu, \oo^\mu) \in L^2 (\mathbb{R}^3) \times \mathscr{F}_e$ and $(\psi^{\nu}, \widetilde{\oo}^{\nu}) \in L^2 (\mathbb{H}^3 ) \times \mathscr{F}_h$, $\mu, v=1,2, \ldots$, such that the conclusion of Proposition \ref{prop ProfileDecomp} holds. Up to using Lemma \ref{lem EquiOrth}  (1), we may assume that for all $\mu$, either $t_k^\mu=0$ for all $k$ or $(N_k^\mu)^2\left|t_k^\mu\right| \to \infty$ and similarly, for all $v$, either $t_k^{\nu}=0$ for all $k$ or $\left|t_k^{\nu}\right| \to \infty$.

\textbf{Case 1: All profiles are trivial, $\phi^\mu=0, \psi^{\nu}=0$ for all $\mu, v$.}

In this case, we get from Proposition \ref{prop ProfileDecomp} that $u_k(0)=r_k^{\Lambda}$ satisfies
\begin{align}
\left\|e^{i t \Delta_{\h^3}}u_k(0)\right\|_{Z(\R)}  \to 0
\end{align}
as $k \to \infty$. Applying Lemma \ref{lem 6.1}, we see that
\begin{align}
\left\|u_k\right\|_{Z(\R)} \leq\left\|e^{i t \Delta_{\h^3}} u_k(0)\right\|_{L_{t, x}^{\frac{10}{3}}(\R \times \h^3 )}+ \left\|u_k-e^{i t \Delta_{\h^3}} u_k(0)\right\|_{S_{\h^3}^0(\R)} \to 0
\end{align}
as $k \to \infty$, which contradicts \eqref{eq Contradiction}.

Proceeding to the remaining cases: for every linear profile $\widetilde{\phi}_{\oo_k^\mu}^\mu$ (resp. $\widetilde{\psi}_{\widetilde{\oo}_k^{\nu}}^{\nu}$), define the associated nonlinear profile $U_{e, k}^\mu$ (resp. $U_{h, k}^{\nu}$) as the maximal solution of \eqref{NLS} with initial data $U_{e, k}^\mu(0)=\widetilde{\phi}_{\oo_k^\mu}^\mu$ (resp. $U_{h, k}^\nu(0)=\widetilde{\psi}_{\widetilde{\oo}_k^{\nu}}^{\nu}$). We may write $U_k^\gamma$ if we do not want to discriminate between Euclidean and hyperbolic profiles.

The nonlinear profiles are defined in the following way:
    \begin{enumerate}
        \item If $\oo^\mu \in \mathscr{F}_e$ is a Euclidean frame, this is given in Lemma \ref{lem 6.2}.
        \item  If $t_k^{\nu}=0$, letting $(I^{\nu}, W^{\nu})$ be the maximal solution of \eqref{NLS} with initial data $W^{\nu}(0)=\psi^{\nu}$ (maximal in the sense of Definition \ref{def order}), we see that for any interval $J \Subset I^{\nu}$,
    \begin{align}\label{eq 6.2}
        \left\|U_{h, k}^{\nu}(t)-\pi_{h_k}^{\nu} W^{\nu}(t-t_k^{\nu})\right\|_{S_{\h^3}^0(J)} \to 0
    \end{align}
as $k \to \infty$ (indeed, this is identically 0 in this case).
        \item If $t_k^{\nu} \to +\infty$, then we define  $(I^{\nu}, W^{\nu})$ to be the maximal solution of \eqref{NLS} satisfying 
\begin{align}
\left\|W^{\nu}(t)-e^{i t \Delta_{\h^3}} \psi^{\nu}\right\|_{L^2(\h^3)} \to 0
\end{align}
as $t \to -\infty$. Then, applying Proposition \ref{prop Stab}, we see that on any interval $J=(-\infty, T) \Subset I^{\nu}$, we have \eqref{eq 6.2}. Using the time reversal symmetry $u(t, x) \to \overline{u}(-t, x)$, we obtain a similar description when $t_k^{\nu} \to -\infty$.
\end{enumerate}

\textbf{Case 2a: There is only one Euclidean profile, i.e., there exists $\mu$ such that $u_k(0)=\widetilde{\phi}_{\oo_k^\mu}^\mu+o_k(1)$ in $L^2(\mathbb{H}^3)$.} 

Applying Lemma \ref{lem 6.2}, we see that $U_{e, k}^\mu$ is global with uniformly bounded $S_{\h^3}^0$-norm for $k$ large enough. Then, using the stability Proposition \ref{prop Stab} with $\widetilde{u}=U_{e, k}^\mu$, we see that for all $k$ large enough,
\begin{align}
\left\|u_k\right\|_{Z(I)} \lesssim_{M_{\max}} 1
\end{align}
which contradicts \eqref{eq Contradiction}.

\textbf{Case 2b: There is only one hyperbolic profile, i.e., there is $\nu$ such that $u_k(0)=\widetilde{\psi}_{\widetilde{\oo}_k^{\nu}}^{\nu}+o_k(1)$ in $L^2(\h^3)$.} 

If $t_k^{\nu} \to+\infty$, then, using Strichartz estimates, we see that
\begin{align*}
    \left\|e^{i t \Delta_{\h^3}} \Pi_{t_k^{\nu}, h_k^{\nu}} \psi^{\nu}\right\|_{Z((-\infty, 0)) }=\left\|e^{i t \Delta_{\h^3}}  \psi^{\nu}\right\|_{Z((-\infty, -t_k^{\nu}))} \to 0
\end{align*}
as $k \to \infty$, which implies that $\left\|e^{i t \Delta_{\h^3}} u_k(0)\right\|_{Z((-\infty, 0))} \to 0$ as $k \to \infty$. Using again Lemma \ref{lem 6.1}, we see that, for $k$ large enough, $u_k$ is defined on $(-\infty, 0)$ and $\left\|u_k\right\|_{Z((-\infty, 0))} \to 0$ as $k \to \infty$, which contradicts \eqref{eq Contradiction}. Similarly, $t_k^{\nu} \to-\infty$ yields a contradiction. Finally, if $t_k^{\nu}=0$, we get that
    \begin{align*}
        \Pi_{0, h_k^{-1}} u_k \to \psi^{\nu}
    \end{align*}
converges strongly in $L^2(\h^3)$, which is the desired conclusion of the proposition.

\textbf{Case 3: There exists $\mu$ or $v$ and $\eta>0$ such that
\begin{align}\label{eq 6.3}
2 \eta<\limsup _{k \to \infty} M (\widetilde{\phi}_{\oo_k^\mu}^\mu ), \quad \limsup _{k \to \infty} M(\widetilde{\psi}_{\oo_k^{\nu}}^{\nu} )<M_{\max}-2 \eta .
\end{align}}

Taking $k$ sufficiently large and maybe replacing $\eta$ by $\eta / 2$, we may assume that \eqref{eq 6.3} holds for all $k$. In this case, we claim that, for ${\Lambda}$ sufficiently large,
    \begin{align}\label{eq Uapp}
        U_k^{\mathrm{app}}=\sum_{1 \leq \mu \leq {\Lambda}} U_{e, k}^\mu+\sum_{1 \leq v \leq {\Lambda}} U_{h, k}^{\nu}+e^{i t \Delta_{\h^3}} r_k^{\Lambda}=U_{\mathrm{prof}, k}^{\Lambda}+e^{i t \Delta_k} r_k^{\Lambda}
    \end{align}
is a global approximate solution with bounded $Z$ norm for all $k$ sufficiently large.
First, by Lemma \ref{lem 6.2}, all the Euclidean profiles are global for $k$ large enough. Using \eqref{eq ProfileDecomp}, we see that for all $v$ and all $k$ sufficiently large, $M(U_{h, k}^{\nu})<M_{\max}-\eta$. By \eqref{eq 6.2}, this implies that $M (W^{\nu} )<M_{\max}-\eta$ so that by the definition of $M_{\max}, W^{\nu}$ is global and by Proposition \ref{prop Stab}, $U_{h, k}^{\nu}$ is global for $k$ large enough and
\begin{align}\label{eq 6.4}
\left\|U_{h, k}^{\nu}(t)-\pi_{h_k} W^{\nu} (t-t_k^{\nu} )\right\|_{S_{\h^3}^0(\R)} \to 0
\end{align}
as $k \to \infty$.
Now we claim that
\begin{align}\label{eq 6.5}
\underset{k \to \infty}{\limsup }\left\|  U_k^{\mathrm{app}}\right\|_{L_t^{\infty} L_x^2} \leq 2 M_{\max}^{\frac{1}{2}}
\end{align}
is bounded uniformly in ${\Lambda}$. Indeed, we first observe using \eqref{eq ProfileDecomp} that
\begin{align}
\left\|  U_k^{\mathrm{app}}\right\|_{L_t^{\infty} L_x^2} & \leq\left\|  U_{\text {prof}, k}^{\Lambda}\right\|_{L_t^{\infty} L_x^2}+\left\| r_k^{\Lambda}\right\|_{L_x^2}  \leq\left\| U_{\text {prof}, k}^{\Lambda}\right\|_{L_t^{\infty} L_x^2}+ M_{\max}^{\frac{1}{2}} .
\end{align}

Using Lemma \ref{lem 6.3}, we get that for fixed $t$ and ${\Lambda}$,
\begin{align}
\left\| U_{\mathrm{prof}, k}^{\Lambda}(t)\right\|_{L_x^2}^2 & \leq \sum_{1 \leq \gamma \leq 2 {\Lambda}}\left\| U_k^\gamma\right\|_{L_t^{\infty} L_x^2}^2+2 \sum_{\gamma \neq \gamma'}\left\langle U_k^\gamma(t),  U_k^{\gamma'}(t)\right\rangle_{L^2 \times L^2} \\
& \leq  \sum_{1 \leq \gamma \leq 2 {\Lambda}} M(U_k^\gamma)+o_k(1) \leq  M_{\max}+o_k(1),
\end{align}
where $o_k(1) \to 0$ as $k \to \infty$ for fixed ${\Lambda}$.

We also have
\begin{align}\label{eq 6.6}
\limsup _{k \to \infty}\left\| U_k^{\mathrm{app}}\right\|_{L_{t,x}^{\frac{10}{3}}} \lesssim_{M_{\max}, \eta} 1
\end{align}
is bounded uniformly in ${\Lambda}$ by Lemma \ref{lem nonlineardecoupling}. Indeed, from \eqref{eq 6.3} and \eqref{eq ProfileDecomp}, we see that for all $\gamma$ and all $k$ sufficiently large (depending maybe on ${\Lambda}$), $M(U_k^\gamma)<M_{\max}-\eta$ and from the definition of $M_{\max}$, we conclude that $U^{\gamma}_k$ exists globally and therefore,
using Proposition \ref{prop Stab}, we see that this implies that
\begin{align}\label{eq 6.7}
\sup _\gamma\left\|  U_k^\gamma\right\|_{L_{t, x}^{\frac{10}{3}}} \lesssim_{M_{\max} , \eta} 1 .
\end{align}

Now, using Lemma \ref{lem nonlineardecoupling} we know that 
\begin{align}
\left\| U_{\mathrm{prof}, k}^{\Lambda}\right\|_{L_{t, x}^{\frac{10}{3}}}^{\frac{10}{3}} \lesssim_{M_{\max}, \eta} 1.
\end{align}
Using triangle inequality and \eqref{eq ErrSmall}, we get \eqref{eq 6.6}.

Using \eqref{eq 6.5} and \eqref{eq 6.6} we can apply Proposition \ref{prop Stab} to $U_k^{\mathrm{app}}$ get $\varepsilon_1>0$ such that the conclusion of Proposition \ref{prop Stab} holds for $u_k$ and $U_k^{\mathrm{app}}$.
In particular,  for $F(x)=|x|^{\frac{4}{3}} x$, we have
\begin{align}
e= (i \partial_t+\Delta_g) U_k^{\mathrm{app}}-\left|U_k^{\mathrm{app}}\right|^{\frac{4}{3}} U_k^{\mathrm{app}}=\sum_{1 \leq \alpha \leq 2 {\Lambda}}((i \partial_t+\Delta_g) U_k^\alpha-F(U_k^\alpha))+\sum_{1 \leq \alpha \leq 2 {\Lambda}} F(U_k^\alpha)-F(U_k^{\mathrm{app}}) .
\end{align}
The first term is identically 0, while using Lemma \ref{lem 6.4}, we see that taking ${\Lambda}$ large enough, we can ensure that the $L_{t, x}^{10/7} $-norm of the second term is smaller than $\varepsilon_1$ for all $k$ large enough. Then, since $u_k(0)=U_k^{\text {app}}(0)$, the conclusion of Proposition \ref{prop Stab} implies that for all $k$ large, and any interval, $J$,
\begin{align}
\left\|u_k\right\|_{Z(J)} \lesssim_{M_{\max , \eta}} 1 ,
\end{align}
where we have used \eqref{eq 6.6}. Then, we see that $u_k$ is global for all $k$ large enough and that $u_k$ has uniformly bounded $Z$-norm, which contradicts \eqref{eq Contradiction}. This ends the proof of Proposition \ref{prop key}.
\end{proof}

\subsection{Criterion for linear evolution}

\begin{lemma}\label{lem 6.1}
For any $M > 0$, there exists $\delta >0$ such that for any interval $J \subset \R$, if
\begin{align}
\norm{\phi}_{L^2 (\h^3)} \leq M \quad \text{ and } \quad \norm{e^{it\Delta_{\h^3}} \phi}_{Z(J)} \leq \delta
\end{align}
then for any $t_0 \in J$, the maximal solution $(I ,u)$ of \eqref{NLS} with $u(t_0) = e^{it_0 \Delta_{\h^3}} \phi$ satisfies $J \subset I$ and
\begin{align}\label{eq 6.8}
\begin{aligned}
\norm{u - e^{it\Delta_{\h^3}} \phi}_{S_{\h^3}^0 (J)} \leq \delta , \\
\norm{u}_{S_{\h^3}^0 (J)} \leq C(M ,\delta) .
\end{aligned}
\end{align}
In addition, if $J = (-\infty ,T)$, then there exists a unique maximal solution $(I, u)$, $J \subset I$ of \eqref{NLS} such that
\begin{align}\label{eq 6.9}
\lim_{t \to - \infty} \norm{u(t) -e^{it\Delta_{\h^3}} \phi}_{L^2 (\h^3)} = 0
\end{align}
and \eqref{eq 6.8} holds in this case too. The same statement holds in the Euclidean case when $(\mathbb{H}^3, g)$ is replaced by $(\mathbb{R}^3, \delta_{i j})$.
\end{lemma}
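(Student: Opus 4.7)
The plan is to apply the stability Proposition~\ref{prop Stab} with the linear evolution $v := e^{it\Delta_{\h^3}}\phi$ serving as the approximate solution. Since $v$ solves the free Schr\"odinger equation, it obeys \eqref{eq PerNLS} with $\rho = 1$ and error $e := -|v|^{4/3} v$, and at the given time $t_0 \in J$ the two initial conditions agree exactly: $u(t_0) - v(t_0) = 0$. Thus the smallness hypothesis of Proposition~\ref{prop Stab} collapses to controlling $\|e\|_{N_{\h^3}^0(J)}$.

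To bound the error I would exploit that the diagonal pair $(10/3, 10/3)$ is Strichartz admissible on $\h^3$: the point $(1/q, 1/r) = (3/10, 3/10)$ lies on the edge $2/q + 3/r = 3/2$ of the triangle $T_3$, so its dual $(10/7, 10/7)$ is an admissible choice in the definition of $N_{\h^3}^0$. A single H\"older inequality in spacetime then yields
\begin{align*}
\|e\|_{N_{\h^3}^0(J)} \leq \bigl\| |v|^{4/3} v \bigr\|_{L_{t,x}^{10/7}(J \times \h^3)} = \|v\|_{L_{t,x}^{10/3}(J \times \h^3)}^{7/3} = \|v\|_{Z(J)}^{7/3} \leq \delta^{7/3}.
\end{align*}
Mass conservation for the free flow gives $\|v\|_{L_t^\infty L_x^2} \leq M$, so combined with $\|v\|_{Z(J)} \leq \delta \leq 1$ the hypothesis \eqref{eq Stab1} holds with a constant depending only on $M$. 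Choosing $\delta = \delta(M)$ small enough that $\delta^{7/3} < \varepsilon_1(M)$ and $C(M) \delta^{7/3} \leq \delta$, where $\varepsilon_1, C$ are the constants from Proposition~\ref{prop Stab}, that proposition produces the solution $u$ on all of $J$ and yields both estimates in \eqref{eq 6.8}; uniqueness of the maximal solution through $u(t_0)$ then forces $J \subset I$.

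For the asymptotic initial data case $J = (-\infty, T)$, the plan is to construct $u$ directly as the fixed point of the Duhamel map
\begin{align*}
\Phi u(t) := e^{it\Delta_{\h^3}}\phi - i \int_{-\infty}^{t} e^{i(t-s)\Delta_{\h^3}} \bigl(|u|^{4/3} u\bigr)(s)\,ds
\end{align*}
on the closed ball $\{u : \|u - v\|_{S_{\h^3}^0(J)} \leq \delta\}$, with contraction supplied by the same nonlinear estimate $\|\,\cdot\,\|_{N_{\h^3}^0} \lesssim \|\,\cdot\,\|_{S_{\h^3}^0}^{7/3}$ together with the inhomogeneous Strichartz inequality. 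The asymptotic condition \eqref{eq 6.9} then follows from the dual Strichartz bound $\|u(t) - v(t)\|_{L^2} \leq \||u|^{4/3}u\|_{N_{\h^3}^0((-\infty, t))}$, whose right-hand side tends to $0$ as $t \to -\infty$ by monotone convergence of the $L_{t,x}^{10/7}$ tail. Uniqueness is settled by a further application of Proposition~\ref{prop Stab}. The one potentially delicate point, and the main algebraic input driving the whole argument, is the identity $(10/7) \cdot (7/3) = 10/3$: it is precisely because the mass-critical nonlinearity closes at the diagonal admissible exponent $(10/3, 10/3)$ that the nonlinear gain $\delta^{7/3}$ dominates $\delta$ and the scheme succeeds. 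The Euclidean statement is proved identically, using the Euclidean admissibility line $L_3$ on which $(10/3, 10/3)$ also lies.
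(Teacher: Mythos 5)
Your proposal is correct and follows essentially the same route as the paper: the first part is exactly the paper's argument (apply Proposition \ref{prop Stab} with $\rho=1$, $v=e^{it\Delta_{\h^3}}\phi$, and the bound $\||v|^{4/3}v\|_{L_{t,x}^{10/7}} = \|v\|_{Z(J)}^{7/3} \leq \delta^{7/3}$), and the second part is dispatched in the paper as a classical fixed-point argument, which you simply spell out. No gaps.
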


\begin{proof}[Proof of Lemma \ref{lem 6.1}]
The first part is a direct consequence of Proposition \ref{prop Stab}. Indeed, let $v=e^{i t \Delta_{\h^3}} \phi$. Then using Strichartz estimates,
\begin{align}
\left\||v|^{\frac{4}{3}}v \right\|_{L_{t, x}^{\frac{10}{7}}(J \times \h^3)} = \left\| e^{i t \Delta_{\h^3}} \phi\right\|^{7/3}_{L_{t, x}^{\frac{10}{3}} (J \times \h^3)} \leq \delta^{\frac{7}{3}}.
\end{align}
Thus the assumptions in Proposition \ref{prop Stab} are satisfied. Then we can apply Proposition \ref{prop Stab} with $\rho=1$ to conclude. The second claim is classical and follows from a fixed point argument.
\end{proof}

\subsection{Description of a Euclidean nonlinear profile}

Let $\F_e$ denote the set of Euclidean frames,
\begin{align}
\widetilde{\F}_e = \{  (N_k , t_k ,h_k)_{k} \in \F_e : t_k =0 \text{ and all } k \text{ or } \lim_{k \to \infty}  N_k^2  \abs{t_k} = \infty  \} .
\end{align}
and let  $\F_h$ denote the set of hyperbolic frames,
\begin{align}
\widetilde{\F}_h = \{   (1 , t_k ,h_k)_{k}  \in \F_h :  t_k =0 \text{ and all } k \text{ or } \lim_{k \to \infty}   \abs{t_k} = \infty  \} .
\end{align}

\begin{lemma}\label{lem 6.2}
Assume $\phi \in L^2 (\mathbb{R}^3)$ and $(N_k, t_k, h_k)_k \in \widetilde{\mathscr{F}}_e$. Let $U_k$ be the solution of \eqref{NLS} such that $U_k(0)=\Pi_{t_k, h_k}(T_{N_k} \phi)$. Then there exists $\widetilde{C}=\widetilde{C}(M_{\R^3}(\phi))$ such that for $k$ large enough, $U_k \in C (\R; L^2(\h^3))$ is globally defined, and
\begin{align}\label{eq 6.10}
\left\|U_k\right\|_{Z(\R)} \leq  \widetilde{C}.
\end{align}

\end{lemma}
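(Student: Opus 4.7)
The strategy is to build a global approximate solution to \eqref{NLS} on $\h^3$ out of Dodson's global Euclidean solution and then invoke the stability result Proposition~\ref{prop Stab}. By applying the isometry $\pi_{h_k^{-1}}$ to $U_k$, which preserves the $Z$-norm and commutes with $e^{it\Delta_{\h^3}}$, I reduce to $h_k = \mathcal{I}$, so that $U_k(0) = e^{-it_k \Delta_{\h^3}}(T_{N_k}\phi)$. Theorem~\ref{thm Dodson} provides a global Euclidean solution $v \in C(\R; L^2(\R^3))$ with $v(0) = \phi$, satisfying $\|v\|_{S^0_{\R^3}(\R)} \leq \widetilde C(\|\phi\|_{L^2})$ and scattering to linear data $v^{\pm\infty} \in L^2(\R^3)$. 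By Strichartz, $\|e^{it\Delta_{\R^3}} v^{\pm\infty}\|_{L^{10/3}_{t,x}(\R \times \R^3)}$ is finite.

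I would then split into the two admissible cases for $(N_k,t_k,h_k) \in \widetilde{\F}_e$. In the easy case $t_k = 0$, Lemma~\ref{lem ProfileAppx} (applied with $\rho = 1$ and a large fixed $T_0$) produces a solution $U_k$ on the short interval $J_0 := (-T_0 N_k^{-2}, T_0 N_k^{-2})$ which is close in $S^0_{\h^3}(J_0)$ to a transferred Euclidean profile $V_{R,N_k}$ of bounded norm. In the case $N_k^2|t_k| \to \infty$, assume $t_k > 0$ by the time-reversal symmetry $u(t,x) \mapsto \overline{u}(-t,x)$; applying the same lemma around the focusing time $t = t_k$ gives a solution on $J_0 := (t_k - T_0 N_k^{-2}, t_k + T_0 N_k^{-2})$, since at $t = t_k$ the (would-be) free evolution from $U_k(0)$ recovers the concentrated Euclidean profile $T_{N_k}\phi$.

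The heart of the argument is that outside $J_0$ the linear evolution dominates. Using the scattering of $v$, for any $\varepsilon > 0$ we choose $T_0$ so that $\|e^{it\Delta_{\R^3}} v^{\pm\infty}\|_{L^{10/3}_{t,x}(\{|t| > T_0\} \times \R^3)} < \varepsilon$. Corollary~\ref{cor ProfileAppx} transfers this into $\|e^{it\Delta_{\h^3}}(T_{N_k} v^{\pm\infty})\|_{Z(\{|t - t_k| > T_0 N_k^{-2}\})} \lesssim \varepsilon$ for $k$ large. At the endpoints of $J_0$, Lemma~\ref{lem ProfileAppx} makes $U_k$ close in $L^2(\h^3)$ to the transferred Euclidean solution at times $\pm T_0$, which by scattering in $\R^3$ is in turn close to $T_{N_k}(e^{\pm i T_0 \Delta_{\R^3}} v^{\pm\infty})$. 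Lemma~\ref{lem 6.1} then extends $U_k$ from each endpoint into the complementary semi-infinite interval with only a small $S^0$-correction to the free evolution, producing bounded $Z$-norm on $\R \setminus J_0$. Combining with the bound on $J_0$ gives $\|U_k\|_{Z(\R)} \leq \widetilde C(\|\phi\|_{L^2(\R^3)})$.

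The main obstacle I expect is the endpoint matching: one must verify that the $L^2$-distance between $v(\pm T_0)$ and its scattering free evolution $e^{\pm i T_0 \Delta_{\R^3}} v^{\pm\infty}$ is $o_{T_0 \to \infty}(1)$ uniformly in the $H^5$-regularization $\phi'$ used in Lemma~\ref{lem ProfileAppx}, and then that this $L^2$-proximity is preserved under the transfer $T_{N_k}$ and the diffeomorphism $\Psi_{\mathcal{I}}^{-1}$ up to errors that vanish as $k \to \infty$. All such errors can be absorbed into the smallness threshold $\varepsilon_1$ of Proposition~\ref{prop Stab}, but the bookkeeping across the three adjacent time windows and the two transfer operations is where the argument requires care.
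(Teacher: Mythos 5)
Your proposal is correct and follows essentially the same route as the paper: Lemma \ref{lem ProfileAppx} controls the core window of length $O(N_k^{-2})$, Corollary \ref{cor ProfileAppx} together with Lemma \ref{lem 6.1} and Proposition \ref{prop Stab} handle the two exterior half-lines where the free evolution dominates, and the case $N_k^2|t_k|\to\infty$ is reduced to the case $t_k=0$ via the Euclidean solution scattering backward to $\phi$. The only difference is bookkeeping: rather than applying Lemma \ref{lem ProfileAppx} ``around $t=t_k$'' (where the nonlinear data is not of the form $T_{N_k}\psi$), the paper recenters by setting $\widetilde{\phi}=u(0)$ for the Euclidean solution $u$ with $u^{-\infty}=\phi$, applies the $t_k=0$ case to $\widetilde{\phi}$, and concludes by stability from $\|V_k(-t_k)-\Pi_{t_k,h_k}T_{N_k}\phi\|_{L^2}\to 0$ --- precisely the endpoint-matching step you flag as the delicate point.
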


\begin{proof}[Proof of Lemma \ref{lem 6.2}]

We begin with the case $t_k=0$. We may assume that $h_k=\mathcal{I}$ for any $k$. Let $\delta>0$ and suppose $t_k=0$. Let $T= T(\phi, \delta)$ be large enough so that 
    \begin{align*}
\norm{e^{it\Delta_{\R^3}} \phi}_{L^{10/3}_{t, x}(\{|t| \geq T\} \times \R^3)} < \delta.
    \end{align*}
   For $k$ large enough (depending on $T$), Corollary \ref{cor ProfileAppx}, then implies
    \begin{align*}
\norm{e^{it\Delta_{\h^3}} T_{N_k}\phi}_{L^{10/3}_{t, x}(\{N^{-2}_k|t| \geq T\} \times \h^3)} \lesssim \delta.
    \end{align*}
If $\delta$ is chosen small enough, then Lemma \ref{lem 6.1} implies that 
    \begin{align}
        \|U_{k}\|_{S_{\h^3}^0(\{N^{-2}_k|t| \geq T\} \times \h^3)} \lesssim_{\phi} 1.
    \end{align}
Let $k=k(T)$ also be large enough so that $N_k$ is large enough to apply part (1) of Lemma \ref{lem ProfileAppx}. Lemma \ref{lem ProfileAppx} then implies that the remaining part of the solution norm is bounded. I.e. 
\begin{align}
\|U_{k}\|_{S_{\h^3}^0(\{N^{-2}_k|t| \leq T\} \times \h^3)} \lesssim_{\phi} 1 .
\end{align}

For the second case, we consider the Euclidean mass-critical NLS and a trajectory, $u \in C(\mathbb{R}; L^2 (\mathbb{R}^3))$, satisfying
\begin{align}\label{eq 6.11}
(i \partial_t+\Delta_{\R^3}) u= |u|^{\frac{4}{3}}u
\end{align}
with scattering data $u^{ \pm \infty}$ defined as in \eqref{eq Scattering}.

If $\lim _{k \to \infty} N_k^2\left|t_k\right|=\infty$, we may assume by symmetry that $N_k^2 t_k \to+\infty$. Then we let $u$ be the solution of \eqref{eq 6.11} such that
\begin{align}
\left\| u(t)-e^{i t \Delta_{\R^3}} \phi \right\|_{L^2(\R^3)} \to 0
\end{align}
as $t \to-\infty$ (thus $u^{-\infty}=\phi$). Therefore, 
\begin{align}\label{eq scat0}
 \limsup_k \left\| u(t_k)-e^{i t_k \Delta_{\R^3}} \phi \right\|_{L^2(\R^3)} = 0 .
\end{align}

We let $\widetilde{\phi}=u(0)$ and consider the frame $(N_k, 0, h_k)_k \in \mathscr{F}_e$ and $V_k(s)$, the solution of \eqref{NLS} with initial data $V_k(0)=\pi_{h_k} T_{N_k} \widetilde{\phi}$. From the previous case, we know that for $k$ large enough, $V_k$ is a global solution with 
\begin{align}
\left\|V_k\right\|_{Z(\R)} \lesssim_{\phi} 1 .
\end{align}
In particular, we see from the fact that $N_k^2 t_k \to+\infty$ and \eqref{eq scat0} that
\begin{align}
\left\|V_k (-t_k )-\Pi_{t_k, h_k} T_{N_k} \phi\right\|_{L^2(\h^3)} \to 0
\end{align}
as $k \to \infty$. Then, using Proposition \ref{prop Stab}, we see that
\begin{align}
\left\|U_k-V_k (\cdot-t_k)\right\|_{S_{\h^3}^0(\R)} \to 0
\end{align}
as $k \to \infty$ which completes the argument in Lemma \ref{lem 6.2}.
\end{proof}

\subsection{Noninteraction of nonlinear profiles}

\begin{lemma}\label{lem 6.3}
Let $\widetilde{\phi}_{\oo_k}$ and $\widetilde{\psi}_{\oo_k}$ be two profiles associated to orthogonal frames $\oo$ and $\oo'$ in $\widetilde{\F}_e \cup \widetilde{\F}_h$. Let $U_k$ and $U_k'$ be the solutions of the nonlinear equation \eqref{NLS} such that $U_k (0) = \widetilde{\phi}_{\oo_k}$ and $U_k' (0) = \widetilde{\psi}_{\oo_k'}$. Suppose also that $\norm{\widetilde{\phi}_{\oo_k}}_{L^2}^2 < M_{\max} - \eta$ (respectively, $\norm{\widetilde{\psi}_{\oo_k'}}_{L^2}^2 < M_{\max} - \eta$) if $\oo \in \F_h$ (respectively, $\oo' \in \F_h$). Then
\begin{align}\label{eq 6.15}
\sup_{T \in \R} \abs{ \inner{U_k (T) , U_k'(T)}_{L^2 \times L^2 (\h^3)}} + \norm{U_k U_k'}_{L_{t,x}^{\frac{5}{3}} (\R \times \h^3)}  \to 0
\end{align}  
as $k \to \infty$.
\end{lemma}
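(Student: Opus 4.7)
My plan is to reduce both estimates to bounds on explicit, space-time localized approximations $\widetilde U_k, \widetilde U_k'$ of $U_k, U_k'$, and then use the quantitative orthogonality \eqref{eq Orth} to decouple their supports or relevant scales as $k\to\infty$. Fix $\varepsilon>0$. The mass hypotheses combined with Lemma \ref{lem 6.2} and Proposition \ref{prop Stab} give global existence with uniformly bounded $S_{\h^3}^0$-norms, so I may approximate each $U_k$ in $S_{\h^3}^0(\R)$ within $\varepsilon$ by a smooth, compactly supported $\widetilde U_k$. Concretely, if $\oo\in\widetilde{\F}_e$, then Lemma \ref{lem 6.2} (and its proof via Lemma \ref{lem ProfileAppx}(2)) realizes $\widetilde U_k$ as a time-translate of $V_{R,N_k}$, supported in a space-time box of size $\sim N_k^{-2}\times N_k^{-1}$ centered at $(t_k, h_k\cdot\0)$, with $L_x^\infty$ bound $\lesssim N_k^{3/2}$. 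If $\oo\in\widetilde{\F}_h$, then $\widetilde U_k = \pi_{h_k} W_{T_0, R}(\cdot - t_k)$, where $W_{T_0, R}$ is the truncation of a fixed hyperbolic solution $W$ to a large box $(-T_0,T_0)\times B_R(\0)$; this truncation is permissible because $W$ scatters (its mass lies strictly below $M_{\max}-\eta$, so $\norm{W}_{L_{t,x}^{10/3}(\R\times\h^3)}<\infty$).

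\textbf{Case analysis.} Following the proof of Lemma \ref{lem EquiOrth}(2), three cases arise. In Case (i), both frames are hyperbolic, and orthogonality forces, after a subsequence, either $\abs{t_k-t_k'}\to\infty$, or $t_k-t_k'$ bounded with $d(h_k\cdot\0, h_k'\cdot\0)\to\infty$. In either sub-case, the truncated translates have disjoint space-time supports for $k$ large, so $\inner{\widetilde U_k(T),\widetilde U_k'(T)}\equiv 0$ for every $T$ and $\widetilde U_k\widetilde U_k'\equiv 0$. In Case (ii), one frame is Euclidean and one hyperbolic; the Euclidean approximant sits in a space-time box of measure $\sim N_k^{-5}$ with $L_{t,x}^\infty$ bound $\lesssim N_k^{3/2}$, while the hyperbolic approximant is smooth and uniformly bounded, so H\"older gives $\sup_T\abs{\inner{\widetilde U_k(T),\widetilde U_k'(T)}}\lesssim N_k^{-3/2}$ and $\norm{\widetilde U_k\widetilde U_k'}_{L_{t,x}^{5/3}}\lesssim N_k^{-3/2}$. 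In Case (iii), both frames are Euclidean and orthogonality reduces to: (a) $N_k/N_k'+N_k'/N_k\to\infty$, (b) $N_k\sim N_k'$ and $N_k^2\abs{t_k-t_k'}\to\infty$, or (c) scales and times aligned but $N_k d(h_k\cdot\0, h_k'\cdot\0)\to\infty$. Sub-cases (b), (c) decouple supports as in Case (i) after rescaling; for (a), assuming $N_k\ll N_k'$, the $T_{N_k}$ scaling gives $\norm{\widetilde U_k(T)}_{L_x^6}\lesssim N_k$ and $\norm{\widetilde U_k'(T)}_{L_x^{6/5}}\lesssim (N_k')^{-1}$, yielding the inner product bound $\lesssim N_k/N_k'\to 0$ by H\"older, while the $L_{t,x}^{5/3}$ bilinear bound follows from Proposition \ref{prop Bilinear} (which at $q=5/3$ gives $C(N,L)=(N/L)^{1/2}$) applied to the nearly-linear approximants in the spirit of Lemma \ref{lem nonbilinear}.

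\textbf{Main obstacle.} The principal subtlety is the supremum over $T\in\R$ in the inner-product estimate, since mass conservation precludes any pointwise-in-$T$ decay of $\norm{U_k(T)}_{L^2}$. The resolution is the uniform-in-$T$ approximation: because the Strichartz pair $(\infty,2)$ is admissible on $\h^3$, we have $\norm{U_k-\widetilde U_k}_{L_t^\infty L_x^2}\le\norm{U_k-\widetilde U_k}_{S_{\h^3}^0(\R)}\le\varepsilon$, hence
\[
\sup_T\abs{\inner{U_k(T), U_k'(T)} - \inner{\widetilde U_k(T), \widetilde U_k'(T)}}\lesssim M_{\max}^{1/2}\varepsilon
\]
uniformly in $T$; similarly $\norm{U_k U_k'-\widetilde U_k \widetilde U_k'}_{L_{t,x}^{5/3}}\lesssim\varepsilon$ by H\"older with $L_{t,x}^{10/3}$, which is bounded uniformly by mass-conservation plus Strichartz. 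The case analysis above then yields the estimates on $\widetilde U_k, \widetilde U_k'$, and sending $k\to\infty$ followed by $\varepsilon\to 0$ completes both conclusions of \eqref{eq 6.15}.
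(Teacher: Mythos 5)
There is a genuine gap, and it sits exactly at the point you identify as the ``main obstacle.'' You propose to approximate $U_k$ within $\varepsilon$ in $S^0_{\h^3}(\R)$ by a $\widetilde U_k$ that is compactly supported in space--time (a box of size $N_k^{-2}\times N_k^{-1}$ in the Euclidean case, a truncation to $(-T_0,T_0)\times B_R(\0)$ in the hyperbolic case). This is impossible: $S^0_{\h^3}(\R)$ controls the $L_t^\infty L_x^2$ norm, and by conservation of mass $\norm{U_k(T)}_{L^2}\equiv M(U_k)^{1/2}\approx M_{\max}^{1/2}$ for \emph{every} $T$, so for any $T$ outside the time support of $\widetilde U_k$ one has $\norm{U_k(T)-\widetilde U_k(T)}_{L^2}\approx M_{\max}^{1/2}$, not $\le\varepsilon$. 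Consequently the claimed uniform bound $\sup_T\abs{\inner{U_k(T),U_k'(T)}-\inner{\widetilde U_k(T),\widetilde U_k'(T)}}\lesssim\varepsilon$ fails, and your treatment of the first term of \eqref{eq 6.15} collapses precisely where mass conservation bites. For times $T$ far from the concentration window one must argue differently: there $U_k(T)$ is close in $L^2$ to a \emph{linear} evolution of its scattering data (by Lemma \ref{lem 6.1}/Lemma \ref{lem 6.2} and Proposition \ref{prop Stab}), and the inner product is then controlled by the orthogonality of the time-translated frames $(N_k,t_k-T,h_k)$ and $(N_k',t_k'-T,h_k')$ as in Lemma \ref{lem EquiOrth}(2), with bounds that are uniform in $T$. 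That step is not automatic and is missing from your write-up.

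The second term of \eqref{eq 6.15} is in better shape, because there you only need to approximate in $L^{10/3}_{t,x}(\R\times\h^3)$, where the tails of $U_k$ genuinely vanish (the global $L^{10/3}_{t,x}$ norm is finite). If you restate the approximation step in that norm only, your case analysis becomes essentially the paper's argument: the paper works with concentration sets $\mathscr{S}^R_{N_k,t_k,h_k}$ and $\mathscr{S}^{\delta}_{N,\tau,h}$ (see \eqref{eq 6.16}--\eqref{eq 6.17}) rather than explicit compactly supported approximants, and handles the scale-separated case $N_k/N_k'\to\infty$ by the inclusion $\mathscr{S}^R_{N_k,t_k,h_k}\subset\mathscr{S}^{\delta}_{N_k',t_k,h_k}$ plus H\"older, with no bilinear Strichartz input; your appeal to Proposition \ref{prop Bilinear} at $q=5/3$ in sub-case (iii)(a) is unnecessary and would additionally require a frequency decomposition of the profiles to invoke Lemma \ref{lem nonbilinear}. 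So: fix the first term with a scattering-plus-frame-orthogonality argument uniform in $T$, and restrict the approximation for the second term to $L^{10/3}_{t,x}$.
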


\begin{proof}[Proof of Lemma \ref{lem 6.3}]
It suffices to prove \eqref{eq 6.15} up to extracting a subsequence, and fix $\varepsilon>0$ sufficiently small.

We only provide the proof that the second norm in  \eqref{eq 6.15} decays; the first term follows similarly. Applying Lemma \ref{lem 6.2} if $U_k$ is a profile associated to a Euclidean frame (respectively \eqref{eq 6.4} if $U_k$ is a profile associated to a hyperbolic frame), we see that
\begin{align}
\left\|U_k\right\|_{S_{\h^3}^0} + \left\|U_k'\right\|_{S_{\h^3}^0} \leq c_0<+\infty
\end{align}
and that there exist $R$ and $\delta$ such that
\begin{align}\label{eq 6.16}
\begin{aligned}
\left\|U_k\right\|_{L_{t,x}^{10/3}\left(\left(\R \times \h^3 \right) \setminus \mathscr{S}_{N_k, t_k, h_k}^R\right)} +\left\|U'_k\right\|_{L_{t,x}^{10/3}\left(\left(\R \times \h^3 \right) \setminus \mathscr{S}_{N'_k, t'_k, h'_k}^R\right)}\leq \varepsilon, \\
\sup _{\tau, h}\left[\left\|U_k\right\|_{L_{t,x}^{10/3}\left(\mathscr{S}_{N_k, \tau, h}^{\delta}\right)}+\left\|U'_k\right\|_{L_{t,x}^{10/3}\left(\mathscr{S}_{N'_k, \tau, h}^{\delta}\right)}\right] \leq \varepsilon,
\end{aligned}
\end{align}
where
\begin{align}\label{eq 6.17}
\mathscr{S}_{N, T, h}^a:=\left\{(t, x) \in \mathbb{R} \times \mathbb{H}^3 : d (h^{-1} \cdot x, \mathbf{0}) \leq a N^{-1} \text { and }|t-T| \leq a^2 N^{-2}\right\} .
\end{align}
A similar claim holds for $U_k'$ with the same values of $R, \delta$.

If $N_k / N_k' \to \infty$, then for $k$ large enough we estimate

\begin{align}
\norm{U_k U_k'}_{L_{t,x}^{\frac{5}{3}} (\R \times \h^3)} & \lesssim \norm{U_k U_k'}_{L_{t,x}^{\frac{5}{3}} (\mathscr{S}_{N_k, t_k, h_k}^R)}  + \norm{U_k U_k'}_{L_{t,x}^{\frac{5}{3}} (\R \times \h^3 \setminus \mathscr{S}_{N_k, t_k, h_k}^R)}  \\
& \lesssim \norm{U_k}_{L_{t,x}^{\frac{10}{3}}} \norm{U_k'}_{L_{t,x}^{\frac{10}{3}} (\mathscr{S}_{N_k', t_k, h_k}^{\delta})} + \norm{U_k}_{L_{t,x}^{\frac{10}{3}} (\R \times \h^3 \setminus \mathscr{S}_{N_k, t_k, h_k}^R)} \norm{U_k'}_{L_{t,x}^{\frac{10}{3}} } \\
& \lesssim_{c_0} \varepsilon ,
\end{align}
where we used 
\begin{align}
\mathscr{S}_{N_k, t_k, h_k}^{R} \subset \mathscr{S}_{N_k', t_k, h_k}^{\delta} ,
\end{align}
since we can choose $k$ large enough so that $R N_k^{-1} < \delta N_k'^{-1}$.

The case when $N_k' / N_k \to \infty$ is similar.
Otherwise, we can assume that $C^{-1} \leq N_k / N_k' \leq C$ for all $k$, and then find $k$ sufficiently large that $\mathscr{S}_{N_k, t_k, h_k}^R \cap \mathscr{S}_{N_k', t_k', h_k'}^R=\varnothing$. Using  \eqref{eq 6.16} it follows as before that
\begin{align}
\norm{U_k U_k'}_{L_{t,x}^{\frac{5}{3}}}  \lesssim_{c_0} \varepsilon.
\end{align}

Hence, in all cases,
\begin{align}
\limsup_{k \to \infty}  \norm{U_k U_k'}_{L_{t,x}^{\frac{5}{3}} }  \lesssim_{c_0} \varepsilon .
\end{align}
The convergence to 0 of the second term in \eqref{eq 6.15} follows. 

The proof of Lemma \ref{lem 6.3} is complete now.
\end{proof}

\begin{lemma}\label{lem nonlineardecoupling}
With the notations in the proof of Proposition \ref{prop key} (See Eqn. \eqref{eq Uapp}), assuming that 
\begin{align}\label{eq 6.75}
\sup _\gamma\left\|  U_k^\gamma\right\|_{L_{t, x}^{\frac{10}{3}}} \lesssim_{M_{\max} , \eta} 1 ,
\end{align}
one can conclude that
\begin{align}
\left\| U_{\mathrm{prof}, k}^{\Lambda}\right\|_{L_{t, x}^{\frac{10}{3}}}^{\frac{10}{3}} \lesssim_{M_{\max}, \eta} 1.
\end{align}

\end{lemma}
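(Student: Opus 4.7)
The plan is to bound $\|U_{\text{prof},k}^\Lambda\|_{L^{10/3}_{t,x}}^{10/3}$ by expanding the square pointwise and separating diagonal from cross contributions, then using the mass-orthogonality \eqref{eq ProfileDecomp} together with the bilinear noninteraction \eqref{eq 6.15} from Lemma~\ref{lem 6.3}. Indexing the $2\Lambda$ Euclidean and hyperbolic nonlinear profiles collectively as $\{U_k^\gamma\}$, I write, pointwise in $(t,x)$,
\begin{equation}
|U_{\text{prof},k}^\Lambda|^{10/3} = \bigl(|U_{\text{prof},k}^\Lambda|^2\bigr)^{5/3} \le 2^{2/3}\bigl(D^{5/3}+C^{5/3}\bigr),\quad D := \sum_\gamma |U_k^\gamma|^2,\quad C := \sum_{\gamma\neq\gamma'}|U_k^\gamma U_k^{\gamma'}|.
\end{equation}

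For the diagonal, the triangle inequality in $L^{5/3}_{t,x}$ gives $\|D\|_{L^{5/3}_{t,x}} \le \sum_\gamma \|U_k^\gamma\|_{L^{10/3}_{t,x}}^2$. Let $\delta_0$ be the small data threshold produced by Proposition~\ref{prop Stab}: for $\|\phi\|_{L^2(\h^3)}\le\delta_0$ the NLS solution satisfies $\|u\|_{L^{10/3}_{t,x}}\lesssim\|\phi\|_{L^2}$. Call $U_k^\gamma$ \emph{large} if $M(U_k^\gamma)>\delta_0^2$ and \emph{small} otherwise. The asymptotic orthogonality \eqref{eq ProfileDecomp} bounds $\sum_\gamma M(U_k^\gamma)\le M_{\max}+o_k(1)$, so the number of large profiles is at most $\Lambda_0 := 2M_{\max}/\delta_0^2$, \emph{independent of $\Lambda$}. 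For large profiles the hypothesis gives $\|U_k^\gamma\|_{L^{10/3}_{t,x}}^2\lesssim_{M_{\max},\eta}1$, and for small ones the small data estimate gives $\|U_k^\gamma\|_{L^{10/3}_{t,x}}^2\lesssim M(U_k^\gamma)$. Therefore
\begin{equation}
\sum_\gamma \|U_k^\gamma\|_{L^{10/3}_{t,x}}^2 \;\lesssim_{M_{\max},\eta}\; \Lambda_0 + \sum_{\gamma\text{ small}}M(U_k^\gamma) \;\lesssim_{M_{\max},\eta}\; 1,
\end{equation}
yielding a $\Lambda$-free bound $\|D\|_{L^{5/3}_{t,x}}^{5/3}\lesssim_{M_{\max},\eta}1$.

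For the cross term, triangle inequality again gives $\|C\|_{L^{5/3}_{t,x}} \le \sum_{\gamma\neq\gamma'}\|U_k^\gamma U_k^{\gamma'}\|_{L^{5/3}_{t,x}}$. For each fixed $\Lambda$ this is a finite sum of $2\Lambda(2\Lambda-1)$ terms, and each summand tends to $0$ as $k\to\infty$ by Lemma~\ref{lem 6.3} (the underlying frames are pairwise orthogonal and, by construction, each profile has mass below $M_{\max}-\eta$). Thus $\|C\|_{L^{5/3}_{t,x}}^{5/3}=o_k(1)$ for every fixed $\Lambda$. Combining with the diagonal bound above,
\begin{equation}
\|U_{\text{prof},k}^\Lambda\|_{L^{10/3}_{t,x}}^{10/3} \le 2^{2/3}\bigl(\|D\|_{L^{5/3}_{t,x}}^{5/3} + \|C\|_{L^{5/3}_{t,x}}^{5/3}\bigr) \lesssim_{M_{\max},\eta} 1 + o_k(1),
\end{equation}
which is the conclusion of the lemma for $k$ sufficiently large (a threshold that is allowed to depend on $\Lambda$, since the implicit constant does not).

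The main obstacle is making the diagonal estimate uniform in $\Lambda$: naively $\sum_\gamma\|U_k^\gamma\|_{L^{10/3}}^2$ has up to $2\Lambda$ terms each of size $O(1)$. The key point that defeats this is the $L^2$-critical nature of the problem, which upgrades small data bounds to the quadratic improvement $\|U_k^\gamma\|_{L^{10/3}_{t,x}}^2\lesssim M(U_k^\gamma)$; this lets the $L^2$-orthogonality $\sum_\gamma M(U_k^\gamma)\le M_{\max}$ absorb infinitely many small profiles at once, leaving only the $\Lambda_0$ large ones to handle directly.
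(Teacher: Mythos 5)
Your proof is correct and takes essentially the same route as the paper's: the cross terms are sent to zero via the noninteraction estimate of Lemma~\ref{lem 6.3}, and the diagonal sum is made uniform in $\Lambda$ by combining the small-data improvement $\|U_k^\gamma\|_{L^{10/3}_{t,x}}^2\lesssim M(U_k^\gamma)$ (valid for all but finitely many profiles) with the mass orthogonality \eqref{eq ProfileDecomp}. The only difference is cosmetic: the paper decouples the $10/3$ power directly, bounding the off-diagonal contribution by $\sum_{\alpha\neq\beta}\|U_k^\alpha\|_{L^{10/3}_{t,x}}^{4/3}\|U_k^\alpha U_k^\beta\|_{L^{5/3}_{t,x}}$, whereas you expand the square first and then raise to the $5/3$ power.
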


\begin{proof}[Proof of Lemma \ref{lem nonlineardecoupling}]
    Using \eqref{eq 6.75} and Lemma \ref{lem 6.3}, we see that
\begin{align}
& \left|\left\|  U_{\mathrm{prof}, k}^{\Lambda}\right\|_{L_{t, x}^{\frac{10}{3}}}^{\frac{10}{3}}-\sum_{1 \leq \alpha \leq 2 {\Lambda}}\left\| U_k^\alpha\right\|_{L_{t, x}^{\frac{10}{3}}}^{\frac{10}{3}}\right| \leq \sum_{1 \leq \alpha \neq \beta \leq 2 {\Lambda}}\left\|( U_k^\alpha)^{\frac{7}{3}}  U_k^\beta\right\|_{L_{t, x}^1} \\
& \lesssim_{M_{\max}, \eta} \sum_{1 \leq \alpha \neq \beta \leq 2 {\Lambda}} \norm{U_k^{\alpha}}_{L_{t,x}^{\frac{10}{3}}}^{\frac{4}{3}} \left\|  U_k^\alpha  U_k^\beta\right\|_{L_{t, x}^{\frac{5}{3}}} \lesssim_{M_{\max}, \eta} o_k(1) . \label{eq decouple}
\end{align}

Additionally, using Lemma \ref{lem 6.1}, triangle inequality and linear Strichartz estimates for $\widetilde{\phi}_{\oo_k^\gamma}^\gamma$, we can see that for $\gamma$ large enough (depending on $M_{max}$) $M(U_k^\gamma) \leq \delta_0$ will be sufficiently small to conclude
\begin{align}\label{eq nonstrich}
\left\|  U_k^\gamma\right\|_{L_{t, x}^{\frac{10}{3}}}^2  \lesssim M(U_k^\gamma).
\end{align}
Combining inequalities \eqref{eq decouple} and \eqref{eq nonstrich}, along with the fact that $10/3\geq2$, we get
\begin{align}
\left\| U_{\mathrm{prof}, k}^{\Lambda}\right\|_{L_{t, x}^{\frac{10}{3}}}^{\frac{10}{3}} & \leq \sum_{1 \leq \alpha \leq 2 {\Lambda}}\left\| U_k^\alpha\right\|_{L_{t, x}^{\frac{10}{3}}}^{\frac{10}{3}}+o_k(1) \\
& \lesssim_{M_{\max , \eta}} C \sum_{1 \leq \alpha \leq 2 {\Lambda}} M(U_k^\alpha)+o_k(1) \lesssim_{M_{\max}, \eta} 1 .
\end{align}
Now we finish the proof of Lemma \ref{lem nonlineardecoupling}.
\end{proof}

\subsection{Control of the error term}

\begin{lemma}\label{lem 6.4}
With the notations in the proof of Proposition \ref{prop key} (See Eqn. \eqref{eq Uapp}),
\begin{align}\label{eq 6.18}
\lim_{{\Lambda} \to \infty} \limsup_{k \to \infty} \norm{F(U_k^{\mathrm{app}}) - \sum_{1 \leq \alpha \leq 2{\Lambda}} F(U_k^{\alpha})}_{L_{t,x}^{\frac{10}{7}}} =0 .
\end{align}
\end{lemma}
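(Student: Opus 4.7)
The plan is to decompose the error into a remainder-driven piece and a profile-decoupling piece, handling the first by perturbation and the second via the orthogonality Lemma \ref{lem 6.3}. Writing $w_k^{\Lambda} := e^{it\Delta_{\h^3}} r_k^{\Lambda}$ and $F(z) = |z|^{4/3}z$, I would split
\begin{align*}
F(U_k^{\mathrm{app}}) - \sum_{\alpha=1}^{2\Lambda} F(U_k^{\alpha}) = \bigl[F(U_{\mathrm{prof},k}^{\Lambda} + w_k^{\Lambda}) - F(U_{\mathrm{prof},k}^{\Lambda})\bigr] + \Bigl[F(U_{\mathrm{prof},k}^{\Lambda}) - \sum_{\alpha=1}^{2\Lambda} F(U_k^{\alpha})\Bigr] =: I_k^{\Lambda} + II_k^{\Lambda},
\end{align*}
and show that both pieces vanish in the iterated limit $\lim_{\Lambda \to \infty}\limsup_{k \to \infty}$.

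For $I_k^{\Lambda}$, combining the pointwise bound $|F(a+b) - F(a)| \lesssim (|a|^{4/3} + |b|^{4/3})|b|$ with Hölder's inequality (exponent split $\tfrac{7}{10} = \tfrac{4}{10} + \tfrac{3}{10}$, i.e.\ $L^{5/2}\cdot L^{10/3}$) gives
\begin{align*}
\|I_k^{\Lambda}\|_{L_{t,x}^{10/7}} \lesssim \|U_{\mathrm{prof},k}^{\Lambda}\|_{L_{t,x}^{10/3}}^{4/3}\,\|w_k^{\Lambda}\|_{L_{t,x}^{10/3}} + \|w_k^{\Lambda}\|_{L_{t,x}^{10/3}}^{7/3}.
\end{align*}
Lemma \ref{lem nonlineardecoupling} provides the uniform bound $\|U_{\mathrm{prof},k}^{\Lambda}\|_{L_{t,x}^{10/3}} \lesssim_{M_{\max},\eta} 1$ (for $k$ large), while \eqref{eq ErrSmall} together with Strichartz ensures $\lim_{\Lambda \to \infty}\limsup_{k \to \infty}\|w_k^{\Lambda}\|_{L_{t,x}^{10/3}} = 0$. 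Hence $I_k^{\Lambda}$ vanishes in the iterated limit.

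For $II_k^{\Lambda}$, iterating the pointwise bilinear estimate $|F(a+b) - F(a) - F(b)| \lesssim |a|^{4/3}|b| + |a||b|^{4/3}$ across the $2\Lambda$ profiles yields
\begin{align*}
|II_k^{\Lambda}| \lesssim \sum_{\alpha \neq \beta} \bigl(|U_k^{\alpha}|^{4/3}|U_k^{\beta}| + |U_k^{\alpha}||U_k^{\beta}|^{4/3}\bigr).
\end{align*}
For each cross term I factor $|U_k^{\alpha}|^{4/3}|U_k^{\beta}| = |U_k^{\alpha}|^{1/3}\cdot|U_k^{\alpha} U_k^{\beta}|$ and apply Hölder with $\tfrac{7}{10} = \tfrac{1}{10} + \tfrac{3}{5}$, giving
\begin{align*}
\bigl\||U_k^{\alpha}|^{4/3}|U_k^{\beta}|\bigr\|_{L_{t,x}^{10/7}} \leq \|U_k^{\alpha}\|_{L_{t,x}^{10/3}}^{1/3}\,\|U_k^{\alpha} U_k^{\beta}\|_{L_{t,x}^{5/3}}.
\end{align*}
The first factor is $O(1)$ by \eqref{eq 6.7}, and the second tends to $0$ as $k \to \infty$ for each fixed pair $\alpha \neq \beta$ by Lemma \ref{lem 6.3}. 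Since the number of cross terms is $O(\Lambda^2)$ for fixed $\Lambda$, summing shows $\limsup_{k \to \infty}\|II_k^{\Lambda}\|_{L_{t,x}^{10/7}} = 0$ for every $\Lambda$.

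The main subtlety — and really the only one — is that $\|II_k^{\Lambda}\|_{L_{t,x}^{10/7}}$ is not small uniformly in $\Lambda$ on any single subsequence, so one must respect the order of limits in \eqref{eq 6.18}: take $k \to \infty$ \emph{first} for each fixed $\Lambda$ to kill the finitely many cross terms, and only afterwards let $\Lambda \to \infty$ to kill the remainder contribution $I_k^{\Lambda}$. The pointwise nonlinearity bounds used above follow from elementary case analysis on $|a| \lessgtr |b|$ combined with $F \in C^1$, and require no new input beyond what the excerpt already provides.
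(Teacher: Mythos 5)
Your proposal is correct and follows essentially the same route as the paper's proof: the identical splitting into $F(U^{\Lambda}_{\mathrm{prof},k}+e^{it\Delta_{\h^3}}r_k^{\Lambda})-F(U^{\Lambda}_{\mathrm{prof},k})$ plus the decoupling term, the same pointwise bounds and H\"older splits ($L^{5/2}\cdot L^{10/3}$ for the remainder piece, $L^{10}\cdot L^{5/3}$ for the cross terms), and the same inputs — Lemma \ref{lem nonlineardecoupling} with \eqref{eq ErrSmall} for the first piece and Lemma \ref{lem 6.3} with \eqref{eq 6.7} for the second, with the limits taken in the same order. No gaps.
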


\begin{proof}[Proof of Lemma \ref{lem 6.4}]
Fix $\varepsilon_0>0$. For fixed ${\Lambda}$, we let
\begin{align}
U_{\mathrm{prof}, k}^{\Lambda}=\sum_{1 \leq \mu \leq {\Lambda}} U_{e, k}^\mu+\sum_{1 \leq \nu \leq {\Lambda}} U_{h, k}^{\nu}=\sum_{1 \leq \gamma \leq 2 {\Lambda}} U_k^\gamma  .
\end{align}
be the sum of the profiles. Then we separate
\begin{align}
\left\| F(U_k^{\mathrm{app}})-\sum_{1 \leq \alpha \leq 2 {\Lambda}} F(U_k^\alpha) \right\|_{L_{t,x}^{\frac{10}{7}}} & \leq\left\| F(U_k^{\mathrm{app}})-F(U_{\mathrm{prof}, k}^{\Lambda}) \right\|_{L_{t,x}^{\frac{10}{7}}}+\left\|  F(U_{\text {prof}, k}^{\Lambda})-\sum_{1 \leq \alpha \leq 2 {\Lambda}} F(U_k^\alpha) \right\|_{L_{t,x}^{\frac{10}{7}}} .
\end{align}

We first claim that for the second term and for fixed ${\Lambda}$,
\begin{align}\label{eq 6.19}
\limsup_{k \to \infty}\left\| F(U_{\mathrm{prof}, k}^{\Lambda})-\sum_{1 \leq \alpha \leq 2 {\Lambda}} F(U_k^\alpha) \right\|_{L_{t,x}^{\frac{10}{7}}} =0 .
\end{align}

Note that given $\{ f_i\}_{i=1}^{\Lambda} \subset\mathbb{C}$, we have the following inequality for $p >1$:
\begin{align}
\abs{\abs{\sum_{i=1}^{\Lambda} f_i}^p \sum_{i=1}^{\Lambda} f_i - \sum_{i=1}^{\Lambda} \abs{f_i}^p f_i}  \leq C(\Lambda, p) \abs{\sum_{i=1}^{\Lambda} (\sum_{j \neq i} \abs{f_j}^p) f_i}.
\end{align}
Then
\begin{align}
\norm{F (U_{\mathrm{prof}, k}^{\Lambda} )-\sum_{1 \leq \alpha \leq 2 {\Lambda}} F (U_k^\alpha )  }_{L_{t,x}^{\frac{10}{7}}} & = \norm{F (\sum_{1 \leq \alpha \leq 2 {\Lambda}} U_k^\alpha )-\sum_{1 \leq \alpha \leq 2 {\Lambda}} F (U_k^\alpha )}_{L_{t,x}^{\frac{10}{7}}} \\
& \lesssim_{\Lambda} \sum_{\alpha \neq \beta} \norm{ \abs{U_k^{\alpha}}^{\frac{4}{3}} U_k^{\beta} }_{L_{t,x}^{\frac{10}{7}}} \\
& \lesssim_{\Lambda} \sum_{\alpha \neq \beta} \norm{ U_k^{\alpha}}_{L_{t,x}^{\frac{10}{3}}}^{\frac{1}{3}} \norm{U_k^{\alpha} U_k^{\beta} }_{L_{t,x}^{\frac{5}{3}}}.
\end{align}

Therefore \eqref{eq 6.19} follows from \eqref{eq 6.15} since the sum is over a finite set and each profile is bounded in $L_{t, x}^{\frac{10}{3}}$ by \eqref{eq 6.7}.

We now complete the proof of \eqref{eq 6.18} by showing that for any given $\varepsilon_0>0$,
\begin{align}\label{eq 6.20}
\limsup _{{\Lambda} \to \infty} \limsup _{k \to \infty}\left\| F (U_k^{\text {app}} )-F (U_{\text {prof}, k}^{\Lambda} ) \right\|_{L_{t,x}^{\frac{10}{7}}} =0    .
\end{align}

We first remark that, from \eqref{eq 6.6}, $U_{\text {prof}, k}^{\Lambda}$ has bounded $L_{t, x}^{10/3}$ norm, uniformly in ${\Lambda}$ for $k$ sufficiently large. We also note that by the profile decomposition and Lemma \ref{lem 6.1} we have the following uniform bound 
\begin{align}\label{eq 6.21}
\sup _{\alpha } \limsup _{k \to \infty}\left\|U_k^\alpha\right\|_{L_{t, x}^{\frac{10}{3}}} \lesssim_{M_{max}} 1 .
\end{align}

A straightforward computation yields
\begin{align}
|F (U_{\mathrm{prof}, k}^{\Lambda}+e^{i t \Delta_{\h^3}} r_k^{\Lambda} )-&F (U_{\mathrm{prof}, k}^{\Lambda} )|
\\& =\abs{\abs{U_{\mathrm{prof}, k}^{\Lambda}+e^{i t \Delta_{\h^3}} r_k^{\Lambda}}^{\frac{4}{3}} ( U_{\mathrm{prof}, k}^{\Lambda}+e^{i t \Delta_{\h^3}} r_k^{\Lambda} ) - \abs{U_{\mathrm{prof}, k}^{\Lambda}}^{\frac{4}{3}} U_{\mathrm{prof}, k}^{\Lambda} }\\
& \lesssim  \abs{U_{\mathrm{prof}, k}^{\Lambda}}^{\frac{4}{3}}\abs{ e^{i t \Delta_{\h^3}} r_k^{\Lambda} }+ \abs{e^{i t \Delta_{\h^3}} r_k^{\Lambda}}^{\frac{4}{3}} \abs{ U_{\mathrm{prof}, k}^{\Lambda}} + \abs{e^{i t \Delta_{\h^3}} r_k^{\Lambda}}^{\frac{4}{3}}  \abs{e^{i t \Delta_{\h^3}} r_k^{\Lambda}} \\
& \lesssim \abs{e^{i t \Delta_{\h^3}} r_k^{\Lambda} }(\abs{U_{\mathrm{prof}, k}^{\Lambda}}^{\frac{4}{3}} + \abs{e^{i t \Delta_{\h^3}} r_k^{\Lambda}}^{\frac{4}{3}}) .
\end{align}

Now we compute
\begin{align}
\left\| F (U_{\mathrm{prof}, k}^{\Lambda}+e^{i t \Delta_{\h^3}} r_k^{\Lambda} )-F (U_{\mathrm{prof}, k}^{\Lambda} ) \right\|_{L_{t,x}^{\frac{10}{7}}} & \lesssim \norm{e^{i t \Delta_{\h^3}} r_k^{\Lambda}}_{L_{t,x}^{\frac{10}{3}}} \norm{\abs{U_{\mathrm{prof}, k}^{\Lambda}}^{\frac{4}{3}} + \abs{e^{i t \Delta_{\h^3}} r_k^{\Lambda}}^{\frac{4}{3}} }_{L_{t,x}^{\frac{10}{4}}} \\
& \lesssim \norm{e^{i t \Delta_{\h^3}} r_k^{\Lambda}}_{L_{t,x}^{\frac{10}{3}}} ( \norm{U_{\mathrm{prof}, k}^{\Lambda}}_{L_{t,x}^{\frac{10}{3}}}^{\frac{4}{3}}  + \norm{e^{i t \Delta_{\h^3}} r_k^{\Lambda}}_{L_{t,x}^{\frac{10}{3}}}^{\frac{4}{3}}  ) .
\end{align}
Since $\norm{e^{i t \Delta_{\h^3}} r_k^{\Lambda}}_{L_{t,x}^{\frac{10}{3}}} \to 0$ by Lemma \ref{prop ProfileDecomp}, it suffices to show $\norm{U_{\mathrm{prof}, k}^{\Lambda}}_{L_{t,x}^{\frac{10}{3}}} $ is bounded.  Lemma \ref{lem nonlineardecoupling} along with inequality \eqref{eq 6.21} imply that $\norm{U_{\mathrm{prof}, k}^{\Lambda}}_{L_{t,x}^{\frac{10}{3}}}$ is uniformly bounded.

Now we complete the proof of Lemma \ref{lem 6.4}.
\end{proof}

%%%%%%%%%%%%%%%%%%%%%%%%%%%%%%%%%%%%%%%%%%%%%%%%%%%%%%%%%

\section{Proof of Proposition \ref{prop MorDod}}\label{sec MorDod}

Let $u \in C(\mathbb{R}; L^2(\h^3))$ be an almost periodic (modulo $\mathbb{G}$) solution to \eqref{NLS} and let $T \in (0,\infty)$. For $\eta>0$, let $C_0:=C(\eta)$, where $C(\eta)$ is defined in Definition \ref{defn AP solution}.

 We first state an obvious consequence of our notion of almost periodicity: 
\begin{lemma}\label{lem etaSmall}
 Let $u \in C(\mathbb{R}; L^2(\h^3))$ be almost periodic modulo $\mathbb{G}$, and let $I \subset \R$. For any $\eta >0$,
\begin{align}
\lim_{N \to \infty}  \norm{P_{> N} u}_{L_t^{\infty} L_x^2(I \times \h^3)} +N^{-\frac{1}{2}} + \norm{P_{> \eta N^{\frac{1}{2}}}u}^{\frac{1}{3}}_{L_t^{\infty} L_x^2 (I \times \h^3)} = 0 .
\end{align}
\end{lemma}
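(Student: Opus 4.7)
The plan is to read off all three vanishing statements directly from the definition of almost periodicity modulo $\mathbb{G}$ (Definition \ref{defn AP solution}). The three summands are of very different natures, but the only nontrivial input is the frequency-localization part of the compactness hypothesis, applied at two different scales.

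First I would dispense with the middle term, which is purely scalar: $N^{-1/2}\to 0$ as $N\to\infty$, no structure of $u$ is used. Next I would turn to $\|P_{>N}u\|_{L_t^\infty L_x^2}$. The plan here is to fix an arbitrary $\varepsilon>0$ and invoke the compactness modulus function $C$ from Definition \ref{defn AP solution}, which gives
\[
\|P_{\geq C(\varepsilon)}u(t,\cdot)\|_{L^2(\h^3)}^{2}\leq \varepsilon\qquad\text{for all }t\in I.
\]
Since $P_{>N}f = P_{>N}P_{\geq C(\varepsilon)}f$ is controlled (via the heat-flow-based projections of Definition \ref{defn LP}) by $P_{\geq C(\varepsilon)}f$ in $L^2$ whenever $N\geq C(\varepsilon)$, taking the supremum in $t$ gives $\|P_{>N}u\|_{L_t^\infty L_x^2}\leq \varepsilon^{1/2}$ for all $N\geq C(\varepsilon)$. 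As $\varepsilon$ was arbitrary, this term tends to $0$.

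For the third term I would run exactly the same argument but at the frequency scale $\eta N^{1/2}$: for any $\varepsilon>0$, as soon as $N\geq \eta^{-2}C(\varepsilon)^{2}$ one has $\eta N^{1/2}\geq C(\varepsilon)$, and hence
\[
\|P_{>\eta N^{1/2}}u\|_{L_t^\infty L_x^2}^{1/3}\leq \varepsilon^{1/6}.
\]
Here the fixed parameter $\eta>0$ is harmless because it only shifts the threshold on $N$. Combining the three estimates, for every $\varepsilon>0$ we obtain
\[
\limsup_{N\to\infty}\Bigl[\|P_{>N}u\|_{L_t^\infty L_x^2}+N^{-1/2}+\|P_{>\eta N^{1/2}}u\|_{L_t^\infty L_x^2}^{1/3}\Bigr]\leq \varepsilon^{1/2}+\varepsilon^{1/6},
\]
and letting $\varepsilon\to 0^{+}$ finishes the proof.

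I do not expect any obstacle: the lemma is essentially a restatement of the frequency-localization clause in the definition of almost periodicity (together with a change of variables $N\mapsto \eta N^{1/2}$). The only point worth double-checking is that the heat-flow projections $P_{>N}$ defined in Definition \ref{defn LP} really are bounded on $L^{2}(\h^{3})$ uniformly in $N$ and that the comparison $\|P_{>N}f\|_{L^2}\lesssim \|P_{\geq M}f\|_{L^2}$ for $N\geq M$ holds with a constant independent of $N,M$, which follows immediately from the contractivity of $e^{s\Delta_{\h^{3}}}$ on $L^{2}$.
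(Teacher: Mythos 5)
Your proposal is correct and is exactly the argument the paper has in mind: the paper states this lemma without proof as an ``obvious consequence'' of Definition \ref{defn AP solution}, and your reading-off of the two frequency terms from the compactness modulus $C(\cdot)$ (at scales $N$ and $\eta N^{1/2}$), together with the spectral monotonicity $\|P_{\geq N}f\|_{L^2}\leq\|P_{\geq M}f\|_{L^2}$ for $N\geq M$ for the heat-flow projections, fills in precisely the intended details.
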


The following is a  modification of Lemma 3.8 in \cite{Dod3d}
\begin{lemma}\label{claim 1}
Let $u \in C(\mathbb{R}; L^2(\h^3))$ be an almost periodic (modulo $\mathbb{G}$) solution to \eqref{NLS} and let $T \in (0,\infty)$. For $\eta>0$ and $0 \leq s \leq \frac{7}{3}$, there exists $c_{\eta}$ such that
\begin{align}
\norm{P_{\geq N} u}_{L_t^2 L_x^6([0, T] \times \h^3)} & \lesssim_s \norm{P_{\geq N} u(t_0)}_{L_x^2} + \sum_{M \leq \eta N} (\frac{M}{N})^s \norm{P_{\geq M} u}_{L_t^2 L_x^6([0, T] \times \h^3)} \\
&\quad + \eta^{\frac{4}{3}} \norm{P_{\geq \eta N} u}_{L_t^2 L_x^6([0, T] \times \h^3)}\\ 
& \quad + \frac{c_{\eta} T^{\frac{1}{2}}}{\eta^{\frac{1}{2}} N^{\frac{1}{2}}}  \left( \norm{P_{> \eta N} u }_{L_t^{\infty} L_x^2([0, T] \times \h^3)} + N^{-\frac{1}{2}} + \norm{P_{> \eta N^{\frac{1}{2}}} u }_{L_t^{\infty} L_x^2([0, T] \times \h^3)}^{\frac{1}{3}} \right) .
\end{align}
\end{lemma}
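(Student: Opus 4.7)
\textbf{Proof plan for Lemma \ref{claim 1}.} My overall strategy is a frequency-localized Strichartz/Duhamel expansion combined with Littlewood--Paley analysis of the nonlinearity in the spirit of Dodson \cite{Dod3d}. The first two terms on the right will come from bilinear Strichartz (Proposition \ref{prop Bilinear} and Lemma \ref{lem nonbilinear}) applied to frequency-paired nonlinear interactions; the last term will come from the hyperbolic local smoothing estimate (Proposition \ref{thm Smoothing}) coupled with the almost periodicity of $u$.

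First I would apply $P_{\geq N}$ to the Duhamel formula based at $t_0$, and take the $L_t^2 L_x^6$ norm on $[0,T]$. Using the hyperbolic Strichartz estimates,
\begin{align*}
\|P_{\geq N} u\|_{L_t^2 L_x^6([0,T]\times \h^3)} \lesssim \|P_{\geq N} u(t_0)\|_{L_x^2} + \|P_{\geq N}(|u|^{4/3} u)\|_{N_{\h^3}^0([0,T])},
\end{align*}
which already yields the initial-data piece. The rest of the argument is spent estimating the nonlinear term in a suitable dual Strichartz norm (e.g.\ $L_t^1 L_x^2$ when paired with local smoothing, and $L_t^2 L_x^{6/5}$ when paired with bilinear Strichartz).

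Next I would Littlewood--Paley decompose each factor of $u$ in $|u|^{4/3} u$ as $u = u_{\leq \eta N} + u_{>\eta N}$ and isolate two regimes. In the \emph{mixed regime}, where at least one factor sits at frequency $M \ge \eta N$, I pair that factor with $P_{\geq N} u$ through Lemma \ref{lem nonbilinear}: taking $q=2$ reproduces the $\R^3$ coefficient $C(M,N)$ of Lemma \ref{lem Shao} and yields, after dyadic summation, $\sum_{M\leq \eta N} (M/N)^s \|P_{\geq M} u\|_{L_t^2 L_x^6}$ for the comparable-frequency interaction (the exponent $s\in[0,\frac73]$ is allowed by interpolating between the range of $C(N,L)$ and the trivial Hölder bound, noting the nonlinearity has total power $\frac73$). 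The part where the high-frequency factor is itself at scale $\gtrsim \eta N$ gets the $\eta^{4/3}$ coefficient from estimating the remaining $|u_{\leq \eta N}|^{4/3}$-type Hölder factor by $\|u_{\leq \eta N}\|_{L_{t,x}^{10/3}}^{4/3}$, which, combined with the almost-periodicity smallness of the high frequencies, produces $\eta^{4/3} \|P_{\geq \eta N} u\|_{L_t^2 L_x^6}$.

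Finally, in the \emph{purely low-frequency regime} where every factor sits at frequency $<\eta N$, one cannot use $P_{\geq N}$ to cleanly annihilate the product because $|u|^{4/3}u$ is not polynomial, so this contribution must be estimated directly. I would invoke the $\h^3$ local smoothing estimate (Proposition \ref{thm Smoothing}) to get, after conjugating by the isometry $h(t)$ that almost periodicity provides, a bound of the form $\|\langle x\rangle^{-1/2-\varepsilon} P_{\geq N} u\|_{L_{t,x}^2([0,T]\times\h^3)} \lesssim N^{-1/2} \|u\|_{L_t^\infty L_x^2}$. Since Definition \ref{defn AP solution} concentrates $u$ inside $\{d(x,\0)\leq C(\eta)\}$ up to $L^2$-mass $\eta$, the weight $\langle x\rangle^{-1/2-\varepsilon}$ is comparable to $1$ on the essential support; a Hölder step $L_t^1 \hookrightarrow T^{1/2} L_t^2$ then yields the prefactor $T^{1/2}/N^{1/2}$. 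The three pieces in the parenthesis of the last term come out naturally: $\|P_{>\eta N} u\|_{L_t^\infty L_x^2}$ is the tail that survives after the Littlewood--Paley cutoff, $N^{-1/2}$ absorbs the constant from the local smoothing step itself (and the factor $\eta^{-1/2}$ arises from estimating the inner $P_{\leq \eta N}$ factors in a Bernstein-type step), and $\|P_{>\eta N^{1/2}}u\|_{L_t^\infty L_x^2}^{1/3}$ arises from distributing the non-integer $\frac43$ power among the factors via a fractional chain rule.

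\textbf{Main obstacle.} The chief technical difficulty is the non-polynomial nature of $|u|^{4/3}u$: it cannot be Littlewood--Paley decomposed exactly, so one must replace $|u|^{4/3}$ by $|u_{\leq \eta N}|^{4/3}$ plus a commutator that is estimated by a fractional chain rule, and then track the dependence on $\eta$ so that the small coefficient $\eta^{4/3}$ (and not merely $\eta^{c}$ for some smaller $c$) appears in front of the $\|P_{\geq \eta N} u\|_{L_t^2 L_x^6}$ term. Balancing the factors so that the $\h^3$ bilinear Strichartz estimate of Proposition \ref{prop Bilinear} is applicable (which requires strict frequency separation $N \leq L/4$) while simultaneously summing the dyadic scales to produce $\sum_{M\leq \eta N}(M/N)^s$ without losing the sharp exponent $s$ is the most delicate bookkeeping.
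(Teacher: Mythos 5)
Your toolbox is the right one (Duhamel plus Strichartz, a Littlewood--Paley splitting of the nonlinearity at scale $\eta N$, the bilinear estimate of Lemma \ref{lem nonbilinear}, and almost periodicity), but you have assigned the wrong tool to each term on the right-hand side, and the two key mechanisms you propose would not go through. First, the dyadic term $\sum_{M\leq \eta N}(M/N)^s\|P_{\geq M}u\|_{L_t^2L_x^6}$ cannot come from a bilinear estimate applied to a ``mixed regime with one factor at $M\geq \eta N$'': the summation range $M\leq \eta N$ does not match that regime, and Proposition \ref{prop Bilinear} requires strict frequency separation $N\leq L/4$, so it is unavailable for comparable frequencies. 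In the paper this term comes from the purely low-input piece $\|P_{\geq N}F(P_{\leq \eta N}u)\|_{L_t^2L_x^{6/5}}$, treated by Bernstein ($P_{\geq N}\lesssim N^{-s}|\nabla|^s$) and the fractional chain rule --- which is also exactly where the restriction $0\leq s\leq \frac{7}{3}$ enters, since $F(z)=|z|^{4/3}z$ only admits $\frac{7}{3}$ derivatives. Conversely, your treatment of the all-low-frequency regime by local smoothing cannot produce the final term: that term carries the factors $\|P_{>\eta N}u\|_{L_t^\infty L_x^2}$ and $\|P_{>\eta N^{1/2}}u\|^{1/3}_{L_t^\infty L_x^2}$, which manifestly require a high-frequency factor in the product, and Proposition \ref{thm Smoothing} gains its half (or full) derivative only through spatial weights on both sides of the inhomogeneous estimate, so it does not yield a clean $N^{-1/2}$ for this term.

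The missing idea is the actual source of the factor $\frac{T^{1/2}}{(\eta N)^{1/2}}$: the paper isolates the interaction $(P_{>\eta N}u)\,\chi_{h(t)}\,|P_{\leq C_0}u|^{4/3}$ (with $\chi_{h(t)}$ the spatial cutoff at radius $C(\eta)$ furnished by almost periodicity), applies the nonlinear bilinear estimate (Lemma \ref{lem nonbilinear} with $q=2$) to the genuinely frequency-separated pair $(P_{>\eta N}u)(P_{\leq C_0}u)$ on each interval of local constancy $J_k$ from Definition \ref{Defn LocCon}, obtaining $\bigl(\tfrac{C_0}{\eta N}\bigr)^{1/2}\|P_{>\eta N}u\|_{S_*^0(J_k)}\|u\|_{S_*^0(J_k)}$, and then sums the squares over the $O(T)$ many such intervals (Lemma \ref{lem aprioriT} with $N(t)\equiv 1$) to produce $T^{1/2}$. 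The parenthetical factor is then a bound on $\sup_k\|P_{>\eta N}u\|_{S_*^0(J_k)}$, obtained by splitting $F(u)=F(P_{<\eta N^{1/2}}u)+\bigl(F(u)-F(P_{<\eta N^{1/2}}u)\bigr)$ inside the $L_t^1L_x^2$ part of the $S_*^0$ norm; this is where $N^{-1/2}$ and the $\frac{1}{3}$-power term actually arise. Your proposal contains none of this counting over intervals of local constancy, and the Hölder step $L_t^1\hookrightarrow T^{1/2}L_t^2$ you suggest instead does not interact correctly with the frequency localization. As written, the argument would not close.
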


\begin{proof}[Proof of Lemma \ref{claim 1}]
Let  $I = [0, T]$.
Strichartz estimates and Duhamel's principle imply
\begin{align}\label{eq Duhamel}
\norm{P_{\geq N} u}_{L_t^2 L_x^6} \lesssim \norm{P_{\geq N} u(t_0))}_{L_x^2} + \norm{P_{\geq N} F(u)}_{L_t^2 L_x^{\frac{6}{5}}} ,
\end{align}
where $F(u) = \abs{u}^{\frac{4}{3}}u$.

To prove Lemma \ref{claim 1}, we only need to focus on the nonlinear estimate above. For each $t \in I$, define a cutoff $\chi_{h(t)}\in L^{\infty} (\h^3)$ in physical space,
\begin{align}
\chi_{h(t)}(x)= 
\begin{cases}
1,\,\,\, d(h(t) \cdot x, \0) < C(\eta)=C_0 ,\\
0,\,\,\, d(h(t) \cdot x, \0) \geq C(\eta)=C_0 .
\end{cases}
\end{align}
We note that the $L^2$ and $L^{\infty}$ invariance of $\pi_{h(t)}$ implies 
\begin{align*}
    \sup_{q \in [2, \infty]}  \norm{\chi_{h(t)} }_{L_t^{\infty} L_x^q (I \times \h^3)} =\sup_{q \in [2, \infty]}  \norm{\pi_{h(t)}\chi_{\mathcal{I}} }_{L_t^{\infty} L_x^q (I \times \h^3)}\lesssim C_{1}=C_1(\eta).
\end{align*}
 We split the nonlinear term into
\begin{align}
\begin{aligned}\label{eq 4}
&\norm{P_{\geq N} F(u)}_{L_t^2 L_x^{\frac{6}{5}} (I \times \h^3)} \\
&\quad \quad \quad \lesssim \norm{P_{\geq N} F(P_{\leq \eta N}u) }_{L_t^2 L_x^{\frac{6}{5}} (I \times \h^3)} + \norm{(P_{> \eta N} u) \abs{P_{>C_0} u}^{\frac{4}{3}}}_{L_t^2 L_x^{\frac{6}{5}} (I \times \h^3)} \\
& \quad \quad\quad\quad + \norm{(P_{> \eta N} u) (1-\chi_{h(t)}) \abs{u}^{\frac{4}{3}}}_{L_t^2 L_x^{\frac{6}{5}} (I \times \h^3)} + \norm{(P_{> \eta N} u) \chi_{h(t)} \abs{P_{\leq C_0 }u}^{\frac{4}{3}}}_{L_t^2 L_x^{\frac{6}{5}} (I \times \h^3)} .  
\end{aligned}
\end{align}

Now we compute it term by term. For the first term in \eqref{eq 4}, by Bernstein inequality, H\"older inequality and conservation of mass, we write
\begin{align}
 \norm{P_{\geq N} F(P_{\leq \eta N}u) }_{L_t^2 L_x^{\frac{6}{5}} (I \times \h^3)}  & \lesssim \frac{1}{N^s} \norm{\abs{\nabla}^{s} F(P_{\leq \eta N}u) }_{L_t^2 L_x^{\frac{6}{5}} (I \times \h^3)} \\
& \lesssim \frac{1}{N^s} \norm{\abs{\nabla}^{s} P_{\leq \eta N} u }_{L_t^2 L_x^{6} (I \times \h^3)} \norm{ P_{\leq \eta N} u }_{L_t^{\infty} L_x^{2} (I \times \h^3)}^{\frac{4}{3}}\\
& \lesssim \frac{1}{N^s} \norm{\abs{\nabla}^{s} P_{\leq \eta N} u }_{L_t^2 L_x^{6} (I \times \h^3)} \\
& \lesssim \sum_{M \leq \eta N} (\frac{M}{N})^s \norm{P_{M} u }_{L_t^2 L_x^{6} (I \times \h^3)} .
\end{align}

Then for the second term in \eqref{eq 4}
\begin{align}
\norm{(P_{> \eta N} u) \abs{P_{>C_0} u}^{\frac{4}{3}}}_{L_t^2 L_x^{\frac{6}{5}} (I \times \h^3)} & \lesssim \norm{P_{> \eta N} u}_{L_t^{2} L_x^6(I \times \h^3)} \norm{P_{>C_0} u}_{L_t^{\infty} L_x^{2} (I \times \h^3)}^{\frac{4}{3}} \\
& \lesssim \eta^{\frac{4}{3}} \norm{P_{> \eta N} u}_{L_t^{2} L_x^6(I \times \h^3)} ,
\end{align}
and the third term in \eqref{eq 4}
\begin{align}
\norm{(P_{> \eta N} u) (1-\chi_{h(t)}) \abs{u}^{\frac{4}{3}}}_{L_t^2 L_x^{\frac{6}{5}} (I \times \h^3)} & \lesssim  \norm{P_{> \eta N} u}_{L_t^{2} L_x^6(I \times \h^3)} \norm{(1- \chi_{h(t)}) u}_{L_t^{\infty} L_x^2 (I \times \h^3)}^{\frac{4}{3}}\\
& \lesssim \eta^{\frac{4}{3}} \norm{P_{> \eta N} u}_{L_t^{2} L_x^6(I \times \h^3)} .
\end{align}

Now take the last term in \eqref{eq 4}. Let  $I = [0, T] = \cup J_k$, where $J_k$ is an interval of local constancy as defined in Definition \ref{Defn LocCon}. Using H\"older inequality and  the nonlinear bilinear estimate in Lemma \ref{lem nonbilinear} (with $q=2$), we obtain
\begin{align}
&\norm{(P_{> \eta N} u) \chi_{h(t)} \abs{P_{\leq C_0 }u}^{\frac{4}{3}}}^2_{L_t^2 L_x^{\frac{6}{5}} (J_k \times \h^3)} \\
& \quad \quad\lesssim \norm{(P_{> \eta N} u) (P_{\leq C_0 } u) }^2_{L_{t,x}^2 (J_k \times \h^3)} \norm{\chi_{h(t)} }^2_{L_t^{\infty} L_x^6 (J_k \times \h^3)} \norm{u}_{L_t^{\infty} L_x^2(J \times \h^3)}^{\frac{2}{3}} \\
& \quad \quad \lesssim \frac{C_0}{\eta N}  C_1^2\norm{P_{> \eta N} u}^2_{S_*^0 (J_k \times \h^3)} \norm{u}^2_{S_*^0 (J_k \times \h^3)} , \label{eq 5}
\end{align}
where the $S_*^0$ norm is defined in Lemma \ref{lem nonbilinear}.

By \eqref{eq local}, we have
\begin{align}
\norm{u}_{S^0 (J_k)} \lesssim \norm{u_0}_{L^2 (\h^3)} + \norm{\abs{u}^{\frac{4}{3}} u}_{L_{t,x}^{\frac{10}{7}} (J_k \times \h^3)} = \norm{u_0}_{L^2 (\h^3)} + \norm{u}_{L_{t,x}^{\frac{10}{3}} (J_k \times \h^3)}^{\frac{7}{3}} \lesssim 1 ,
\end{align}
then
\begin{align}
\norm{u}_{S_*^0 (J_k \times \h^3)} = \norm{u_0}_{L^2 (\h^3)} + \norm{\abs{u}^{\frac{4}{3}} u}_{L_t^1 L_x^2 (J_k \times \h^3)} \lesssim \norm{u_0}_{L^2 (\h^3)} + \norm{u}_{L_t^{\frac{7}{3}} L_x^{\frac{14}{3}} (J_k \times \h^3)}^{\frac{7}{3}} \lesssim 1 .
\end{align}

Then
\begin{align}
\eqref{eq 5} & \lesssim\frac{C_0C_1^2}{\eta N} \norm{P_{> \eta N} u}^2_{S_*^0 (J_k \times \h^3)}  .
\end{align}

Lemma \ref{lem aprioriT} implies that there are at most a constant multiple of $T$ many subintervals $J_k$. Therefore, summing over the subintervals $J_k$, we obtain
\begin{align}
\norm{(P_{> \eta N} u) \chi_{h(t)} \abs{P_{\leq C_0 }u}^{\frac{4}{3}}}^2_{L_t^2 L_x^{\frac{6}{5}} (I \times \h^3)} & \lesssim \sum_{J_k \subset I} \norm{(P_{> \eta N} u) (P_{\leq C_0} u) }^2_{L_{t,x}^2 (J_k \times \h^3)}  \norm{\chi_{h(t)}}^2_{L_t^{\infty} L_x^6 (J_k \times \h^3)} \\
& \lesssim  \sum_{J_k \subset I}   \frac{C_0C_1^2}{\eta N} \norm{P_{> \eta N} u}_{S_*^0 (J_k \times \h^3)}  \\
& \lesssim \frac{C_0C_1^2}{\eta} \frac{T}{N} \left(\sup_{J_k} \norm{P_{>\eta N} u}^2_{S_*^0 (J_k \times \h^3)} \right) . \label{eq 3}
\end{align}

Next, we claim that for any $(q,r)$ admissible and any $k$
\begin{align}\label{eq S^0}
\norm{u}_{L_t^q L_x^r (J_k \times \h^3)} & \lesssim_q \norm{u}_{L_t^{\infty}L_x^2 (J_k \times \h^3)} \lesssim 1 .
\end{align}
In fact, we divide $J_k =  \cup J_k^\ell = \cup [a_\ell, b_\ell] $ with 
\begin{align}\label{eq 9}
\norm{u}_{L_{t,x}^{\frac{10}{3}}(J_k^\ell \times \h^3)} = \varepsilon .
\end{align}
Here we need $\varepsilon \ll 1$ and it will be determined later. Using the Duhamel principle, Strichartz estimates and \eqref{eq 9}, we write
\begin{align}
\norm{u}_{L_t^q L_x^r (J_k^\ell \times \h^3)} & \lesssim \norm{u(a_\ell)}_{L^2 (\h^3)} + \norm{ u}_{L_t^q L_x^r (J_k^\ell \times \h^3)} \norm{u}_{L_{t,x}^{\frac{10}{3}} (J_k^\ell \times \h^3)}^{\frac{4}{3}} \\
& \lesssim \norm{u(a_\ell)}_{L^2 (\h^3)} + \norm{ u}_{L_t^q L_x^r (J_k^\ell \times \h^3)} \varepsilon^{\frac{4}{3}} .
\end{align}
Then by choosing $\varepsilon$ small enough, a continuity argument gives
\begin{align}
\norm{u}_{L_t^q L_x^r (J_k^\ell \times \h^3)} & \lesssim \norm{u(a_\ell)}_{L^2 (\h^3)} \lesssim \norm{u}_{L_t^{\infty}L_x^2 (J_k^\ell \times \h^3)} .
\end{align}
Adding $J_k^\ell$, we obtain the claimed estimate in \eqref{eq S^0}.

To estimate the $S_*^0$ norm in \eqref{eq 3}, we only need to focus on the second term in the $S_*^0$ norm below
\begin{align}
\norm{P_{> \eta N} u}_{S_*^0 (J_k \times \h^3)} = \norm{P_{> \eta N}u_0}_{L^2 (\h^3)} + \norm{P_{> \eta N}  F(u)}_{L_t^1 L_x^2 (J_k \times \h^3)} ,
\end{align}
where $F(u) = \abs{u}^{\frac{4}{3}} u$.

First, we decompose 
\begin{align}\label{eq 7}
\begin{aligned}
\norm{P_{> \eta N}  F(u)}_{L_t^1 L_x^2 (J_k \times \h^3)} & \leq \norm{P_{> \eta N}  F(P_{< \eta N^{\frac{1}{2}}} u)}_{L_t^1 L_x^2 (J_k \times \h^3)} \\
& \quad + \norm{P_{> \eta N}  (F(u) - F(P_{< \eta N^{\frac{1}{2}}} u))}_{L_t^1 L_x^2 (J_k \times \h^3)} .
\end{aligned}
\end{align}
Then for the first term in \eqref{eq 7}, we write using Bernstein inequality, H\"older inequality and \eqref{eq S^0}
\begin{align}
\norm{P_{> \eta N}  F(P_{< \eta N^{\frac{1}{2}}} u)}_{L_t^1 L_x^2 (J_k \times \h^3)}  & \lesssim \frac{1}{\eta N} \norm{\nabla F(P_{< \eta N^{\frac{1}{2}}} u)}_{L_t^1 L_x^2 (J_k \times \h^3)} \\
& \lesssim \frac{1}{\eta N} \norm{\nabla P_{< \eta N^{\frac{1}{2}}} u }_{S^0 (J_k)} \norm{u}_{S^0 (J_k)}^{\frac{4}{3}} \lesssim \frac{1}{\eta N} \eta N^{\frac{1}{2}} =  N^{-\frac{1}{2}} \label{eq 10} ,
\end{align}
and for the second term in \eqref{eq 7}, using H\"older inequality and \eqref{eq S^0}, we have
\begin{align}
\norm{P_{> \eta N}  (F(u) - F(P_{< \eta N^{\frac{1}{2}}} u))}_{L_t^1 L_x^2 (J_k \times \h^3)} & \lesssim  \norm{P_{> \eta N^{\frac{1}{2}}} u }_{L_t^{\infty} L_x^2 (J_k \times \h^3)}^{\frac{1}{3}} \norm{u }_{L_t^2 L_x^6 (J_k \times \h^3)}^{2} \\
& \lesssim \norm{P_{> \eta N^{\frac{1}{2}}} u }_{L_t^{\infty} L_x^2 (J_k \times \h^3)}^{\frac{1}{3}}. \label{eq 11}
\end{align}

Now combining \eqref{eq 10} and \eqref{eq 11}, we get
\begin{align}
\eqref{eq 7} & = \norm{P_{> \eta N}  F(P_{< \eta N^{\frac{1}{2}}} u)}_{L_t^1 L_x^2 (J_k \times \h^3)} + \norm{P_{> \eta N}  (F(u) - F(P_{< \eta N^{\frac{1}{2}}} u))}_{L_t^1 L_x^2 (J_k \times \h^3)} \\
& \lesssim N^{-\frac{1}{2}} + \norm{P_{> \eta N^{\frac{1}{2}}} u }_{L_t^{\infty} L_x^2 (J_k \times \h^3)}^{\frac{1}{3}} .
\end{align}
Therefore, we obtain the following control of the $S_*^0$ norm in \eqref{eq 3}, that is,
\begin{align}
\norm{P_{> \eta N} u}_{S_*^0 (J_k \times \h^3)} & = \norm{P_{> \eta N}u_0}_{L_x^2 (\h^3)} + \norm{P_{> \eta N}  (\abs{u}^{\frac{4}{3}} u)}_{L_t^1 L_x^2 (J_k \times \h^3)} \\
& \lesssim \norm{P_{> \eta N} u }_{L_t^{\infty} L_x^2 (J_k \times \h^3)} + N^{-\frac{1}{2}} + \norm{P_{> \eta N^{\frac{1}{2}}} u }_{L_t^{\infty} L_x^2(J_k \times \h^3)}^{\frac{1}{3}} .
\end{align}

Finally, let $C_0C_1^2 =: c^2_{\eta}$, then
\begin{align}
\eqref{eq 3} & = \frac{C_0C_1^2}{\eta} \frac{T}{N} \sup_{J_k \subset I} \norm{P_{>\eta N} u}^2_{S_*^0 (J_k \times \h^3)}  \\
& \lesssim  \frac{c^2_{\eta}T}{\eta N} \left( \norm{P_{> \eta N} u }_{L_t^{\infty} L_x^2 (I \times \h^3)} +  N^{-\frac{1}{2}} + \norm{P_{> \eta N^{\frac{1}{2}}} u }_{L_t^{\infty} L_x^2(I \times \h^3)}^{\frac{1}{3}}\right)^2 .
\end{align}
which completes the estimate for the last term in \eqref{eq 4}. 

Now putting all the terms above, we obtain Lemma \ref{claim 1}.
\end{proof}

\begin{lemma}\label{lem LTS}
    Let $u \in C(\mathbb{R}; L^2(\h^3))$ be an almost periodic (modulo $\mathbb{G}$) solution to \eqref{NLS} and let $T \in (0,\infty)$. 
    \begin{enumerate}
        \item 
        We have the following so-called long-time Strichartz estimates
        \begin{align}\label{eq LTS1}
            \left\|P_{\geq N}u\right\|_{L_t^2 L_x^6([0, T] \times \h^3)} \lesssim 1+ \frac{T^{\frac{1}{2}}}{N^{\frac{1}{2}}}.
        \end{align}

        \item 
        For any $\varepsilon >0$, there exists $N_0(\varepsilon) > 0$ so that for $N \geq N_0$,
\begin{align}\label{eq LTS2}
\left\|P_{\geq N}u\right\|_{L_t^2 L_x^6([0, T] \times \h^3)} \lesssim \varepsilon(1+ \frac{T^{\frac{1}{2}}}{N^{\frac{1}{2}}} ).
\end{align}
        \item 
        The $L_t^2 L_x^6$ norm in \eqref{eq LTS1} and \eqref{eq LTS2} can be replaced by $L_t^2 L_x^q$ norm with $q \in (2, 6]$.
    \end{enumerate}
\end{lemma}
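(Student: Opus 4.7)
The proof is a dyadic bootstrap built on the recursive estimate of Lemma \ref{claim 1}. Set $A(N) := \|P_{\geq N}u\|_{L^2_t L^6_x([0,T] \times \h^3)}$ and target the bound $A(N) \leq K(1 + T^{1/2}/N^{1/2})$ of part (1). The base case (dyadic $N$ of bounded size) is immediate from applying short-time Strichartz on each of the $\sim T$ intervals of local constancy provided by Lemma \ref{lem aprioriT}, yielding $A(N) \lesssim T^{1/2} \lesssim 1 + T^{1/2}$. For the inductive step, choose $s = 1$ (any $s \in (1/2, 7/3]$ would do) and a small dyadic $\eta > 0$. Under the hypothesis $A(M) \leq K(1 + T^{1/2}/M^{1/2})$ for dyadic $M < N$, the self-referential terms in Lemma \ref{claim 1} satisfy, by direct dyadic summation,
\[
\sum_{M \leq \eta N}(M/N)^s A(M) \lesssim K\bigl[\eta^{s} + \eta^{s-1/2}\,T^{1/2}/N^{1/2}\bigr], \qquad \eta^{4/3}A(\eta N) \lesssim K\bigl[\eta^{4/3} + \eta^{5/6}\,T^{1/2}/N^{1/2}\bigr],
\]
so that Lemma \ref{claim 1} gives
\[
A(N) \leq C + CK(\eta + \eta^{4/3}) + \bigl[CK(\eta^{1/2} + \eta^{5/6}) + c_\eta D_0\,\eta^{-1/2}\bigr] T^{1/2}/N^{1/2},
\]
where $D_0$ bounds the $B$-factor from Lemma \ref{lem etaSmall}. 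Choosing first $\eta$ so small that $C(\eta + \eta^{4/3})$ and $C(\eta^{1/2} + \eta^{5/6})$ are both below $1/2$, and then $K$ large enough to dominate both $C$ and the bilinear contribution $c_\eta D_0 \eta^{-1/2}$, closes the induction.

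For part (2), the improvement uses Lemma \ref{lem etaSmall}: the factor $B(N) := \|P_{>\eta N}u\|_{L^\infty L^2} + N^{-1/2} + \|P_{>\eta N^{1/2}}u\|^{1/3}_{L^\infty L^2}$ tends to zero as $N \to \infty$. Given $\varepsilon > 0$, we restart the bootstrap at a threshold $N_* = N_*(\varepsilon, \eta)$ beyond which $B(N) \leq \varepsilon^2/c_\eta$; the contribution to $\sum_{M \leq \eta N}(M/N)^sA(M)$ from $M < N_*$ is bounded via part (1) by $\lesssim K(N_*/N)^s$, which is $\leq \varepsilon$ once $N \gg N_*$. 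The same induction then closes at the improved level $A(N) \leq \varepsilon(1 + T^{1/2}/N^{1/2})$ for $N \geq N_0$. Part (3) follows at no extra cost: in $\h^3$ the Strichartz admissibility triangle $T_d$ contains $(2, q)$ and its dual $(2, q')$ for every $q \in (2, 6]$, so Lemma \ref{claim 1} itself may be re-derived verbatim with $L^2_t L^6_x$ and $L^2_t L^{6/5}_x$ replaced by $L^2_t L^q_x$ and $L^2_t L^{q'}_x$ throughout; every structural input (Strichartz duality, H\"older, Bernstein, and Lemma \ref{lem nonbilinear}) transfers, and the same bootstrap yields the analogues of parts (1)--(2) in the $L^2_t L^q_x$ topology.

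The main delicate point is the balancing of parameters. The coefficient $c_\eta\,\eta^{-1/2}$ from the bilinear term grows as $\eta \to 0$, yet it is the only contribution to the $T^{1/2}/N^{1/2}$-part not already multiplied by $K$. One must therefore shrink $\eta$ first, in order to drive the $\eta^{s}$, $\eta^{4/3}$, $\eta^{s-1/2}$, and $\eta^{5/6}$ prefactors strictly below $1$, and only then enlarge $K$ to absorb the $\varepsilon$-independent term $c_\eta D_0\,\eta^{-1/2}$. The essential algebraic reason this succeeds is precisely that $s > 1/2$ makes $\eta^{s-1/2}$ small, so the $T^{1/2}/N^{1/2}$ coefficient can be kept strictly below $K$ after the absorption. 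Executing this order carefully while tracking the $\eta$-dependent constants, and, for part (2), verifying that the low-frequency contributions (controlled only by part (1)) enter the recursion through an $(N_*/N)^s$-decaying term so that the $\varepsilon$-threshold propagates up the dyadic ladder, constitutes the main technical effort.
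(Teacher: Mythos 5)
Your proposal is correct and follows essentially the same route as the paper: both close the recursion of Lemma \ref{claim 1} by an absorption argument with $\eta$ chosen first and the constant enlarged afterwards (the paper phrases this via the supremum $c=\sup_N f(N)/(1+(T/N)^{1/2})$ rather than a dyadic induction, but the mechanism is identical), then obtain part (2) by feeding part (1) and the vanishing $L^\infty_tL^2_x$ tails back into the recursion, and part (3) from the wider hyperbolic admissible range. Your inductive formulation is, if anything, a slightly more careful version of the paper's argument, since it does not presuppose finiteness of the supremum.
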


\begin{proof}[Proof of Lemma \ref{lem LTS}]
The proof of this lemma follows the main idea in \cite{Dod3d}.
We start from the proof of (1). 

Fix $T$ and let 
\begin{align}
f(N) : = \norm{P_{\geq N} u}_{L_t^2 L_x^6([0, T] \times \h^3)}.
\end{align}
By Lemma \ref{claim 1}, 
\begin{align}
f(N) \lesssim  1 +  \sum_{M \leq \eta N} (\frac{M}{N}) f(M) +  \frac{c_{\eta}}{\eta^{\frac{1}{2}}} (\frac{T}{N})^{\frac{1}{2}} . 
\end{align}
Let 
\begin{align}
c = \sup_N \frac{f(N)}{1 + (\frac{T}{N})^{\frac{1}{2}} } 
\end{align}
taking the supremum over all dyadic integers $N$. Then
\begin{align}
f(N) & \lesssim  1 + c \sum_{M \leq \eta N} (\frac{M}{N}) \parenthese{ 1 + (\frac{T}{M})^{\frac{1}{2}} } + \frac{c_{\eta}}{\eta^{\frac{1}{2}}} (\frac{T}{N})^{\frac{1}{2}} \\
& \lesssim  1 + c \eta^{\frac{1}{2}}(\frac{T}{N})^{\frac{1}{2}} + c \eta + \frac{c_{\eta}}{\eta^{\frac{1}{2}}} (\frac{T}{N})^{\frac{1}{2}} .
\end{align}
Fixing $\eta_0(u) >0$ sufficiently small, we can take
\begin{align}
c \lesssim  \frac{c_{\eta_0}}{\eta_0^{\frac{1}{2}}} .
\end{align}

Now (2) can be shown with the following argument: Consider 
\begin{align}
f(N) & \lesssim \inf_{t_0 \in I} \norm{P_{\geq N} u(t_0)}_{L_x^2} + \sum_{M \leq \eta N} (\frac{M}{N})^s f(M)  + \eta^{\frac{4}{3}} \norm{P_{\geq \eta N} u}_{L_t^2 L_x^6} \\
& \quad + \frac{c_{\eta} T^{\frac{1}{2}}}{\eta^{\frac{1}{2}} N^{\frac{1}{2}}}  \left( \norm{P_{> \eta N} u }_{L_t^{\infty} L_x^2 } +  N^{-\frac{1}{2}} + \norm{P_{> \eta N^{\frac{1}{2}}} u }_{L_t^{\infty} L_x^2}^{\frac{1}{3}}\right).
\end{align}
Using the following equation (Lemma \ref{lem etaSmall})
\begin{align}
\lim_{N \to \infty}  \norm{P_{> \eta N} u }_{L_t^{\infty} L_x^2(I \times \h^3)} +  N^{-\frac{1}{2}} + \norm{P_{> \eta N^{\frac{1}{2}}} u }_{L_t^{\infty} L_x^2(I \times \h^3)}^{\frac{1}{3}} = 0
\end{align}
and part (1), we can write
\begin{align}
f(N) & \lesssim \inf_{t_0 \in I} \norm{P_{\geq N} u(t_0)}_{L_x^2} + \sum_{M \leq \eta N} (\frac{M}{N})^s f(M)  + \eta^{\frac{4}{3}} \norm{P_{\geq \eta N} u}_{L_t^2 L_x^6} \\
& \quad + \frac{c_{\eta} T^{\frac{1}{2}}}{\eta^{\frac{1}{2}} N^{\frac{1}{2}}}  \left( \norm{P_{> \eta N} u }_{L_t^{\infty} L_x^2 } +  N^{-\frac{1}{2}} + \norm{P_{> \eta N^{\frac{1}{2}}} u }_{L_t^{\infty} L_x^2}^{\frac{1}{3}}\right)  \\
& \lesssim \inf_{t_0 \in I} \norm{P_{\geq N} u(t_0)}_{L_x^2} + \sum_{M \leq \eta N} (\frac{M}{N})^s \parenthese{1 + \frac{T^{\frac{1}{2}}}{M^{\frac{1}{2}}} } + \eta^{\frac{4}{3}} \parenthese{1 + \frac{T^{\frac{1}{2}}}{\eta^{\frac{1}{2}} N^{\frac{1}{2}}} } \\
& \quad + \frac{c_{\eta} T^{\frac{1}{2}}}{\eta^{\frac{1}{2}} N^{\frac{1}{2}}}  \left( \norm{P_{> \eta N} u }_{L_t^{\infty} L_x^2 } +  N^{-\frac{1}{2}} + \norm{P_{> \eta N^{\frac{1}{2}}} u }_{L_t^{\infty} L_x^2}^{\frac{1}{3}}\right) .
\end{align}

Using Lemma \ref{lem etaSmall} and choosing  $\eta$ small enough, then $N$ large enough we conclude
\begin{align}
\left\|P_{\geq N}u\right\|_{L_t^2 L_x^6([0, T] \times \h^3)} \lesssim \varepsilon(1+ \frac{ T^{1 / 2}}{ N^{1 / 2}} ).
\end{align}

Now we turn to part (3). 
In fact, the $L_t^2 L_x^6$ norm on the left-hand-side of inequalities \eqref{eq LTS1} and \eqref{eq LTS2} is not essential, since we only used the $L^2$ integrability in time. Thus such norm can be replaced by any admissible $L_t^2 L_x^q$ norm ($q \in (2,6]$), thanks to a larger range of admissible pairs (compared to the corresponding Euclidean case) as shown in Figure \ref{figure}. That is,
\begin{align}
\norm{P_{\geq N}u}_{L_t^2 L_x^q (J\times \h^3)}^2 \lesssim 1 + \frac{T}{N} .
\end{align}
Combining with part (2), we have for any $\varepsilon >0$, there exists $N_0(\varepsilon) > 0$ so that for $N \geq N_0$,
\begin{align}
\left\|P_{\geq N}u\right\|_{L_t^2 L_x^q([0, T] \times \h^3)}^2 \lesssim \varepsilon(1+ \frac{T}{ N} ).
\end{align}
which completes part (3).

Then we finish the proof of Lemma \ref{lem LTS}.
\end{proof}

\begin{lemma}\label{lem gradlow}
    Let $u \in C(\mathbb{R}; L^2(\h^3))$ be an almost periodic (modulo $\mathbb{G}$) solution to \eqref{NLS}, let $s\in (\frac{1}{2}, 1]$, and let $T \in (0,\infty)$. Then for any $q \in (2,6]$
        \begin{align*}
            \norm{\abs{\nabla}^s P_{\leq N}u}_{L_t^2 L_x^q([0, T] \times \h^3)} \lesssim  N^{s-\frac{1}{2}}T^{\frac{1}{2}} .
        \end{align*}
\end{lemma}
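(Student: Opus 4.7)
\textbf{Proof proposal for Lemma \ref{lem gradlow}.}

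The plan is to perform a Littlewood–Paley decomposition of $P_{\leq N}u$ and then apply the long-time Strichartz estimate of Lemma \ref{lem LTS}(3) piece-by-piece. Using the fundamental identity from Definition \ref{defn LP}, we write $P_{\leq N} u = 2\int_0^N P_M u \, dM/M$, and apply the triangle inequality in $L_t^2 L_x^q([0,T]\times\h^3)$ to obtain
\begin{align}
\norm{\abs{\nabla}^s P_{\leq N} u}_{L_t^2 L_x^q} \lesssim \int_0^N \norm{\abs{\nabla}^s P_M u}_{L_t^2 L_x^q} \, \frac{dM}{M}.
\end{align}
The first key step is to check that $\abs{\nabla}^s P_M$ is an $L^q \to L^q$ Fourier multiplier of norm $\sim M^s$ for $1<q<\infty$. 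This follows from the spectral calculus: $\abs{\nabla}^s P_M$ has spectral symbol $(\lambda^2+\rho^2)^{s/2} \cdot M^{-2}(\lambda^2+\rho^2) e^{-M^{-2}(\lambda^2+\rho^2)}$, which peaks at frequencies of size $M$ with value $\sim M^s$ and decays rapidly away from that scale; standard spectral multiplier theorems on $\h^3$ then give the $L^q$ bound uniformly in $M$. Consequently $\norm{\abs{\nabla}^s P_M u}_{L_t^2 L_x^q} \lesssim M^s \norm{P_M u}_{L_t^2 L_x^q}$.

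Next, I apply Lemma \ref{lem LTS}(3) to each dyadic frequency, writing $\norm{P_M u}_{L_t^2 L_x^q} \lesssim \norm{P_{\geq M} u}_{L_t^2 L_x^q} + \norm{P_{\geq 2M} u}_{L_t^2 L_x^q} \lesssim 1 + T^{1/2} M^{-1/2}$. Combining with the previous step yields
\begin{align}
\norm{\abs{\nabla}^s P_M u}_{L_t^2 L_x^q} \lesssim M^s + T^{1/2} M^{s-1/2}.
\end{align}
Integrating this against $dM/M$ over $(0,N]$ gives, using $s>0$ for convergence of $\int_0^N M^{s-1}dM \sim N^s$ at the top end and $s>1/2$ for convergence of $\int_0^N M^{s-3/2}dM \sim N^{s-1/2}$,
\begin{align}
\norm{\abs{\nabla}^s P_{\leq N} u}_{L_t^2 L_x^q} \lesssim N^s + T^{1/2} N^{s-1/2}.
\end{align}
In the regime of interest $N \lesssim T$ (which is the setting in which this gradient bound will be applied in the Morawetz-type argument of Proposition \ref{prop MorDod}), the second term dominates the first since $N^s = N^{1/2} \cdot N^{s-1/2} \leq T^{1/2} N^{s-1/2}$, yielding the desired conclusion. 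In the complementary range $N > T$, one absorbs the $N^s$ term similarly or uses the improved bound from Lemma \ref{lem LTS}(2) to gain a small factor.

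The main technical obstacle is the spectral multiplier claim $\norm{\abs{\nabla}^s P_M}_{L^q \to L^q} \lesssim M^s$ in the hyperbolic setting; the heat-based projections behave well under this because their spectral symbols are Schwartz functions of $M^{-2}(-\Delta_{\h^3})$, but one must verify uniformity in $M$. The exponent condition $s>1/2$ is exactly what is needed so that the dyadic sum $\sum_{M \leq N} M^{s-1/2}$ localizes at the top scale $M \sim N$ rather than diverging logarithmically (the $s=1/2$ case) or accumulating on low frequencies.
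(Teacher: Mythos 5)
Your proposal is correct and follows essentially the same route as the paper: a Littlewood--Paley decomposition of $P_{\leq N}u$, the multiplier bound $\norm{\abs{\nabla}^s P_M u}_{L_t^2L_x^q}\lesssim M^s\norm{P_Mu}_{L_t^2L_x^q}$, the long-time Strichartz estimate of Lemma \ref{lem LTS} applied to each dyadic piece, and summation using $s>\tfrac12$, arriving at the same intermediate bound $N^s + T^{1/2}N^{s-1/2}$. You also correctly observed that the stated conclusion only dominates this in the regime $N\lesssim T$ (the paper's proof likewise records $N^s\bigl((T/N)^{1/2}+1\bigr)$ and only invokes the lemma with $N=T$), so no further action is needed.
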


\begin{proof}[Proof of Lemma \ref{lem gradlow}]
For $\frac{1}{2} < s \leq 1$ and $2 < q \leq 6$, using Lemma \ref{lem LTS},
\begin{align}
\norm{\abs{\nabla}^s P_{\leq N} u}_{L_t^2 L_x^q ([0, T] \times \h^3)} & \lesssim_s \sum_{M \leq N} M^s \norm{ P_{M} u}_{L_t^2 L_x^q ([0, T] \times \h^3)} \\
& \lesssim_s \sum_{M \leq N} M^s \left((\frac{T}{M})^{\frac{1}{2}} +1 \right) \\
&\lesssim_s N^{(s-\frac{1}{2})}T^{\frac{1}{2}}+ N^s = N^s \left( (\frac{T}{N})^{\frac{1}{2}} +1 \right) .
\end{align}
In particular, we have
\begin{align}
\norm{\nabla P_{\leq T} u}_{L_t^2 L_x^q ([0, T] \times \h^3)} \lesssim T.
\end{align}
\end{proof}

We are now prepared to prove Proposition \ref{prop MorDod}.

\begin{proof}[Proof of Proposition \ref{prop MorDod}]
    Let $0<\eta\ll \norm{u}_{L^2(\h^3)}$ be small enough so that $\mbox{Vol}_{\h^3}(B_{\0}(C(\eta))\geq 1$ and 
        \begin{align*}
            \tfrac{1}{2}\norm{u}^{6/5}_{L^2(\h^3)} \leq \left(\int_{d(x, \0) \geq C(\eta)}|(\pi_{h(t)}u)(t, x)|^2 d \mu(x) \right)^{3/5}
        \end{align*}
    for all $t \in \R$. Then by H\"older inequality, 
        \begin{align*}
            C_{\h^3}\norm{u}^{6/5}_{L^2(\h^3)}  \leq \norm{u(t)}_{L_x^{10/3}(\h^3)}^{10/3} .
        \end{align*}
    Therefore,
        \begin{align*}
          C T  \leq \norm{u}_{L_{t, x}^{10/3}([0, T]\times\h^3)}^{10/3} 
        \end{align*}
    for some $C= C(\h^3, \|u\|_{L^2(\h^3)}).$ Moreover, due to almost periodicity, for $T$ large enough (depending on $\eta$).
        \begin{align*}
          C T  \leq \norm{P_{\leq T}u}_{L_{t, x}^{10/3}([0, T]\times\h^3)}^{10/3} .
        \end{align*}

    Now $P_{\leq N}u$ solves \eqref{eq modNLS} with $\NN$ defined by
        \begin{align*}
            \NN&= P_{\leq T}(|u|^{\frac{4}{3}}u)-\abs{P_{\leq T} u}^{\frac{4}{3}}(P_{\leq T}u) .
        \end{align*}
    Then by Proposition \ref{prop Morawetz},
        \begin{align}
            \norm{P_{\leq T} u}_{L_{t, x}^{10/3}([0, T]\times\h^3)}^{10/3} &\lesssim  \norm{P_{\leq T} u}_{L_t^{\infty} L_x^2 ([0,T] \times \h^3)} \norm{P_{\leq T} u}_{L_t^{\infty} H_x^1 ([0,T] \times \h^3)} \label{eq Mor1}\\
            &\hspace{1cm}+ \norm{\NN\overline{P_{\leq T} u}}_{L_{t,x}^1 ([0,T] \times \h^3)} + \norm{\NN \nabla \overline{ P_{\leq T} u}}_{L_{t,x}^1 ([0,T] \times \h^3)} . \label{eq Mor2}
        \end{align}
    Let $T\gtrsim \|u\|_{L^2(\h^3)}\eta^{-1}C(\eta)$, then $\|P_{\geq T}u\|_{L^2(\h^3)}\leq \|P_{\geq C(\eta)}u\|_{L^2(\h^3)}\leq \eta $ and for all $t \in [0, T]$,
        \begin{align}\label{eq gradest}
            \norm{\nabla P_{\leq T} u(t) }_{L^2(\h^3)} &\leq \norm{\nabla P_{\geq C(\eta)} P_{\leq T} u(t) }_{L^2(\h^3)}+\norm{\nabla P_{\leq C(\eta)} P_{\leq T} u(t) }_{L^2(\h^3)}\\
            &\lesssim T\norm{ P_{\geq C(\eta)}  u(t) }_{L^2(\h^3)}+\norm{\nabla P_{\leq C(\eta)} u(t) }_{L^2(\h^3)}\\
            &\lesssim \eta T+C(\eta) \|u\|_{L^2(\h^3)}\\
            &\lesssim \eta T
        \end{align}
which implies 
\begin{align}
\eqref{eq Mor1} \lesssim \eta T .
\end{align}

To estimate the terms in \eqref{eq Mor2}, we write
\begin{align}
\NN = P_{\leq T} F(u) - F(P_{\leq T} u) & = [F(u) - F(P_{\leq T} u)] - P_{>T} F(u) \\
& = [F(u) - F(P_{\leq T} u)] - P_{>T} F(P_{< \varepsilon T} u) - P_{> T} (F(u) -F(P_{< \varepsilon T} u) ) .
\end{align}

For $T > N_0 (\varepsilon)$, using \eqref{eq LTS2}
\begin{align}
\norm{F(u) - F(P_{\leq T} u)}_{L_t^2 L_x^{\frac{6}{5}}} \lesssim \norm{P_{> T} u}_{L_t^2 L_x^6} \norm{u}_{L_t^{\infty} L_x^2}^{\frac{4}{3}} \lesssim \varepsilon .
\end{align}
Using Lemma \ref{lem gradlow}
\begin{align}
\norm{P_{>T} F(P_{< \varepsilon T} u)}_{L_t^2 L_x^{\frac{6}{5}}} & \lesssim \frac{1}{T}\norm{\nabla P_{>T} F(P_{< \varepsilon T} u)}_{L_t^2 L_x^{\frac{6}{5}}} \lesssim \frac{1}{T} \norm{\nabla P_{< \varepsilon T} u}_{L_t^2 L_x^6} \norm{u}_{L_t^{\infty} L_x^2}^{\frac{4}{3}} \\
& \lesssim \frac{1}{T} (\varepsilon T)^{1/2} T^{1/2} = \varepsilon^{1/2} .
\end{align}

For $\varepsilon T > N_0$, using \eqref{eq LTS2}
\begin{align}
\norm{P_{> T} (F(u) -F(P_{< \varepsilon T} u) ) }_{L_t^2 L_x^{\frac{6}{5}}} \lesssim \norm{P_{> \varepsilon T} u}_{L_t^2 L_x^6} \norm{u}_{L_t^{\infty} L_x^2}^{\frac{4}{3}} \lesssim \varepsilon (1 + \frac{T^{1/2}}{\varepsilon^{1/2} T^{1/2}}) \lesssim \varepsilon^{1/2} .
\end{align}

Then we conclude
\begin{align}
\norm{P_{\leq T} F(u) - F(P_{\leq T} u) }_{L_t^2 L_x^{\frac{6}{5}}} \lesssim \varepsilon^{1/2} .
\end{align}
In fact, we also have
\begin{align}\label{eq 8}
\norm{P_{\leq T} F(u) - F(P_{\leq T} u) }_{L_t^2 L_x^{\frac{6}{5}-}} \lesssim \varepsilon^{1/2} .
\end{align}
where we just need to replace all the $L_t^2 L_x^6$ norm above by $L_t^2 L_x^{6-}$ and combine with part (3) in Lemma \ref{lem LTS}.

By Lemma \ref{lem gradlow}, we have
\begin{align}
\norm{\nabla P_{\leq T} u}_{L_t^2 L_x^6} \lesssim T .
\end{align}
Hence the second error term has the following bound
\begin{align}
\norm{[P_{\leq T} F(u) - F(P_{\leq T} u)] \overline{\nabla P_{\leq T} u}}_{L_{t,x}^1}  \lesssim \norm{P_{\leq T} F(u) - F(P_{\leq T} u) }_{L_t^2 L_x^{\frac{6}{5}}} \norm{\nabla P_{\leq T} u}_{L_t^2 L_x^6} \lesssim \varepsilon^{1/2} T .
\end{align}

For the other error term, using Sobolev embedding, Lemma \ref{lem gradlow} and \eqref{eq 8}
\begin{align}
\norm{[P_{\leq T} F(u) - F(P_{\leq T} u)] \overline{P_{\leq T} u}}_{L_{t,x}^1} & \lesssim \norm{P_{\leq T} F(u) - F(P_{\leq T} u) }_{L_t^2 L_x^{\frac{6}{5}-}} \norm{ P_{\leq T} u}_{L_t^2 L_x^{6+}} \\
& \lesssim \norm{P_{\leq T} F(u) - F(P_{\leq T} u) }_{L_t^2 L_x^{\frac{6}{5}-}} \norm{\nabla P_{\leq T} u}_{L_t^2 L_x^{2+}} \\
& \lesssim \varepsilon^{1/2} T^{1/2} T^{1/2} = \varepsilon^{1/2} T .
\end{align}

Therefore, we obtain
\begin{align}
\eqref{eq Mor2} \lesssim \varepsilon^{1/2} T .
\end{align}
and thus
\begin{align*}
\norm{P_{\leq T} u}_{L_{t, x}^{10/3}([0, T]\times\h^3)}^{10/3}\lesssim (\eta + \varepsilon^{1/2}) T.
\end{align*}

Assuming $\varepsilon< \eta^2$ completes the argument in Proposition \ref{prop MorDod}.
\end{proof}

\bibliographystyle{plain}
\bibliography{bibfile}

\begin{thebibliography}{10}

\bibitem{AP09}
J.~Anker and V.~Pierfelice.
\newblock Nonlinear {S}chr\"{o}dinger equation on real hyperbolic spaces.
\newblock {\em Ann. Inst. H. Poincar\'{e} Anal. Non Lin\'{e}aire},
  26(5):1853--1869, 2009.

\bibitem{Ban07}
V.~Banica.
\newblock The nonlinear {S}chr\"{o}dinger equation on hyperbolic space.
\newblock {\em Comm. Partial Differential Equations}, 32(10-12):1643--1677,
  2007.

\bibitem{BCD09}
V.~Banica, R.~Carles, and T.~Duyckaerts.
\newblock On scattering for {NLS}: from {E}uclidean to hyperbolic space.
\newblock {\em Discrete Contin. Dyn. Syst.}, 24(4):1113--1127, 2009.

\bibitem{BCS08}
V.~Banica, R.~Carles, and G.~Staffilani.
\newblock Scattering theory for radial nonlinear {S}chr\"{o}dinger equations on
  hyperbolic space.
\newblock {\em Geom. Funct. Anal.}, 18(2):367--399, 2008.

\bibitem{BD07}
V.~Banica and T.~Duyckaerts.
\newblock Weighted {S}trichartz estimates for radial {S}chr\"{o}dinger equation
  on noncompact manifolds.
\newblock {\em Dyn. Partial Differ. Equ.}, 4(4):335--359, 2007.

\bibitem{BD15}
V.~Banica and T.~Duyckaerts.
\newblock Global existence, scattering and blow-up for the focusing {NLS} on
  the hyperbolic space.
\newblock {\em Dyn. Partial Differ. Equ.}, 12(1):53--96, 2015.

\bibitem{Bour99}
J.~Bourgain.
\newblock Global wellposedness of defocusing critical nonlinear
  {S}chr\"{o}dinger equation in the radial case.
\newblock {\em J. Amer. Math. Soc.}, 12(1):145--171, 1999.

\bibitem{Ca}
T.~Cazenave.
\newblock {\em Semilinear {S}chr\"{o}dinger equations}, volume~10 of {\em
  Courant Lecture Notes in Mathematics}.
\newblock New York University, Courant Institute of Mathematical Sciences, New
  York; American Mathematical Society, Providence, RI, 2003.

\bibitem{CKSTT08}
J.~Colliander, M.~Keel, G.~Staffilani, H.~Takaoka, and T.~Tao.
\newblock Global well-posedness and scattering for the energy-critical
  nonlinear {S}chr\"{o}dinger equation in {$\mathbb{R}^3$}.
\newblock {\em Ann. of Math. (2)}, 167(3):767--865, 2008.

\bibitem{Dod3d}
B.~Dodson.
\newblock Global well-posedness and scattering for the defocusing,
  {$L^{2}$}-critical nonlinear {S}chr\"{o}dinger equation when {$d\geq3$}.
\newblock {\em J. Amer. Math. Soc.}, 25(2):429--463, 2012.

\bibitem{Dod1d}
B.~Dodson.
\newblock Global well-posedness and scattering for the defocusing, {$L^2$}
  critical, nonlinear {S}chr\"{o}dinger equation when {$d=1$}.
\newblock {\em Amer. J. Math.}, 138(2):531--569, 2016.

\bibitem{Dod2d}
B.~Dodson.
\newblock Global well-posedness and scattering for the defocusing,
  {$L^2$}-critical, nonlinear {S}chr\"odinger equation when {$d=2$}.
\newblock {\em Duke Math. J.}, 165(18):3435--3516, 2016.

\bibitem{Dod22}
B.~Dodson.
\newblock Global well-posedness for the defocusing, cubic nonlinear
  {S}chr\"{o}dinger equation with initial data in a critical space.
\newblock {\em Rev. Mat. Iberoam.}, 38(4):1087--1100, 2022.

\bibitem{GV92}
J.~Ginibre and G.~Velo.
\newblock Smoothing properties and retarded estimates for some dispersive
  evolution equations.
\newblock {\em Comm. Math. Phys.}, 144(1):163--188, 1992.

\bibitem{Gr00}
M.~G. Grillakis.
\newblock On nonlinear {S}chr\"{o}dinger equations.
\newblock {\em Comm. Partial Differential Equations}, 25(9-10):1827--1844,
  2000.

\bibitem{hebey2000nonlinear}
E.~Hebey.
\newblock {\em Nonlinear analysis on manifolds: {S}obolev spaces and
  inequalities}, volume~5 of {\em Courant Lecture Notes in Mathematics}.
\newblock New York University, Courant Institute of Mathematical Sciences, New
  York; American Mathematical Society, Providence, RI, 1999.

\bibitem{IP1}
A.~D. Ionescu and B.~Pausader.
\newblock The energy-critical defocusing {NLS} on {$\mathbb{T}^3$}.
\newblock {\em Duke Math. J.}, 161(8):1581--1612, 2012.

\bibitem{IP2}
A.~D. Ionescu and B.~Pausader.
\newblock Global well-posedness of the energy-critical defocusing {NLS} on
  {$\mathbb{R}\times \mathbb{T}^3$}.
\newblock {\em Comm. Math. Phys.}, 312(3):781--831, 2012.

\bibitem{IPS12}
A.~D. Ionescu, B.~Pausader, and G.~Staffilani.
\newblock On the global well-posedness of energy-critical {S}chr\"{o}dinger
  equations in curved spaces.
\newblock {\em Anal. PDE}, 5(4):705--746, 2012.

\bibitem{IS09}
A.~D. Ionescu and G.~Staffilani.
\newblock Semilinear {S}chr\"{o}dinger flows on hyperbolic spaces: scattering
  {$H^1$}.
\newblock {\em Math. Ann.}, 345(1):133--158, 2009.

\bibitem{Kaiz14}
K.~Kaizuka.
\newblock Resolvent estimates on symmetric spaces of noncompact type.
\newblock {\em J. Math. Soc. Japan}, 66(3):895--926, 2014.

\bibitem{KT98}
M.~Keel and T.~Tao.
\newblock Endpoint {S}trichartz estimates.
\newblock {\em Amer. J. Math.}, 120(5):955--980, 1998.

\bibitem{KM06}
C.~E. Kenig and F.~Merle.
\newblock Global well-posedness, scattering and blow-up for the
  energy-critical, focusing, non-linear {S}chr\"{o}dinger equation in the
  radial case.
\newblock {\em Invent. Math.}, 166(3):645--675, 2006.

\bibitem{KM10}
C.~E. Kenig and F.~Merle.
\newblock Scattering for {$\dot H^{1/2}$} bounded solutions to the cubic,
  defocusing {NLS} in 3 dimensions.
\newblock {\em Trans. Amer. Math. Soc.}, 362(4):1937--1962, 2010.

\bibitem{KTV09}
R.~Killip, T.~Tao, and M.~Visan.
\newblock The cubic nonlinear {S}chr\"{o}dinger equation in two dimensions with
  radial data.
\newblock {\em J. Eur. Math. Soc. (JEMS)}, 11(6):1203--1258, 2009.

\bibitem{KVClay}
R.~Killip and M.~Vi\c~san.
\newblock Nonlinear {S}chr\"odinger equations at critical regularity.
\newblock In {\em Evolution equations}, volume~17 of {\em Clay Math. Proc.},
  pages 325--437. Amer. Math. Soc., Providence, RI, 2013.

\bibitem{LLOS}
A.~Lawrie, J.~L\"{u}hrmann, S.-J. Oh, and S.~Shahshahani.
\newblock Asymptotic stability of harmonic maps on the hyperbolic plane under
  the {S}chr\"{o}dinger maps evolution.
\newblock {\em Comm. Pure Appl. Math.}, 76(3):453--584, 2023.

\bibitem{Ma}
C.~Ma.
\newblock A scattering result of the radial cubic defocusing {S}chr\"{o}dinger
  equation on the 3d hyperbolic space.
\newblock {\em arXiv preprint arXiv:2210.13760}, 2022.

\bibitem{Mur14}
J.~Murphy.
\newblock The defocusing {$\dot H^{1/2}$}-critical {NLS} in high dimensions.
\newblock {\em Discrete Contin. Dyn. Syst.}, 34(2):733--748, 2014.

\bibitem{PTW14}
B.~Pausader, N.~Tzvetkov, and X.~Wang.
\newblock Global regularity for the energy-critical {NLS} on {$\mathbb{S}^3$}.
\newblock {\em Ann. Inst. H. Poincar\'{e} C Anal. Non Lin\'{e}aire},
  31(2):315--338, 2014.

\bibitem{Pie08}
V.~Pierfelice.
\newblock Weighted {S}trichartz estimates for the {S}chr\"{o}dinger and wave
  equations on {D}amek-{R}icci spaces.
\newblock {\em Math. Z.}, 260(2):377--392, 2008.

\bibitem{RV07}
E.~Ryckman and M.~Visan.
\newblock Global well-posedness and scattering for the defocusing
  energy-critical nonlinear {S}chr\"{o}dinger equation in {$\mathbb{R}^{1+4}$}.
\newblock {\em Amer. J. Math.}, 129(1):1--60, 2007.

\bibitem{Shao09}
S.~Shao.
\newblock Sharp linear and bilinear restriction estimates for paraboloids in
  the cylindrically symmetric case.
\newblock {\em Rev. Mat. Iberoam.}, 25(3):1127--1168, 2009.

\bibitem{SY}
G.~Staffilani and X.~Yu.
\newblock On the high--low method for {NLS} on the hyperbolic space.
\newblock {\em J. Math. Phys.}, 61(8):081509, 24, 2020.

\bibitem{Stru15}
N.~Strunk.
\newblock Global well-posedness of the energy-critical defocusing {NLS} on
  rectangular tori in three dimensions.
\newblock {\em Differential Integral Equations}, 28(11-12):1069--1084, 2015.

\bibitem{tao2006nonlinear}
T.~Tao.
\newblock {\em Nonlinear dispersive equations}, volume 106 of {\em CBMS
  Regional Conference Series in Mathematics}.
\newblock Conference Board of the Mathematical Sciences, Washington, DC; by the
  American Mathematical Society, Providence, RI, 2006.
\newblock Local and global analysis.

\bibitem{TVZ07}
T.~Tao, M.~Visan, and X.~Zhang.
\newblock Global well-posedness and scattering for the defocusing mass-critical
  nonlinear {S}chr\"{o}dinger equation for radial data in high dimensions.
\newblock {\em Duke Math. J.}, 140(1):165--202, 2007.

\bibitem{Visan07}
M.~Visan.
\newblock The defocusing energy-critical nonlinear {S}chr\"{o}dinger equation
  in higher dimensions.
\newblock {\em Duke Math. J.}, 138(2):281--374, 2007.

\bibitem{Yaji87}
K.~Yajima.
\newblock Existence of solutions for {S}chr\"{o}dinger evolution equations.
\newblock {\em Comm. Math. Phys.}, 110(3):415--426, 1987.

\bibitem{Yu18}
X.~Yu.
\newblock Global well-posedness and scattering for the defocusing {$\dot
  H^{1/2}$}-critical nonlinear {S}chr\"{o}dinger equation in {$\mathbb{R}^2$}.
\newblock {\em Anal. PDE}, 14(7):2225--2268, 2021.

\bibitem{YY20}
X.~Yu and H.~Yue.
\newblock On the global well-posedness for the periodic quintic nonlinear
  {S}chr\"odinger equation.
\newblock {\em SIAM J. Math. Anal.}, 56(2):1851--1902, 2024.

\bibitem{YYZ20}
X.~Yu, H.~Yue, and Z.~Zhao.
\newblock Global {W}ell-posedness for the focusing cubic {NLS} on the product
  space {$\mathbb{R} \times \mathbb{T}^3$}.
\newblock {\em SIAM J. Math. Anal.}, 53(2):2243--2274, 2021.

\bibitem{Yue21}
H.~Yue.
\newblock Global well-posedness for the energy-critical focusing nonlinear
  {S}chr\"{o}dinger equation on {$\mathbb{T}^4$}.
\newblock {\em J. Differential Equations}, 280:754--804, 2021.

\bibitem{Zhao19}
Z.~Zhao.
\newblock Global well-posedness and scattering for the defocusing cubic
  {S}chr\"{o}dinger equation on waveguide {$\mathbb{R}^2\times\mathbb{T}^2$}.
\newblock {\em J. Hyperbolic Differ. Equ.}, 16(1):73--129, 2019.

\bibitem{Zhao21}
Z.~Zhao.
\newblock On scattering for the defocusing nonlinear {S}chr\"{o}dinger equation
  on waveguide {$\mathbb{R}^m$}$\times${$ \mathbb{T}$} (when {$m$} = 2,3).
\newblock {\em J. Differential Equations}, 275:598--637, 2021.

\end{thebibliography}

\end{document}